\def\@tocline#1#2#3#4#5#6#7{\relax
  \ifnum #1>\c@tocdepth 
  \else
    \par \addpenalty\@secpenalty\addvspace{#2}%
    \begingroup \hyphenpenalty\@M
    \@ifempty{#4}{%
      \@tempdima\csname r@tocindent\number#1\endcsname\relax
    }{%
      \@tempdima#4\relax
    }%
    \parindent\z@ \leftskip#3\relax \advance\leftskip\@tempdima\relax
    \rightskip\@pnumwidth plus4em \parfillskip-\@pnumwidth
    #5\leavevmode\hskip-\@tempdima
      \ifcase #1
      \or\or \hskip 2em \or \hskip 2em \else \hskip 3em \fi%
      #6\nobreak\relax
    \dotfill\hbox to\@pnumwidth{\@tocpagenum{#7}}\par
    \nobreak
    \endgroup
  \fi}
\subjclass[2020]{Primary: 13D03; Secondary: 14A21, 14F42, 19D55}
\keywords{Logarithmic Hochschild homology, dividing covers}
\numberwithin{equation}{section}
\newtheorem{theorem}[subsection]{Theorem}
\newtheorem{corollary}[subsection]{Corollary}
\newtheorem{lemma}[subsection]{Lemma}
\newtheorem{proposition}[subsection]{Proposition}
\newtheorem{prop}[subsection]{Proposition}
\theoremstyle{definition}
\newtheorem{definition}[subsection]{Definition}
\newtheorem{df}[subsection]{Definition}
\newtheorem{remark}[subsection]{Remark}
\newtheorem{example}[subsection]{Example}
\newtheorem{rmk}[subsection]{Remark}
\newtheorem{warn}[subsection]{Warning}
\newcommand{\bA}{\mathbb{A}}
\newcommand{\bE}{\mathbb{E}}
\newcommand{\bF}{\mathbb{F}}
\newcommand{\bL}{\mathbb{L}}
\newcommand{\bN}{\mathbb{N}}
\newcommand{\bT}{\mathbb{T}}
\newcommand{\bP}{\mathbb{P}}
\newcommand{\bQ}{\mathbb{Q}}
\newcommand{\bR}{\mathbb{R}}
\newcommand{\bZ}{\mathbb{Z}}
\newcommand{\cA}{\mathcal{A}}
\newcommand{\cC}{\mathcal{C}}
\newcommand{\cD}{\mathcal{D}}
\newcommand{\cE}{\mathcal{E}}
\newcommand{\cF}{\mathcal{F}}
\newcommand{\cM}{\mathcal{M}}
\newcommand{\cO}{\mathcal{O}}
\newcommand{\cV}{\mathcal{V}}
\newcommand{\cal}{\mathcal}
\newcommand{\sS}{\mathscr{S}}
\newcommand{\sX}{\mathscr{X}}
\newcommand{\Z}{\mathbb{Z}}
\DeclareMathOperator{\Map}{Map}
\DeclareMathOperator{\THH}{THH}
\DeclareMathOperator{\HH}{HH}
\DeclareMathOperator{\logHH}{logHH}
\newcommand{\blogHH}{\mathbf{logHH}}
\DeclareMathOperator{\Mod}{Mod}
\newcommand{\colimit}{\mathop{\mathrm{colim}}}
\newcommand{\ol}{\overline}
\newcommand{\ul}{\underline}
\newcommand{\gp}{\mathrm{gp}}
\newcommand{\rep}{\mathrm{rep}}
\newcommand{\GL}{\mathrm{GL}}
\newcommand{\id}{{\mathrm{id}}}
\providecommand{\Spec}[1]{\operatorname{Spec}(#1)}
\newcommand{\SmlSm}{\mathrm{SmlSm}}
\newcommand{\iAr}{\mathrm{iAr}}
\newcommand{\lSm}{\mathrm{lSm}}
\newcommand{\Sm}{\mathrm{Sm}}
\newcommand{\lSmAr}{\mathrm{lSmAr}}
\newcommand{\dlSmAr}{\mathrm{dlSmAr}}
\newcommand{\lSch}{\mathrm{lSch}}
\newcommand{\Sch}{\mathrm{Sch}}
\newcommand{\lAff}{\mathrm{lAff}}
\newcommand{\et}{{\acute{e}t}}
\newcommand{\SCRing}{\mathcal{S}\mathrm{CRing}}
\newcommand{\sCRing}{\mathrm{sCRing}}
\newcommand{\infPsh}{\mathcal{P}\mathrm{sh}}
\newcommand{\infShv}{\mathcal{S}\mathrm{hv}}
\newcommand{\infSpc}{\mathcal{S}\mathrm{pc}}
\newcommand{\infDAb}{\mathcal{D}(\mathrm{Ab})}
\newcommand{\Fil}{\mathrm{Fil}}
\newcommand{\cofib}{\mathrm{cofib}}
\newcommand{\fiber}{\mathrm{fib}}
\newcommand{\logDAeff}{\mathrm{log}\mathcal{DA}^{\mathrm{eff}}}
\newcommand{\logDA}{\mathrm{log}\mathcal{DA}}
\newcommand{\Fun}{\mathrm{Fun}}
\newcommand{\sPsh}{\mathrm{sPsh}}
\newcommand{\sShv}{\mathrm{sShv}}
\newcommand{\boxx}{\overline{\Box}}
\newcommand{\Hyp}{\mathrm{Hyp}}
\newcommand{\Deri}{\mathrm{D}}
\newcommand{\Coh}{\mathrm{Coh}}
\newcommand{\Shv}{\mathrm{Shv}}
\newcommand{\Psh}{\mathrm{Psh}}
\newcommand{\lFan}{\mathrm{lFan}}
\newcommand{\dlog}{d\hspace{.07em}\mathrm{log}}
\newcommand{\CMon}{\mathrm{CMon}}
\newcommand{\CRing}{\mathrm{CRing}}
\newcommand{\sCMon}{\mathrm{sCMon}}
\newcommand{\sPreLog}{\mathrm{sPreLog}}
\newcommand{\PreLog}{\mathrm{PreLog}}
\newcommand{\SemiLog}{\mathrm{SemiLog}}
\newcommand{\Bl}{\mathrm{Bl}}
\newcommand{\Bigwedge}{\bigwedge\nolimits}
\newcommand{\unit}{\mathbbm{1}}
\newcommand{\pt}{\mathrm{pt}}
\newcommand{\Hom}{\mathrm{Hom}}
\newcommand{\map}{\mathrm{map}}
\title[A logarithmic HKR theorem and residue 
sequences]{A Hochschild-Kostant-Rosenberg theorem and residue sequences for logarithmic Hochschild homology}
\author{Federico Binda}
\address{Department of Mathematics ``F. Enriques'', University of Milan, Italy}
\email[F. Binda]{federico.binda@unimi.it}
\author{Tommy Lundemo}
\address{Department of Mathematics and Informatics, University of Wuppertal, Germany}
\email{lundemo@math.uni-wuppertal.de}
\author{Doosung Park}
\address{Department of Mathematics and Informatics, University of Wuppertal, Germany}
\email{dpark@uni-wuppertal.de}
\author{Paul Arne {\O}stv{\ae}r}
\address{Department of Mathematics ``F. Enriques'', University of Milan, Italy \&
Department of Mathematics, University of Oslo, Norway}
\email{paul.oestvaer@unimi.it \& paularne@math.uio.no}
\thanks{The authors gratefully acknowledge the hospitality and support of the 
Centre for Advanced Study at the Norwegian Academy of Science and Letters in Oslo, Norway, 
which funded and hosted the research project “Motivic Geometry" during the 2020/21 academic year, 
and the RCN Frontier Research Group Project no. 250399 “Motivic Hopf Equations" and no. 312472 “Equations in Motivic Homotopy."
Binda would like to thank the Isaac Newton Institute for Mathematical Sciences for support and hospitality during the program 
``$K$-theory, algebraic cycles and motivic homotopy theory'' in 2022 when part of the work on this paper was undertaken, 
partially supported by the EPSRC Grant EP/R014604/1 (UK) and by the 
PRIN “Geometric, Algebraic and Analytic Methods in Arithmetic” at MUR (Italy).  
Lundemo was partially supported by the NWO-grant 613.009.121.
Park was partially supported by the research training group GRK 2240 ``Algebro-Geometric Methods in Algebra, 
Arithmetic and Topology.''
{\O}stv{\ae}r was partially supported by a Guest Professorship awarded by The Radboud Excellence Initiative}
\date{\today}
\begin{document}
\maketitle

\begin{abstract} 
This paper incorporates the theory of Hochschild homology into our program on log motives. 
We discuss a geometric definition of logarithmic Hochschild homology of animated pre-log rings 
and construct an Andr{\'e}-Quillen type spectral sequence. 
The latter degenerates for derived log smooth maps between discrete pre-log rings. 
We employ this to show a logarithmic version of the Hochschild-Kostant-Rosenberg theorem and that 
logarithmic Hochschild homology is representable in the category of log motives. 
Among the applications, 
we deduce a generalized residue sequence involving blow-ups of log schemes.
\end{abstract}

\tableofcontents

\section{Introduction} 
Let $R$ be a commutative ring. 
If $A$ is a commutative $R$-algebra, 
the Hochschild homology $\HH(A / R)$ of $A$ relative to $R$ admits a filtration whose graded pieces are the derived exterior powers 
$(\Bigwedge_A^i {\mathbb L}_{A/R})[i]$ of the cotangent complex ${\mathbb L}_{A/R}$ \cite[Proposition IV.4.1]{NS18}. 
If $A$ is smooth over $R$, 
this gives an isomorphism of graded commutative rings
\begin{equation}
\label{equation:HKRthm}
\Omega^*_{A / R} \xrightarrow{\cong} \HH_*(A / R).
\end{equation}
The isomorphism in \eqref{equation:HKRthm} is 
commonly referred to as the Hochschild--Kostant--Rosenberg (=HKR) theorem after \cite{HKR62}. 
The \emph{HKR-filtration} alluded to above is a motivic filtration in the sense of 
Bhatt--Morrow--Scholze \cite{BMS19} 
(see also Mathew \cite{Mat22} and Raksit \cite{Rak20}), and it is a key input in many classical $K$-theoretic computations involving trace methods.

One of the main differences between algebraic $K$-theory and  Hochschild homology is that the latter (as well as its topological counterpart) does not enjoy a satisfactory localization property. For example, if $A$ is a discrete valuation ring with finite residue field $k$ and fraction field $F$, a classical result of Quillen \cite{Quillen} implies the existence of a cofiber sequence
\begin{equation}\label{dvrcofiberk} K(A) \to K(F) \to \Sigma K(k). \end{equation}
Such a sequence does not exist if we replace $K$-theory with $\HH$ or $\THH$, and the trace map from $K(F)$ to $\HH(F)$ or to $\THH(F)$ carries much less information than the one for $A$ or $k$.  
One of the initial motivations for studying Hochschild homology of log rings, 
as introduced by Rognes \cite{Rog09}, 
is that it allows to fix part of this problem, since it participates in cofiber sequences
that are unavailable for ordinary Hochschild homology.  For example,
there is a 
cofiber sequence 
\begin{equation}
\label{dvrcofiber}
\HH(A) \to {\rm logHH}(A, A - \{0\}) \to \HH(k)[1]
\end{equation} 
relating 
the Hochschild homology of $A$, 
its log Hochschild homology (for the usual log structure) and the Hochschild homology of $k$, 
see e.g., \cite[Example 5.7]{RSS15}. 
\vspace{0.1in}

With the goal of better understanding the term $\logHH$ and studying its relationship with the 
motivic theory developed in \cite{logSH}, \cite{logDMcras}, \cite{logDM}, in this paper 
we are interested in generalizing the HKR-theorem and several of its surrounding statements 
to the context of log geometry. 

Our first main result is a generalization of the HKR-theorem to pre-log rings. We state the result in the context of animated pre-log rings: These are modeled by simplicial pre-log rings in Sagave--Sch\"urg--Vezzosi's work on derived log geometry \cite{SSV16}.  
\begin{theorem}[Theorem \ref{thm:logaqss}]
\label{thm:mainthm} 
For a morphism of animated pre-log rings $(R, P) \to (A, M)$, 
the log Hochschild homology $\logHH((A, M) / (R, P))$ admits a descending separated filtration with 
graded pieces the derived exterior powers $(\Bigwedge_A^i {\mathbb L}_{(A, M) / (R, P)})[i]$ 
of Gabber's cotangent complex ${\mathbb L}_{(A, M) / (R, P)}$. 
If $(A, M)$ is discrete and \emph{derived log smooth} over a discrete $(R, P)$-algebra, 
there is an isomorphism of graded commutative rings 
\[
\Omega^*_{(A, M) / (R, P)} 
\xrightarrow{\cong} 
\logHH_*((A, M) / (R, P))
\] 
relating the log de Rham complex and log Hochschild homology.  
\end{theorem}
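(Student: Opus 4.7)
My plan is to treat the two assertions as follows: the filtration statement should be an Andr{\'e}--Quillen-type construction obtained by imitating the proof in \cite[Proposition IV.4.1]{NS18}, suitably adapted to derived pre-log rings; the HKR-isomorphism then follows by degeneration under derived log smoothness together with a multiplicativity check.

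The first step is to realize log Hochschild homology as the $S^1$-tensor
\[
\logHH((A,M)/(R,P)) \simeq (A,M) \otimes^{\mathbb{L}}_{(R,P)} S^1
\]
in the $\infty$-category of derived pre-log rings, using that the latter is tensored over spaces thanks to the simplicial pre-log model of \cite{SSV16}. Filtering $S^1$ (equivalently, the Hochschild complex) by its cellular or Postnikov skeleta produces a descending multiplicative filtration on $\logHH((A,M)/(R,P))$. To identify the $i$-th associated graded piece with $(\Bigwedge_A^i \mathbb{L}_{(A,M)/(R,P)})[i]$, I would repeat the stabilization argument of \cite[Proposition IV.4.1]{NS18}, relying on two inputs: Gabber's cotangent complex is by construction the linearization of the free-forgetful adjunction for derived pre-log rings, and hence coincides with the first Goodwillie derivative of the $S^1$-tensor functor; and derived exterior powers appear on the graded level because $S^1 \wedge \cdots \wedge S^1 \simeq S^n$, with the resulting $n$-fold symmetric structure passing through the adjunction exactly as in the non-logarithmic case.

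For the HKR-isomorphism, if $(A,M)$ is derived log smooth over a discrete $(R,P)$, then by definition $\mathbb{L}_{(A,M)/(R,P)}$ is concentrated in homological degree zero and equivalent there to the finitely generated projective $A$-module $\Omega^1_{(A,M)/(R,P)}$. Consequently each derived exterior power $\Bigwedge_A^i \mathbb{L}_{(A,M)/(R,P)}$ reduces to the classical log Kähler module $\Omega^i_{(A,M)/(R,P)}$, which lives in a single homological degree $i$, so the filtration degenerates without differentials or extension problems and yields the claimed isomorphism of graded $A$-modules. To upgrade this to an isomorphism of graded-commutative rings, I would use that the $S^1$-tensor is symmetric monoidal --- making the filtration multiplicative --- and identify the degree-one map with the $\dlog$-morphism $\Omega^1_{(A,M)/(R,P)} \to \logHH_1((A,M)/(R,P))$ by reducing to free pre-log polynomial algebras, where it is computed in \cite{Rog09,RSS15}. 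The main obstacle is not this last step but the identification of the graded pieces in the construction of the filtration: the derived commutative algebra and Goodwillie calculus inputs of \cite{NS18,Rak20,Mat22} must be adapted to pre-log rings, and the critical lemma is that Gabber's log cotangent complex is indeed the first Goodwillie derivative of the log $S^1$-tensor functor, which rests on its universal property as the linearization of the free-forgetful adjunction between derived pre-log rings and derived modules.
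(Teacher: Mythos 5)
There is a genuine gap at the very first step, and it propagates through the whole argument. The identification $\logHH((A,M)/(R,P)) \simeq (A,M) \otimes^{\mathbb{L}}_{(R,P)} S^1$ in the $\infty$-category of derived pre-log rings is false: that tensor has underlying derived ring the \emph{ordinary} Hochschild homology $\HH(A/R) \simeq S^1 \otimes_R A$ and underlying derived monoid the cyclic bar construction $S^1 \oplus_P M$. What makes log Hochschild homology logarithmic is the further pushout along $\mathbb{Z}[S^1 \oplus_P M] \to \mathbb{Z}[(S^1 \oplus_P M)^{\rep}]$, i.e.\ the \emph{repletion} (equivalently, replacing the diagonal by the replete diagonal of Kato--Saito). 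Your proposal never mentions repletion, so the filtration you build by skeleta of $S^1$ lives on the wrong object; its graded pieces would involve $\bigwedge^i_A \mathbb{L}_{A/R}$ (or a non-log variant), not Gabber's $\bigwedge^i_A \mathbb{L}_{(A,M)/(R,P)}$. For the same reason, the "critical lemma" you isolate --- that Gabber's cotangent complex is the first Goodwillie derivative of the log $S^1$-tensor functor --- is only true after repletion: the paper shows that $\mathbb{L}_{(A,M)/(R,P)}$ is the conormal of the \emph{replete} diagonal and that the indecomposables of $\logHH$ are $\mathbb{L}_{(A,M)/(R,P)}[1]$, and both statements use the splitting $(M\oplus_P M)^{\rep} \simeq M \oplus (M^{\gp}/P^{\gp})$ in an essential way; the indecomposables of the unrepleted multiplication map merely recover $\Omega^1_{A/R}$. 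Any repaired version of your argument would have to show that the repletion functor interacts with the cellular filtration of $S^1$ and with Goodwillie linearization, which is not automatic and is precisely the difficulty the paper's proof is designed to avoid.

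For contrast, the paper's route is: establish the HKR isomorphism by hand for free pre-log algebras (where the replete bar construction splits explicitly as $M_0 \oplus B(M_0^{\gp}/P_0^{\gp})$ and a ${\rm Tor}$ computation over $\mathbb{Z}[x,x^{-1}]$ settles the one-generator case), then observe that both $\logHH$ and $\bigwedge^i \mathbb{L}$ are left Kan extended from free pre-log algebras, so the Whitehead filtration on free objects induces the filtration in general. Your degeneration argument for the smooth case (projectivity of $\mathbb{L}$ plus discreteness of $A$ forces collapse) and the reduction of the ring-map identification to free algebras are both fine and match the paper --- but they only become available once the filtration has the correct graded pieces, which requires the repletion.
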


Derived log smooth maps $(R, P) \to (A, M)$ of discrete pre-log rings are log smooth in the sense of classical log geometry \cite[\S 3]{Kat89}, see \cite[Theorem 6.4]{SSV16}.
We will proceed to explain the meaning and necessity of the additional adjective \emph{derived} in the statement of the logarithmic HKR-theorem, after which we state generalizations and applications of Theorem \ref{thm:mainthm}.

\begin{remark} In the context of discrete pre-log rings, the log Hochschild homology in the statement of Theorem \ref{thm:mainthm} is equivalent to Rognes' definition \cite{Rog09}, see \Cref{prop:rognescomparison}, 
and also used by Krause--Nikolaus \cite{KN19}. See also Leip \cite[Page 886]{Ob18} for a discussion of this definition of log ${\rm THH}$. 
The above should not be confused with Hesselholt--Madsen's \cite{HM03} ``relative term" $\THH(A | K)$ defined in terms of Waldhausen categories. Indeed, the sequence \eqref{dvrcofiberk} comes to life from d\'evissage, a result which is not available for topological Hochschild homology. This is very eloquently explained in the introduction of \cite{BL23}. Rognes' logarithmic Hochschild homology and Hesselholt--Madsen's relative term circumvent this problem in two different ways: While the latter is set up as to allow for a d\'evissage argument, the cofiber sequence \eqref{dvrcofiber} arises from a direct analysis of the morphism ${\rm HH}(A) \to {\rm logHH}(A, A - \{0\})$. This construction makes no reference to arguments in the spirit of d\'evissage, and consequently it is not at all clear how the two constructions compare. This is closely related to \cite[Conjecture 7.5]{Rog14}.

Olsson \cite{Ols} has proposed another definition involving his stack of log structures. 
As we explain in Section \ref{subsec:futurework}, we hope the results herein yield a comparison result relating 
Olsson's definition of logarithmic Hochschild homology with the one in this paper. 
\end{remark}

\subsection{The logarithmic Andr\'e--Quillen spectral sequence} 
One way to prove the HKR-theorem for ordinary Hochschild homology is to exhibit $\HH_*(A / R)$ as the abutment of an \emph{Andr\'e--Quillen spectral sequence}. Indeed, one readily shows  that the HKR-theorem holds for $A = R[x]$, from which it follows that it holds for any free commutative $R$-algebra. Using this, results of Quillen \cite[Theorem 8.1]{Qui70} yield a strongly convergent spectral sequence 
\begin{equation}
\label{equation:AQss}
E^2_{p, q} 
= 
\pi_p(\Bigwedge_A^q {\mathbb L}_{A / R}) 
\implies 
\pi_{p + q}(A \otimes_{A \otimes_R^{\mathbb L} A}^{\mathbb L} A)
\end{equation}
relating the derived wedge powers of ${\mathbb L}_{A / R}$ and the derived self-intersections of the (derived) 
``diagonal embedding" $A \otimes_R^{\mathbb L} A \to A$. 
If $R \to A$ is smooth, 
then ${\mathbb L}_{A / R} \simeq \Omega^1_{A / R}[0]$. 
Since the derived tensor product in \eqref{equation:AQss} computes $\HH_*(A / R)$, 
this finishes the proof. 
\vspace{0.1in}

To apply this proof strategy in the context of animated log rings, 
we define log Hochschild homology ${\rm logHH}((A, M) / (R, P))$ to be the derived self-intersections of a 
derived version of Kato--Saito's log diagonal embedding \cite[\S 4]{KS04}. 
We refer to Definition \ref{def:loghh} for details of this construction. 
The fact that our geometric description agrees with the direct generalization to animated log rings of 
Rognes' notion of log $\HH$ for discrete log rings \cite[Definition 3.21]{Rog09} is the content of the 
following result.

\begin{proposition}[\Cref{prop:rognescomparison2}]\label{prop:rognescomparison} 
There is a natural equivalence between Rognes' log Hochschild homology and 
${\rm logHH}((A, M) / (R, P))$. 
\end{proposition}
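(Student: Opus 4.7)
My plan is to construct an explicit comparison map and verify it is an equivalence by reduction to generators. Recall first that Rognes' log Hochschild homology of $(A,M)$ over $(R,P)$, extended to the simplicial/derived setting of \cite{SSV16}, is modeled by a formula of the shape
\[
\HH(A/R) \otimes^{\mathbb{L}}_{R[B^{\cy}(M/P)]} R[B^{\rep}(M/P)],
\]
where $B^{\cy}(M/P)$ is the relative cyclic bar construction of the commutative monoid $M$ over $P$ and $B^{\rep}(M/P)$ is its replete version, obtained by group-completion along the shear map. The geometric definition in \Cref{def:loghh}, on the other hand, is the derived self-intersection along a derived Kato--Saito log diagonal $(A,M) \to (A,M) \boxtimes_{(R,P)} (A,M) \to (A,M)$, in which the middle term is built from $(A,M) \otimes^{\mathbb{L}}_{(R,P)} (A,M)$ by an exactification step performed on the pre-log structure.

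I would first interpret both constructions as geometric realizations of explicit simplicial objects in derived pre-log rings: Rognes' version arises as the diagonal of the bisimplicial object combining the Hochschild simplicial object on $A$ with the cyclic-to-replete correction on $M$, while the geometric version is the \v{C}ech nerve of the log diagonal. In simplicial degree $n$, both sides should compute a derived tensor product of $n+1$ copies of $(A,M)$ over $(R,P)$ with the monoid component replaced by the appropriate replete variant, and a natural comparison map can be produced by identifying the exactification hidden in the $\boxtimes$ product with the group-completion defining $B^{\rep}$.

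To verify the comparison map is an equivalence, I would reduce to free pre-log rings of the form $(R[Q], Q)$ equipped with a morphism $(R,P) \to (R[Q], Q)$ of free pre-log rings. For such objects both constructions can be computed directly: the underlying Hochschild component reduces to the classical calculation $\HH(R[Q]/R) \simeq R[B^{\cy}(Q)]$, and the replete correction is explicit and matches on both sides. Both log Hochschild functors preserve sifted colimits of derived pre-log rings, since the derived tensor products involved and the group-completion/repletion functor on simplicial commutative monoids all commute with sifted colimits; hence left Kan extension along the inclusion of free pre-log rings delivers the general equivalence. The main obstacle will be the precise identification of the derived repletion of $B^{\cy}(M/P)$ as a simplicial commutative monoid with the exactification built into the derived Kato--Saito log diagonal: one must check that these two operations agree naturally at the level of simplicial pre-log structures, and that the identification is compatible with all face and degeneracy maps. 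Once this compatibility is secured, the remainder of the argument is formal.
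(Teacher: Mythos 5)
Your strategy---build a degreewise comparison map, verify it on free pre-log rings, and extend by left Kan extension along sifted colimits---is viable, but it is a genuinely different and more roundabout route than the paper's. The proof of Proposition \ref{prop:rognescomparison2} works directly for an arbitrary map of derived pre-log rings: it exhibits both constructions as the colimit of one $3\times 3$ diagram of derived rings, with rows $A \leftarrow A\otimes_R A \to A$, \ ${\mathbb Z}[M] \leftarrow {\mathbb Z}[M\oplus_P M] \to {\mathbb Z}[M]$, \ ${\mathbb Z}[M] \leftarrow {\mathbb Z}[(M\oplus_P M)^\rep] \to {\mathbb Z}[M]$, and observes that taking horizontal pushouts first yields Rognes' formula while taking vertical pushouts first yields the replete-diagonal formula; no reduction to generators is needed. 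The step you defer as ``the main obstacle''---matching the repletion of the cyclic bar construction with the exactification built into the log diagonal---is precisely the mathematical content of the statement, and it is settled by Proposition \ref{prop:repletesplit}: since the augmentations $B^{\rm cy}_P(M)\to M$ and $M\oplus_P M\to M$ both admit sections, their repletions split as $M\oplus B(M^\gp/P^\gp)$ and $M\oplus(M^\gp/P^\gp)$, and the chain $B^{\rm cy}_P(M)^\rep\simeq M\oplus B(M^\gp/P^\gp)\simeq M\oplus_{M\oplus(M^\gp/P^\gp)}M\simeq M\oplus_{(M\oplus_P M)^\rep}M$ follows because $B$ is the suspension in derived commutative monoids. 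What your route buys is a template that also proves statements which are only checkable on free objects (the paper uses exactly this pattern for the HKR filtration); what the paper's route buys is that the equivalence is manifestly natural and requires no colimit-preservation bookkeeping.

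Two cautions if you carry out your version. First, repletion is defined as a pullback $M\times_{M^\gp}N^\gp$, so its commutation with sifted colimits is not formal; it holds here only because the relevant augmentations have sections, whence Proposition \ref{prop:repletesplit} rewrites the repletion purely in terms of colimits---you should say this rather than assert commutation outright. Second, the general free pre-log $(R_0,P_0)$-algebra is $(R_0\langle X\sqcup Y\rangle, P_0\oplus\langle X\rangle)$, carrying polynomial generators $Y$ that are not in the monoid, so restricting to objects of the form $(R[Q],Q)$ does not generate the full category you need to Kan extend from.
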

Our definition of log $\HH$ is inspired by the observation that in the underived context, 
the conormal bundle of Kato--Saito's log diagonal coincides with the module of log K\"ahler differentials 
$\Omega^1_{(A, M) / (R, P)}$ \cite[Corollary 4.2.8(ii)]{KS04}. 
Passing to the derived context begets Gabber's log cotangent complex ${\mathbb L}_{(A, M) / (R, P)}$ essentially 
by definition, 
see Proposition \ref{prop:logcotangentreplete}.
Combining this observation with \Cref{prop:rognescomparison}, we can apply
Quillen's construction once again  to obtain a strongly convergent spectral sequence \begin{equation}\label{intro:logquillenss}E^2_{p, q} = \pi_p(\Bigwedge_A^q {\mathbb L}_{(A, M) / (R, P)}) \implies \pi_{p + q} \logHH((A, M) / (R, P))\end{equation} relating the derived wedge powers of Gabber's log cotangent complex with log Hochschild homology. 
This is the point of the proof where the adjective ``derived" in the statement of Theorem \ref{thm:mainthm} 
comes into play: In general, 
Gabber's log cotangent complex is \emph{not} the log differentials concentrated in degree zero for log smooth maps, see Example \ref{exm:logetale}. Thus the log Andr\'e--Quillen spectral sequence generally fails to collapse for log smooth maps. For this reason, it is necessary to impose additional flatness assumptions on the log smooth map $(R, P) \to (A, M)$: It suffices to require that the map ${\mathbb Z}[P] \to {\mathbb Z}[M]$ is flat, 
a condition that holds for any integral log smooth map. 

\subsection{\'Etale base change for log Hochschild homology} Using the Andr\'e-Quillen spectral sequence,
Weibel--Geller \cite{WG91} proved the canonical equivalence 
$$B \otimes_A \HH(A/R) \xrightarrow{\simeq} \HH(B/R)$$ for 
\'etale maps $A \to B$ of commutative $R$-algebras. 
Their result follows from the transitivity sequence for the cotangent complex combined with the vanishing 
of the relative cotangent complex for \'etale morphisms. 
As Gabber's log cotangent complex enjoys a similar transitivity property,
the spectral sequence \eqref{intro:logquillenss} yields:

\begin{theorem}[Theorem \ref{prop:loghhbasechange}]
\label{thm:intrologetalebasechange} 
Let $(R, P)$ be an animated pre-log ring. A map $(A, M) \to (B, N)$ of $(R, P)$-algebras is derived log \'etale if and only if the canonical map 
\[
B \otimes_A \logHH((A, M) / (R, P)) 
\to 
\logHH((B, N) / (R, P))
\]
is an equivalence.
\end{theorem}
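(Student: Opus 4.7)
The plan is to mimic Weibel--Geller's classical strategy, using the logarithmic Andr\'e--Quillen spectral sequence \eqref{intro:logquillenss} together with the transitivity cofiber sequence for Gabber's log cotangent complex,
\[
B \otimes_A {\mathbb L}_{(A, M) / (R, P)}
\to
{\mathbb L}_{(B, N) / (R, P)}
\to
{\mathbb L}_{(B, N) / (A, M)},
\]
with the understanding that $(A, M) \to (B, N)$ is \emph{derived log \'etale} precisely when the third term vanishes.

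For the forward direction, the vanishing of ${\mathbb L}_{(B, N) / (A, M)}$ combined with the transitivity triangle yields the equivalence $B \otimes_A {\mathbb L}_{(A, M) / (R, P)} \simeq {\mathbb L}_{(B, N) / (R, P)}$. Derived exterior powers commute with base change along $A \to B$, so this upgrades to equivalences on each graded piece $(\Bigwedge^q {\mathbb L})[q]$ of the filtration from Theorem~\ref{thm:mainthm}. Since the natural base change map on $\logHH$ is compatible with these separated descending filtrations, strong convergence of \eqref{intro:logquillenss} promotes the graded equivalences to an equivalence on the abutments.

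For the reverse direction, I would argue that an equivalence on $\logHH$ forces an equivalence $B \otimes_A {\mathbb L}_{(A, M) / (R, P)} \simeq {\mathbb L}_{(B, N) / (R, P)}$, after which the transitivity triangle immediately yields ${\mathbb L}_{(B, N) / (A, M)} \simeq 0$. The strategy is to exploit the connectivity of the higher graded pieces $(\Bigwedge^q {\mathbb L})[q]$, which are $q$-connective for connective ${\mathbb L}$: because the filtration of Theorem~\ref{thm:mainthm} is Postnikov-like in this sense, the equivalence on $\logHH$ descends cleanly to an equivalence on the first graded piece $\mathrm{gr}^1 \simeq {\mathbb L}[1]$.

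The main obstacle I anticipate is the reverse direction, specifically isolating the map on $\mathrm{gr}^1$ from the map on the full filtered object $\logHH$. The cleanest route is via induction on the filtration degree, starting from $q = 0$ (where the base change map is trivially the equivalence $B \simeq B$) and propagating upward using the connectivity estimates on the graded pieces. This in turn requires verifying that the filtration of Theorem~\ref{thm:mainthm} is compatible with base change along $A \to B$, which should follow from the exactness of $B \otimes_A -$ and the naturality of the Quillen construction underlying the filtration.
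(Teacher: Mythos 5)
Your forward direction is essentially the paper's: vanishing of ${\mathbb L}_{(B,N)/(A,M)}$ plus transitivity gives $B \otimes_A {\mathbb L}_{(A,M)/(R,P)} \simeq {\mathbb L}_{(B,N)/(R,P)}$, and comparing the two Andr\'e--Quillen filtrations yields the equivalence on $\logHH$. The gap is in the reverse direction, at the sentence claiming the equivalence on $\logHH$ ``descends cleanly'' to an equivalence on $\mathrm{gr}^1 \simeq {\mathbb L}[1]$ because the filtration is ``Postnikov-like.'' It is not: the graded piece $(\Bigwedge^q_B {\mathbb L})[q]$ is only $q$-connective, so for $q \geq 2$ it contributes to every homotopy degree $\geq q$, and in any fixed degree $n \geq 2$ the pieces $\mathrm{gr}^1,\dots,\mathrm{gr}^n$ all contribute. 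You therefore cannot read off $\mathrm{gr}^1$ from the abutment without simultaneously controlling the maps on $\mathrm{gr}^{\geq 2}$ --- and those are $\Bigwedge^q$ of the very map $\phi\colon B \otimes_A {\mathbb L}_{(A,M)/(R,P)} \to {\mathbb L}_{(B,N)/(R,P)}$ whose invertibility you are trying to establish, so an induction ``on the filtration degree'' starting from $q=0$ is circular as stated. A correct repair is an induction on the homotopy degree of the first failure of $\phi$: if $\phi$ is an equivalence on $\pi_{<n}$, then each $\Bigwedge^q\phi$ is too (derived exterior powers of connective modules preserve highly connected maps), so in total degree $n+1$ only $\mathrm{gr}^1$ can fail, and convergence of the filtration forces $\pi_n(\phi)$ to be an isomorphism. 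That connectivity input for $\Bigwedge^q$, and the degree-shifting bookkeeping, are genuine extra ingredients your sketch does not supply.

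The paper avoids all of this with a different device: the derived indecomposables functor $Q_A$ of Section \ref{subsec:derivedindec}. Lemma \ref{lem:loghhindeccotcx} identifies $Q_A(\logHH((A,M)/(R,P))) \simeq {\mathbb L}_{(A,M)/(R,P)}[1]$, because $\logHH$ is the suspension of the replete-diagonal algebra whose indecomposables are ${\mathbb L}$ by Proposition \ref{prop:logcotangentreplete}, and $Q$ commutes with base change. Applying $Q_B$ to the assumed equivalence therefore produces $B \otimes_A {\mathbb L}_{(A,M)/(R,P)}[1] \simeq {\mathbb L}_{(B,N)/(R,P)}[1]$ in one step, after which transitivity kills ${\mathbb L}_{(B,N)/(A,M)}$. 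If you want to salvage a clean reverse direction, that is the route to take; your filtration argument can be made to work but only with the connectivity induction described above.
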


Theorem \ref{thm:intrologetalebasechange} applies for example to tamely ramified extensions of discrete valuation rings (for which the classical result doesn't hold). 
The adjective ``derived" is necessary here for the same reason as in Theorem \ref{thm:mainthm}: 
There exist 
log \'etale maps for which Gabber's log cotangent complex does not vanish, see again Example \ref{exm:logetale}.

\begin{remark}
In this work we use Gabber's cotangent complex for log schemes.
Olsson has proposed another definition \cite{Ols05}. 
Each has its own advantages and disadvantages, 
and there is no choice of log cotangent complex satisfying all desirable properties \cite[\S 7]{Ols05}. 
The main reasons for our choice are the following: 
1) Gabber's cotangent complex enjoys the same kind of transitivity property that the classical cotangent complex has, 
and 2) Gabber's cotangent complex admits a straightforward adaptation to animated log rings. 
\end{remark}

\subsection{Extension to fs log schemes}
Hochschild homology of rings extends to schemes owing to the \'etale base change theorem \cite{WG91}. 
Likewise, 
log Hochschild homology of log rings extends to log schemes by appeal to Theorem \ref{thm:intrologetalebasechange}. 

Several Grothendieck topologies are available in the category of log schemes. We can broadly divide them into two classes: the \emph{strict} topologies, where the coverings are defined on the underlying schemes, and the log structures are simply pulled back; and the \emph{log} topologies, where a non-trivial modification of the log structure happens along the covering. Among these, the \emph{dividing Nisnevich} topology, 
which we review in Section \ref{section:dNis}, 
plays a crucial role for the logarithmic motivic homotopy theory in \cite{logSH}, \cite{logDMcras}, \cite{logDM}. 

While log Hochschild homology satisfies strict \'etale descent (in fact, it satisfies \emph{integral} \'etale descent) thanks to 
\Cref{thm:intrologetalebasechange}, it does not satisfy dividing Nisnevich descent; 
see Example \ref{arrow.13}.
To remedy this, 
we need to consider the dividing Nisnevich localization of log Hochschild homology, which we now briefly discuss. 
In this process, 
we fix a base fs log scheme $S$ and define $\logHH(X / S)$ for every morphism of fs log schemes $X\to S$.
The precise construction carried out in \Cref{section:dNis,section:soac} involves arrow categories, and produces two sheaves of complexes, $\logHH(-/-)$ in $\Shv_{sNis}(\Fun(\Delta^1,\lAff))$ and $\logHH^{-}_{dNis}(-)$ in $\Shv_{dNis}(\Fun(\Delta^1,\lAff))$. The dividing Nisnevich sheafification process is well-controlled: 
in fact, if $X\to S$ is integral log smooth, 
Proposition \ref{dZarsheaf.17} shows that the localization map 
$\logHH(X/S)\rightarrow\logHH_{dNis}(X/S)$ is an equivalence. Along the way, we obtain similar results for the dividing Nisnevich localized log cotangent complex $\bL^{dNis}$.

\subsection{Representability of log Hochschild homology}

We refer to \cite{logDMcras} for an overview of log motives explaining our choice $\boxx=(\mathbb{P}^{1},\infty)$ 
of the unit interval. 
For an fs log scheme $S$, 
the $\infty$-category $\logDAeff(S)$ of effective motives without transfers is defined as the $\boxx$-localization 
of the $\infty$-category of dividing Nisnevich sheaves of chain complexes of abelian groups on $\lSm/S$, 
see also \cite{logSH}. 
Here $\lSm/S$ denotes the category of fs log schemes log smooth over $S$.
We claim that the dividing Nisnevich sheaf $\logHH_{dNis}(- / S)$ is $\boxx$-invariant and hence representable in $\logDAeff(S)$.
Owing to the logarithmic Andr\'e--Quillen spectral sequence \eqref{intro:logquillenss},
it suffices to show that there is a canonical equivalence 
\[
\bL^{dNis}(X\times \boxx\to S)\simeq \bL^{dNis}(X\to S)
\]
for all $X\in \lSm/S$.
Proposition \ref{dZarsheaf.13} yields a canonical equivalence
\[
\bL^{dNis}(X\to S)\simeq R\Gamma_{Nis}(\ul{X},\Omega_{X / S}^1),
\]
where $\ul{X}$ denotes the underlying scheme of $X$.
Finally, 
by an explicit calculation, 
we have
\[
R\Gamma_{Nis}(\ul{X},\Omega_{X / S}^1)
\simeq
R\Gamma_{Nis}(\ul{X\times \boxx},\Omega_{X\times \boxx / S}^1).
\]
This establishes a basic result relating logarithmic Hochschild homology and log motives.
We write $\lSch$ for the category of noetherian fs log schemes of finite Krull dimensions and finite type morphisms.

\begin{theorem}[Theorem \ref{dZarsheaf.16}]\label{thm:loghhrep}
Suppose that $S$ is a noetherian fs log scheme of finite Krull dimension.
Then we have 
\[
\logHH_{dNis}(- / S)\in \logDAeff(S).
\]
In particular, for every $X\in \lSm/S$, we have 
\[\Map_{\logDAeff(S)}(M_S(X), \logHH_{dNis}(- / S))= \logHH_{dNis}(X/S).\] 
\end{theorem}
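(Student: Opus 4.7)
The plan is to verify the two claims of Theorem \ref{dZarsheaf.16} in turn. Since $\logHH_{dNis}(-/S)$ is a dividing Nisnevich sheaf of chain complexes on $\lSm/S$ by construction, membership in $\logDAeff(S)$ reduces to establishing $\boxx$-invariance; once that is known, the Map-formula is immediate from the Yoneda lemma applied inside the localized $\infty$-category, where $M_S(X)$ is the image of the representable presheaf. Consequently, the whole theorem amounts to proving
\[
\logHH_{dNis}(X \times \boxx / S) \simeq \logHH_{dNis}(X / S) \quad \text{for every } X \in \lSm/S.
\]

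First I would reduce this to the corresponding statement for the log cotangent complex. Applying Theorem \ref{thm:mainthm} and sheafifying in the dividing Nisnevich topology yields, on the presheaf $\logHH_{dNis}(-/S)$, a descending filtration whose associated graded pieces are the dividing Nisnevich sheafifications of $(\Bigwedge_{\cO_X}^i \bL_{X/S})[i]$. Since derived exterior powers preserve equivalences and commute with pullback along $X \times \boxx \to X$, the claim reduces to showing that the canonical map
\[
\bL^{dNis}(X \to S) \longrightarrow \bL^{dNis}(X \times \boxx \to S)
\]
is an equivalence in $\lSm/S$.

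Next I would apply Proposition \ref{dZarsheaf.13} to replace both sides with Nisnevich cohomology of log K\"ahler differentials, reducing the question to
\[
R\Gamma_{Nis}(\ul{X}, \Omega^1_{X/S}) \xrightarrow{\simeq} R\Gamma_{Nis}(\ul{X \times \boxx}, \Omega^1_{X \times \boxx/S}).
\]
Let $p \colon X \times \boxx \to X$ denote the projection. Because $p$ is a log smooth projection, the log transitivity sequence splits as $\Omega^1_{X \times \boxx/S} \cong p^\ast \Omega^1_{X/S} \oplus \Omega^1_{X \times \boxx/X}$. Pushing forward by $Rp_\ast$ and invoking the projection formula, it suffices to verify the two identities $Rp_\ast \cO_{X \times \boxx} \simeq \cO_X$ and $Rp_\ast \Omega^1_{X \times \boxx/X} \simeq 0$. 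Both are standard consequences of proper base change: on each fibre one computes $R\Gamma(\bP^1, \cO_{\bP^1}) = \bZ$ and $R\Gamma(\bP^1, \Omega^1_{\bP^1}(\log \infty)) \simeq R\Gamma(\bP^1, \cO_{\bP^1}(-1)) = 0$, using that the log pole at $\infty$ twists the canonical sheaf $\cO(-2)$ into $\cO(-1)$.

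The main obstacle is the compatibility between the Andr\'e--Quillen filtration and the dividing Nisnevich sheafification: a priori, passing to associated gradeds does not commute with sheafifying in a topology. The key input that lets this go through is Proposition \ref{dZarsheaf.17}, which asserts that for integral log smooth maps the localization map $\bL(X/S) \to \bL^{dNis}(X/S)$ is already an equivalence. This allows us to transport the AQ filtration across the dNis sheafification without loss, so that the $\bP^1$-cohomology calculation above can be applied graded piece by graded piece to conclude.
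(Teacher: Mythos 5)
Your overall strategy is exactly the paper's (and the one sketched in its introduction): reduce membership in $\logDAeff(S)$ to $\boxx$-invariance, feed the Andr\'e--Quillen filtration of Theorem \ref{thm:logaqss} into the dividing Nisnevich sheafification, identify the graded pieces with Hodge cohomology via Proposition \ref{dZarsheaf.13}, and conclude by the computation $Rp_*\cO_{X\times\boxx}\simeq\cO_X$ and $Rp_*\Omega^1_{X\times\boxx/X}\simeq 0$. You also correctly identify the genuine obstacle, namely that the complete filtration need not survive sheafification, and you correctly point at Proposition \ref{dZarsheaf.17} as the fix.

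However, there is one missing step without which the argument does not close. Proposition \ref{dZarsheaf.17} (and likewise the identification of the graded pieces with $L_{sNis}\Omega^{-d}[d]$, which uses Proposition \ref{prop:cotangent_vs_omega2} / Corollary \ref{cor:loghkr}) applies only to \emph{derived} log smooth morphisms, and a general $X\in\lSm/S$ need not be one: Example \ref{exm:logetale} and Example \ref{arrow.13} show that for non-integral log smooth maps $\bL_{X/S}$ can fail to be concentrated in degree $0$ and $\logHH$ genuinely fails dividing Nisnevich descent. The paper's proof of Proposition \ref{dZarsheaf.15} supplies the bridge you need: the $\boxx$-invariance of the dNis sheaf $\logHH_{dNis}(-/S)$ is a dividing-Nisnevich-local question, and by Kato's theorem \cite[Theorem 1.1]{Katointegral} every log smooth morphism becomes integral -- hence derived log smooth by \eqref{eq:implications1} -- after a dividing cover. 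Only after this reduction may you invoke Proposition \ref{dZarsheaf.17} to drop the sheafification and run the filtration argument on $\logHH$ itself. You should state this reduction explicitly. A second, more minor point: the graded pieces involve $\Bigwedge^d\bL$ for all $d$, and the objects $\bL^{dNis}(X\to S)\simeq R\Gamma_{sNis}(X,\Omega^1_{X/S})$ are global-sections complexes, so one cannot literally ``apply derived exterior powers'' to the equivalence for $d=1$ to get the case $d>1$; instead one works at the sheaf level, using $\Omega^d_{X\times\boxx/S}\cong\bigoplus_{j}p^*\Omega^j_{X/S}\otimes\Omega^{d-j}_{X\times\boxx/X}$, local freeness of $\Omega^j_{X/S}$, and the projection formula, exactly as in Proposition \ref{rep.1}. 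Your $\bP^1$-fibre computation (including $\Omega^1_{\bP^1}(\log\infty)\cong\cO(-1)$) is the right input for that.
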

Note that if $S$ has a valuative log structure (for example, $S$ is the spectrum of a DVR with canonical log structure, or $S$ has trivial log structure), then every morphism $f\colon X\to S$ is integral, and thus the dividing Nisnevich sheafification is redundant. See Section \ref{sec:motivic_repr} for more details.

\subsection{Residue sequences in log Hochschild homology}\label{subsec:resseqintro} Cofiber sequences for log Hochschild homology as in \eqref{dvrcofiber} have to this point only been available for log structures generated by a single element. Theorem \ref{thm:loghhrep} allows for the results of \cite{logDM} to be applied in the context of log Hochschild homology.
As we discuss in Section \ref{sec:gysinseq}, this perspective shows that the cofiber sequences in 
log $\HH$ should be thought of as generalizations of residue sequences in log geometry, as opposed to approximations of localization sequences in algebraic $K$-theory. For this reason, it seems the term ``\emph{residue sequence}" is more appropriate than 
``\emph{localization sequence}" in the context of cofiber sequences in log (topological) Hochschild homology of \cite{RSS15}. 

Moreover, the representability result Theorem \ref{thm:loghhrep} allows us to apply the results of \cite{logDM} to generalize the residue sequence in log Hochschild homology in the following way:

\begin{theorem}[Theorem \ref{Gysin.2}]\label{thm:regularseq} Fix a noetherian base $S$ of finite Krull dimension. If $X$ is a smooth $S$-scheme and $Z \subset X$ is a smooth closed subscheme, then there is a cofiber sequence \[
\HH(X/S)
\to
\logHH((\Bl_Z X,E) / S)
\to
\HH(Z/S)[1].
\] 
\end{theorem}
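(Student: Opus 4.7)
The plan is to leverage the representability result Theorem \ref{thm:loghhrep} to reduce the assertion to a cofiber sequence in the stable $\infty$-category $\logDAeff(S)$ of log motives. Concretely, applying the contravariant exact functor $\Map_{\logDAeff(S)}(-, \logHH_{dNis}(-/S))$ to a suitable cofiber sequence of motives will produce the desired cofiber sequence in log Hochschild homology.

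The main geometric input is a log blow-up / Gysin cofiber sequence in $\logDAeff(S)$ of the form
\[
M_S(\Bl_Z X, E) \to M_S(X) \to M_S(Z),
\]
which is one of the principal structural results proved in \cite{logDM} for smooth pairs $(X,Z)$ over $S$. Heuristically, this expresses the fact that the log motive of the blow-up ``excises" $Z$ from $X$, with cofiber $M_S(Z)$. The log blow-up along a smooth center plays the role, in log motivic theory, that the open complement $X\setminus Z$ plays in classical motivic theory; one advantage of the log formulation is that the cofiber carries no Tate twists or extra suspensions, reflecting the $\boxx$-invariance of $\logDAeff(S)$ and making the codimension of $Z$ in $X$ invisible in this cofiber sequence.

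Given this input, applying $\Map_{\logDAeff(S)}(-, \logHH_{dNis}(-/S))$ and rotating produces a cofiber sequence
\[
\logHH_{dNis}(X/S) \to \logHH_{dNis}((\Bl_Z X, E)/S) \to \logHH_{dNis}(Z/S)[1].
\]
The final step is to identify the dividing Nisnevich sheafified terms with the unsheafified ones. The morphisms $X\to S$ and $Z\to S$ are integral log smooth (they carry trivial log structure), and $(\Bl_Z X, E)\to S$ is integral log smooth because $E$ is a smooth effective Cartier divisor on the smooth $S$-scheme $\Bl_Z X$. By Proposition \ref{dZarsheaf.17}, the canonical maps $\logHH\to\logHH_{dNis}$ are equivalences in all three cases. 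Combining this with the identity $\logHH(Y/S)=\HH(Y/S)$ for log schemes $Y$ with trivial log structure yields exactly the sequence claimed in the statement.

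The principal obstacle is the first step: extracting the motivic cofiber sequence above from \cite{logDM} in precisely the form needed, and in particular verifying the absence of additional twists or suspensions that would obstruct the direct appearance of $\HH(Z/S)[1]$ as the cofiber. Once this sequence is in hand, Theorem \ref{thm:loghhrep} and the dividing Nisnevich compatibility results reduce the remainder of the proof to formal manipulations.
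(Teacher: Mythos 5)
Your overall strategy---representability plus a motivic blow-up cofiber sequence from \cite{logDM}, followed by the identification of the dividing Nisnevich sheafified terms with the unsheafified ones---is the same as the paper's, and the last step (Proposition \ref{dZarsheaf.17}, integrality over a base with trivial log structure) is handled correctly. But the key geometric input is mis-stated, and this is a genuine gap rather than a bookkeeping issue. The cofiber sequence provided by \cite{logDM} (Theorem \ref{Gysin.6} in the paper) is
\[
M(\Bl_Z X, E) \to M(X) \to MTh(N_Z X),
\]
where the third term is the \emph{Thom motive} $MTh(N_ZX) = M(N_ZX/(\Bl_Z N_ZX, E))$, not $M(Z)$. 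Your assertion that ``the cofiber carries no Tate twists or extra suspensions, reflecting the $\boxx$-invariance'' is false: $\boxx$-invariance does not make the codimension invisible at the level of motives, and in the transfer-free category $\logDA(S)$ the Thom motive of a rank $d$ bundle is not even abstractly equivalent to $M(Z)(d)[2d]$ without further input. So applying $\Map(-,\logHH_{dNis}(-/S))$ to the correct sequence yields $\map(MTh(N_ZX),\blogHH)$ in the place where you want $\HH(Z/S)$, and your proposal supplies no mechanism to identify these.

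The paper closes this gap with two additional ingredients that your proposal omits entirely. First, it proves that $\blogHH(-/S)$ is an \emph{oriented} homotopy commutative monoid in the $\bP^1$-stabilized category $\logDA(S)$ (Theorem \ref{ori.5}), which rests on the projective bundle formula $\logHH(X\times\bP^n/S)\simeq\logHH(X/S)^{\oplus n+1}$ (Propositions \ref{ori.2}--\ref{ori.4}, themselves computed via the Andr\'e--Quillen filtration). Orientation gives the Thom isomorphism $\map(MTh(\cE),\bE(q+d)[p+2d])\simeq\map(M(X),\bE(q)[p])$ (Theorem \ref{Gysin.4}). Second, the projective bundle formula yields the Bott-type periodicity $\blogHH(-/S)\simeq\blogHH(-/S)(-1)[-2]$ of \eqref{ori.6.2}, which cancels the twist $(d)[2d]$ coming from the Thom isomorphism; note that this cancellation genuinely requires working in the stable category $\logDA(S)$, not in $\logDAeff(S)$ where the Tate twist is not invertible. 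Only after these two steps does $\map(MTh(N_ZX),\blogHH)\simeq\logHH(Z/S)=\HH(Z/S)$ hold, independently of the codimension of $Z$. You correctly flagged the absence of twists as ``the principal obstacle,'' but the resolution is not that the motivic sequence is untwisted---it is that the represented theory is oriented and periodic.
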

In fact, a more general result holds, when a non-trivial log structure on both $X$ and $Z$ is allowed. See Theorem \ref{Gysin.2} for the exact statement. 
As we explain in Remark \ref{rem:regularsequence}, one special case of Theorem \ref{thm:regularseq} is the cofiber sequence \[{\rm HH}(A/R) \to {\rm logHH}(({\rm Bl}_{(f_1, \dots, f_r)}{\rm Spec}(A), E)/R) \to {\rm HH}((A/(f_1, \dots, f_r)) / R)[1]\] for a regular sequence $(f_1, \dots, f_r)$ cutting a smooth subscheme in a smooth commutative $R$-algebra $A$, where $R$ is a commutative ring of finite Krull dimension. This recovers the discrete version of the Rognes--Sagave--Schlichtkrull sequence for $r = 1$. In other words, in order to have cofiber sequences in log Hochschild homology for log structures generated by several elements, we must leave the affine context behind and invoke blow-ups.

In Remark \ref{rem:bp} we discuss the existence of residue sequences in (log) topological Hochschild homology \begin{equation}
\label{bpcofiberseq}
{\rm THH}(BP \langle n \rangle) \to {\rm logTHH}(BP\langle n \rangle) \to {\rm THH}(BP \langle n - 1 \rangle)[1]
\end{equation} involving the truncated Brown--Peterson spectrum $BP \langle n \rangle$ for which there is provably no analogous localization sequence in algebraic $K$-theory.
In a slightly different setup,
a version of this sequence will appear in the forthcoming work of Ausoni--Bayındır--Moulinos.
Using an appropriate notion of blow-ups in spectral algebraic geometry, 
we suspect that Theorem \ref{thm:regularseq} admits a generalization to log topological Hochschild homology,
and that this will give rise to the generalization of the sequence \eqref{bpcofiberseq}. 
We 
refer to Remark \ref{rem:bp} for further discussion in this direction.

\subsection{Analogous results for log ring spectra} Some of the results in this paper are algebraic analogues of results proved in the context of log ring spectra in \cite{Lun21}, and these results (appearing in Sections $\S 2$ to $\S 5$) also appear in the second-named author's thesis \cite{Lun22}. At its time of writing, the second-named author was unaware of the algebro-geometric log diagonal interpretation of these results. 
Versions of \Cref{prop:rognescomparison} and \Cref{thm:intrologetalebasechange} hold for log 
\emph{topological} Hochschild homology of discrete pre-log rings in the sense of 
Krause--Nikolaus \cite{KN19} and Rognes--Sagave--Schlichtkrull \cite{RSS15}, \cite{RSS18}.

\subsection{Upcoming work}\label{subsec:futurework} The relationship between the circle action on log $\HH$ and the log de Rham differential is not treated in the present paper. We intend to prove a logarithmic analogue of the main result of \cite{TV11}, identifying log Hochschild homology with derived log de Rham theory in characteristic zero. 
Olsson \cite{Ols} has proposed another definition of log Hochschild homology using his approach to log geometry 
via algebraic stacks. 
We expect a comparison theorem relating these definitions of log Hochschild homology, 
analogous to \cite[\S 8.31]{Ols05} for log cotangent complexes.
We hope to return to these questions, 
in addition to those raised in Section \ref{subsec:resseqintro} and Remark \ref{rem:bp}.

\subsection{Notation and conventions}
\label{subsec:notation}
We refer to Ogus \cite{Ogu18} for standard conventions on monoids, 
(pre)-log rings and schemes. 
If $A$ is a discrete commutative ring and $M$ is a discrete commutative monoid, 
the triple $(A,M, \alpha)$ defines a pre-log ring if $\alpha\colon M\to A$ is a homomorphism of monoids, 
where multiplication gives the monoid operation on $A$.
If no confusion arises, we will drop $\alpha$ from the notation and simply write $(A,M)$. 
A morphism of pre-log rings $(f, f^\flat)\colon (A,M)\to (B,N)$ is a pair of morphisms $f\colon A\to B$, 
$f^\flat \colon M\to N$ such that the obvious square commutes. Since our constructions are all implicitly derived, we will omit the symbol $\mathbb{L}$ on top of the tensor product in the main body of the text.

For a category $\cC$, we employ the following notations.
\vspace{0.05in}

\begin{tabular}{l|l}
$\Sch$ & noetherian schemes of finite Krull dimensions
\\
$\lSch$ & noetherian fs log schemes of finite Krull dimensions
\\
$\Psh(\cC)$ & presheaves on $\cC$
\\
$\Shv_t(\cC)$ & $t$-sheaves on $\cC$ with respect to the topology $t$
\\
$\sPsh(\cC)$ & simplicial presheaves on $\cC$
\\
$\sShv_t(\cC)$ & simplicial $t$-sheaves on $\cC$ with respect to the topology $t$
\\
$\Coh(X)$ & coherent sheaves on $X$
\\
$\Deri^b(\cA)$ & bounded derived category of an abelian category $\cA$
\end{tabular}

\section{Affine derived logarithmic geometry} 
\label{section:dalg} 
The definition of Hochschild homology, even in the classical case, is inherently derived and better understood in the setting of derived algebraic geometry. In order to pursue this point of view in the logarithmic context, 
let us recall from  \cite[\S 2, 3]{SSV16} the notion of animated (therein modelled by simplicial) (pre-)log rings.

\subsection{Logarithmic morphisms}
We begin with the following basic definition.
\begin{definition}
We denote by $\sCMon$ the category of simplicial commutative monoids.
By elementary model category techniques (see e.g.\ \cite[Proposition 2.1]{SSV16}), this admits a proper simplicial combinatorial model structure in which a map is a fibration or a weak equivalence precisely when the underlying map of simplicial sets is so in the (standard) Kan--Quillen model structure. This is referred to as the \emph{standard} model structure on $\sCMon$. 
We write ${\rm Ani(CMon)}$ for the associated $\infty$-category of simplicial commutative monoids, 
which we shall refer to as the $\infty$-category of \emph{animated (commutative) monoids}. We remark that ${\rm Ani(CMon)}$ is presentable and in particular cocomplete \cite[Proposition A.3.7.6]{HA}.
We let $\oplus$ denote the coproduct in ${\rm Ani(CMon)}$.
\end{definition}

We now proceed to define and study derived analogues of logarithmic morphisms. See 
\cite[Chapter I.4.1]{Ogu18} for the context of discrete commutative monoids.

\begin{definition}
Let $M$ be an animated monoid.
Its \emph{space of units} is the grouplike animated monoid $\GL_1(M)$ of path components representing units in $\pi_0(M)$. 
\end{definition}

This definition allows for the following generalization of \cite[Definition I.4.1.1(3)]{Ogu18}:

\begin{definition}\label{def:logarithmic}
A map $f\colon P\to M$ of animated monoids is \emph{logarithmic} if the projection
\[
f^{-1}\GL_1(M):=P\times_M \GL_1(M) \to \GL_1(M)
\]
is an equivalence. If $f \colon P \to M$ is an arbitrary map of simplicial commutative monoids, we define its \emph{logification} $f^a \colon P^a \to M$ by the universal property of the cocartesian square \[\begin{tikzcd}[row sep = small]f^{-1}{\rm GL}_1(M) \ar{r} \ar{d} & {\rm GL}_1(M) \ar{d} \\ P \ar{r} &  P^a.\end{tikzcd}\]
\end{definition}

\begin{proposition}
Let $f\colon P\to M$ be a map of animated monoids. 
Then the logification $f^a \colon P^a \to M$ is the initial logarithmic map under $f$.
\end{proposition}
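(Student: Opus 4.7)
The plan is to prove the two halves of the universal property separately, following the classical model in Ogus \cite[I.4.1]{Ogu18} adapted to the $\infty$-categorical setting.

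\emph{Step 1 (the map $f^a$ is logarithmic).} From the defining pushout square together with the fact that both $f^{-1}\GL_1(M) \to \GL_1(M) \to M$ and $f^{-1}\GL_1(M) \to P \xrightarrow{f} M$ agree with the canonical inclusion $\GL_1(M) \hookrightarrow M$ (after composing with the projection to $M$), the universal property produces a map $f^a\colon P^a \to M$ whose restriction along the coprojection $\iota\colon \GL_1(M) \to P^a$ recovers the inclusion of units. In particular, $\iota$ factors through $(f^a)^{-1}\GL_1(M)$, producing a canonical comparison map $\GL_1(M) \to (f^a)^{-1}\GL_1(M)$. To see it is an equivalence, I would base change the defining pushout along the monomorphism $\GL_1(M) \hookrightarrow M$ (which is a union of path components at $\pi_0$, and hence $(-1)$-truncated in $\sCMon$). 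Since both $f^{-1}\GL_1(M)$ and $\GL_1(M)$ already factor through $\GL_1(M) \hookrightarrow M$, the relevant interchange reduces the pushout after base change to the degenerate square
\[
\begin{tikzcd}
f^{-1}\GL_1(M) \ar{r} \ar[equal]{d} & \GL_1(M) \ar{d} \\
f^{-1}\GL_1(M) \ar{r} & (f^a)^{-1}\GL_1(M),
\end{tikzcd}
\]
in which the left vertical map is an equivalence; so is the right one.

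\emph{Step 2 (universal property).} Let $g\colon Q \to M$ be a logarithmic map of derived monoids together with a factorization $f = g \circ h$ for some $h\colon P \to Q$. The hypothesis furnishes an equivalence $g^{-1}\GL_1(M) \xrightarrow{\simeq} \GL_1(M)$ over $M$, which we invert to obtain a canonical $\psi\colon \GL_1(M) \to Q$ lifting the inclusion $\GL_1(M) \hookrightarrow M$. Naturality of the pullback along $h$ yields a map $f^{-1}\GL_1(M) \to g^{-1}\GL_1(M)$, and unwinding the definitions shows that the square
\[
\begin{tikzcd}
f^{-1}\GL_1(M) \ar{r} \ar{d} & \GL_1(M) \ar{d}{\psi} \\
P \ar{r}{h} & Q
\end{tikzcd}
\]
commutes over $M$. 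The universal property of the pushout then produces the required unique factorization $P^a \to Q$ through $f^a$.

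\emph{Main obstacle.} The subtle step is the pullback-pushout interchange used in Step 1: since $\sCMon$ is not an $\infty$-topos, base change does not commute with pushouts in general. The point is that $\GL_1(M) \hookrightarrow M$ is a monomorphism, so pullback along it is an idempotent localization on the slice $\sCMon_{/M}$ onto the full subcategory of objects factoring through $\GL_1(M)$; applied to the defining pushout, two of the three relevant corners already lie in this subcategory, which is enough to carry out the interchange. Verifying this cleanly is the only non-formal part of the argument, and it can be done by choosing a cofibrant resolution in the standard model structure and arguing on path components, or by invoking the reflective-subcategory formalism directly.
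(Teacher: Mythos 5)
Your outline is the paper's: show $f^a$ is logarithmic by identifying $(f^a)^{-1}\GL_1(M)$ with $\GL_1(M)$, then deduce the factorization (and its uniqueness) from the universal property of the pushout. Step 2 is fine. The gap is in Step 1: the principle you invoke to justify the pullback--pushout interchange --- that $\GL_1(M)\hookrightarrow M$ is a monomorphism (indeed an inclusion of path components) and that two of the three corners of the span lie in the coreflective subcategory of objects factoring through it --- is not sufficient, already for discrete monoids. Take $M=\bN$ and the submonoid $U=\bN\setminus\{1\}$, which is a monomorphism and a union of path components of the underlying (discrete) space. Let $A=\{0\}$, and let $B=C=\bN$ with $B\to M$ given by multiplication by $2$ and $C\to M$ the identity; then $A$ and $B$ factor through $U$ while $C$ does not. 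The pushout is $C\oplus B\cong\bN^{2}$ with augmentation $(a,b)\mapsto a+2b$, and its pullback along $U$ contains $(1,1)$ (of degree $3\in U$), whereas $(C\times_{\bN}U)\oplus_{A}B=\{(a,b):a\neq 1\}$ does not. So the interchange genuinely fails for a general monomorphism of this kind, and the reflective-subcategory formalism cannot be invoked as stated; "arguing on path components" has to use something more than you have written down.

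What rescues the case at hand is a property of $\GL_1(M)$ your argument never uses: $\pi_0(M)^{\times}$ is a \emph{face} of $\pi_0(M)$, i.e.\ a sum is a unit if and only if every summand is (whereas $U=\bN\setminus\{1\}$ is precisely a submonoid that is not a face). Equivalently, $\GL_1(M)$ is the fiber of $M\to\overline{\pi_0(M)}:=\pi_0(M)/\pi_0(M)^{\times}$ over $0$, with sharp \emph{discrete} target. Now run the whole diagram over $\overline{\pi_0(M)}$: the pushout is computed by the bar construction $\lvert P\oplus(f^{-1}\GL_1(M))^{\oplus\bullet}\oplus\GL_1(M)\rvert$, whose underlying space in each simplicial degree is a product; sharpness of $\overline{\pi_0(M)}$ implies that the fiber over $0$ of each term is the product of the fibers, namely $f^{-1}\GL_1(M)\oplus(f^{-1}\GL_1(M))^{\oplus\bullet}\oplus\GL_1(M)$; and pulling back along the clopen point $\{0\}$ of a discrete space commutes with geometric realization because colimits of spaces are universal. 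This identifies $(f^a)^{-1}\GL_1(M)$ with $f^{-1}\GL_1(M)\oplus_{f^{-1}\GL_1(M)}\GL_1(M)\simeq\GL_1(M)$, which is the content of the paper's ``by construction.'' With this substitution your Step 1 is correct, and the rest of the proposal goes through.
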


\begin{proof}
By construction, 
the composite map $(f^a)^{-1}{\rm GL}_1(M) \xrightarrow{} P^a \xrightarrow{f^a} M$ is equivalent to the inclusion 
${\rm GL}_1(M) \to M$, so that $f^a$ is logarithmic. 
The universal property of a pushout verifies the remaining claim. 
\end{proof}

\subsection{Animated pre-log rings}

Let ${\rm Ani(CRing)}$ denote the $\infty$-category of \emph{animated commutative rings}: We model this by the underlying $\infty$-category associated with the model structure on simplicial commutative rings $\sCRing$ with fibrations and weak equivalences defined on the level of simplicial sets.  

\begin{definition}
A \emph{pre-log simplicial ring} $(A, M, \alpha)$ is a simplicial object in the category of pre-log rings. That is, a simplicial commutative ring $A$ equipped with a simplicial commutative monoid $M$ and a map of simplicial monoids $\alpha\colon M\to (A,\cdot)$.
\end{definition}

By adjunction, 
$\alpha$ gives rise to a map $\bar{\alpha} \colon {\mathbb Z}[M] \to A$ of simplicial commutative rings. 
We let $\sPreLog$ denote the category of pre-log simplicial rings. 
By \cite[Proposition 3.3]{SSV16}, it admits a \emph{projective} model structure. In this model structure, $(f, f^\flat)$ is a fibration if both $f$ and $f^\flat$ are Kan fibrations of simplicial sets.

Let ${\rm Ani(PreLog)}$ denote the associated $\infty$-category of  
\emph{animated pre-log rings}; see Remark \ref{rem:loganimation} for a justification of the terminology.
If no confusion arises, 
we will write $(A,M)$ for a pre-log simplicial ring, 
omitting the structure map $\alpha$.
The coproduct of animated pre-log rings is defined using the underlying animated commutative rings and animated commutative monoids; that is, it is given by the formula $(A, M) \otimes (B, N):= (A \otimes B, M \oplus N)$ with the obvious structure map.

\begin{df}\label{def:logderived} Let $(R, P,\beta)$ be an animated pre-log ring. 

\begin{enumerate} \item $(R, P, \beta)$ is a \emph{log animated ring} if $\beta$ is logarithmic in the sense of Definition \ref{def:logarithmic}.
\item The \emph{logification} $(R, P^a, \beta^a)$ of $(R, P, \beta)$ is the logification of $\beta$ in the sense of Definition \ref{def:logarithmic}. 
\end{enumerate} 
\end{df}

\begin{remark} 
A log simplicial ring is not a
synonym with a simplicial object in the category of log rings, see \cite[Remark 4.18]{SSV16}. 
We use the term \emph{animated log ring} for an animated pre-log ring satisfying the condition of 
Definition \ref{def:logderived}(1). 
\end{remark}

There are Quillen pairs between model categories 
\begin{equation}
\label{equation:Quillenpairs}
\sCRing\rightleftarrows \sPreLog,
\;
\sCMon\rightleftarrows \sCRing,
\text{ and }
\sCMon\rightleftarrows \sPreLog.
\end{equation} 
The left adjoints in \eqref{equation:Quillenpairs} send $A \in \sCRing$ to $(A, \{1\})$, 
$M \in \sCMon$ to $\Z[M]$, 
and $M\in \sCMon$ to $(\Z[M],M)$.
On the underlying $\infty$-categories, 
the space of maps 
$${\rm Map}_{\rm Ani(PreLog)}((A, M), (B, N))$$ 
from $(A, M)$ to $(B, N)$ in ${\rm Ani(PreLog)}$ 
is given by the pullback of 
\begin{equation}
\label{mappingspace} 
{\rm Map}_{\rm Ani(CMon)}(M, N) \xrightarrow{} 
{\rm Map}_{\rm Ani(CRing)}({\mathbb Z}[M], B) 
\xleftarrow{} {\rm Map}_{\rm Ani(CRing)}(A, B).
\end{equation} 
The maps in \eqref{mappingspace} are naturally induced by the structure maps. 

\subsection{Free pre-log algebras}

Following Bhatt \cite[\S 6]{Bha12} and Olsson \cite[\S 8.2]{Ols05}, we discuss free pre-log algebras.
Let $(R_0, P_0)$ be a discrete pre-log ring.
The forgetful functor
\[
{\rm PreLog}_{(R_0, P_0)/} \to {\rm Set} \times {\rm Set}
\]
from pre-log $(R_0, P_0)$-algebras (sending $(A_0, M_0)$ to $(M_0, A_0)$) 
admits a left adjoint $F_{(R_0, P_0)}$ given by
\begin{equation}
\label{freeprelog}
F_{(R_0, P_0)}(X, Y) = (R_0 \langle X \sqcup Y \rangle, P_0 \oplus \langle X \rangle).
\end{equation}
The underlying commutative ring of $F_{(R_0, P_0)}(X, Y)$ is the polynomial ring $R_0\langle X \sqcup Y \rangle$ 
(we use the bracket notation to avoid confusion with the monoid ring construction). 
Moreover, 
its pre-log structure is induced by the pre-log structure on $R_0$ and the canonical map from the free 
commutative monoid on $X$ to $(R_0 \langle X \sqcup Y \rangle, \cdot)$.
Notice that a free pre-log ring, 
viewed as a constant simplicial pre-log ring, 
is cofibrant in the projective model structure. 

\begin{remark}[Animating pre-log rings]\label{rem:loganimation} We now explain that the $\infty$-category of animated pre-log rings indeed fits in the framework of animation, thus justifying the terminology. We largely follow the exposition of \u{C}esnavi\u{c}ius--Scholze \cite[Section 5]{CS}.

Let $(R_0, P_0)$ be a discrete pre-log ring. Define the category of \emph{polynomial pre-log rings} ${\rm Poly}_{(R_0, P_0)}$ to consist of the pre-log rings $F_{(R_0, P_0)}(X, Y)$ for finite sets $X$ and $Y$. Concretely, ${\rm Poly}_{(R_0, P_0)}$ has objects of the form \[(R_0[x_1, \dots, x_n, y_1, \dots, y_m], P_0 \oplus \langle x_1, \dots, x_n \rangle),\] where $\langle x_1, \dots, x_r \rangle \cong {\Bbb N}^r$ is the free commutative monoid generated by the $x_i$'s. 

Let ${\rm PreLog}_{(R_0, P_0)/}$ denote the category of (discrete) $(R_0, P_0)$-algebras and let ${\rm PreLog}^{\rm sfp}_{(R_0, P_0)/}$ denote the full subcategory consisting of those $(A_0, M_0)$ that are \emph{strongly of finite presentation}; that is, ${\rm Hom}_{{\rm PreLog}_{(R_0, P_0)/}}((A_0, M_0), (-, -))$ commutes with sifted colimits. Following the strategy of \cite[Example 5.1.3]{CS} for the free-forgetful adjunction \[{\rm Set} \times {\rm Set} \rightleftarrows {\rm PreLog}_{(R_0, P_0)/},\] we find that \cite[Corollary 4.7.3.18]{HA} implies that ${\rm PreLog}^{\rm sfp}_{(R_0, P_0)/}$ consists of retracts of objects in ${\rm Poly}_{(R_0, P_0)/}$. 

As summarized in \cite[Section 5.1.4]{CS}, the material of \cite[Sections 5.5.8 and 5.5.9]{HTT} implies that the $\infty$-category associated to that of pre-log simplicial rings may be modelled by that of finite product-preserving functors \[{\rm Poly}_{(R_0, P_0)}^{\rm op} \to {\rm Ani}\] from polynomial $(R_0, P_0)$-algebras to that of anima; that is, the animation of the category of $(R_0, P_0)$-algebras. 
\end{remark}

The following remark was suggested to us by an anonymous referee:

\begin{remark} One convenient way to model ${\rm Ani(PreLog)}$ is as the Grothendieck construction of the functor \[{\rm Ani(CRing)} \to {\rm Cat}_{\infty}, \quad R \mapsto {\rm Ani(CMon)}_{/(R, \cdot)}.\] Informally, the objects are pairs $(R, P)$ with a specified structure map $P \to (R, \cdot)$, and the morphisms are readily checked to coincide with the ones described above. In particular, \cite[Theorem 10.3]{GHN17} applies to conclude that ${\rm Ani(PreLog)}$ is presentable, while \cite[Proposition 2.4.4.3]{HTT} applies to obtain the description of the mapping spaces in ${\rm Ani(PreLog)}$ as the pullback of \eqref{mappingspace}.
\end{remark}

Using the $\infty$-category ${\rm Ani(PreLog)}$ as the basis of derived log geometry, 
we make the following definitions:

\begin{definition}\label{def:der_log_affine} Let $(A, M)$ be a pre-log simplicial ring. 

\begin{enumerate}\item
The $\infty$-category of \emph{affine pre-log derived schemes} 
is the 
opposite of the $\infty$-category of animated pre-log rings. 

\item Let $\Spec{A,M}$ denote the affine derived pre-log scheme corresponding to $(A, M)$.

\item For an affine derived pre-log scheme $X$, we let $\ul{X}$ denote the underlying affine derived scheme of $X$.
More precisely, if $X=\Spec{A,M}$, then $\ul{X}:=\Spec{A}$.
\end{enumerate}
\end{definition}

\subsection{Strict morphisms of pre-log simplicial rings}

Following \cite[\S 3.4]{SSV16}, we recall the basic functoriality properties of pre-log and log animated rings:

\begin{enumerate}
\item Let $(R, P, \beta)$ be an animated pre-log ring and let $f \colon R \to A$ be a map of animated commutative rings. 
The \emph{inverse image pre-log structure} $(A, f^*P)$ is the animated pre-log ring with a structure map 
$P \xrightarrow{\beta} (R, \cdot) \xrightarrow{(f, \cdot)} (A, \cdot)$. 
\item Let $(A, M, \alpha)$ be an animated pre-log ring and let $f \colon R \to A$ be a map of animated commutative rings. 
The \emph{direct image pre-log struct} is given by the fiber product of animated monoids
\[\begin{tikzcd}
f^*M \arrow[r]\arrow[d] & (R, \cdot) \arrow[d, "f"]\\
M \arrow[r, "\alpha"] & (A, \cdot).
\end{tikzcd}
\]
\end{enumerate}

\begin{definition} A map $(R, P) \xrightarrow{(f, f^\flat)} (A, M)$ of animated pre-log rings is \emph{strict} if the canonical map $(A, (f^*P)^a) \to (A, M^a)$ is an equivalence.  
\end{definition}

This notion of strictness differs slightly from the one in ordinary log geometry, 
where one typically requires $(R, P)$ and $(A, M)$ to be 
log rings already. 

\subsection{Repletions of animated commutative monoids}
Next we discuss \emph{repletions} of animated commutative monoids following Rognes \cite{Rog09} 
and Sagave--Sch\"urg--Vezzosi \cite{SSV16} in the derived context.  
An essentially identical notion appearing earlier in Kato--Saito \cite{KS04} traces back to 
exactifications of closed immersions of log schemes: 
See Remark \ref{rem:rogkatosaitocomp} for details.

Recall that an animated monoid is \emph{grouplike} if the commutative monoid $\pi_0(M)$ is a group. The (de)looping adjunction $(B, \Omega)$ in the category of animated commutative monoids is a Quillen pair for the standard model structure and induces an adjunction on the underlying $\infty$-categories. 
The unit of the adjunction \[
\Omega B(-)\colon {\rm Ani(CMon)}\to {\rm Ani(CMon)}, \quad M \mapsto \Omega B M,
\] gives rise to the group completion construction $M\to \Omega B M$;
if $M$ is grouplike, this is an equivalence, cf.\ \cite[5.2.6.11, 5.2.6.12, 5.2.6.15]{HA}.

\begin{definition} The \emph{group completion} of an animated monoid $M$ is the grouplike animated monoid $M^\gp := \Omega B M$. 
\end{definition}

Recall the following definition from \cite[Definition 3.6]{Rog09}, \cite[\S 2.2]{SSV16}.

\begin{definition} Let $f \colon N \to M$ be a map of animated commutative rings.
Then $f$ is 
\begin{enumerate}
\item \emph{virtually surjective} if 
$\pi_0(f^{\gp}) \colon \pi_0(N^{\gp}) \to \pi_0(M^{\gp})$ is a surjection
\item \emph{exact} if the induced square of animated monoids
\[\begin{tikzcd}N \ar{r} \ar[d,"f"'] & N^{\gp} \ar{d}{f^{\gp}} \\ M \ar{r} & M^{\gp}\end{tikzcd}\] is cartesian.
\item \emph{replete} if it is virtually surjective and exact. 
\end{enumerate}
The \emph{exactification} relative to the map $f$ is given by the pullback
\[
N^{\mathrm{ex}}
:=
M\times_{M^\gp} N^\gp.
\]
If $f$ is virtually surjective, 
we use the terminology \emph{repletion} instead of \emph{exactification} and set $N^{\rep}:=N^{\mathrm{ex}}$ 
(see \cite{Rog09}).
The map $f$ admits a canonical factorization $N \xrightarrow{} N^\rep \xrightarrow{f^\rep} M$.
\end{definition}

\begin{rmk}\label{rem:rogkatosaitocomp}
An analogous notion in the context of discrete integral commutative monoids 
is called \emph{exactification} by Ogus \cite[I.4.2.17]{Ogu18}. 
More generally, 
a morphism of discrete commutative monoids $P_0 \to Q_0$ is exact \cite[I.2.1.15]{Ogu18} 
if $P_0 = Q_0 \times_{Q_0^\gp} P_0^\gp$, 
where $M_0^\gp$ is the classical group completion of the discrete commutative monoid $M_0$. Under the additional assumption of virtual surjectivity, repletion has been considered (with different terminology) by Kato--Saito \cite[Proposition 4.2.1]{KS04}.
\end{rmk}

\begin{rmk}
We briefly discuss various perspectives on the repletion of the addition map ${\mathbb N}^2 \to {\mathbb N}$.
\begin{enumerate}
\item As a map of discrete commutative monoids, 
${\mathbb N}^2 \to {\mathbb  N}$ is a Kan fibration. Hence its repletion is given by the ordinary pullback of 
${\mathbb N} \xrightarrow{} {\mathbb Z} \xleftarrow{+} {\mathbb Z}^2$, 
which we identify with ${\mathbb N} \oplus {\mathbb Z}$. 
The isomorphism can be chosen so that the corresponding map from ${\mathbb N}^2$ to its repletion sends $(m, n)$ to $(m + n, n)$.
\item To give a geometric description of the repletion, 
recall that a  closed immersion of (discrete) log schemes is exact if and only if it is strict. 
As a non-example, 
the diagonal map 
$\Delta\colon\Spec{\mathbb{Z}[\mathbb{\mathbb{N}}]}
=\mathbb{A}_{\mathbb{N}}\to
\mathbb{A}_{\mathbb{N}^2}
=\Spec{\mathbb{Z}[\mathbb{\mathbb{N}}^2]}$
for the addition $\mathbb{N}^2 \to \mathbb{N}$ 
is clearly not strict (the monoids have different $\mathbb{N}$-ranks). 
Hence it is not exact. 
On the other hand, 
the repletion of 
$\Delta$ is constructed geometrically via the log blow-up 
$\Bl_{(0,0)}(\mathbb{A}_{\mathbb{N}^2}) \to \mathbb{A}_{\mathbb{N}^2}$ equipped with its canonical log structure. 
The strict transform of the diagonal $\Delta$ is a strict closed immersion; hence it is exact.

\item More generally, 
any quasi-compact morphism of quasi-compact and fine log schemes can be made exact by a suitable log blow-up by
\cite[Theorems III.2.6.7, II.1.8.1]{Ogu18}.
\end{enumerate}
\end{rmk}

In most of our examples, the map $f \colon N \to M$ admits a section and is therefore virtually surjective. 
In this case, the repletion admits a convenient description. 
The following is an analogue of \cite[Lemma 3.11]{Rog09} and \cite[Proposition I.4.2.19]{Ogu18} in the context of 
discrete commutative monoids, 
and \cite[Lemma 2.12]{Lun21} in the context of graded ${\mathbb E}_{\infty}$-spaces that form the basis for 
spectral log geometry.

\begin{proposition}\label{prop:repletesplit} Let $N \xrightarrow{f} M$ be a map of animated commutative monoids, and assume that it has a section $\eta\colon M\to N$.
Then there is an equivalence \[N^\rep \xrightarrow{\simeq} M \oplus N^\gp/M^\gp\] over and under $M$, where $N^\gp/M^\gp$ denotes the cofiber of $M^\gp \xrightarrow{\eta^\gp } N^\gp$. 
\end{proposition}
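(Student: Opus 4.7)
The key input is that the $\infty$-category of grouplike derived commutative monoids is semiadditive: finite products and coproducts coincide, and every split fiber sequence is a biproduct sequence (this follows from its identification with connective spectra). My strategy is to exploit this to split $N^\gp$ using $\eta^\gp$, and then to propagate the splitting through the pullback defining $N^\rep$.

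First I would group-complete. Applying $(-)^\gp$ to the identity $f\circ\eta\simeq\id_M$ yields a section $\eta^\gp\colon M^\gp\to N^\gp$ of $f^\gp$. The cofiber sequence $M^\gp\xrightarrow{\eta^\gp}N^\gp\to N^\gp/M^\gp$ in grouplike derived commutative monoids is therefore split, and semiadditivity produces an equivalence
\[
N^\gp \;\simeq\; M^\gp\oplus N^\gp/M^\gp
\]
under which $\eta^\gp$ is the inclusion of the first summand and $f^\gp$ is the projection onto it.

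Next I would substitute this equivalence into $N^\rep := M\times_{M^\gp}N^\gp$. Since finite coproducts and products coincide in derived commutative monoids, I can identify $M^\gp\oplus N^\gp/M^\gp$ with $M^\gp\times N^\gp/M^\gp$ and distribute the pullback:
\[
N^\rep \;\simeq\; M\times_{M^\gp}\bigl(M^\gp\times N^\gp/M^\gp\bigr) \;\simeq\; M\oplus N^\gp/M^\gp,
\]
where the last step uses that the pullback of $M\to M^\gp$ along the first projection $M^\gp\times N^\gp/M^\gp\to M^\gp$ is $M$. Finally, I would verify the structure maps: the canonical projection $N^\rep\to M$ from the defining pullback corresponds to the projection $M\oplus N^\gp/M^\gp\to M$, while the canonical map $M\to N^\rep$ induced by the composite $M\xrightarrow{\eta}N\to N^\rep$ corresponds to the summand inclusion $M\hookrightarrow M\oplus N^\gp/M^\gp$, since both sides are characterized by being sections of $f^\rep$ that recover $\eta^\gp$ after group completion.

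The main point, rather than a genuine obstacle, is the semiadditivity of grouplike derived commutative monoids which converts the split cofiber sequence into a biproduct. Once this is granted, the remainder is a formal manipulation of split pullbacks entirely parallel to \cite[Lemma~2.12]{Lun21} in the setting of graded $\mathbb{E}_\infty$-spaces and to \cite[Lemma~3.11]{Rog09} and \cite[Proposition~I.4.2.19]{Ogu18} in the discrete setting.
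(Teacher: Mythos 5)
Your proposal is correct and follows essentially the same route as the paper: split $N^\gp\simeq M^\gp\oplus N^\gp/M^\gp$ using the section $\eta^\gp$ and the (semi)additivity of grouplike derived monoids, then distribute the pullback defining $N^\rep$ (the paper packages this last step as a commutative cube of cartesian faces). The only cosmetic quibble is that grouplike simplicial commutative monoids are modeled by simplicial abelian groups (the paper cites \cite[Appendix Q]{FM94}) rather than by general connective spectra, but this only strengthens the semiadditivity you invoke.
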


\begin{proof} 
Consider the map of cofiber sequences of grouplike animated monoids
\[
\begin{tikzcd}[row sep = small, column sep=small]M^\gp \ar{r} \ar{d}{=} & 
N^\gp \ar{r} \ar[dashed]{d} & N^\gp/M^\gp \ar{d}{=} \\ 
M^\gp \ar{r} & M^\gp \oplus (N^\gp/M^\gp) \ar{r} & N^\gp/M^\gp
\end{tikzcd}
\] 
The dashed arrow is induced by $f^\gp$ and the canonical map $N^\gp \to N^\gp/M^\gp$ (recall that finite products and finite coproducts of animated monoids coincide), and is an equivalence since the outer vertical maps are. 
Consider now the commutative cube of animated monoids 
\[
\begin{tikzcd}[row sep = small, column sep = small]
&
M \oplus (N^\gp/M^\gp)
\ar{rr}{}
\ar[]{dd}[near end]{}
& & M^\gp \oplus (N^\gp/M^\gp)
\ar{dd}
\\
N^\rep
\ar[crossing over]{rr}[near start]{}
\ar{dd}[swap]{}
\ar{ur}
& & N^\gp
\ar{ur}{\simeq}
\\
&
M
\ar[near start]{rr}{}
& & M^\gp.
\\
M
\ar{ur}{=}
\ar{rr}{}
& & M^\gp
\ar[crossing over, leftarrow, near start,swap]{uu}{f^\gp}
\ar[swap]{ur}{=}
\end{tikzcd}\]
The bottom face and the back vertical face are evidently cartesian, while the front face is cartesian by definition. Hence the top face is cartesian, which concludes the proof.  
\end{proof}

The notion of repletion extends from animated monoids to animated pre-log rings in the following manner:

\begin{definition}\label{def:prelogrep} Let $(p, p^\flat) \colon (B, N) \to (A, M)$ be a map of animated pre-log rings, and assume that $p^\flat$ is virtually surjective. The \emph{repletion} $p^\rep$ of $p$ is the canonical map \[p^\rep \colon B \otimes_{{\mathbb Z}[N]} {\mathbb Z}[N^\rep] \to A \otimes_{{\mathbb Z}[M]} {\mathbb Z}[M] \cong A,\] where $N^\rep$ is the repletion of the map $p^\flat$.
\end{definition} 

If $(p, p^\flat)$ corresponds to a map $Y \to X$ of affine derived pre-log schemes, we shall write $Y\to X^{\rep} \to X$ for the corresponding repletion (factoring the original map $Y\to X$). The following is a consequence of Proposition \ref{prop:repletesplit}:

\begin{corollary}\label{cor:sectionstrict} Let $(p, p^\flat) \colon (B, N) \to (A, M)$ be a morphism of animated pre-log rings which admits a section. Then the repletion $(p^\rep, (p^\flat)^\rep)$ is strict. \qed
\end{corollary}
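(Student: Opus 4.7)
The plan is to combine the explicit splitting from \Cref{prop:repletesplit} with a direct universal-property calculation of logifications.

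First, I would observe that a section of $(p,p^\flat)$ provides in particular a section of $p^\flat\colon N\to M$, so \Cref{prop:repletesplit} yields an equivalence $N^{\rep}\simeq M\oplus N^\gp/M^\gp$ of derived monoids over and under $M$, with $(p^\flat)^{\rep}$ corresponding to the projection $\pi_M$ onto the first factor. Next, I would identify the structure map $\phi\colon N^{\rep}\to A$ of the inverse image pre-log structure $(p^{\rep})^*N^{\rep}$: by the compatibility condition defining a morphism of pre-log rings, it equals the composition $N^{\rep}\xrightarrow{(p^\flat)^{\rep}}M\xrightarrow{\alpha}A$. Consequently, under the splitting, $\phi$ sends $(m,x)\in M\oplus N^\gp/M^\gp$ to $\alpha(m)$, so the grouplike factor $N^\gp/M^\gp$ is sent entirely to the multiplicative identity of $A$.

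Strictness then amounts to showing that the induced map $((p^{\rep})^*N^{\rep})^a\to M^a$ of logifications over $A$ is an equivalence. Setting $K:=\alpha^{-1}\GL_1(A)$, I would use the defining pushout presentation of the logification together with a derived pullback computation identifying $\phi^{-1}\GL_1(A)\simeq K\oplus N^\gp/M^\gp$, where the structural map to $\GL_1(A)$ is $(k,x)\mapsto\alpha(k)$. A universal-property check against a test derived monoid $X$ then yields data $(f_M,f_L,f_{\GL})$ subject to a compatibility condition; specializing this condition at the identities of $N^\gp/M^\gp$ and of $K$ forces $f_L$ to be trivial and recovers exactly the universal property of $M^a=M\sqcup_K\GL_1(A)$. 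The equivalence constructed is the one induced by $(p^\flat)^{\rep}$, which is precisely the strictness assertion.

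The main obstacle I anticipate is the derived pullback computation for $\phi^{-1}\GL_1(A)$, since in principle $\infty$-categorical subtleties in $\sCMon$ could complicate the identification. However, because $N^\gp/M^\gp$ is grouplike, its map to $A$ automatically factors through $\GL_1(A)$, and the $N^\gp/M^\gp$ summand is effectively decoupled from the pullback in $\phi=\alpha\circ\pi_M$. This ensures the derived pullback agrees with the naive classical formula, after which the universal property reduction is a routine verification.
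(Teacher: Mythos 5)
Your argument is correct and is exactly the derivation the paper intends: the corollary is stated as an immediate consequence of \Cref{prop:repletesplit} with the proof left to the reader, and your computation---identifying the pulled-back structure map with $\alpha\circ\pi_M$ so that the pullback $\phi^{-1}\GL_1(A)$ splits as $K\oplus N^\gp/M^\gp$ and the logification pushout decomposes as $(M\sqcup_K\GL_1(A))\oplus(N^\gp/M^\gp\sqcup_{N^\gp/M^\gp}*)\simeq M^a$---fills in that omitted verification correctly. The only cosmetic caveat is that ``forces $f_L$ to be trivial'' should be read as saying the $N^\gp/M^\gp$-component of the gluing data is a map equipped with a nullhomotopy, hence contributes a contractible factor to the mapping space, which is precisely what your coproduct decomposition of the pushout yields.
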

\begin{remark}Let $Y\to X$ be a closed immersion of affine derived pre-log schemes (i.e., the corresponding map of animated pre-log rings induces a surjection on $\pi_0$), and assume that it has a section $X\to Y$. Then, by the previous Corollary, the repletion $Y\to X^\rep$ is a strict (hence exact) closed immersion.  
\end{remark}

\section{Log differentials and the log cotangent complex} \label{sec:differentials_and_cotangent}

Following \cite{SSV16}, 
we interpret Gabber's cotangent complex as the derived functor of log K\"ahler differentials. 
We discuss log \'etale maps between affine pre-log derived schemes and note that such maps have 
contractible cotangent complexes. 
It turns out that \emph{integral} log \'etale maps of ordinary affine log schemes are log \'etale in the derived sense.  

\subsection{The log differentials}\label{subsec:logkahler} Let $(f_0, f_0^\flat) \colon (R_0, P_0, \beta_0) \to (A_0, M_0, \alpha_0)$ be a map of discrete pre-log rings. The log K\"ahler differentials $\Omega^1_{(A_0, M_0) / (R_0, P_0)}$ are defined as the $A_0$-module \[(\Omega^1_{A_0 / R_0} \oplus (A_0 \otimes_{{\mathbb Z}} M_0^{\gp}))/\!\!\sim.\] The equivalence relation is $A_0$-linearly generated by $(d\alpha_0(m), 0) \sim (0, \alpha_0(m) \otimes \gamma_0(m))$ and $0 \sim (0, 1 \otimes \gamma(f_0^\flat(n)))$, where $\gamma_0 \colon M_0 \to M_0^{\gp}$ is the canonical map from $M_0$ to its group completion.  It is common to write $\dlog(m)$ for the element $(0, 1 \otimes \gamma_0(m))$, so that the first relation reads $d\alpha_0(m) = \alpha_0(m) \dlog(m)$ and the second relation reads $0 = \dlog(f_0^\flat(n))$. 
By definition, the log K\"ahler differentials of the canonical map $(R_0, P_0) \to F_{(R_0, P_0)}(X, Y)$ are given by
\begin{equation}\label{logkahlerfree}\Omega^1_{F_{(R_0, P_0)}(X, Y) / (R_0, P_0)} = R_0\langle X \sqcup Y \rangle \otimes_{\mathbb Z} (\bigoplus_{x \in X} {\mathbb Z}\{\dlog(x)\} \oplus \bigoplus_{y \in Y} {\mathbb Z}\{{\rm d}y\}).
\end{equation} 

Let $\PreLog$ denote the category of ordinary (i.e.,  discrete) pre-log rings.
The module of log K\"ahler differentials corepresent logarithmic derivations as in \cite[IV.1.1.1]{Ogu18} \[{\rm Der}_{(R_0, P_0)}((A_0, M_0), J_0) := {\rm Hom}_{\PreLog_{(R_0, P_0)//(A_0, M_0)}}((A_0, M_0), (A_0 \oplus J_0, M_0 \oplus J_0))\] with values in some $A_0$-module $J_0$. Here the pre-log structure $M_0 \oplus J_0 \to (A_0 \oplus J_0, \cdot)$ on the trivial square-zero extension $A_0 \oplus J_0$ is defined by $(m, j) \mapsto (\alpha_0(m), \alpha_0(m) \cdot j)$. This is the coproduct of the natural map $$M_0 \xrightarrow{\alpha_0} (A_0, \cdot) \xrightarrow{(d_0, \cdot)} (A_0 \oplus J_0, \cdot)$$ and the inclusion of the units $J_0 \cong 1 + J_0 \subset {\rm GL}_1(A_0 \oplus J_0)$ in $A_0 \oplus J_0$.

\subsection{The log differentials via the log diagonal}\label{subsec:logdifflogdiag} Recall that the module of relative differentials $\Omega^1_{A_0 / R_0}$ arises as the indecomposables of $A_0 \otimes_{R_0} A_0 \to A_0$, that is, the conormal bundle of the diagonal map. If $(R_0, P_0) \to (A_0, M_0)$ is a map of pre-log rings, this naturally leads to the question of how the multiplication map \[(A_0 \otimes_{R_0} A_0, M_0 \oplus_{P_0} M_0) \to (A_0, M_0)\] relates to the log differentials. It is not clear what should be meant by the module of indecomposables of this map; forming the indecomposables of the underlying map of commutative rings merely recovers the ordinary module of differentials $\Omega^1_{A_0 / R_0}$. 

However, after replacing the multiplication map with its repletion, we obtain a map \[((A_0 \otimes_{R_0} A_0) \otimes_{\bZ[M_0 \oplus_{P_0} M_0]} \bZ[(M_0 \oplus_{P_0} M_0)^{\rm rep}], (M_0 \oplus_{P_0} M_0)^{\rm rep}) \to (A_0, M_0)\] of pre-log rings. Moreover, this map is strict by Corollary \ref{cor:sectionstrict}. Hence it is determined by the underlying map of commutative rings, and it makes sense to define its module of indecomposables as that of the underlying map \begin{equation}\label{discrepaugm}(A_0 \otimes_{R_0} A_0) \otimes_{{\mathbb Z}[M_0 \oplus_{P_0} M_0]} {\mathbb Z}[(M_0 \oplus_{P_0} M_0)^{\rm rep}] \to A_0\end{equation} of commutative rings. As a formal consequence of \cite[Corollary 4.2.8(ii)]{KS04} (see also \cite[Remark IV.1.1.8]{Ogu18}), the module of indecomposables of \eqref{discrepaugm} is precisely the module of log differentials $\Omega^1_{(A_0, M_0) / (R_0, P_0)}$. Indeed, the map \eqref{discrepaugm} is a special case of Kato--Saito's \emph{log diagonal}. 

This implies that the log derivations ${\rm Der}_{(R_0, P_0)}((A_0, M_0), J_0)$ is naturally isomorphic to the set of augmented $A_0$-algebra maps \[{\rm Hom}_{{\rm CRing}_{A_0 // A_0}}((A_0 \otimes_{R_0} A_0) \otimes_{{\mathbb Z}[M_0 \oplus_{P_0} M_0]} {\mathbb Z}[(M_0 \oplus_{P_0} M_0)^\rep], A_0 \oplus J_0).\] This works as expected because log K\"ahler differentials are corepresented by an $A_0$-module. This observation and its derived analogue serve as a starting point for our approach to logarithmic Hochschild homology.

\subsection{Gabber's cotangent complex} We now discuss the Gabber cotangent complex in the context of animated pre-log rings. In addition to establishing some properties which will be essential to us, we explain how to obtain a description of it analogous to that of the log differentials given in Section \ref{subsec:logdifflogdiag}.

Let $(R,P) \to (A,M)$ be a morphism of animated pre-log rings, and let $J$ be a connective $A$-module.
Let $A\oplus J$ be the derived square-zero extension  of \cite[Remark 7.3.4.15]{HA}. 
There is a canonical map $A\oplus J \to A$, 
informally given by the projection onto the first component. 
It admits a canonical section $s\colon A \to A\oplus J$.
Let $(1 + J)$ be the fiber of the canonical projection ${\rm GL}_1(A \oplus J) \to {\rm GL}_1(A)$, 
and set $M \oplus J := M \oplus (1 + J)$ in grouplike animated monoids. 

We form the log square-zero extension associated with $J$ in the following way. 

\begin{definition}
We let $(A\oplus J, M\oplus J)$ be the animated pre-log ring with a structure map $M \oplus J \to (A \oplus J, \cdot)$ induced by the structure map $M \xrightarrow{\alpha} (A, \cdot) \xrightarrow{s} (A \oplus J, \cdot)$ and the inclusion $(1 + J) \to {\rm GL}_1(A \oplus J) \to (A \oplus J, \cdot)$.
\end{definition}

There is a canonical map $(A\oplus J, M\oplus J)\to (A,M)$, 
informally given by the projection onto the first component. 
It admits a canonical section $(A,M)\to (A\oplus J, M\oplus J)$. 
We define the space $\mathrm{Der}_{(R, P)}((A,M), J)$ of $(R,P)$-linear derivations of $(A,M)$ with values in $J$ 
as the fiber of the map 
\begin{equation}
\label{logderivationsfiber} 
\Map_{{\rm Ani(PreLog)}_{(R, P)/}}( (A,M) ,(A\oplus J, M\oplus J) ) 
\to \Map_{{\rm Ani(PreLog)}_{(R,P)/}}((A,M),(A,M)) 
\end{equation}
induced by $(A\oplus J, M\oplus J)\to (A,M)$ at the point $\mathrm{id}_{(A,M)}$. 
As in 
derived algebraic geometry \cite[1.4.1.14]{TV08},  
we say that $(R,P) \to (A,M)$ has a 
cotangent complex 
(or that $(A,M)$ admits a 
cotangent complex over $(R,P)$) 
if there exists a connective $A$-module co-representing the functor 
\[ 
\Mod_A^{ \rm cn} \to \mathcal{S}, \quad J \mapsto \mathrm{Der}_{(R, P)}((A,M), J). 
\]
We note that such a log cotangent complex is unique up to equivalence in the $\infty$-category $\Mod_A^{\rm cn}$ of  
connective $A$-modules.

Let $(R, P)$ be a simplicial pre-log ring. 
By extending the definition of the log K\"ahler differentials 
degreewise, 
we obtain a functor 
\[\Omega^1_{(-, -) / (R, P)} \colon {\rm sPreLog}_{(R, P)/} \to {\rm sMod}_R\] 
from the category of pre-log simplicial $(R, P)$-algebras to simplicial $R$-modules. 
As discussed in \cite[\S 4.2]{SSV16}, 
this is a left Quillen functor whose right adjoint is given by the levelwise square-zero construction 
$J \mapsto (A \oplus J, M \oplus J)$. 
This gives rise to an adjunction on the underlying $\infty$-categories; 
we denote the left adjoint by ${\mathbb L}$.

The following is the definition of the log cotangent complex pursued in \cite{SSV16}:

\begin{definition} Let $(R, P) \to (A, M)$ be a map of animated pre-log rings. The \emph{log cotangent complex} ${\mathbb L}_{(A, M)/(R, P)}$ is the $R$-module \[{\mathbb L}_{(A, M)/(R, P)} := {\mathbb L}(A, M).\] 
We note that ${\mathbb L}_{(A, M)/(R, P)}$ has a natural $A$-module structure.
Following Illusie and Quillen, 
we write $\Bigwedge^i_A {\mathbb L}_{(A, M)/(R, P)}$ for the $i$-th derived exterior power of ${\mathbb L}_{(A, M)/(R, P)}$.
\end{definition}

It is clear by construction that ${\mathbb L}_{(A, M)/(R, P)}$ co-represents the functor of log-derivations. 
In particular, every morphism of animated pre-log rings $(R,P)\to (A,M)$ admits a log cotangent complex. Well-known properties of the log cotangent complex (as recorded in e.g.\ \cite[Lemma 2.2.2.5]{Lun22}) that we will freely use include

\begin{enumerate}
\item strict invariance: the canonical map ${\Bbb L}_{A / R} \to {\Bbb L}_{(A, M) / (R, P)}$ is an equivalence for $(R, P) \to (A, M)$ strict;
\item base-change: if $(C, K)$ is the pushout of the diagram $(B, N) \xleftarrow{} (R, P) \xrightarrow{} (A, M)$, then the canonical map $C \otimes_B {\Bbb L}_{(B, N) / (R, P)} \to {\Bbb L}_{(C, K) / (A, M)}$ is an equivalence; and
\item transitivity: for a composite $(R, P) \to (A, M) \to (B, N)$, the sequence \begin{equation}\label{eq:transitivity}B \otimes_A {\Bbb L}_{(A, M) / (R, P)} \to {\Bbb L}_{(B, N) / (R, P)} \to {\Bbb L}_{(B, N) / (A, M)}\end{equation} is a cofiber sequence of $B$-modules.
\end{enumerate}

\begin{remark}
\label{rmk:qcohcotangent}
If $X \to S$ is a map of affine pre-log derived schemes, we  write ${\mathbb L}_{X/S}$ for the log cotangent complex. 
\end{remark}

\begin{remark} The cotangent complex defined above is a model for Gabber's cotangent complex, 
defined as the $A$-module 
${\mathbb L}_{(A, M)/(R, P)} := A \otimes_{A_{\bullet}} \Omega^1_{(A_{\bullet}, M_{\bullet}) / (R, P)}$ 
for a free $(R, P)$-algebra resolution $(A_{\bullet}, M_{\bullet}) \xrightarrow{\simeq} (A, M)$. 
This should not be confused with Olsson's log cotangent complex in \cite[Definition 3.2]{Ols05}. 
Note that the derived exterior powers can be modeled as 
$\Bigwedge^q_A {\mathbb L}_{(A, M) / (R, P)} = A \otimes_{A_{\bullet}} \Omega^q_{(A_{\bullet}, M_{\bullet}) / (R, P)}$.
\end{remark}

We now aim to describe some elementary properties of the log cotangent complex. 
Using the definition of log derivations in \eqref{logderivationsfiber} and the description of mapping spaces in ${\rm Ani(PreLog)}$ \eqref{mappingspace}, we learn that the log cotangent complex can be realized as the pushout of the diagram \begin{equation}\label{logcotangentpushout}{\mathbb L}_{A / R} \xleftarrow{} A \otimes_{{\mathbb Z}[M]} {\mathbb L}_{{\mathbb Z}[M] / {\mathbb Z}[P]} \xrightarrow{} A \otimes_{{\mathbb Z}[M]} {\mathbb L}_{({\mathbb Z}[M], M)/ ({\mathbb Z}[P], P)}\end{equation} of $A$-modules. 
The following describes the log cotangent complex in a way that is reminiscent of the 
description of the spectral log cotangent complex given by Rognes \cite[\S 11]{Rog09} and Sagave \cite{Sag14} 
(therein called \emph{logarithmic ${\rm TAQ}$}):

\begin{proposition}\label{prop:cotangentquotient} 
There is a canonical equivalence 
$$
{\mathbb L}_{({\mathbb Z}[M], M)/({\mathbb Z}[P], P)} 
\simeq 
{\mathbb Z}[M] \otimes_{\mathbb Z} (M^\gp/P^\gp).
$$
\end{proposition}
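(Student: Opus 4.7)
The plan is to verify the claimed equivalence via Yoneda by showing that both sides corepresent the same functor on the $\infty$-category $\Mod_A^{\mathrm{cn}}$ of connective $A$-modules, where $A = \bZ[M]$. The functor in question is $J \mapsto \mathrm{Der}_{(\bZ[P], P)}((\bZ[M], M), J)$, and the right-hand side $\bZ[M] \otimes_{\bZ}(M^\gp/P^\gp)$ corepresents the functor $J \mapsto \Map_{\Mod_{\bZ}^{\mathrm{cn}}}(M^\gp/P^\gp, J)$ by the free-forgetful adjunction along $\bZ \to \bZ[M]$; the substance of the argument is to identify these two spaces naturally in $J$.

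First I would use that $(\bZ[M], M)$ is the free derived pre-log ring on $M$: the adjunction $\mathrm{dMon} \rightleftarrows \mathrm{dPreLog}$ with left adjoint $N \mapsto (\bZ[N], N)$ yields, for every $(B, L) \in \mathrm{dPreLog}_{(\bZ[P], P)/}$, an equivalence
\[
\Map_{\mathrm{dPreLog}_{(\bZ[P], P)/}}((\bZ[M], M), (B, L))
\simeq
\Map_{\mathrm{dMon}_{P/}}(M, L).
\]
Applied to the projection $(A \oplus J, M \oplus J) \to (A, M)$, this identifies the log-derivation space with the fiber over $\mathrm{id}_M$ of the map $\Map_{\mathrm{dMon}_{P/}}(M, M \oplus J) \to \Map_{\mathrm{dMon}_{P/}}(M, M)$ induced by the projection $M \oplus J \to M$.

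Next, recall that $M \oplus J := M \oplus (1+J)$ in $\mathrm{dMon}$. Since the ordinary category of commutative monoids is semiadditive (coproducts coincide with products), this property lifts levelwise to $\sCMon$ and hence to $\mathrm{dMon}$, identifying $M \oplus (1+J)$ with the product $M \times (1+J)$ and the structure map $P \to M \oplus J$ with $(\iota, 1)$. A diagram chase through the resulting product decomposition then identifies the fiber above with
\[
\mathrm{fib}_{1}\bigl(\Map_{\mathrm{dMon}}(M, 1 + J) \to \Map_{\mathrm{dMon}}(P, 1 + J)\bigr).
\]
Since $1+J$ is grouplike (as $J$ is connective), the group-completion adjunction replaces this fiber by
$\mathrm{fib}_{0}\bigl(\Map_{\mathrm{dMon}^\gp}(M^\gp, 1 + J) \to \Map_{\mathrm{dMon}^\gp}(P^\gp, 1 + J)\bigr)$.
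Under the equivalence $\mathrm{dMon}^\gp \simeq \Mod_{\bZ}^{\mathrm{cn}}$ (carrying $1+J$ to the underlying $\bZ$-module of $J$), stability rewrites this fiber as $\Map_{\Mod_{\bZ}^{\mathrm{cn}}}(M^\gp/P^\gp, J)$. The free-forgetful adjunction along $\bZ \to A$ finally identifies it with $\Map_{\Mod_A^{\mathrm{cn}}}(\bZ[M] \otimes_{\bZ}(M^\gp/P^\gp), J)$, and Yoneda concludes.

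The main subtle point I anticipate is the semiadditivity step: one must carefully track the identification $M \oplus (1+J) \simeq M \times (1+J)$ in $\mathrm{dMon}$ together with the induced structure maps, and in particular verify that the relevant basepoint in $\Map_{\mathrm{dMon}}(P, 1+J)$ is the constant map at the unit (so that the subsequent stability argument uses the correct zero point). Once this is in place the remaining manipulations are formal adjunctions, and a consistency check on $\pi_0$ reproduces the explicit computation of ordinary log K\"ahler differentials for discrete $P \to M$ recalled in Section \ref{subsec:logkahler}.
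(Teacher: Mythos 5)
Your proof is correct, and it reaches the same corepresentability statement as the paper but by a different middle route. The paper's proof also begins by identifying the log-derivation space with $\Map_{{\rm dMon}_{P//M}}(M, M\oplus J)$, but then rewrites this as $\Map_{{\rm dMon}_{M//M}}(M\oplus_P M, M\oplus J)$, observes that $M\oplus J\to M$ is replete so that maps into it factor uniquely through the repletion of the source, and invokes Proposition \ref{prop:repletesplit} to replace $(M\oplus_P M)^{\rep}$ by $M\oplus(M^{\gp}/P^{\gp})$; the quotient $M^{\gp}/P^{\gp}$ thus appears on the \emph{source} side via the repletion of the "monoid cyclic bar" object. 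You instead never form $M\oplus_P M$: you split the \emph{target} $M\oplus(1+J)$ as a product using semiadditivity of ${\rm dMon}$, reduce to $\mathrm{fib}_1(\Map(M,1+J)\to\Map(P,1+J))$, and group-complete the sources against the grouplike target before passing to $\Mod_{\bZ}^{\mathrm{cn}}$. The underlying ingredients largely coincide — the paper's Proposition \ref{prop:repletesplit} is itself proved using the same coincidence of finite products and coproducts in ${\rm dMon}$, and both arguments rest on $1+J$ being grouplike and on modelling grouplike derived monoids by derived abelian groups. What the paper's route buys is a direct link to the repletion/log-diagonal formalism that drives the rest of the paper (Proposition \ref{prop:logcotangentreplete}, Definition \ref{def:loghh}); what your route buys is a shorter, more self-contained derivation that does not need the repletion machinery at this point. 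Your flagged subtlety (tracking the basepoint of $\Map(P,1+J)$ through the product decomposition and group completion) is indeed the only place requiring care, and it goes through as you describe.
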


In Proposition \ref{prop:cotangentquotient}, we implicitly use \cite[Appendix Q]{FM94} (see also \cite[Lemma 2.10]{SSV16}) 
to model the grouplike animated monoid $M^\gp/P^\gp$ by an animated abelian group, 
so that the tensor product ${\mathbb Z}[M] \otimes_{{\mathbb Z}} (M^\gp/P^\gp)$ is defined.

\begin{proof}[Proof of \Cref{prop:cotangentquotient}] We show that there is a natural equivalence \begin{equation}\label{desiredequivalence}{\rm Map}_{{\rm Mod}_{{\mathbb Z}[M]}}({\mathbb L}_{({\mathbb Z}[M], M) / ({\mathbb Z}[P], P)}, J) \simeq {\rm Map}_{{\rm Mod}_{{\mathbb Z}[M]}}({\mathbb Z}[M] \otimes_{{\mathbb Z}} (M^{\rm gp}/P^{\rm gp}), J)\end{equation} of mapping spaces for any ${\mathbb Z}[M]$-module $J$. To see this, we first use that the log cotangent complex ${\mathbb L}_{({\mathbb Z}[M] , M) / ({\mathbb Z}[P], P)}$ corepresents log derivations \[{\rm Map}_{{\rm Ani(PreLog)}_{({\mathbb Z}[P], P)//({\mathbb Z}[M], M)}}(({\mathbb Z}[M], M), ({\mathbb Z}[M] \oplus J, M \oplus J)).\] Moreover, by the description of mapping spaces in \eqref{mappingspace}, this can be simplified to \[{\rm Map}_{{\rm Ani(CMon)}_{P//M}}(M, M \oplus J) \simeq {\rm Map}_{{\rm Ani(CMon)}_{M//M}}(M \oplus_P M, M \oplus J).\] Since the map $M \oplus J \to M$ is replete, the universal property of repletion implies that there is a natural equivalence \[{\rm Map}_{{\rm Ani(CMon)}_{M//M}}(M \oplus_P M, M \oplus J) \simeq {\rm Map}_{{\rm Ani(CMon)}_{M//M}}((M \oplus_P M)^{\rm rep}, M \oplus J).\] We now apply \Cref{prop:repletesplit} to infer an equivalence \begin{equation}{\rm Map}_{{\rm Ani(CMon)}_{M//M}}((M \oplus_P M)^\rep, M \oplus J) \simeq {\rm Map}_{{\rm Ani(CMon)}_{M//M}}(M \oplus (M^\gp/P^\gp), M \oplus J).\end{equation} By restriction of scalars and pullback along $* \to M$, we obtain a natural equivalence \[{\rm Map}_{{\rm Ani(CMon)}_{M//M}}(M \oplus (M^\gp/P^\gp), M \oplus J) \simeq {\rm Map}_{\rm Ani(Ab)}(M^{\rm gp}/P^{\rm gp}, J).\] By extension of scalars along ${\mathbb Z} \to {\mathbb Z}[M]$ we obtain the equivalence \eqref{desiredequivalence}, which concludes the proof. 
\end{proof}

The following Corollary is of fundamental importance for us since it makes it possible to infer properties of the log cotangent complex from its non-log counterpart, together with some simple information from an animated abelian group.

\begin{corollary}
\label{cor:logcotangentseq}
Let $(R, P) \to (A, M)$ be a map of animated pre-log rings. There is a cofiber sequence of $A$-modules
\[
A \otimes_{\mathbb Z} (M^\gp/P^\gp) \to {\mathbb L}_{(A, M)/(R, P)} 
\to {\mathbb L}_{A / R \otimes_{{\mathbb Z}[P]} {\mathbb Z}[M]}. 
\] 
\end{corollary}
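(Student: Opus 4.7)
The plan is to combine the pushout presentation \eqref{logcotangentpushout} of the log cotangent complex with the transitivity sequence for the ordinary cotangent complex, using Proposition \ref{prop:cotangentquotient} to identify the fiber.

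First, I would recall the pushout presentation: the log cotangent complex $\mathbb{L}_{(A,M)/(R,P)}$ fits into the cocartesian square of $A$-modules
\[
\begin{tikzcd}
A \otimes_{\mathbb{Z}[M]} \mathbb{L}_{\mathbb{Z}[M]/\mathbb{Z}[P]} \ar[r] \ar[d] & \mathbb{L}_{A/R} \ar[d] \\
A \otimes_{\mathbb{Z}[M]} \mathbb{L}_{(\mathbb{Z}[M], M)/(\mathbb{Z}[P], P)} \ar[r] & \mathbb{L}_{(A,M)/(R,P)}
\end{tikzcd}
\]
coming from \eqref{logcotangentpushout}. Taking horizontal cofibers, one identifies the cofiber of the left-hand vertical map with the cofiber of the right-hand vertical map.

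Next I would compute the left vertical cofiber. By Proposition \ref{prop:cotangentquotient}, the bottom-left term simplifies to $A \otimes_{\mathbb{Z}[M]} (\mathbb{Z}[M] \otimes_{\mathbb{Z}} (M^\gp/P^\gp)) \simeq A \otimes_{\mathbb{Z}} (M^\gp/P^\gp)$. Under this identification the left vertical map arises from the natural map of cotangent complexes $\mathbb{L}_{\mathbb{Z}[M]/\mathbb{Z}[P]} \to \mathbb{L}_{(\mathbb{Z}[M],M)/(\mathbb{Z}[P],P)}$ tensored up to $A$; its cofiber is exactly $A\otimes_{\mathbb{Z}}(M^\gp/P^\gp)$ shifted into the right position, so that we obtain a cofiber sequence whose two outer terms match those in the desired statement.

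Then I would compute the right vertical cofiber using standard transitivity. Since $A$ is naturally a $R \otimes_{\mathbb{Z}[P]} \mathbb{Z}[M]$-algebra via the pre-log structure, the transitivity sequence for $R \to R \otimes_{\mathbb{Z}[P]} \mathbb{Z}[M] \to A$ gives
\[
A \otimes_{R \otimes_{\mathbb{Z}[P]} \mathbb{Z}[M]} \mathbb{L}_{(R \otimes_{\mathbb{Z}[P]} \mathbb{Z}[M])/R} \longrightarrow \mathbb{L}_{A/R} \longrightarrow \mathbb{L}_{A/R \otimes_{\mathbb{Z}[P]} \mathbb{Z}[M]}.
\]
Base change for the non-log cotangent complex applied to the pushout square $(\mathbb{Z}[P] \to \mathbb{Z}[M]) \otimes_{\mathbb{Z}[P]} R$ identifies the first term with $A \otimes_{\mathbb{Z}[M]} \mathbb{L}_{\mathbb{Z}[M]/\mathbb{Z}[P]}$, which is exactly the top-left term of the cocartesian square above. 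Hence the cofiber of the right vertical map is $\mathbb{L}_{A/R \otimes_{\mathbb{Z}[P]} \mathbb{Z}[M]}$, completing the desired sequence.

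The only potential subtlety I foresee is bookkeeping for the derived base-change equivalence $\mathbb{L}_{(R \otimes_{\mathbb{Z}[P]} \mathbb{Z}[M])/R} \simeq (R \otimes_{\mathbb{Z}[P]} \mathbb{Z}[M]) \otimes_{\mathbb{Z}[M]} \mathbb{L}_{\mathbb{Z}[M]/\mathbb{Z}[P]}$ in the derived setting and checking that all the tensor products in sight are indeed derived, but these are standard facts about the cotangent complex of simplicial commutative rings.
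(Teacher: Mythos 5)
Your proposal is correct and follows essentially the same route as the paper's proof: the pushout square \eqref{logcotangentpushout}, the identification supplied by Proposition \ref{prop:cotangentquotient}, and the transitivity sequence for $R \to R \otimes_{\mathbb{Z}[P]} \mathbb{Z}[M] \to A$ combined with derived base change for the classical cotangent complex. The only blemish is expository: your second paragraph mislabels which parallel maps of the cocartesian square are being compared --- the relevant fact is that the cofiber of $A \otimes_{\mathbb{Z}} (M^{\gp}/P^{\gp}) \to \mathbb{L}_{(A,M)/(R,P)}$ agrees with that of $A \otimes_{\mathbb{Z}[M]} \mathbb{L}_{\mathbb{Z}[M]/\mathbb{Z}[P]} \to \mathbb{L}_{A/R}$, not that the cofiber of the left vertical map ``is'' $A \otimes_{\mathbb{Z}} (M^{\gp}/P^{\gp})$ --- but the computation you actually carry out in the final paragraph is the correct one and closes the argument.
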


\begin{proof} Recall that all tensor products are implicitly derived. There is a pushout square
\[\begin{tikzcd}[row sep = small]A \otimes_{{\mathbb Z}[M]} {\mathbb L}_{{\mathbb Z}[M] / {\mathbb Z}[P]} \ar{r} \ar{d} & A \otimes_{{\mathbb Z}[M]} {\mathbb L}_{({\mathbb Z}[M], M) / ({\mathbb Z}[P], P)} \ar{d} \\ {\mathbb L}_{A / R} \ar{r} & {\mathbb L}_{(A, M)/(R, P)}\end{tikzcd}\] of $A$-modules \eqref{logcotangentpushout}. By Proposition \ref{prop:cotangentquotient}, there is an equivalence \[A \otimes_{{\mathbb Z}[M]} {\mathbb L}_{({\mathbb Z}[M], M)/({\mathbb Z}[P], P)} \simeq A \otimes_{\mathbb Z} (M^{\gp}/P^{\gp}).\] Thus it suffices to prove that the upper row in the diagram \begin{equation}\label{desiredseq}\begin{tikzcd}[row sep = small]A \otimes_{{\mathbb Z}[M]} {\mathbb L}_{{\mathbb Z}[M] / {\mathbb Z}[P]} \ar{r} \ar{d} & {\mathbb L}_{A/R} \ar{r} \ar{d}{=} & {\mathbb L}_{A / R \otimes_{{\mathbb Z}[P]} {\mathbb Z}[M]} \ar{d}{=} \\ A \otimes_{R \otimes_{{\mathbb Z}[P]} {\mathbb Z}[M]} {\mathbb L}_{R \otimes_{{\mathbb Z}[P]} {\mathbb Z}[M] / R} \ar{r} & {\mathbb L}_{A / R} \ar{r} & {\mathbb L}_{A / R \otimes_{{\mathbb Z}[P]} {\mathbb Z}[M]}\end{tikzcd}\end{equation} is a cofiber sequence. Here the lower row is the transitivity sequence associated to the composite $R \to R \otimes_{{\mathbb Z}[P]} {\mathbb Z}[M] \to A$; thus, it is a cofiber sequence. The left-hand map is an equivalence by derived base change for the classical cotangent complex applied to the pushout of the diagram $R \xleftarrow{} {\mathbb Z}[P] \xrightarrow{} {\mathbb Z}[M]$, from which we conclude the proof.
\end{proof}

The proof of Corollary \ref{cor:logcotangentseq} uses derived base change for the classical cotangent complex  \cite[\href{https://stacks.math.columbia.edu/tag/08QQ}{Tag 08QQ}]{stacks-project}. 
This suggests that some flatness hypothesis is necessary to transport results from the derived setting to 
the discrete case.
In the log setting, this is guaranteed by the integrality condition:
A discrete commutative monoid $M_0$ is \emph{integral} if the canonical map $M_0 \to M_0^\gp$ is an injection, while a map $P_0 \to M_0$ of discrete commutative monoids is \emph{integral} if its pushout along any map to an integral monoid remains integral \cite[Definition I.4.6.2(3)]{Ogu18}. This holds for any map of integral monoids $P_0 \to M_0$ provided  $P_0$ is valuative  \cite[Proposition I.4.6.3(5)]{Ogu18}. 
In particular, 
the canonical pre-log structure on any discrete valuation ring is valuative. 

\begin{corollary}\label{cor:discretecofiber}  Let $(R_0, P_0) \to (A_0, M_0)$ be a map of discrete pre-log rings. If $P_0 \to M_0$ is integral and $P_0^\gp \to M_0^\gp$ is injective, there is a cofiber sequence of $A_0$-modules \[A_0 \otimes_{\mathbb Z} (M_0^\gp/P_0^\gp) \to {\mathbb L}_{(A_0, M_0)/(R_0, P_0)} \to {\mathbb L}_{A_0 / R_0 \otimes_{{\mathbb Z}[P_0]} {\mathbb Z}[M_0]}. 
\] 
\end{corollary}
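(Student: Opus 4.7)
The plan is to deduce Corollary \ref{cor:discretecofiber} directly from Corollary \ref{cor:logcotangentseq} by verifying that, under the two stated hypotheses on $P_0 \to M_0$, the derived quantities appearing in the latter reduce to their classical counterparts. Viewing the discrete pre-log rings $(R_0, P_0)$ and $(A_0, M_0)$ as constant objects in $\mathrm{dPreLog}$, Corollary \ref{cor:logcotangentseq} already produces a cofiber sequence of derived $A_0$-modules
\[
A_0 \otimes_{\mathbb Z} (M_0^\gp/P_0^\gp) \to {\mathbb L}_{(A_0, M_0)/(R_0, P_0)} \to {\mathbb L}_{A_0 / R_0 \otimes_{{\mathbb Z}[P_0]} {\mathbb Z}[M_0]}
\]
in which, following the convention of Section \ref{subsec:notation}, both tensor products and the cofiber $M_0^\gp/P_0^\gp$ are a priori derived. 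The task is therefore to show that each of these three constructions is discrete.

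First, since $P_0^\gp \to M_0^\gp$ is assumed injective, the cofiber of the map $P_0^\gp \to M_0^\gp$ in the $\infty$-category of derived abelian groups is concentrated in degree zero and coincides with the ordinary cokernel. Hence the leftmost term in the sequence is an ordinary tensor product of abelian groups (tensored with $A_0$), exactly as written in the statement.

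Second, the integrality of $P_0 \to M_0$ implies that the associated map of monoid algebras $\mathbb{Z}[P_0] \to \mathbb{Z}[M_0]$ is flat, by the classical theorem of Kato (see e.g.\ \cite[Proposition I.4.6.6]{Ogu18} or the discussion preceding Corollary \ref{cor:discretecofiber} in the text). Consequently, the derived tensor product $R_0 \otimes_{\mathbb Z[P_0]} \mathbb Z[M_0]$ agrees with the ordinary one, so that the rightmost term is Quillen's classical cotangent complex of the discrete ring map $R_0 \otimes_{\mathbb Z[P_0]} \mathbb Z[M_0] \to A_0$. Combining these two identifications gives the desired cofiber sequence.

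The main obstacle is really the second step: namely, invoking the nontrivial fact that integrality of a monoid morphism forces flatness of the induced monoid algebra extension. This is precisely why the hypothesis of mere injectivity of $P_0^\gp \to M_0^\gp$ (which suffices to make the leftmost term discrete) does not suffice on its own, and why the additional integrality assumption on $P_0 \to M_0$ is needed to transport the cofiber sequence from the derived setting to the discrete one.
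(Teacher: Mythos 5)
Your proposal is correct and follows essentially the same route as the paper: both deduce the statement from Corollary \ref{cor:logcotangentseq} by observing that integrality of $P_0 \to M_0$ forces $\mathbb{Z}[P_0] \to \mathbb{Z}[M_0]$ to be flat (so the derived tensor product $R_0 \otimes_{\mathbb{Z}[P_0]} \mathbb{Z}[M_0]$ is discrete and the base-change step survives), while injectivity of $P_0^\gp \to M_0^\gp$ makes the derived quotient $M_0^\gp/P_0^\gp$ agree with the ordinary cokernel (the paper packages this last identification as a citation to Olsson's Lemma 8.18(ii), but the content is the same).
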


The integrality hypothesis is essential, see Example \ref{exm:logetale}. Corollary \ref{cor:discretecofiber} is particularly convenient when relating the log cotangent complex with the characterization of log \'etale and smooth maps in terms of charts \cite[Theorem 3.5]{Kat89}, see Proposition \ref{prop:cotangent_vs_omega}.

\begin{proof}  
The flat base change argument in the proof of Corollary \ref{cor:logcotangentseq} goes through since the 
integrality hypothesis implies that ${\mathbb Z}[P_0] \to {\mathbb Z}[M_0]$ is flat by \cite[Remark I.4.6.6]{Ogu18}.
Moreover, 
by \cite[Lemma 8.18(ii)]{Ols05}, 
there is an equivalence of $A_0$-modules
\[
A_0 \otimes_{{\mathbb Z}[M_0]} {\mathbb L}_{({\mathbb Z}[M_0], M_0)/({\mathbb Z}[P_0], P_0)} 
\simeq 
A_0 \otimes_{\mathbb Z} (M_0^{\gp}/P_0^{\gp}).
\]  \end{proof}

Since the log cotangent complex corepresents (derived) log derivations, 
the argument of \cite[Corollary 6.7]{Sag14} yields

\begin{lemma}\label{lem:logcotangentloginv} The logification construction induces equivalences of $A$-modules 
\[{\mathbb L}_{(A, M)/(R, P)} \xrightarrow{\simeq} {\mathbb L}_{(A, M^a) / (R, P)} 
\xrightarrow{\simeq} {\mathbb L}_{(A, M^a)/(R, P^a)}.
\] 
\end{lemma}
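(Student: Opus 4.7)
The plan is to verify both equivalences by identifying the corepresented functors of log derivations on connective $A$-modules. Since ${\mathbb L}_{(A,M)/(R,P)}$ corepresents the functor $J \mapsto \mathrm{Der}_{(R,P)}((A,M), J)$ (and similarly for the other two cotangent complexes), it suffices to produce natural equivalences of these derivation spaces induced by the logification maps on source and base.

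For the second equivalence ${\mathbb L}_{(A, M^a)/(R, P)} \simeq {\mathbb L}_{(A, M^a)/(R, P^a)}$, I would invoke the universal property of logification as the left adjoint to the inclusion of derived log rings into derived pre-log rings. Since $(A, M^a)$ is log, any map of derived pre-log rings $(R, P) \to (A, M^a)$ factors uniquely through $(R, P^a) \to (A, M^a)$, giving an equivalence of slice $\infty$-categories $\sPreLog_{(R, P)//(A, M^a)} \simeq \sPreLog_{(R, P^a)//(A, M^a)}$. Since the square-zero extension $(A \oplus J, M^a \oplus J)$ is likewise log, the two derivation spaces are computed by the same mapping spaces and hence agree.

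For the first equivalence ${\mathbb L}_{(A, M)/(R, P)} \simeq {\mathbb L}_{(A, M^a)/(R, P)}$, the essential observation is that the logification of the pre-log square-zero extension satisfies $(A \oplus J, M \oplus J)^a \simeq (A \oplus J, M^a \oplus J)$: the factor $1 + J$ is already grouplike and lands in $\mathrm{GL}_1(A \oplus J)$, so the logification of $M \oplus (1+J)$ only affects the $M$-summand. Using this compatibility together with the logification-inclusion adjunction, a pre-log derivation $(A, M) \to (A \oplus J, M \oplus J)$ corresponds uniquely under logification to a log derivation $(A, M^a) \to (A \oplus J, M^a \oplus J)$, and conversely such a log derivation restricts to a pre-log derivation along the unit $(A, M) \to (A, M^a)$. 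The argument is formally analogous to Sagave's proof of \cite[Corollary 6.7]{Sag14} in the spectral setting.

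The main obstacle is making precise the interplay above between the logification of the source and the fact that the target $(A \oplus J, M \oplus J)$ of the pre-log derivations is not a priori log. The adjunction only immediately handles maps into log objects, so one has to argue that the pre-log derivations into $(A \oplus J, M \oplus J)$ match the log derivations into its logification. An alternative route that sidesteps this subtlety is to use the pushout presentation of the log cotangent complex in \eqref{logcotangentpushout} together with Proposition \ref{prop:cotangentquotient}, and to verify that each term of the pushout is unchanged under the replacements $M \rightsquigarrow M^a$ and $P \rightsquigarrow P^a$, using that the new generators of $M^a$ and $P^a$ are units and hence contribute trivially after tensoring with $A$.
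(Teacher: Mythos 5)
Your main argument is essentially the paper's own proof: the paper merely records that the log cotangent complex corepresents log derivations and defers to the argument of Sagave's \cite[Corollary 6.7]{Sag14}, which is exactly the adjunction/square-zero compatibility you spell out, so your write-up is if anything more detailed than the paper's. The one step you flag as an obstacle --- matching pre-log derivations into $(A\oplus J, M\oplus J)$ against log derivations into its logification --- does need to be closed, and the clean way to do it is to observe that the commutative square with vertical maps the two square-zero projections and horizontal maps the logification units, from $(A\oplus J, M\oplus J)\to(A,M)$ to $(A\oplus J, M^a\oplus J)\to(A,M^a)$, is cartesian in derived pre-log rings: on underlying rings it is the identity square, and on monoids one has $M\oplus(1+J)\simeq M\times_{M^a}(M^a\oplus(1+J))$ because finite coproducts of derived monoids agree with finite products. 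Since $\Map_{{\rm dPreLog}_{(R,P)/}}((A,M),-)$ preserves pullbacks, the fiber defining $\mathrm{Der}_{(R,P)}((A,M),J)$ over $\mathrm{id}$ is identified with the fiber of $\Map((A,M),(A\oplus J,M^a\oplus J))\to\Map((A,M),(A,M^a))$ over the unit, and the logification adjunction (using your observation that $(A\oplus J,M\oplus J)^a\simeq(A\oplus J,M^a\oplus J)$ and that this is a log ring) identifies that with $\mathrm{Der}_{(R,P)}((A,M^a),J)$; the second equivalence then follows as you say.

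One caution about your fallback route: it is \emph{not} true that each individual term of the pushout \eqref{logcotangentpushout} is unchanged under $M\rightsquigarrow M^a$. Both $A\otimes_{\bZ[M]}{\mathbb L}_{\bZ[M]/\bZ[P]}$ and $A\otimes_{\bZ}(M^{\gp}/P^{\gp})$ acquire contributions from the adjoined units of $\GL_1(A)$, and these cancel only in the total pushout against ${\mathbb L}_{A/R}$ (via the relation $\dlog(u)=u^{-1}du$). So that alternative is salvageable but requires comparing the whole pushouts, not their terms; as stated it would not compile into a proof.
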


We shall also have occasion to use that the log cotangent complex of discrete pre-log rings is invariant under 
logification by mimicking Definition \ref{def:logarithmic} using ordinary pullbacks and pushouts. 
This is the content of \cite[Theorem 8.16]{Ols05}.

\begin{corollary}\label{Cor:strict_cot}
Let $f\colon (R_0, P_0)\to (A_0,M_0)$ be a strict map of discrete pre-log rings. Then ${\mathbb L}_{(A_0,M_0)/(R_0,P_0)} \simeq \mathbb{L}_{A_0/R_0}$, where the latter denotes the classical relative cotangent complex. In particular, if $f$ is strict \'etale, then  ${\mathbb L}_{(A_0,M_0)/(R_0,P_0)}$ is contractible.
\end{corollary}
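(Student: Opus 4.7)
The plan is to combine the logification invariance of Gabber's cotangent complex (Lemma \ref{lem:logcotangentloginv}) with the cofiber sequence in Corollary \ref{cor:logcotangentseq}. The point is that, for the inverse image pre-log structure, the monoid part of the map is literally the identity, so the monoid-theoretic contribution in the cofiber sequence becomes trivial.

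Concretely, I would first use strictness of $f$ to identify $(A_0, M_0^a)$ with $(A_0, (f^*P_0)^a)$ as log rings. Applying Lemma \ref{lem:logcotangentloginv} twice, once on the source and once on the target, gives a chain of equivalences of derived $A_0$-modules
\[
\mathbb{L}_{(A_0, M_0)/(R_0, P_0)} \simeq \mathbb{L}_{(A_0, M_0^a)/(R_0, P_0^a)} \simeq \mathbb{L}_{(A_0, (f^*P_0)^a)/(R_0, P_0^a)} \simeq \mathbb{L}_{(A_0, f^*P_0)/(R_0, P_0)}.
\]
This reduces the problem to computing the log cotangent complex of the inverse image pre-log structure relative to $(R_0, P_0)$.

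Next, I would apply Corollary \ref{cor:logcotangentseq} directly to the map $(R_0, P_0) \to (A_0, f^*P_0)$. Here the underlying monoid map $P_0 \to f^*P_0$ is the identity: in particular $(f^*P_0)^{\gp}/P_0^{\gp}$ is contractible and $\mathbb{Z}[P_0] \to \mathbb{Z}[f^*P_0]$ is an isomorphism, so $R_0 \otimes_{\mathbb{Z}[P_0]} \mathbb{Z}[f^*P_0] \simeq R_0$. The cofiber sequence therefore collapses to an equivalence $\mathbb{L}_{(A_0, f^*P_0)/(R_0, P_0)} \simeq \mathbb{L}_{A_0/R_0}$, yielding the first claim.

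For the second claim, strict \'etaleness of $f$ means by definition that $f$ is strict and that the underlying ring map $R_0 \to A_0$ is \'etale; hence the classical relative cotangent complex $\mathbb{L}_{A_0/R_0}$ is contractible, and the first claim immediately gives contractibility of $\mathbb{L}_{(A_0, M_0)/(R_0, P_0)}$. There is no real obstacle here beyond bookkeeping with logifications; the only subtle point is making sure that Corollary \ref{cor:logcotangentseq} is available without an integrality hypothesis on $P_0 \to f^*P_0$, but this is automatic since that map is the identity and no derived base change issues arise.
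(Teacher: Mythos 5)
Your proposal is correct and follows essentially the same route as the paper: reduce via logification invariance to the inverse image pre-log structure, then apply the cofiber sequence of Corollary \ref{cor:logcotangentseq}, whose outer terms trivialize because the monoid map becomes the identity, and finish the \'etale case by vanishing of the classical cotangent complex. Your remark that no integrality hypothesis is needed is also the right observation, since Corollary \ref{cor:logcotangentseq} holds for arbitrary derived pre-log rings and the relevant base change is along an isomorphism.
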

\begin{proof}
Since the construction is invariant under logification, 
we may assume that the log structure on $(A_0,M_0)$ is induced by the inverse image pre-log structure 
$P_0\to (R_0, \cdot) \to (A_0, \cdot)$. 
Now the equivalence ${\mathbb L}_{(A_0,M_0)/(R_0,P_0)} \simeq \mathbb{L}_{A_0/R_0}$ follows immediately from 
Corollary \ref{cor:logcotangentseq}.  
The last statement is a consequence of \cite[\href{https://stacks.math.columbia.edu/tag/08R2}{Tag 08R2}]{stacks-project}.
\end{proof}

Let $(R,P)\xrightarrow{(f,f^\flat)} (A,M)\xrightarrow{(g,g^\flat)} (B,N)$ be maps of animated pre-log rings. By combining \eqref{eq:transitivity} with Corollary \ref{Cor:strict_cot}, 
we see that for every strict \'etale map $(A_0, M_0)\to (B_0, N_0)$ of discrete pre-log rings, 
there is a natural equivalence
\begin{equation}
\label{eq:strictet_bc}
B_0 \otimes_{A_0} \mathbb{L}_{(A_0,M_0)/(R_0, P_0)} 
\xrightarrow{\simeq}
\mathbb{L}_{(B_0, N_0)/(R_0, P_0)}.    
\end{equation}

\subsection{Derived indecomposables}\label{subsec:derivedindec} Let $A$ be an animated commutative ring, 
and let $A \to B \to A$ be an animated augmented $A$-algebra. 
We write $Q_A(B)$ for the $A$-module of indecomposables in $B$. 
For example, 
$Q_A(-)$ can be realized as the functor $Q_A(-) \colon {\rm Ani(Ring)}_{A//A} \to {\rm Mod}_A$ of $\infty$-categories associated 
to the left Quillen functor $Q_A(-) \colon {\rm sCRing}_{A // A} \to {\rm sMod}_A$ obtained by taking indecomposables levelwise; 
its right adjoint is 
determined by the trivial square-zero extension. If $A \to C$ is a map of animated commutative rings, there is a base-change formula \[Q_C(C \otimes_A B) \simeq C \otimes_A Q_A(B)\] which we will use freely.

\subsection{The replete diagonal map} 
In what follows, 
we use the repletion to describe the log cotangent complex as the conormal of the \emph{replete} diagonal.

\begin{definition}
Suppose $f\colon X\to S$ is a map of affine pre-log derived schemes.
The \emph{replete diagonal map}
\[
\Delta^{\rep}
\colon
X\to X\times_S^{\rep}X
\]
is the repletion of the diagonal map $\Delta \colon X\to X\times_S X$.
\end{definition}

\begin{remark} If $f \colon X \to S$ is a map of ordinary affine log schemes, 
the replete diagonal map defined above is a derived analogue of Kato-Saito's log diagonal map 
$X\to X\times_{S,[P]}^{\log} X$ \cite[Corollary 4.2.8(i)]{KS04}. 
See also \cite[Example III.2.3.6]{Ogu18} for an equivalent construction. 
\end{remark}

The following gives a description of the log cotangent complex analogous to that of the log differentials in Section \ref{subsec:logdifflogdiag}:

\begin{proposition}\label{prop:logcotangentreplete} Let $X \to S$ be a map of affine pre-log derived schemes. The cotangent complex ${\mathbb L}_{X / S}$ is equivalent to the conormal of $\ul{\Delta^\rep}$, the map of derived schemes underlying the replete diagonal $\Delta^\rep \colon X \to X \times_S^\rep X$.
\end{proposition}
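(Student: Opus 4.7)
The plan is to realize both $\mathbb{L}_{X/S}$ and the conormal $Q_A(C)$ of $\ul{\Delta^\rep}$---where $C$ denotes the underlying derived ring of $X \times_S^\rep X$---as corepresenting the same functor $J \mapsto \mathrm{Der}_{(R,P)}((A,M),J)$ on connective $A$-modules, mirroring the argument of Section~\ref{subsec:logdifflogdiag} in the derived setting.

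First I would collect two technical inputs. The multiplication $\mu\colon (A \otimes_R A, M\oplus_P M) \to (A,M)$ admits a section via either factor inclusion, so the underlying derived monoid map $M\oplus_P M \to M$ is virtually surjective; \Cref{cor:sectionstrict} then yields that the augmentation $(C, (M\oplus_P M)^{\rep}) \to (A,M)$ is strict. On the other hand, for every connective $A$-module $J$, the log square-zero extension $(A\oplus J, M\oplus J) \to (A,M)$ is replete: its underlying map of derived monoids $M\oplus (1+J) \to M$ is a split projection whose group completion is the same projection (the factor $1+J$ already being grouplike), making virtual surjectivity and exactness immediate. A direct inspection of logifications, similar to the computation in the proof of \Cref{lem:logcotangentloginv}, shows this map is also strict.

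Now fix a connective $A$-module $J$. By the universal property of the indecomposables, $\Map_A(Q_A(C), J) \simeq \Map_{\mathrm{dRing}_{A//A}}(C, A\oplus J)$. On the log side, unwinding \eqref{logderivationsfiber} and the universal property of coproducts in pre-log derived rings, the space $\mathrm{Der}_{(R,P)}((A,M),J)$ identifies with the space of pre-log derived ring maps
\[
X\times_S X \longrightarrow (A\oplus J, M\oplus J)
\]
sitting under $(A,M)$ via the first insertion (resp.\ canonical inclusion) and over $(A,M)$ via the multiplication (resp.\ projection). Since the target is replete over $(A,M)$, the universal property of the repletion provides a natural equivalence with the analogous mapping space out of $X\times_S^{\rep}X$.

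The remaining step, which I expect to be the main technical obstacle, is to identify this last pre-log mapping space with $\Map_{\mathrm{dRing}_{A//A}}(C, A\oplus J)$. Since both $(C,(M\oplus_P M)^{\rep})$ and $(A\oplus J, M\oplus J)$ are strict over $(A,M)$, the pre-log structure data on a map between them sitting under and over $(A,M)$ should be determined by the underlying map of derived rings: the monoid component is forced by strictness on both ends, together with the pre-log compatibility. Making this precise in the derived setting---tracking the $(\infty,1)$-categorical coherence of the lifting---is the technical heart of the argument. Concatenating all the equivalences then produces a natural isomorphism $\Map_A(Q_A(C), J) \simeq \Map_A(\mathbb{L}_{X/S}, J)$, and hence $Q_A(C) \simeq \mathbb{L}_{X/S}$ in $\Mod_A$, as desired.
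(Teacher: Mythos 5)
Your strategy is correct, and it is genuinely different from the proof in the paper. The paper's argument is a direct computation: after using \Cref{cor:sectionstrict} to reduce to the indecomposables of the underlying ring map $(A\otimes_R A)\otimes_{\bZ[M\oplus_P M]}\bZ[(M\oplus_P M)^\rep]\to A$, it decomposes that map as a pushout, identifies the new factor via \Cref{prop:repletesplit} and the fact that $Q_{\bZ}(\bZ[G_0])$ is the abelianization of $G_0$, and matches the result term-by-term against the pushout description \eqref{logcotangentpushout} of ${\mathbb L}_{(A,M)/(R,P)}$ together with \Cref{prop:cotangentquotient}. You instead run the whole argument at the level of corepresented functors, which is the honest derived analogue of Section \ref{subsec:logdifflogdiag}; this buys a proof that never opens up the explicit pushout formula for the cotangent complex, at the cost of having to control mapping spaces of pre-log rings rather than modules. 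Your intermediate steps are all sound: the repleteness of $M\oplus(1+J)\to M$ is exactly what the paper uses inside the proof of \Cref{prop:cotangentquotient}, and the factorization of maps into a replete target through the repletion does pass from monoids to pre-log rings because $(C,(M\oplus_P M)^\rep)$ is the base change of $(A\otimes_R A, M\oplus_P M)$ along $(\bZ[M\oplus_P M],M\oplus_P M)\to(\bZ[(M\oplus_P M)^\rep],(M\oplus_P M)^\rep)$, so its mapping spaces are computed by a fiber product in which the monoid-level factor collapses.

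One correction on the step you flag as the technical heart: strictness of the two augmentations is not quite the right lever, and I do not see how to force the monoid component from strictness as stated. What closes the gap is \Cref{prop:repletesplit} together with grouplikeness. Writing $N:=M\oplus_P M$, the splitting $N^{\rep}\simeq M\oplus(M^{\gp}/P^{\gp})$ shows that, for a map under and over $(A,M)$, the only undetermined monoid data is a map out of the grouplike summand $M^{\gp}/P^{\gp}$; any such map into $(A\oplus J,\cdot)$ lying over the constant map to $1\in A$ lands in the fiber $1+J$ of ${\rm GL}_1(A\oplus J)\to{\rm GL}_1(A)$, essentially by definition of $1+J$. Hence the square in \eqref{mappingspace} expressing the pre-log mapping space as a pullback has both of its monoid-side corners equivalent to $\Map_{\rm dAb}(M^{\gp}/P^{\gp},J)$, the comparison map between them is an equivalence, and the pullback collapses onto $\Map_{{\rm dRing}_{A//A}}(C,A\oplus J)$. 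With that substitution your chain of equivalences is complete and yields the proposition.
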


For a morphism of affine derived schemes arising from a map of animated commutative rings admitting a section, 
we define its indecomposables as the module obtained by applying the indecomposable functor levelwise.
It may be helpful to observe that the ordinary cotangent complex ${\mathbb L}_{A / R}$ arises as the indecomposables of $A \otimes_R A \to A$: Indeed, the derived tensor product $A \otimes_R A$ can be computed via a polynomial $R$-algebra resolution $A_{\bullet} \xrightarrow{\simeq} A$, and the indecomposables of \[A \otimes_R A_{\bullet} \cong A \otimes_{A_{\bullet}} (A_{\bullet} \otimes_R A_{\bullet}) \to A\] is $A \otimes_{A_{\bullet}} \Omega^1_{A_{\bullet} / R}$, 
which is the definition of the cotangent complex ${\mathbb L}_{A / R}$. 

\begin{proof}[Proof of Proposition \ref{prop:logcotangentreplete}] 
The replete diagonal corresponds to an augmentation 
\[
((A \otimes_R A) \otimes_{{\mathbb Z}[M \oplus_P M]} {\mathbb Z}[(M \oplus_P M)^\rep], (M \oplus_P M)^\rep) 
\to 
(A, M)
\] 
of animated pre-log rings. 
This evidently admits a section, 
so that Corollary \ref{cor:sectionstrict} implies the map is strict.
The module in question is therefore the levelwise indecomposables of 
\begin{equation}\label{indecofrep}(A \otimes_R A) \otimes_{{\mathbb Z}[M \oplus_P M]} {\mathbb Z}[(M \oplus_P M)^\rep] \to A.\end{equation} 
Since the ordinary cotangent complex $\bL_{A / R}$ is the indecomposables of $A \otimes_R A \to A$, and similarly for $\bL_{\bZ[M] / \bZ[P]}$, the indecomposables of \eqref{indecofrep} is the pushout of the diagram \[\bL_{A / R} \xleftarrow{} A \otimes_{\bZ[M]} \bL_{\bZ[M] / \bZ[P]} \to Q_A(A \otimes_{\bZ[M]} \bZ[(M \oplus_P M)^{\rm rep}]),\] where $Q_A(-) \colon {\rm CAlg}_{A // A} \to {\rm Mod}_A$ is the indecomposables functor. By Proposition \ref{prop:repletesplit}, the right-hand $A$-module is equivalent to $Q_A(A \otimes_{\bZ} \bZ[M^{\rm gp}/P^{\rm gp}]) \simeq A \otimes_{\bZ} Q_{\bZ}(\bZ[M^{\rm gp}/P^{\rm gp}])$. Since, for any group $G_0$, the module of indecomposables of the augmentation $\bZ[G_0] \to \bZ$ is isomorphic to abelianization of $G_0$, this further identifies $A \otimes_{\bZ} Q_{\bZ}(\bZ[M^{\rm gp}/P^{\rm gp}])$ with $A \otimes_{\bZ} (M^{\rm gp}/P^{\rm gp})$. Now Proposition \ref{prop:cotangentquotient} and \eqref{logcotangentpushout} apply to conclude the proof. 
\end{proof}

\begin{remark} 
Proposition \ref{prop:logcotangentreplete} gives rise to a model-independent construction of the log cotangent complex. Indeed, since we have described ${\mathbb L}_{(A, M) / (R, P)}$ as the indecomposables of a certain augmented $A$-algebra, which can be used to describe the ${\mathbb E}_{\infty}$-cotangent complex, we may now mimic the approach of \cite[7.3.5]{HA}.  We refer to \cite[\S 9]{Lun21} for details. The second-named author is currently pursuing a logarithmic variant of the cotangent complex formalism of 
\cite[7.3]{HA}, and \cite[\S 5]{Lun22} contains the expected identification of the fibers of the resulting \emph{replete tangent bundle} with the category of ordinary $A$-modules.
\end{remark}

\section{{\'E}tale and smooth morphisms in derived log geometry}

We shall use the following notion of formal \'etaleness in the context of derived log geometry from \cite[Definition 5.3]{SSV16}:
\begin{definition}\label{def:logetale} A map $(f, f^\flat) \colon (R, P) \to (A, M)$ of animated pre-log rings is \emph{derived formally log \'etale} if the log cotangent complex ${\mathbb L}_{(A, M) / (R, P)}$ vanishes.
\end{definition}

Following \cite{SSV16}, we say that a map $(f, f^\flat)$ is derived log \'etale if it is formally derived log \'etale and if $f$ is homotopically finitely presented in the sense of \cite[Definition 1.2.3.1]{TV08}.   By \cite[Theorem 5.6]{SSV16}, a derived log \'etale map $(f, f^\flat)$ such that $\pi_0(f^\flat)$ is finitely presented induces a log \'etale map $(\pi_0(f), \pi_0(f^\flat))$ of discrete log rings in the sense that $(\pi_0(f), \pi_0(f^\flat))$ lifts uniquely against log square-zero extensions \cite{Kat89}. 
In particular, \cite[Theorem 3.5]{Kat89} shows that, locally on charts, 
$(\pi_0(f), \pi_0(f^\flat))$ satisfies the following properties:

\begin{enumerate}
\item $\pi_0(R) \otimes_{{\mathbb Z}[\pi_0(P)]} {\mathbb Z}[\pi_0(M)] \to \pi_0(A)$ is \'etale;
\item $\pi_0(M)^\gp/\pi_0(P)^\gp$ is finite of order invertible in $\pi_0(A)$. 
\end{enumerate}

In future work, we intend to develop the deformation theory of derived log schemes.
We prove a derived analogue of \cite[Theorem 3.5]{Kat89}, 
obtaining several equivalent notions of log \'etaleness in the context of derived log geometry. 

\begin{df}
A map $(f, f^\flat) \colon (R, P) \to (A, M)$ of animated pre-log rings is \emph{derived formally log smooth} if the log cotangent complex ${\mathbb L}_{(A, M) / (R, P)}$ is a projective $A$-module, i.e., a direct summand of a free $A$-module.
\end{df}

Our definition of derived formal log smoothness is equivalent to \cite[Definition 6.3]{SSV16} by 
\cite[Propositions 7.2.2.6, 7.2.2.7]{HA}, \cite[1.2.8.3]{TV08}.
One can define a map of animated pre-log rings to be \emph{derived log smooth} if the underlying map of 
animated commutative rings is homotopically finitely presented.
By \cite[Theorem 6.4]{SSV16}, this yields a log smooth map of log rings on $\pi_0$. 

\begin{proposition}
\label{prop:composition}
The classes of derived formally log \'etale morphisms and derived formally log smooth morphisms of animated pre-log rings are closed under compositions and pushouts.
\end{proposition}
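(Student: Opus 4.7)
My plan is to reduce both statements (closure under composition and under pushout) to standard properties of the log cotangent complex: the transitivity cofiber sequence \eqref{eq:transitivity} and base change along pushouts. The arguments will be essentially the same as those used for the classical cotangent complex in \cite[\href{https://stacks.math.columbia.edu/tag/08QQ}{Tag 08QQ}]{stacks-project}.

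\textbf{Closure under composition.} Let $(R,P)\xrightarrow{(f,f^\flat)}(A,M)\xrightarrow{(g,g^\flat)}(B,N)$ be maps of derived pre-log rings, and apply the transitivity cofiber sequence \eqref{eq:transitivity} from \cite[Proposition 4.12(i)]{SSV16}:
\[
B\otimes_A \bL_{(A,M)/(R,P)}\to \bL_{(B,N)/(R,P)}\to \bL_{(B,N)/(A,M)}.
\]
If both outer maps are derived formally log \'etale, then the two outer terms of this cofiber sequence are contractible, hence so is $\bL_{(B,N)/(R,P)}$, showing that the composite is derived formally log \'etale. If both outer maps are derived formally log smooth, then $\bL_{(A,M)/(R,P)}$ is projective as an $A$-module and $\bL_{(B,N)/(A,M)}$ is projective as a $B$-module. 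Base change of a projective module along $A\to B$ is projective, so $B\otimes_A\bL_{(A,M)/(R,P)}$ is a projective $B$-module. Because the right-hand term is also projective, the cofiber sequence splits (a projective $B$-module is, by definition, a retract of a free connective $B$-module, and maps out of such a retract lift along cofibers). Hence $\bL_{(B,N)/(R,P)}$ is a direct summand of a sum of two projective $B$-modules, and is therefore projective.

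\textbf{Closure under pushouts.} Suppose $(f,f^\flat)\colon(R,P)\to(A,M)$ is derived formally log \'etale (resp.\ log smooth), and let $(R,P)\to(R',P')$ be an arbitrary map of derived pre-log rings. Write $(A',M'):=(A,M)\otimes_{(R,P)}(R',P')$ for the pushout, with induced map $(f',(f')^\flat)\colon(R',P')\to(A',M')$. Because the log cotangent complex is a left adjoint (it corepresents log derivations), it preserves pushouts; concretely, this gives the base change equivalence
\[
\bL_{(A',M')/(R',P')}\simeq A'\otimes_A \bL_{(A,M)/(R,P)},
\]
which is \cite[Proposition 4.12(ii)]{SSV16}. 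If $\bL_{(A,M)/(R,P)}$ is contractible, then so is the base change, and $(f',(f')^\flat)$ is derived formally log \'etale. If $\bL_{(A,M)/(R,P)}$ is a projective $A$-module, then the extension of scalars $A'\otimes_A\bL_{(A,M)/(R,P)}$ is a projective $A'$-module (extension of scalars carries retracts of free connective modules to retracts of free connective modules), so $(f',(f')^\flat)$ is derived formally log smooth.

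The only step that is not a verbatim repetition of the classical argument is the splitting of a cofiber sequence whose outer terms are projective connective modules in the composition case; once this is noted, both closure properties are immediate consequences of transitivity and base change.
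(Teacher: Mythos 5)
Your proposal is correct and follows essentially the same route as the paper: the transitivity cofiber sequence \eqref{eq:transitivity} for composition and the base change equivalence of \cite[Proposition 4.12(ii)]{SSV16} for pushouts. The only cosmetic difference is that where you spell out the splitting of the cofiber sequence with projective outer terms (which does work, since the fiber is connective and hence the map to the projective cofiber is surjective on $\pi_0$), the paper simply cites \cite[Proposition 7.2.2.6]{HA}.
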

\begin{proof}
Let $(R,P)\xrightarrow{(f,f^\flat)} (A,M)\xrightarrow{(g,g^\flat)} (B,N)$ be maps of animated pre-log rings, and consider the transitivity sequence \eqref{eq:transitivity}.
If $(f,f^\flat)$ and $(g,g^\flat)$ are derived formally log smooth, 
then $B\otimes_A \bL_{(A,M) / (R,P)}$ and $\bL_{(B,N) / (A,M)}$ are projective $B$-modules.
Hence $\bL_{(B,N) / (R,P)}$ is a projective $B$-module by \cite[Proposition 7.2.2.6]{HA}.
It follows that $(gf,g^\flat f^\flat)$ is derived formally log smooth.
Let
\[
\begin{tikzcd}
(R,P)\ar[d]\ar[r,"{(f,f^\flat)}"]&
(A,M)\ar[d]
\\
(R',P')\ar[r,"{(f',f'^\flat)}"]&
(A',M')
\end{tikzcd}
\]
be a pushout square of animated pre-log rings.
By \cite[Proposition 4.12(ii)]{SSV16}, 
we have 
\[
A'\otimes_A \bL_{(A,M) / (R,P)}
\simeq\bL_{(A',M')/(R',P')}.
\]
The assumption on $(f,f^\flat)$ implies that 
$\bL_{(A,M)/(R,P)}$ is a projective $A$-module.
Therefore, 
$\bL_{(A',M')/(R',P')}$ is a projective $A'$-module, 
i.e., 
$(f',f'^\flat)$ is derived formally log smooth.

The claim for derived formally log \'etale maps is shown similarly. 
\end{proof}

\subsection{Derived formally log \'etale maps of discrete log rings}  

Example \ref{exm:logetale} below shows that log \'etale maps of ordinary log rings need not give rise to derived formally log \'etale maps of affine pre-log derived schemes. This is in contrast to ordinary derived algebraic geometry and closely related to the fact that the log cotangent complex may fail to be contractible for a given log \'etale map. However, a derived formally log \'etale map has a vanishing cotangent complex by definition. Derived formally log smooth maps enjoy the property of being concentrated as the log differentials in degree zero:

\begin{proposition}
\label{prop:cotangent_vs_omega2}
If $(R_0, P_0) \xrightarrow{(f_0, f^\flat_0)} (A_0, M_0)$ is a derived formally log smooth map of discrete pre-log rings, 
there is a canonical equivalence
\[
{\mathbb L}_{(A_0, M_0)/(R_0, P_0)} 
\xrightarrow{\simeq}
\pi_0({\mathbb L}_{(A_0, M_0) / (R_0, P_0)}) 
\cong 
\Omega^1_{(A_0, M_0) / (R_0, P_0)}.
\]
\end{proposition}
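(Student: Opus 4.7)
The plan is to combine two ingredients: derived formal log smoothness forces the log cotangent complex to be concentrated in homological degree zero, and $\pi_0$ of the log cotangent complex of a discrete pre-log ring recovers Gabber's module of log K\"ahler differentials. Together these yield the claimed chain of equivalences.

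For the first ingredient, I would argue as follows. By definition, derived formal log smoothness asserts that $\mathbb{L}_{(A_0, M_0)/(R_0, P_0)}$ is a projective object of $\mathrm{Mod}_{A_0}^{\mathrm{cn}}$, i.e., a retract of a free module $\bigoplus_{i \in I} A_0$ (by the characterization \cite[Propositions 7.2.2.6, 7.2.2.7]{HA}, already invoked in the discussion preceding Proposition 6.3). Since $A_0$ is discrete, such a free module is concentrated in degree zero, and consequently so is any retract (as $\pi_n$ commutes with retracts). Hence $\pi_n \mathbb{L}_{(A_0, M_0)/(R_0, P_0)} = 0$ for $n \geq 1$, yielding the first equivalence $\mathbb{L}_{(A_0, M_0)/(R_0, P_0)} \xrightarrow{\simeq} \pi_0 \mathbb{L}_{(A_0, M_0)/(R_0, P_0)}$.

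For the second ingredient, I would appeal to the explicit model of the log cotangent complex recalled in the remark following Definition 3.10, namely $\mathbb{L}_{(A_0, M_0)/(R_0, P_0)} \simeq A_0 \otimes_{A_\bullet} \Omega^1_{(A_\bullet, M_\bullet)/(R_0, P_0)}$ for any free $(R_0, P_0)$-algebra resolution $(A_\bullet, M_\bullet) \xrightarrow{\simeq} (A_0, M_0)$. Since $\Omega^1_{(-,-)/(R_0, P_0)}$ is a left Quillen functor (left adjoint to the square-zero extension, as recalled in Section 3.3), it preserves the reflexive coequalizer computing $\pi_0(A_\bullet, M_\bullet) = (A_0, M_0)$. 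Taking $\pi_0$ of the simplicial module above therefore yields precisely $\Omega^1_{(A_0, M_0)/(R_0, P_0)}$.

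The only substantive step is the first one: it rests entirely on the Lurie characterization of projective connective modules as retracts of free modules. Once this structural input is in hand, the remaining identification of $\pi_0$ with log differentials is a formal consequence of the left Quillen nature of $\Omega^1$ established in Section 3.3, so I do not anticipate any further serious obstacle.
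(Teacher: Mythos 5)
Your proposal is correct and follows essentially the same route as the paper: the definition of derived formal log smoothness makes $\mathbb{L}_{(A_0,M_0)/(R_0,P_0)}$ a retract of a free module over the discrete ring $A_0$, hence concentrated in degree zero, and $\pi_0$ of the log cotangent complex is the module of log K\"ahler differentials. The paper states the second identification without proof (elsewhere it cites \cite[Lemma 8.9]{Ols05} for it), whereas you supply the standard left-Quillen/reflexive-coequalizer argument; both are fine.
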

\begin{proof}
Since ${\mathbb L}_{(A_0, M_0) / (R_0, P_0)}$ is a direct summand of a free $A_0$-module and $A_0$ is discrete, ${\mathbb L}_{(A_0, M_0) / (R_0, P_0)}$ is concentrated in degree $0$.
This implies the claim.
\end{proof}

As suggested by the application of base change in the proof of Corollary \ref{cor:logcotangentseq} and the resulting integrality hypothesis in Corollary \ref{cor:discretecofiber}, flatness hypotheses are necessary when passing from ordinary to derived log geometry. Corollary \ref{cor:discretecofiber} is particularly convenient when using the characterization of log \'etale and log smooth morphisms in terms of charts, for which we need to impose an additional finiteness hypothesis:

\begin{proposition}
\label{prop:cotangent_vs_omega}
Let $(R_0, P_0) \xrightarrow{(f_0, f^\flat_0)} (A_0, M_0)$ be an integral map of discrete fine pre-log rings. 

\begin{enumerate}
\item If $(f_0, f_0^\flat)$ is log smooth, then it is derived log smooth.
\item If $(f_0, f_0^\flat)$ is log \'etale, then it is derived log \'etale.
\end{enumerate}
\end{proposition}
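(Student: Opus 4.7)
The plan is to reduce to a chart description via Kato's structure theorem \cite[Theorem 3.5]{Kat89} and then analyze the log cotangent complex via Corollary \ref{cor:discretecofiber}. First, since both conditions in the definition of derived log smooth/\'etale (projectivity, resp.\ contractibility, of the log cotangent complex and homotopical finite presentation of the underlying ring map) are Zariski, and hence strict \'etale, local on the target, and since Lemma \ref{lem:logcotangentloginv} renders the log cotangent complex insensitive to logification, I can pass to a strict \'etale cover of $A_0$ and replace the pre-log data by chart data. After this reduction, I may assume that $P_0 \to M_0$ is an integral injection of fine monoids with $P_0^\gp \to M_0^\gp$ injective, that $M_0^\gp/P_0^\gp$ is finitely generated with torsion of order invertible in $A_0$ (and in fact finite of order invertible in $A_0$ in the \'etale case), and that $R_0 \otimes_{\mathbb{Z}[P_0]}\mathbb{Z}[M_0] \to A_0$ is smooth (resp.\ \'etale).

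Under these hypotheses Corollary \ref{cor:discretecofiber} supplies the cofiber sequence
\[
A_0 \otimes_{\mathbb{Z}}^{\mathbb{L}} (M_0^\gp/P_0^\gp) \longrightarrow \mathbb{L}_{(A_0,M_0)/(R_0,P_0)} \longrightarrow \mathbb{L}_{A_0/(R_0 \otimes_{\mathbb{Z}[P_0]}\mathbb{Z}[M_0])}.
\]
In the log smooth case the torsion part of $M_0^\gp/P_0^\gp$ contributes nothing to the derived tensor product (multiplication by an invertible integer is both zero and an equivalence on it), so the left-hand term reduces to a finitely generated free $A_0$-module in degree zero; the right-hand term equals $\Omega^1_{A_0/R_0 \otimes_{\mathbb{Z}[P_0]}\mathbb{Z}[M_0]}$, a finitely generated projective module in degree zero. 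The middle term is therefore an extension of discrete projective $A_0$-modules and is itself projective. In the log \'etale case both endpoints vanish, by the same invertibility argument and by \'etaleness respectively, so $\mathbb{L}_{(A_0,M_0)/(R_0,P_0)} \simeq 0$. For homotopical finite presentation, $R_0 \to A_0$ factors as $R_0 \to R_0 \otimes_{\mathbb{Z}[P_0]}\mathbb{Z}[M_0] \to A_0$, where the first map is of finite presentation by integrality and fineness and the second is smooth (resp.\ \'etale); any finitely presented map of discrete commutative rings is homotopically finitely presented.

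The main obstacle is the chart reduction itself: Kato's charts need not coincide with the given pre-log structures, so one must carefully combine the logification invariance of $\mathbb{L}_{(-, -)/(-, -)}$ provided by Lemma \ref{lem:logcotangentloginv} with the strict \'etale base change \eqref{eq:strictet_bc} to verify that passing to chart data does not change the global cotangent complex or the ring-theoretic property of homotopical finite presentation. Once this bookkeeping is handled, the remainder is a direct application of the cofiber sequence together with the classical cotangent-complex characterization of smooth and \'etale maps of discrete commutative rings.
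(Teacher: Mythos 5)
Your proof follows essentially the same route as the paper's: reduce strict \'etale locally (via \eqref{eq:strictet_bc} and fppf descent of finite projectivity) and by logification invariance to Kato's chart description, then apply the cofiber sequence of Corollary \ref{cor:discretecofiber} and observe that both outer terms are discrete finitely generated projective modules (resp.\ vanish in the \'etale case), so the middle term is as well. The one flaw is the auxiliary assertion that any finitely presented map of discrete commutative rings is homotopically finitely presented: in the sense of \cite{TV08} this additionally requires the classical cotangent complex to be perfect, which is not automatic here since $\mathbb{Z}[P_0]\to\mathbb{Z}[M_0]$ need not be lci. Note, however, that the paper's own proof only establishes the cotangent-complex half of the definition and is silent on homotopical finite presentation, so on the substantive content the two arguments coincide.
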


\begin{proof}We give a proof of (1), since (2) is similar (in fact, easier). Using the base change property for strict \'etale maps \eqref{eq:strictet_bc}, one readily verifies that the claims can be checked strict \'etale locally on $(A_0, M_0)$. Indeed, suppose $(A_0,M_0)\to (B_0,N_0)$ is a strict \'etale cover such that  $(R_0,P_0)\to (B_0,N_0)$ is derived log smooth.
Then $B_0\otimes_{A_0}\bL_{(A_0,M_0) / (R_0,P_0)} \simeq \mathbb{L}_{(B_0, N_0) / (R_0, P_0)}$ is a finitely generated projective $B_i$-module.
By fppf descent  \cite[03DX]{stacks-project}, we deduce that $\bL_{(A_0,M_0) / (R_0,P_0)}$ is a finitely generated projective $A_0$-module as well.

Since the log cotangent complex is invariant under logification \cite[Theorem 8.16]{Ols05}, 
it suffices to check the statement on charts. 
Since the monoids are assumed to be fine, 
by appealing to \cite[Theorem 3.5]{Kat89} or \cite[Theorem IV.3.3.1]{Ogu18} we may assume
\begin{enumerate}
\item $R_0 \otimes_{{\mathbb Z}[P_0]} {\mathbb Z}[M_0] \to A_0$ is smooth, and
\item the induced map $P_0^\gp \to M_0^\gp$ is injective, 
and the torsion part of $M_0^\gp/P_0^\gp$ is of finite order invertible in $A_0$.
\end{enumerate}
After a further localization, 
\cite[Theorem IV.3.3.1]{Ogu18} allows us to assume that $P_0 \to M_0$ remains integral. 
Hence Corollary \ref{cor:discretecofiber} yields a cofiber sequence 
\[
A_0 \otimes_{\mathbb Z}^{\mathbb L} (M_0^\gp/P_0^\gp) \to 
{\mathbb L}_{(A_0, M_0) / (R_0, P_0)} \to 
{\mathbb L}_{A_0 / R_0 \otimes_{{\mathbb Z}[P_0]} {\mathbb Z}[M_0]}.
\] 

The right-hand module is concentrated in degree zero as the differentials by (1), 
while the left-hand module is concentrated in degree zero as the free $A_0$-module of rank that of the 
free part of $M_0^\gp/P_0^\gp$ by (2). 
Hence the middle term is concentrated in degree zero as the log differentials.
\end{proof}

In summary, there are implications for morphisms of discrete fine pre-log rings:
\begin{gather}
\label{eq:implications1}
\text{(integral log smooth)}
\Rightarrow
\text{(derived log smooth)}
\Rightarrow
\text{(log smooth),}
\\
\label{eq:implications2}
\text{(integral log \'etale)}
\Rightarrow
\text{(derived log \'etale)}
\Rightarrow
\text{(log \'etale).}
\end{gather}

\noindent Here the first implications in both (\ref{eq:implications1}) and (\ref{eq:implications2}) follow from \Cref{prop:cotangent_vs_omega}, while second implications follow from \cite[Theorem 5.6 and Theorem 6.4]{SSV16}.

\begin{example} Recall that the replete diagonal participates in a factorization \[X \xrightarrow{\Delta^\rep} X \times_S^\rep X \to X \times_S X\] of the diagonal map associated to a map $X \to S$ of affine derived pre-log schemes. The second map in this composite is an example of a log \'etale map according to Corollary \ref{cor:logcotangentseq}. This is analogous to the log diagonal studied by Kato--Saito \cite[\S 4]{KS04}.
\end{example}

\begin{example}
\label{exm:logetale2}
Let $A_0$ be a ring, and let $\theta\colon P_0 \to M_0$ be a (not necessarily integral) map of finitely generated monoids. 
We assume that the kernel and the torsion part of the cokernel of $\theta^\gp$ are finite groups with order invertible in $A_0$. This implies that the tensor product $A_0 \otimes_{\mathbb Z}^{\mathbb L} (M_0^\gp/P_0^\gp)$ is a free $A_0$-module concentrated in degree zero. Moreover, it does not make a difference whether we consider the ordinary or derived quotient $M_0^\gp/P_0^\gp$ after inducing up to $A_0$. Hence Corollary \ref{cor:logcotangentseq} implies that ${\mathbb L}_{(A_0[M_0], M_0) / (A_0[P_0], P_0)}$ is a free $A_0$-module concentrated in degree zero.
This means that the naturally induced map $f\colon (A_0[P_0],P_0)\to (A_0[M_0],M_0)$ is derived log smooth even if we do not assume that $\theta$ is integral.
If we further assume that the cokernel of $\theta^\gp$ is a finite group whose order is invertible in $A_0$, 
the same argument shows there is a canonical equivalence \[A_0 \otimes_{\mathbb Z}^{\mathbb L} (M_0^\gp/P_0^\gp) \xrightarrow{\simeq} \bL_{(A_0[M_0],M_0) / (A_0[P_0],P_0)}\] with contractible source, so that $f$ is derived log \'etale.
\end{example}

\begin{example}
\label{exm:logetale}
Let $k$ be a field of characteristic $0$, and let $P$ be the submonoid of $\bN^2$ generated by $(2,0)$, $(1,1)$, and $(0,2)$.
The inclusion $P\to \bN^2$ naturally induces
\[
f\colon (k[P],P)\to (k[\bN^2],\bN^2).
\]
Example \ref{exm:logetale2} shows that $f$ is  derived log \'etale.
The homomorphism $u\colon k[P]\to k$ sending all elements of $P$ to $0$ gives the pushout map
\[
f'\colon (k,P)\to (k\otimes_{k[P]}k[\bN^2],\bN^2).
\]
If $\bL_{(k\otimes_{k[P]}k[\bN^2],\bN^2) / (k,P)}$ is contractible, 
the argument of Bauer in \cite[\S 7.2]{Ols05} would imply there is a naturally induced equivalence
\[
(k\otimes_{k[P]}k[\bN^2])\otimes_k \bL_{(k,P) / (k[P],P)}
\xrightarrow{\simeq}
\bL_{(k\otimes_{k[P]}k[\bN^2],\bN^2) / (k[\bN^2],\bN^2)}
\]
This is false, however, as observed in \cite[Example 7.3]{Ols05}.
Thus $f'$ is \emph{not} derived log \'etale even though $f'$ is log \'etale.
The reason for this discrepancy is that $f'$ is the pullback of $f$ but not the homotopy pullback of $f$, i.e.,
$
k\otimes_{k[P]}k[\bN^2]
\not\simeq \pi_0(
k\otimes_{k[P]}k[\bN^2])
$.
\end{example}

\section{Logarithmic Hochschild homology of animated pre-log rings} 
\label{section:lhh}
After a brief review of Hochschild homology, 
we define log Hochschild homology of animated pre-log rings and 
note that it recovers Rognes' definition for discrete pre-log rings. 
Throughout, 
all tensor products are implicitly derived unless explicitly stated otherwise.

\subsection{Hochschild homology} Let $R \to A$ be a map of animated commutative rings. The \emph{Hochschild homology} $\HH(A / R)$ of $A$ relative to $R$ is the colimit $S^1 \otimes_R A$ of the constant diagram $A$ from $S^1$ to animated commutative $R$-algebras. The maps $* \to S^1 \to *$ exhibit $A \to {\rm HH}(A / R) \to A$ as an animated augmented commutative $A$-algebra, while the equivalence $S^1 \simeq * \sqcup_{* \sqcup *} *$ gives an equivalence $\HH(A / R) \simeq A \otimes_{A \otimes_R A} A$. If $\ul{X} \to \ul{S}$ denotes the corresponding map of affine derived schemes, we see that \[\HH({\ul{X}} / {\ul{S}}) \simeq {\ul X} \times_{\ul{X} \times_{\ul{S}} \ul{X}} \ul{X},\] 
the derived self-intersection of the derived diagonal $\Delta \colon {\ul{X}} \to {\ul{X}} \times_{\ul{S}} \ul{X}$.

If $R_0 \to A_0$ is a map of discrete commutative rings, 
we note there are isomorphisms (see e.g., \cite[Corollary 1.18, Proposition 1.1.10]{Lod92})
\begin{equation}
\label{twofirsthh}
\pi_0\HH(A_0 / R_0)\cong A_0, 
\qquad \pi_1\HH(A_0 / R_0)\cong \Omega^1_{A_0 / R_0}. 
\end{equation}

\subsection{Log Hochschild homology} 
Inspired by the description of Hochschild homology as the derived self-intersections of the diagonal 
and the relationship between the replete diagonal and the log cotangent complex described in 
Proposition \ref{prop:logcotangentreplete}, 
we propose the following definition 
(the reader is invited to compare with \cite[\S 13]{Rog09}):

\begin{definition}\label{def:loghh} Let $(R, P) \to (A, M)$ be a map of animated pre-log rings. The \emph{log Hochschild homology} ${\rm logHH}((A, M) / (R, P))$ is the animated commutative ring \[A\otimes_{(A\otimes_R A)_{M \oplus_P M}^{\rep}} A = A \otimes_{(A \otimes_R A) \otimes_{{\mathbb Z}[M \oplus_P M]} {\mathbb Z}[(M \oplus_P M)^\rep]} A.\]
\end{definition}

Let $X \to S$ be the map of affine pre-log derived schemes corresponding to $(R, P) \to (A, M)$. 
Then log Hochschild homology 
${\rm logHH}(X / S)$ is by definition the affine derived scheme \[\underline{X \times_{X \times_S X}^\rep X}\] underlying the derived self-intersections $X \times_{X \times_S X}^\rep X$ of the replete diagonal $X \to X \times_S^\rep X$. 

\begin{proposition}\label{prop:basechangeHH}
Let $(R, P) \to (A, M) \to (B, N)$ be maps of animated pre-log rings. There is an equivalence of animated commutative rings 
\[
A \otimes_{\logHH((A, M) / (R, P))} \logHH((B, N) / (R, P)) \simeq \logHH((B, N) / (A, M)).
\] 
\end{proposition}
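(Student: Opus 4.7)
Introduce the derived commutative rings
\[
L_X := (A\otimes_R A) \otimes_{\bZ[M\oplus_P M]}\bZ[(M\oplus_P M)^\rep],
\]
and analogously $L_Y$, $L_{Y/X}$ using the pre-log data for $(B,N)/(R,P)$ and $(B,N)/(A,M)$, respectively. By Definition \ref{def:loghh},
\[
\logHH((A,M)/(R,P)) = A\otimes_{L_X} A,\quad \logHH((B,N)/(R,P)) = B\otimes_{L_Y} B,\quad \logHH((B,N)/(A,M)) = B\otimes_{L_{Y/X}} B.
\]
The plan is to establish a pushout square of derived commutative rings
\[
\begin{tikzcd}
L_X \ar[r]\ar[d] & L_Y \ar[d]\\
A \ar[r] & L_{Y/X},
\end{tikzcd}
\]
i.e., $L_{Y/X}\simeq L_Y\otimes_{L_X}A$, and then conclude via the standard tensor-product identity for modules over a pushout of derived rings: for $P = R_1\otimes_{R_0} R_2$ and a $P$-algebra $B$, one has $B\otimes_P B \simeq (B\otimes_{R_1} B)\otimes_{R_2\otimes_{R_0} R_2} R_2$. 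Applied with $R_1 = L_Y$, $R_0 = L_X$, $R_2 = A$, $P = L_{Y/X}$, this yields
\[
\logHH((B,N)/(A,M)) = B\otimes_{L_{Y/X}} B \simeq (B\otimes_{L_Y} B)\otimes_{A\otimes_{L_X} A} A,
\]
which is the claim after transposing tensor factors.

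To establish the pushout square, one observes that each of $L_X$, $L_Y$, $L_{Y/X}$ decomposes as a tensor product of a ``ring piece'' and a ``repleted-monoid piece,'' so the square reduces to two sub-pushouts: (i) the ring pushout $B\otimes_A B \simeq A\otimes_{A\otimes_R A}(B\otimes_R B)$, a standard identity; and (ii) an equivalence of derived commutative monoids
\[
(N\oplus_M N)^\rep \simeq M \oplus_{(M\oplus_P M)^\rep}(N\oplus_P N)^\rep.
\]
For (ii), apply Proposition \ref{prop:repletesplit} to the fold maps $M\oplus_P M\to M$, $N\oplus_P N\to N$, and $N\oplus_M N\to N$---each of which admits a section---to obtain splittings $(M\oplus_P M)^\rep \simeq M\oplus M^\gp/P^\gp$, $(N\oplus_P N)^\rep \simeq N\oplus N^\gp/P^\gp$, and $(N\oplus_M N)^\rep \simeq N\oplus N^\gp/M^\gp$. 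Under these identifications the augmentation $(M\oplus_P M)^\rep \to M$ is the first projection, and the functoriality map $(M\oplus_P M)^\rep \to (N\oplus_P N)^\rep$ is the direct sum of $M\to N$ and $M^\gp/P^\gp \to N^\gp/P^\gp$. The pushout in (ii) therefore computes componentwise as $N \oplus (N^\gp/P^\gp)/(M^\gp/P^\gp)$, and the second summand is identified with $N^\gp/M^\gp$ via the transitivity cofiber sequence $M^\gp/P^\gp \to N^\gp/P^\gp \to N^\gp/M^\gp$ of derived abelian groups. Applying $\bZ[-]$ to (ii) and combining with (i) produces the required pushout square.

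The main obstacle is (ii): it requires compatible invocations of Proposition \ref{prop:repletesplit} across three distinct fold maps and tracing the induced augmentation and functoriality maps through the resulting splittings. Once this is carried out, the derivation of the base change formula is formal.
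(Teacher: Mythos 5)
Your proposal is correct and follows essentially the same route as the paper: reduce to the pushout identity $L_{Y/X}\simeq A\otimes_{L_X}L_Y$, split it into the ring factor and the $\bZ[-]$ of the monoid factor, and compute the latter by applying Proposition \ref{prop:repletesplit} to the three fold maps together with the transitivity cofiber sequence $M^\gp/P^\gp\to N^\gp/P^\gp\to N^\gp/M^\gp$. Your write-up only spells out in more detail the formal colimit step and the compatibility of the splittings that the paper leaves implicit.
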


\begin{proof} It suffices to prove that \begin{equation}\label{itsufficestoprovethat} A \otimes_{(A \otimes_R A)^\rep_{M \oplus_P M}} (B \otimes_R B)^\rep_{N \oplus_P N} \simeq (B \otimes_A B)^\rep_{N \oplus_M N}.\end{equation} Since $A \otimes_{A \otimes_R A} (B \otimes_R B) \simeq B \otimes_A B$ and ${\mathbb Z}[M] \otimes_{{\mathbb Z}[M \oplus_P M]} {\mathbb Z}[N \oplus_P N] \simeq {\mathbb Z}[N \oplus_M N]$, \eqref{itsufficestoprovethat} follows from the equivalences \begin{align*}{\mathbb Z}[M] \otimes_{{\mathbb Z}[(M \oplus_P M)^\rep]} {\mathbb Z}[(N \oplus_P N)^\rep] &\simeq {\mathbb Z}[M] \otimes_{{\mathbb Z}[M \oplus (M^\gp/P^\gp)]} {\mathbb Z}[N \oplus (N^\gp/P^\gp)] \\ &\simeq {\mathbb Z}[N \oplus (N^\gp/M^\gp)] \\ &\simeq {\mathbb Z}[(N \oplus_M N)^\rep]\end{align*} provided by Proposition \ref{prop:repletesplit}.
\end{proof}

\subsection{Comparison with Rognes' definition of log \texorpdfstring{$\HH$}{HH}} 
In \cite[Definition 3.21]{Rog09}, 
Rognes introduced absolute log Hochschild homology of discrete log rings. A direct generalization goes as follows: for a map $(R, P) \to (A, M)$ of animated pre-log rings, let $S^1 \oplus_P M$ denote the tensor with $S^1$ in the $\infty$-category of animated commutative monoids under $P$. Let $(S^1 \oplus_P M)^\rep$ denote the repletion of the augmentation $S^1 \oplus_P M \to M$, and consider the pushout of the diagram \begin{equation}\label{rognesloghh}S^1 \otimes_R A \xleftarrow{} S^1 \otimes_{{\mathbb Z}[P]} {\mathbb Z}[M] \simeq {\mathbb Z}[S^1 \oplus_P M] \xrightarrow{} {\mathbb Z}[(S^1 \oplus_P M)^\rep]\end{equation}  in the $\infty$-category of animated commutative rings.

\begin{proposition}
\label{prop:rognescomparison2} Let $(R, P) \to (A, M)$ be a map of animated
pre-log rings. The pushout of the diagram \eqref{rognesloghh} is naturally equivalent to ${\rm logHH}((A, M) / (R, P))$. 
\end{proposition}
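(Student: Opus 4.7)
The plan is to rewrite both sides as iterated derived pushouts in a common form and then invoke associativity of the derived tensor product. From $S^1 \simeq \ast \sqcup_{\ast \sqcup \ast} \ast$ one obtains the Hochschild-style decompositions
\[
S^1 \otimes_R A \simeq A \otimes_{A \otimes_R A} A, \qquad S^1 \oplus_P M \simeq M \oplus_{M \oplus_P M} M.
\]
Since $\mathbb{Z}[-]$ is a left adjoint, one deduces $\mathbb{Z}[S^1 \oplus_P M] \simeq \mathbb{Z}[M] \otimes_{\mathbb{Z}[M \oplus_P M]} \mathbb{Z}[M]$.

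The key technical step is the analogous identification for the repletion:
\[
(S^1 \oplus_P M)^\rep \simeq M \oplus_{(M \oplus_P M)^\rep} M
\]
in derived commutative monoids. To prove it, I would apply Proposition \ref{prop:repletesplit} to both sides: the augmentation $S^1 \oplus_P M \to M$ admits a section from either basepoint $\ast \to S^1$, while the codiagonal $M \oplus_P M \to M$ admits a section from either factor inclusion, so each side splits as $M \oplus G$ for an appropriate grouplike derived monoid $G$. On the left, $G = (S^1 \oplus_P M)^\gp/M^\gp$; since group completion commutes with colimits, $(S^1 \oplus_P M)^\gp \simeq M^\gp \oplus_{M^\gp \oplus_{P^\gp} M^\gp} M^\gp$, which a direct computation in connective derived abelian groups identifies with $M^\gp \oplus \Sigma(M^\gp/P^\gp)$, hence $G \simeq \Sigma(M^\gp/P^\gp)$. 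On the right, Proposition \ref{prop:repletesplit} gives $(M \oplus_P M)^\rep \simeq M \oplus (M^\gp/P^\gp)$, and the pushout $M \oplus_{M \oplus (M^\gp/P^\gp)} M$ in derived commutative monoids equals $M \oplus \Sigma(M^\gp/P^\gp)$ via the bar construction on the grouplike $M^\gp/P^\gp$. Naturality of these splittings in the maps from $S^1 \oplus_P M$ yields the equivalence, and applying $\mathbb{Z}[-]$ gives $\mathbb{Z}[(S^1 \oplus_P M)^\rep] \simeq \mathbb{Z}[M] \otimes_{\mathbb{Z}[(M \oplus_P M)^\rep]} \mathbb{Z}[M]$.

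Abbreviate $Y := A \otimes_R A$, $U := \mathbb{Z}[M \oplus_P M]$, $V := \mathbb{Z}[(M \oplus_P M)^\rep]$, $C := \mathbb{Z}[M]$, and $X := Y \otimes_U V$, so that Definition \ref{def:loghh} reads $\logHH((A,M) / (R,P)) = A \otimes_X A$. Substituting the above into the pushout of \eqref{rognesloghh} yields $(A \otimes_Y A) \otimes_{C \otimes_U C} (C \otimes_V C)$. Two base-change identities, namely $C \otimes_V C \simeq (C \otimes_U C) \otimes_{V \otimes_U V} V$ and $X \otimes_Y X \simeq Y \otimes_U (V \otimes_U V)$ (both verified by comparing $D$-valued points), together with associativity of the derived tensor product, reduce this expression and the decomposition $A \otimes_X A \simeq (A \otimes_Y A) \otimes_{X \otimes_Y X} X$ to the common form $(A \otimes_Y A) \otimes_{V \otimes_U V} V$, completing the proof. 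The main obstacle will be the repletion identity in the second paragraph, which requires a careful analysis of how group completion and repletion interact with the $S^1$-tensor in derived commutative monoids; once it is in place, the remaining manipulations are formal consequences of universal properties of derived pushouts.
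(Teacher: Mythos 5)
Your proof is correct and follows essentially the same route as the paper's: both hinge on the identification $(S^1 \oplus_P M)^\rep \simeq M \oplus_{(M \oplus_P M)^\rep} M$, obtained from Proposition \ref{prop:repletesplit} together with the bar/suspension description of $B(M^\gp/P^\gp)$, after which everything is a formal commutation of colimits. The paper packages that last step as interchanging horizontal and vertical pushouts in a $3\times 3$ diagram of derived rings, which is a tidier way of organizing your base-change identities but not a different argument.
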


\begin{proof} There are natural equivalences 
\[
S^1 \otimes_R A \simeq A \otimes_{A \otimes_R A} A 
\quad 
\text{and} 
\quad 
S^1 \otimes_{{\mathbb Z}[P]} {\mathbb Z}[M] \simeq {\mathbb Z}[M] \oplus_{{\mathbb Z}[M \oplus_P M]} {\mathbb Z}[M].
\] Moreover, we see that \[{\mathbb Z}[(S^1 \oplus_P M)^\rep] \simeq {\mathbb Z}[M] \otimes_{{\mathbb Z}[(M \oplus_P M)^\rep]} {\mathbb Z}[M]\] by applying Proposition \ref{prop:repletesplit} twice and the equivalence $B^{\rm cy}_{P^\gp}(M^\gp)/M^\gp \simeq B(M^\gp/P^\gp)$: Indeed, there are equivalences of animated monoids \[B^{\rm cy}_{P}(M)^\rep \simeq M \oplus B(M^\gp/P^\gp) \simeq M \oplus_{M \oplus (M^\gp/P^\gp)} M \simeq M \oplus_{(M \oplus_P M)^\rep} M.\] For the second equivalence we use that $B(-)$ is the suspension functor on the category of animated monoids. Hence the result follows from commuting colimits in the diagram 
\[
\begin{tikzcd}[row sep = small] 
A & A \otimes_R A\ar{l} \ar{r} & A \\ 
{\mathbb Z}[M] \ar{u} \ar{d}{=} & {\mathbb Z}[M \oplus_P M] \ar{l} \ar{u} \ar{r} \ar{d} & {\mathbb Z}[M] \ar{u} \ar{d}{=} \\ 
{\mathbb Z}[M] & {\mathbb Z}[(M \oplus_P M)^\rep] \ar{l} \ar{r} & {\mathbb Z}[M]
\end{tikzcd}
\] 
of animated commutative rings: 
Forming the horizontal 
pushouts first gives Rognes' definition, 
while forming the vertical pushouts first gives our formulation.  
\end{proof}

\subsection{Log Hochschild homology of free pre-log algebras} Let $(R_0, P_0)$ be a discrete pre-log ring and let $(A_0, M_0)$ be a free pre-log algebra over $(R_0, P_0)$. Since $(A_0, M_0)$ is cofibrant over $(R_0, P_0)$, no replacement is necessary when defining log Hochschild homology ${\rm logHH}((A_0, M_0) / (R_0, P_0))$. In particular, by virtue of \Cref{prop:rognescomparison2}, we have that ${\rm logHH}((A_0, M_0) / (R_0, P_0))$ is naturally equivalent to the point-set pushout of the diagram \begin{equation}\label{underivedloghh}{\rm HH}(A_0/R_0) \xleftarrow{} {\mathbb Z}[B^{\rm cy}_{P_0}(M_0)] \xrightarrow{} {\mathbb Z}[B^\rep_{P_0}(M_0)]\end{equation} of simplicial commutative rings.
Here $B^{\rm cy}_{P_0}(M_0)$ is the (underived) cyclic bar construction of $M_0$ relative to $P_0$ and $B^\rep_{P_0}(M_0)$ 
is the (discrete) pullback of $M_0 \xrightarrow{} M_0^\gp \xleftarrow{} B^{\rm cy}_{P_0^\gp}(M_0^\gp)$. 

We will consider these (underived) constructions for any map of discrete commutative monoids $P_0 \to M_0$
and apply the following results when $M_0$ is free over $P_0$:

\begin{lemma}\label{lem:discreterepletebar}
Let $P_0\to M_0$ be a map of discrete commutative monoids.
There is an isomorphism of animated commutative monoids
\[M_0 \oplus B(M_0^\gp/P_0^\gp) \xrightarrow{\cong} B^\rep_{P_0}(M_0).\] 
In simplicial degree $q$, 
it is given by 
$(m, [g_1], \dots, [g_q]) \mapsto (m, (\gamma_0(m)(g_1 \cdots g_q)^{-1}, g_1, \dots, g_q)$.
\end{lemma}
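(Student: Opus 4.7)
The plan is in three steps: (i) split the augmentation $\epsilon \colon B^{\mathrm{cy}}_{P_0^\gp}(M_0^\gp) \to M_0^\gp$, $(g_0,\dots,g_q) \mapsto g_0 \cdots g_q$, as a map of simplicial commutative monoids; (ii) transport the splitting across the defining pullback $B^{\rep}_{P_0}(M_0) = M_0 \times_{M_0^\gp} B^{\mathrm{cy}}_{P_0^\gp}(M_0^\gp)$; and (iii) read off the explicit degree-$q$ formula.

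For (i), because $M_0^\gp$ is a group, the assignment $g \mapsto (g,e,\dots,e)$ in degree $q$ defines a simplicial section $\iota \colon M_0^\gp \to B^{\mathrm{cy}}_{P_0^\gp}(M_0^\gp)$ of $\epsilon$: indeed, every face and degeneracy map leaves $(g,e,\dots,e)$ invariant, with the inner faces multiplying identities, the degeneracies inserting further identities, and the cyclic face $d_q$ using commutativity of $M_0^\gp$. Since $B^{\mathrm{cy}}_{P_0^\gp}(M_0^\gp)$ is levelwise grouplike and split extensions of grouplike simplicial commutative monoids decompose as direct sums, we obtain an isomorphism
\[
B^{\mathrm{cy}}_{P_0^\gp}(M_0^\gp) \;\cong\; M_0^\gp \oplus B(M_0^\gp/P_0^\gp),
\]
with the projection to the second summand given in degree $q$ by $(g_0,\dots,g_q) \mapsto ([g_1],\dots,[g_q])$. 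Compatibility with the standard bar simplicial structure on $B(M_0^\gp/P_0^\gp)$ is a short direct check: the inner faces $d_i$ for $0 < i < q$ match term-by-term, while $d_0$ and $d_q$ fold the boundary factor $g_0$ into $g_1$ or $g_q$, which is invisible modulo $P_0^\gp$ and matches the standard bar $d_0,d_q$. The inverse isomorphism reads $(m,[g_1],\dots,[g_q]) \mapsto (m(g_1\cdots g_q)^{-1}, g_1, \dots, g_q)$.

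For (ii) and (iii), applying $M_0 \times_{M_0^\gp}(-)$ to the splitting replaces the $M_0^\gp$-summand by $M_0$, yielding
\[
B^{\rep}_{P_0}(M_0) \;\cong\; M_0 \oplus B(M_0^\gp/P_0^\gp),
\]
and unwinding through the inverse above gives the claimed formula $(m,[g_1],\dots,[g_q]) \mapsto (m,(\gamma_0(m)(g_1\cdots g_q)^{-1}, g_1, \dots, g_q))$. The only non-automatic step is the degreewise verification that $\iota$ is simplicial and that the induced projection commutes with all face and degeneracy maps of $B^{\mathrm{cy}}$---especially the cyclic $d_q$---which is elementary bookkeeping but requires care with the indexing conventions of the cyclic bar construction. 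I do not expect any essential obstacle beyond this.
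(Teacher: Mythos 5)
Your argument is correct and lands on exactly the isomorphism in the statement, but it is organized differently from the paper's proof. The paper simply writes down the degree-$q$ formula, checks it is well defined on the iterated pushout over $P_0^\gp$, exhibits the inverse $(m,(g_0,\dots,g_q)) \mapsto (m,[g_1],\dots,[g_q])$, and asserts compatibility with the simplicial structure maps. You instead derive the splitting structurally: the section $g \mapsto (g,e,\dots,e)$ of the augmentation is simplicial, $B^{\mathrm{cy}}_{P_0^\gp}(M_0^\gp)$ is levelwise an abelian group so the split extension decomposes as a direct sum, the complement is identified with $B(M_0^\gp/P_0^\gp)$, and pulling back along $M_0 \to M_0^\gp$ yields the statement. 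This buys a conceptual explanation of where the factor $\gamma_0(m)(g_1\cdots g_q)^{-1}$ comes from (it is $\iota(\gamma_0(m))$ multiplied by the canonical lift into the kernel of the augmentation), at the cost of still having to verify that the projection $(g_0,\dots,g_q)\mapsto([g_1],\dots,[g_q])$ is simplicial and descends to the pushout over $P_0^\gp$ --- which is essentially the same bookkeeping the paper's proof defers. One small correction in that bookkeeping: for $d_0$ and $d_q$ the products $g_0g_1$ and $g_qg_0$ land in the \emph{zeroth} coordinate, which the projection discards; that, and not invisibility modulo $P_0^\gp$, is why these faces match the bar-construction faces, since $g_0$ need not lie in the image of $P_0^\gp$. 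The conclusion you draw there is nevertheless correct, so this does not affect the validity of the proof.
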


\begin{proof} The proposed map is well-defined, as for different representatives of the elements in $M_0^\gp/P_0^\gp$, the inversion used in the map will cancel their difference. An inverse sends a $q$-simplex $(m, (g_0, \dots, g_q))$ to the element $(m, [g_1], \dots, [g_q])$. It is straight-forward to check that this map respects the simplicial structure maps.\qedhere 
\end{proof}

\begin{corollary}\label{cor:lowdegreecomputations} Let $P_0 \to M_0$ be a map of discrete commutative monoids. There are isomorphisms of ${\mathbb Z}[M_0]$-modules 
\[\pi_0{\mathbb Z}[B^\rep_{P_0}(M_0)] \cong {\mathbb Z}[M_0] \quad \text{ and } \quad  \pi_1{\mathbb Z}[B^\rep_{P_0}(M_0)] \cong {\mathbb Z}[M_0] \otimes_{\mathbb Z} (M_0^{\gp}/P_0^{\gp}).\]
\end{corollary}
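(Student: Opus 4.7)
The plan is to combine \Cref{lem:discreterepletebar} with standard computations of the integral homology of Eilenberg--Mac Lane spaces. First, I would invoke \Cref{lem:discreterepletebar} to replace $B^\rep_{P_0}(M_0)$ with the equivalent derived commutative monoid $M_0 \oplus B(M_0^\gp/P_0^\gp)$. Since the monoid algebra functor ${\mathbb Z}[-] \colon {\rm dMon} \to {\rm dRing}$ is a left adjoint and therefore preserves coproducts, this produces an equivalence of (derived) commutative rings
\[
{\mathbb Z}[B^\rep_{P_0}(M_0)] \simeq {\mathbb Z}[M_0] \otimes_{\mathbb Z} {\mathbb Z}[B(M_0^\gp/P_0^\gp)].
\]

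Next, I would use that ${\mathbb Z}[M_0]$ is a free, hence flat, abelian group concentrated in degree zero, so that tensoring with ${\mathbb Z}[M_0]$ commutes with homotopy groups:
\[
\pi_n {\mathbb Z}[B^\rep_{P_0}(M_0)] \cong {\mathbb Z}[M_0] \otimes_{\mathbb Z} \pi_n {\mathbb Z}[B(M_0^\gp/P_0^\gp)].
\]
This reduces the problem to computing $\pi_0$ and $\pi_1$ of ${\mathbb Z}[BG]$ for $G := M_0^\gp/P_0^\gp$. Since the underlying simplicial set of $BG$ models the Eilenberg--Mac Lane space $K(G, 1)$, we have $\pi_n {\mathbb Z}[BG] \cong H_n(G; {\mathbb Z})$. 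In particular, $\pi_0 {\mathbb Z}[BG] \cong {\mathbb Z}$, and since $G$ is already abelian, the Hurewicz map identifies $\pi_1 {\mathbb Z}[BG] \cong G$. Substituting back yields the claimed isomorphisms, and the ${\mathbb Z}[M_0]$-module structures are automatic from the monoid map $M_0 \to B^\rep_{P_0}(M_0)$.

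There is no serious obstacle here; the only minor subtlety is whether $M_0^\gp/P_0^\gp$ is interpreted as the classical cokernel or the derived cofiber of $P_0^\gp \to M_0^\gp$ (these differ when the map has nonzero kernel). In the derived interpretation, the Hurewicz step extracts only $\pi_0$ of this cofiber, which equals the classical cokernel, so the two interpretations agree at the level of $\pi_1 {\mathbb Z}[B^\rep_{P_0}(M_0)]$ and the stated formula is unambiguous.
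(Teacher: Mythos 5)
Your proof is correct and follows essentially the same route as the paper: both arguments reduce via Lemma \ref{lem:discreterepletebar} to the splitting ${\mathbb Z}[B^\rep_{P_0}(M_0)] \cong {\mathbb Z}[M_0] \otimes_{\mathbb Z} {\mathbb Z}[B(M_0^\gp/P_0^\gp)]$ and then read off low-degree homotopy. The only difference is packaging — the paper writes out the Moore complex and identifies $H_1$ by hand, whereas you cite the standard identification $\pi_*{\mathbb Z}[BG] \cong H_*(G;{\mathbb Z})$ with $H_1(G) \cong G$ for $G$ abelian; these are the same computation.
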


\begin{proof} By Lemma \ref{lem:discreterepletebar}, the replete bar construction $B^\rep_{P_0}(M_0)$ is levelwise isomorphic to $M_0 \oplus B(M_0^\gp/P_0^\gp)$. Hence the Moore complex associated to ${\mathbb Z}[B^\rep_{P_0}(M_0)]$ takes the form \[\cdots \xrightarrow{} {\mathbb Z}[M_0] \otimes_{\mathbb Z} {\mathbb Z}[M_0^{\gp}/P_0^{\gp}]^{\otimes 2} \xrightarrow{\partial_2} {\mathbb Z}[M_0] \otimes_{\mathbb Z} {\mathbb Z}[M_0^{\gp}/P_0^{\gp}] \xrightarrow{\partial_1 = 0} {\mathbb Z}[M_0] \to 0.\] This immediately gives the desired identification in degree $0$. In degree $1$, we notice that the map \[({\mathbb Z}[M_0] \otimes_{{\mathbb Z}} {{\mathbb Z}[M_0^\gp/P_0^\gp]})/\!\!\sim \xrightarrow{} {\mathbb Z}[M_0] \otimes_{{\mathbb Z}} (M_0^\gp/P_0^\gp)\] sending $[m \otimes g]$ to $m \otimes g$ is a well-defined isomorphism, where $\sim$ is the equivalence relation ${\mathbb Z}[M_0]$-linearly generated by $m \otimes g_2 - m \otimes g_1g_2 + m \otimes g_1$. The source of this map is by definition $H_1$ of the Moore complex of the replete bar construction. 
\end{proof}

The following description of the log Hochschild homology of a free $(R_0, P_0)$-algebra, which follows from Proposition \ref{prop:basechangeHH}, will prove useful:

\begin{lemma}
\label{lem:loghhfree} 
Let $(R_0, P_0)$ be a discrete pre-log ring. 
There is a natural equivalence of animated commutative rings
\[
{\rm logHH}((F_{(R_0, P_0)}(X, Y)) / (R_0, P_0)) \simeq 
R_0 \otimes_{{\mathbb Z}} \HH({\mathbb Z}\langle Y \rangle) 
\otimes_{{\mathbb Z}} {\rm logHH}(({\mathbb Z}\langle X \rangle, \langle X \rangle) / ({\mathbb Z}, \{1\})).\] 
\end{lemma}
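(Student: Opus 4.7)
The plan is to invoke Rognes' description of $\logHH$ from Proposition \ref{prop:rognescomparison2} as a certain pushout of derived commutative rings, and to identify each of its three vertices explicitly for the free pre-log algebra $F_{(R_0,P_0)}(X,Y)$, using base change and multiplicativity of ordinary Hochschild homology together with the analogous properties of the (replete) bar construction of derived commutative monoids.

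Set $(A,M) = F_{(R_0,P_0)}(X,Y) = (R_0\langle X \sqcup Y \rangle, P_0 \oplus \langle X \rangle)$, which is cofibrant over $(R_0,P_0)$. Rognes' formula then realizes $\logHH((A,M)/(R_0,P_0))$ as the pushout of
\[
\HH(A/R_0) \longleftarrow \bZ[S^1 \oplus_{P_0} M] \longrightarrow \bZ[(S^1 \oplus_{P_0} M)^{\rep}].
\]
Flat base change and monoidality of $\HH$ yield $\HH(A/R_0) \simeq R_0 \otimes_{\bZ} \HH(\bZ\langle X \rangle) \otimes_{\bZ} \HH(\bZ\langle Y \rangle)$. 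Since tensoring with $S^1$ in derived commutative monoids under $P_0$ preserves coproducts, one obtains $S^1 \oplus_{P_0}(P_0 \oplus \langle X \rangle) \simeq P_0 \oplus B^{\cy}(\langle X \rangle)$; moreover, the pullback $N \mapsto M \times_{M^{\gp}} N^{\gp}$ defining the repletion distributes over coproducts of grouplike derived monoids, so the rightmost vertex is identified with $\bZ[P_0] \otimes_{\bZ} \bZ[B^{\rep}(\langle X \rangle)]$.

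Combining these identifications and cancelling the common $\bZ[P_0]$-factor, on which both structure maps act as the identity, the pushout collapses to
\[
R_0 \otimes_{\bZ} \HH(\bZ\langle Y \rangle) \otimes_{\bZ} \Bigl( \HH(\bZ\langle X \rangle) \otimes_{\bZ[B^{\cy}(\langle X \rangle)]} \bZ[B^{\rep}(\langle X \rangle)] \Bigr),
\]
and the parenthesized factor is precisely $\logHH((\bZ\langle X \rangle, \langle X \rangle)/(\bZ, \{1\}))$ by another application of Rognes' formula. An equivalent route would be to first derive a tensor-product formula $\logHH((A_1,M_1) \otimes (A_2,M_2)/(\bZ, \{1\})) \simeq \logHH((A_1,M_1)/(\bZ, \{1\})) \otimes_{\bZ} \logHH((A_2,M_2)/(\bZ, \{1\}))$ for coproducts of absolute derived pre-log rings, and then invoke Proposition \ref{prop:basechangeHH} along the tower $(\bZ, \{1\}) \to (R_0,P_0) \to F_{(R_0,P_0)}(X,Y)$ to pass from the absolute to the relative version. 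The only delicate point in either approach is verifying that repletion distributes over the relevant coproducts; this reduces to the fact that finite coproducts and products of grouplike derived monoids coincide, so that group completion splits across the coproduct and the pullback decomposes accordingly.
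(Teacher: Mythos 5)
Your proposal is correct, and its second route (multiplicativity of absolute $\logHH$ over $(\bZ,\{1\})$ followed by base change along $(\bZ,\{1\})\to(R_0,P_0)\to F_{(R_0,P_0)}(X,Y)$ via Proposition \ref{prop:basechangeHH}) is precisely the argument the paper intends, as it states the lemma ``follows from Proposition \ref{prop:basechangeHH}'' without further proof. Your first route is just an explicit unwinding of the same computation at the level of Rognes' pushout, and the key point you flag --- that repletion splits across $P_0\oplus\langle X\rangle$ because finite coproducts and products of derived commutative monoids coincide and group completion respects them --- is exactly the verification the paper leaves implicit.
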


\subsection{Recovering the log K\"ahler differentials from log Hochschild homology} 
Let $(f_0, f_0^\flat) \colon (R_0, P_0, \beta_0) \to (A_0, M_0, \alpha_0)$ be a map of discrete pre-log rings. 
Recall the definition of the log K\"ahler differentials from Section \ref{subsec:logkahler}. 
Lemma \ref{lem:loghhfree} gives the following:

\begin{corollary}\label{cor:loghhkahlerfree} Let $F_{(R_0, P_0)}(X, Y)$ be a free pre-log algebra. 
There is an isomorphism of $R_0\langle X \sqcup Y \rangle$-modules
\[\Phi_1 \colon \Omega^1_{F_{(R_0, P_0)}(X, Y) / (R_0, P_0)} \to \pi_1{\rm logHH}((F_{(R_0, P_0)}(X, Y)) / (R_0, P_0)).\] 
\end{corollary}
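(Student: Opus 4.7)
The plan is to apply Lemma \ref{lem:loghhfree} to reduce the computation of $\pi_1 \logHH$ to a K\"unneth-style argument over $\mathbb{Z}$, and then invoke the low-degree identifications provided by \eqref{twofirsthh} and Corollary \ref{cor:lowdegreecomputations}.

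First, I would combine Lemma \ref{lem:loghhfree} with the observation that $\mathbb{Z}[B^{\rm cy}(\langle X\rangle)] \simeq \HH(\mathbb{Z}\langle X\rangle/\mathbb{Z})$, so that Proposition \ref{prop:rognescomparison2}, applied to the free pre-log ring $(\mathbb{Z}\langle X\rangle, \langle X\rangle)$ over $(\mathbb{Z}, \{1\})$, identifies the last tensor factor with $\mathbb{Z}[B^{\rep}(\langle X\rangle)]$. This rewrites the log Hochschild homology as
\[
\logHH((F_{(R_0,P_0)}(X,Y))/(R_0,P_0)) \simeq R_0 \otimes_{\mathbb Z} \HH(\mathbb{Z}\langle Y\rangle/\mathbb{Z}) \otimes_{\mathbb Z} \mathbb{Z}[B^{\rep}(\langle X\rangle)].
\]
Because $\pi_0\HH(\mathbb{Z}\langle Y\rangle/\mathbb{Z}) \cong \mathbb{Z}\langle Y\rangle$ and $\pi_0\mathbb{Z}[B^{\rep}(\langle X\rangle)] \cong \mathbb{Z}\langle X\rangle$ are free (hence flat) $\mathbb{Z}$-modules by \eqref{twofirsthh} and Corollary \ref{cor:lowdegreecomputations}, the K\"unneth spectral sequence over $\mathbb{Z}$ collapses in low degrees and yields a canonical splitting
\[
\pi_1 \logHH \cong R_0 \otimes_{\mathbb Z}\!\Bigl(\pi_1\HH(\mathbb{Z}\langle Y\rangle/\mathbb{Z}) \otimes_{\mathbb Z} \mathbb{Z}\langle X\rangle \;\oplus\; \mathbb{Z}\langle Y\rangle \otimes_{\mathbb Z} \pi_1\mathbb{Z}[B^{\rep}(\langle X\rangle)]\Bigr).
\]

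Finally, I would insert $\pi_1\HH(\mathbb{Z}\langle Y\rangle/\mathbb{Z}) \cong \Omega^1_{\mathbb{Z}\langle Y\rangle/\mathbb{Z}} = \bigoplus_y \mathbb{Z}\langle Y\rangle\cdot dy$ from \eqref{twofirsthh} and $\pi_1\mathbb{Z}[B^{\rep}(\langle X\rangle)] \cong \bigoplus_x \mathbb{Z}\langle X\rangle\cdot \dlog(x)$ from Corollary \ref{cor:lowdegreecomputations}. After rearrangement, the right-hand side becomes $R_0\langle X \sqcup Y\rangle \otimes_{\mathbb Z}\bigl(\bigoplus_y \mathbb{Z}\{dy\} \oplus \bigoplus_x \mathbb{Z}\{\dlog(x)\}\bigr)$, which matches the explicit presentation of $\Omega^1_{F_{(R_0,P_0)}(X,Y)/(R_0,P_0)}$ given in \eqref{logkahlerfree}. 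The map $\Phi_1$ is then the $R_0\langle X \sqcup Y\rangle$-linear map sending each generator $dy$ (respectively $\dlog(x)$) to its canonical class in the relevant $\pi_1$-summand, and it is an isomorphism by the preceding identification.

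The main point requiring care is the K\"unneth collapse in degree one; under other hypotheses one would expect Tor contributions, but here the freeness of the relevant $\pi_0$'s as $\mathbb{Z}$-modules makes this step routine. Naturality of the Quillen left adjoints defining log K\"ahler differentials and log Hochschild homology ensures $\Phi_1$ is well defined and will provide the input for the edge-map identification in the logarithmic Andr\'e--Quillen spectral sequence used in the remainder of the paper.
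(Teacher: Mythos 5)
Your proposal is correct and follows essentially the same route as the paper's proof: apply Lemma \ref{lem:loghhfree} to split $\logHH$ as a tensor product over $\mathbb{Z}$, then use the Tor/K\"unneth spectral sequence together with Corollary \ref{cor:lowdegreecomputations} and the classical identifications \eqref{twofirsthh} to compute $\pi_1$ and match it with the presentation \eqref{logkahlerfree}. Your write-up merely spells out the identification of the two degree-one summands with the $dy$ and $\dlog(x)$ generators more explicitly than the paper does.
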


\begin{proof} Since $F_{(R_0, P_0)}(X, Y)$ is free, we can model log Hochschild homology by the pushout of \eqref{underivedloghh}. Apply Lemma \ref{lem:loghhfree} and consider the convergent ${\rm Tor}$-spectral sequence \[E^2_{p, q} := {\rm Tor}_p^{{\mathbb Z}}(\pi_*\HH^{R_0}(R_0\langle Y \rangle), \pi_*{\mathbb Z}[B^\rep\langle X \rangle])_q \implies \pi_{p + q}{\rm logHH}((F_{(R_0, P_0)}(X, Y)) / (R_0, P_0)).\] By Corollary \ref{cor:lowdegreecomputations}, we obtain $E^2_{p, 0}$ for $p > 0$ and $E^2_{0, 1} \cong \Omega^1_{F_{(R_0, P_0)}(X, Y) / (R_0, P_0)}$. 
\end{proof}

From this, 
we deduce the following more general result:

\begin{proposition}
\label{prop:logkahler} 
Let $(R_0, P_0) \to (A_0, M_0)$ be a map of discrete pre-log rings. 
There is an isomorphism of $A_0$-modules
\[
\Phi_1 
\colon 
\Omega^1_{(A_0, M_0)/(R_0, P_0)} 
\to 
\pi_1\logHH((A_0, M_0) / (R_0, P_0)).
\] 
\end{proposition}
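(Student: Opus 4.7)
The plan is to extend the isomorphism for free pre-log algebras established in \Cref{cor:loghhkahlerfree} to arbitrary discrete pre-log algebras via a simplicial resolution argument. First I would choose a simplicial resolution $(A_\bullet, M_\bullet) \xrightarrow{\simeq} (A_0, M_0)$ by free pre-log $(R_0, P_0)$-algebras in the projective model structure on $\sPreLog_{(R_0, P_0)/}$, writing $(A_n, M_n) = F_{(R_0, P_0)}(X_n, Y_n)$. Because log Hochschild homology is a derived construction (for instance via \Cref{prop:rognescomparison2}), we obtain an equivalence
\[
\logHH((A_0, M_0) / (R_0, P_0)) \simeq |\logHH((A_\bullet, M_\bullet) / (R_0, P_0))|
\]
of derived commutative rings.

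In each simplicial degree $n$, \Cref{cor:loghhkahlerfree} supplies an isomorphism $\Phi_1^n \colon \Omega^1_{(A_n, M_n) / (R_0, P_0)} \xrightarrow{\cong} \pi_1 \logHH((A_n, M_n) / (R_0, P_0))$. The construction of $\Phi_1^n$ via the Tor spectral sequence in the proof of \Cref{cor:loghhkahlerfree} is manifestly natural in $(A_n, M_n)$, so these maps assemble into an isomorphism $\Phi_1^\bullet$ of simplicial $A_\bullet$-modules. This defines the map $\Phi_1$ in the statement after taking $\pi_0$ and invoking the identifications below.

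Next I would exploit the bisimplicial structure of $\logHH((A_\bullet, M_\bullet) / (R_0, P_0))$, where the outer simplicial direction comes from the resolution and the inner one from the simplicial commutative ring structure of each $\logHH((A_n, M_n) / (R_0, P_0))$. The associated Eilenberg--Zilber spectral sequence takes the form
\[
E^2_{p, q} = \pi_p \pi_q \logHH((A_\bullet, M_\bullet) / (R_0, P_0)) \implies \pi_{p + q} \logHH((A_0, M_0) / (R_0, P_0)).
\]
In total degree $1$ the only potential contributions are $E^2_{1, 0}$ and $E^2_{0, 1}$. Since $\pi_0 \logHH((A_n, M_n) / (R_0, P_0)) = A_n$ and $A_\bullet \xrightarrow{\simeq} A_0$ is a resolution, $E^2_{1, 0} = \pi_1 A_\bullet = 0$. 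For the remaining term, $\Phi_1^\bullet$ identifies $E^2_{0, 1}$ with $\pi_0 \Omega^1_{(A_\bullet, M_\bullet) / (R_0, P_0)}$. The functor $(A, M) \mapsto \Omega^1_{(A, M) / (R_0, P_0)}$ is a left adjoint (it corepresents log derivations by Section \ref{subsec:logkahler}), so it preserves colimits and hence $\pi_0$ of simplicial objects; this yields $\pi_0 \Omega^1_{(A_\bullet, M_\bullet) / (R_0, P_0)} \cong \Omega^1_{(A_0, M_0) / (R_0, P_0)}$. Assembling everything gives the desired isomorphism.

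The main obstacle is the bisimplicial bookkeeping: one must set up the Eilenberg--Zilber spectral sequence for the bisimplicial commutative ring $\logHH((A_\bullet, M_\bullet) / (R_0, P_0))$, verify that the isomorphism $\Phi_1^n$ produced by the Tor spectral sequence in \Cref{cor:loghhkahlerfree} is sufficiently natural to be a map of simplicial $A_\bullet$-modules, and confirm that no unanticipated differentials or extension problems appear in total degree $1$. Each of these checks is essentially formal given the vanishing $\pi_{> 0} A_\bullet = 0$ and the left adjoint nature of $\Omega^1_{(-, -) / (R_0, P_0)}$, but they do require some care with bisimplicial homotopy theory.
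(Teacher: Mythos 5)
Your proposal is correct and follows essentially the same route as the paper: resolve by levelwise free pre-log algebras, run the spectral sequence of the resulting bisimplicial object, kill $E^2_{p,0}$ for $p>0$ using $\pi_*A_\bullet \cong A_0$, and identify $E^2_{0,1}$ with $\Omega^1_{(A_0,M_0)/(R_0,P_0)}$ via the levelwise isomorphism of Corollary \ref{cor:loghhkahlerfree}. The only cosmetic difference is that the paper identifies $E^2_{0,1}$ as $\pi_0\mathbb{L}_{(A_0,M_0)/(R_0,P_0)}\cong\Omega^1_{(A_0,M_0)/(R_0,P_0)}$ by citing Olsson, whereas you argue directly that $\Omega^1$ preserves $\pi_0$ as a left adjoint; these amount to the same fact.
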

\begin{proof} In view of \Cref{prop:rognescomparison2}, 
we model 
log Hochschild homology by taking a cofibrant replacement $(A_0^{\rm cof}, M_0^{\rm cof}) \xrightarrow{\simeq} (A_0, M_0)$, where $(A_0^{\rm cof}, M_0^{\rm cof})$ is a levelwise free simplicial $(R_0, P_0)$-algebra and applying the 
construction \eqref{underivedloghh} levelwise.
As the resulting object $\logHH((A_0, M_0)/(R_0,P_0))$ is the diagonal of a bisimplicial commutative ring, there is a convergent spectral sequence \[E^2_{p, q} := \pi_p(\pi_q(B^{\rm cy}_{R_0}(A_0^{\rm cof}) \otimes_{{\mathbb Z}[B^{\rm cy}_{P_0}(M_0^{\rm cof})]} {\mathbb Z}[B^\rep_{P_0}(M_0^{\rm cof})])) \implies \pi_{p + q}\logHH((A_0, M_0) / (R_0, P_0)). \] Since $\pi_0{\mathbb Z}[B^{\rm cy}_{P_0}(M_0^{\rm cof})] \cong \pi_0{\mathbb Z}[B^\rep_{P_0}(M_0^{\rm cof})] \cong {\mathbb Z}[M_0^{\rm cof}]$ by Corollary \ref{cor:lowdegreecomputations}, we have \[\pi_0(B^{\rm cy}_{R_0}(A_0^{\rm cof}) \otimes_{{\mathbb Z}[B^{\rm cy}_{P_0}(M_0^{\rm cof})]} {\mathbb Z}[B^\rep_{P_0}(M_0^{\rm cof})]) \cong A_0^{\rm cof}.\] Thus $E_{p, 0} = 0$ for all $p > 0$. Let us compute $E^2_{0, 1}$. Since $(A_0^{\rm cof}, M_0^{\rm cof})$ is levelwise free, Corollary \ref{cor:loghhkahlerfree} applies to obtain \[\pi_1(B^{\rm cy}_{R_0}(A_0^{\rm cof}) \otimes_{{\mathbb Z}[B^{\rm cy}_{P_0}(M_0^{\rm cof})]} {\mathbb Z}[B^\rep_{P_0}(M_0^{\rm cof})] \cong \Omega^1_{(A^{\rm cof}_0, M_0^{\rm cof}) / (R_0, P_0)} \xleftarrow{\simeq} {\mathbb L}_{(A_0, M_0) / (R_0, P_0)}.\] Consequently, $E^2_{0, 1} \cong \pi_0{\mathbb L}_{(A_0, M_0) / (R_0, P_0)} \cong \Omega^1_{(A_0, M_0) / (R_0, P_0)}$ \cite[Lemma 8.9]{Ols05}, as desired. 
\end{proof}

Keeping track of the maps involved in constructing the map $\Phi_1$, we find that it admits the explicit description \[\Phi_1(da) = [1 \otimes a], \qquad \Phi_1(\dlog(m)) = [1 \otimes \gamma_0(m)].\] By abuse of notation we identify $[a \otimes b]$ in $\pi_1\HH(A_0 / R_0)$ and $[1 \otimes \gamma_0(m)]$ in $\pi_1{{\mathbb Z}[B^\rep_{P_0}(M_0)]}$ with their images in $\pi_1\logHH((A_0, M_0) / (R_0, P_0))$.
The map $\Phi_1$ in Proposition \ref{prop:logkahler} and the universal property of the exterior algebra give rise to a map
of graded commutative rings
\begin{equation}
\label{antisymm} 
\Phi_* \colon \Omega^*_{(A_0, M_0) / (R_0, P_0)} 
\to 
\pi_*\logHH((A_0, M_0) / (R_0, P_0)). 
\end{equation}

\subsection{The log ${\rm HKR}$-filtration} 
We are ready to construct a logarithmic version of the ${\rm HKR}$-filtration \cite[Proposition IV.4.1]{NS18}:

\begin{theorem}\label{thm:logaqss} Let $(R, P) \to (A, M)$ be a map of animated pre-log rings. Then the relative log Hochschild homology $\logHH((A, M) / (R, P))$ admits a descending separated filtration with graded pieces of the form $(\Bigwedge_A^n {\mathbb L}_{(A, M)/(R, P)})[n]$. In particular, there is a strongly convergent spectral sequence \[E^2_{p, q} := \pi_p(\Bigwedge\nolimits_A^q {\mathbb L}_{(A, M)/(R, P)}) \implies \pi_{p + q}\logHH((A, M) / (R, P))\] relating the derived wedge powers of the log cotangent complex and log Hochschild homology.
\end{theorem}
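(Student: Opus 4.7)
My approach mirrors Quillen's classical derivation of the André--Quillen spectral sequence for ordinary Hochschild homology: establish the log HKR splitting on the nose for free pre-log algebras, then transfer it to a filtration on arbitrary derived pre-log rings via a simplicial resolution. The ingredients for the free case have essentially already been assembled in Section \ref{section:lhh}.

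First, I would pin down log HH of a free pre-log $(R,P)$-algebra $F = F_{(R,P)}(X,Y)$. Combining Lemma \ref{lem:loghhfree} with classical HKR for polynomial $\bZ$-algebras, which gives $\HH(\bZ\langle Y\rangle)\simeq\bZ\langle Y\rangle\otimes_\bZ\Lambda_\bZ(dY)$, and Lemma \ref{lem:discreterepletebar} together with the Koszul-style computation $\bZ[B\langle X\rangle^\gp] \simeq \Lambda_\bZ(\dlog X)$, I would establish an equivalence of graded commutative rings
\[
\Bigwedge\nolimits^*_F \Omega^1_{F/(R,P)}
\xrightarrow{\simeq}
\pi_*\logHH(F/(R,P))
\]
extending the map $\Phi_*$ from \eqref{antisymm} and compatible with the description of $\Omega^1$ in \eqref{logkahlerfree}. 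In particular, the theorem holds for free pre-log algebras with the evident splitting into exterior-power summands.

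Next, I would take a simplicial resolution $(A_\bullet,M_\bullet)\xrightarrow{\simeq}(A,M)$ by levelwise free pre-log $(R,P)$-algebras, so that $\logHH((A,M)/(R,P))$ is modeled by the realization of the bisimplicial commutative ring $\logHH((A_\bullet,M_\bullet)/(R,P))$. The splitting from step one applies in each simplicial degree and produces a filtered bisimplicial object whose realization endows $\logHH((A,M)/(R,P))$ with a descending filtration. The $n$-th graded piece is, by the very definition of the derived wedge powers of Gabber's log cotangent complex,
\[
\bigl|\Bigwedge\nolimits^n_{A_\bullet}\Omega^1_{(A_\bullet,M_\bullet)/(R,P)}\bigr|[n]
\simeq
\Bigwedge\nolimits^n_A \bL_{(A,M)/(R,P)}[n].
\]
Since $\bL_{(A,M)/(R,P)}$ is connective, the object $\Bigwedge^n_A \bL_{(A,M)/(R,P)}[n]$ is concentrated in homological degrees $\geq n$; hence the filtration is separated in each total degree and the associated spectral sequence converges strongly.

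The main obstacle is naturality in the base case: the free-algebra splitting must be sufficiently natural in $F$ to commute with the face and degeneracy maps of $(A_\bullet, M_\bullet)$, so that the splitting at each level assembles into an honest filtration on the bisimplicial object. This reduces to naturality of Lemma \ref{lem:discreterepletebar} in the pair $(P, M)$ and of classical HKR in the polynomial variables $Y$, both known but requiring careful bookkeeping to combine in the logarithmic setting. A model-independent alternative is to invoke Quillen's general derived-functor machinery for simplicial resolutions by free algebras \cite{Qui70}, which automatically identifies the graded pieces as the left derived functors of $\Omega^n_{(-,-)/(R,P)}[n]$, namely $\Bigwedge^n_A \bL_{(A,M)/(R,P)}[n]$.
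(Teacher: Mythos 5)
Your proposal is correct and follows essentially the same route as the paper: establish the log HKR isomorphism for free pre-log algebras (Corollary \ref{cor:loghkrfree}) and then transfer the resulting filtration to general derived pre-log rings by resolving with levelwise free algebras (phrased in the paper as left Kan extension from free pre-log algebras), with the graded pieces identified as $\Bigwedge^n_A\bL_{(A,M)/(R,P)}[n]$ by the very definition of the derived exterior powers. The only cosmetic difference is that the paper filters the free case by the (automatically functorial) Whitehead filtration rather than by the direct-sum splitting, which sidesteps the naturality concern you raise, and your fallback to Quillen's machinery is exactly what the paper's subsequent remark records.
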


Theorem \ref{thm:logaqss} is proven at the end of this section. 
As a corollary, we obtain a version of the ${\rm HKR}$-theorem for log smooth log rings:

\begin{corollary}\label{cor:loghkr} Let $(R_0, P_0) \to (A_0, M_0)$ be a derived formally log smooth map of discrete pre-log rings. Then the map defined in \eqref{antisymm} is an isomorphism of graded commutative rings.
\end{corollary}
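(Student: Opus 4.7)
The plan is to deduce this as a direct application of the log Andr\'e--Quillen spectral sequence of Theorem~\ref{thm:logaqss}, together with the identifications of the log cotangent complex in the derived formally log smooth case. First I would feed the hypothesis into Proposition~\ref{prop:cotangent_vs_omega2}: since $(R_0, P_0) \to (A_0, M_0)$ is derived formally log smooth, the log cotangent complex ${\mathbb L}_{(A_0, M_0)/(R_0, P_0)}$ is a projective $A_0$-module concentrated in degree zero, canonically equivalent to the log K\"ahler differentials $\Omega^1_{(A_0, M_0)/(R_0, P_0)}$.

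Because the log cotangent complex is a flat $A_0$-module sitting in degree $0$, the derived exterior powers $\Bigwedge_{A_0}^q {\mathbb L}_{(A_0, M_0)/(R_0, P_0)}$ coincide with the ordinary exterior powers $\Omega^q_{(A_0, M_0)/(R_0, P_0)}$, again concentrated in degree zero. The spectral sequence of Theorem~\ref{thm:logaqss} therefore has $E^2$-page supported on the line $p = 0$, with
\[
E^2_{0, q} \cong \Omega^q_{(A_0, M_0)/(R_0, P_0)}, \qquad E^2_{p, q} = 0 \text{ for } p > 0.
\]
Hence the spectral sequence collapses trivially, the induced filtration on $\pi_*\logHH((A_0, M_0)/(R_0, P_0))$ has only one non-zero graded piece in each total degree, and we obtain an abstract isomorphism of $A_0$-modules $\Omega^n_{(A_0, M_0)/(R_0, P_0)} \xrightarrow{\cong} \pi_n \logHH((A_0, M_0)/(R_0, P_0))$ for every $n$.

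The remaining task is to identify this abstract isomorphism with the map $\Phi_*$ constructed in \eqref{antisymm}. The key point is multiplicativity: the HKR filtration of Theorem~\ref{thm:logaqss} is compatible with the ring structure on $\logHH$ (arising from the fact that $\logHH$ is an $E_\infty$-algebra), so the associated graded is a graded commutative ring, and the collapse of the spectral sequence identifies $\pi_*\logHH$ itself with this associated graded as graded commutative rings. In degree $1$, the abstract identification coincides with $\Phi_1$: both arise from corepresentability of log derivations by ${\mathbb L}_{(A_0, M_0)/(R_0, P_0)}$, and Proposition~\ref{prop:logkahler} has already shown $\Phi_1$ is an isomorphism onto $\pi_1\logHH$. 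Since $\Phi_*$ is by construction the unique multiplicative extension of $\Phi_1$ from the exterior algebra, and since $\pi_*\logHH((A_0, M_0)/(R_0, P_0))$ is generated in degree $1$ as a graded commutative ring by the collapse result, we conclude $\Phi_*$ agrees with the spectral sequence isomorphism and is therefore itself an isomorphism of graded commutative rings.

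The main obstacle I anticipate is the compatibility check in the last step: verifying that the edge identification produced by the spectral sequence in degree~$1$ really is the map $\Phi_1$ defined combinatorially via the cyclic and replete bar construction in Proposition~\ref{prop:logkahler}. This amounts to chasing the definition of $\Phi_1$ through the free-algebra case (Corollary~\ref{cor:loghhkahlerfree}, where the computation is transparent) and then using functoriality under a free resolution to reduce the general formally log smooth case to the free one. Everything else is a bookkeeping consequence of the collapse of the spectral sequence and the universal property of the exterior algebra.
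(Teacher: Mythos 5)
Your proof is correct and is essentially the paper's own argument: the paper's proof of Corollary \ref{cor:loghkr} is precisely the one-line observation that Proposition \ref{prop:cotangent_vs_omega2} forces the spectral sequence of Theorem \ref{thm:logaqss} to collapse. The extra care you take in identifying the resulting abstract isomorphism with $\Phi_*$ (via multiplicativity, generation in degree $1$, and the free-algebra case of Corollary \ref{cor:loghkrfree}) is a legitimate filling-in of details the paper leaves implicit, not a different route.
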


\begin{proof} By Proposition \ref{prop:cotangent_vs_omega2}, the spectral sequence of Theorem \ref{thm:logaqss} collapses to give the desired isomorphism.
\end{proof}

We first establish the ${\rm HKR}$-theorem for free pre-log algebras and use this to construct the logarithmic ${\rm HKR}$-filtration.

\subsection{The log ${\rm HKR}$-theorem for free pre-log algebras} The general log ${\rm HKR}$-theorem for free pre-log algebras will reduce to the following:

\begin{lemma}\label{lem:loghkrfree} The map \[\Omega^*_{F_{(R_0, P_0)}(\{x\}, \emptyset)/(R_0, P_0)} \xrightarrow{} \pi_*{\rm logHH}(F_{(R_0, P_0)}(\{x\}, \emptyset) / (R_0, P_0))\] defined in \eqref{antisymm} is an isomorphism of graded rings.
\end{lemma}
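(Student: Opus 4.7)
The plan is to reduce to the universal base case $(R_0, P_0) = (\mathbb{Z}, \{1\})$ and then compute both sides explicitly. Applying \Cref{lem:loghhfree} with $X = \{x\}$ and $Y = \emptyset$ (so $\HH(\mathbb{Z}\langle \emptyset \rangle) \simeq \mathbb{Z}$) yields
\[
\logHH(F_{(R_0, P_0)}(\{x\}, \emptyset) / (R_0, P_0))
\simeq
R_0 \otimes_{\mathbb{Z}} \logHH((\mathbb{Z}[x], \mathbb{N}) / (\mathbb{Z}, \{1\})),
\]
while formula \eqref{logkahlerfree} gives a matching decomposition $\Omega^*_{F_{(R_0, P_0)}(\{x\}, \emptyset) / (R_0, P_0)} \cong R_0 \otimes_{\mathbb{Z}} \mathbb{Z}[x]\langle \dlog(x) \rangle$. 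Since $\Phi_*$ is natural in the base and respects these decompositions, it is enough to prove the lemma when $(R_0, P_0) = (\mathbb{Z}, \{1\})$.

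In this case, $\Omega^*_{(\mathbb{Z}[x], \mathbb{N})/(\mathbb{Z}, \{1\})} \cong \mathbb{Z}[x] \oplus \mathbb{Z}[x] \cdot \dlog(x)$ is concentrated in degrees $0$ and $1$. For the source of $\Phi_*$, since $(\mathbb{Z}[x], \mathbb{N})$ is already cofibrant over $(\mathbb{Z}, \{1\})$, I would use the point-set model \eqref{underivedloghh}. The key observation is that the left-hand map $\mathbb{Z}[B^{\mathrm{cy}}(\mathbb{N})] \to \HH(\mathbb{Z}[x]/\mathbb{Z})$ is an equivalence: both sides are models for $S^1 \otimes_{\mathbb{Z}} \mathbb{Z}[x]$, because $\mathbb{Z}[-]$ preserves colimits and $\mathbb{Z}[\mathbb{N}] = \mathbb{Z}[x]$. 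The pushout therefore collapses to
\[
\logHH((\mathbb{Z}[x], \mathbb{N})/(\mathbb{Z}, \{1\}))
\simeq
\mathbb{Z}[B^{\mathrm{rep}}(\mathbb{N})]
\simeq
\mathbb{Z}[x] \otimes_{\mathbb{Z}} \mathbb{Z}[B\mathbb{Z}],
\]
the second equivalence coming from \Cref{lem:discreterepletebar}.

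It remains to compute $\pi_* \mathbb{Z}[B\mathbb{Z}]$. Writing $B\mathbb{Z} \simeq \mathrm{pt} \sqcup_{\mathbb{Z}} \mathrm{pt}$ in derived commutative monoids and using that $\mathbb{Z}[-]$ preserves colimits, this identifies with $\mathbb{Z} \otimes^{\mathbb{L}}_{\mathbb{Z}[t^{\pm 1}]} \mathbb{Z}$ for the augmentation $t \mapsto 1$. The Koszul resolution $\mathbb{Z}[t^{\pm 1}] \xrightarrow{t - 1} \mathbb{Z}[t^{\pm 1}]$ then gives $\pi_* \mathbb{Z}[B\mathbb{Z}] \cong \mathbb{Z}$ concentrated in degrees $0$ and $1$. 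Consequently, $\pi_* \logHH((\mathbb{Z}[x], \mathbb{N})/(\mathbb{Z}, \{1\}))$ has the same graded $\mathbb{Z}[x]$-module structure as $\Omega^*_{(\mathbb{Z}[x], \mathbb{N})/(\mathbb{Z}, \{1\})}$. The main obstacle is then to verify that $\Phi_*$ realizes this match of abstract structures rather than being an isomorphism by coincidence: I would trace through the explicit formula $\Phi_1(\dlog(x)) = [1 \otimes \gamma_0(x)]$ recorded after \Cref{prop:logkahler}, combined with \Cref{cor:lowdegreecomputations}, which identifies $\pi_1 \mathbb{Z}[B^{\mathrm{rep}}(\mathbb{N})]$ as the free $\mathbb{Z}[x]$-module on the image of $\gamma_0(x)$ in $\mathbb{N}^{\mathrm{gp}} = \mathbb{Z}$. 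This shows $\Phi_1(\dlog(x))$ is a generator; together with the trivial isomorphism in degree $0$ and the vanishing of both sides in degrees $\geq 2$, we conclude that $\Phi_*$ is an isomorphism of graded rings.
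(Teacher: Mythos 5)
Your proof is correct and follows essentially the same route as the paper: reduce via Lemma \ref{lem:loghhfree} and Lemma \ref{lem:discreterepletebar} to the single factor $\mathbb{Z}[B\mathbb{Z}] \simeq \mathbb{Z}\otimes^{\mathbb{L}}_{\mathbb{Z}[t^{\pm 1}]}\mathbb{Z}$, compute its homotopy via the length-one Koszul resolution to see everything is concentrated in degrees $0$ and $1$, and then invoke the degree-one isomorphism $\Phi_1$ from Proposition \ref{prop:logkahler}. The only differences are cosmetic: you make explicit the collapse of the pushout \eqref{underivedloghh} (the left-hand map being an equivalence) and re-verify the generator $\Phi_1(\dlog(x))$ by hand, both of which the paper leaves implicit or delegates to Proposition \ref{prop:logkahler}.
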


Stated differently, the log ${\rm HKR}$-theorem holds for free pre-log algebras $(R_0\langle x \rangle, \langle x \rangle)$ on a single generator.

\begin{proof} By Lemma \ref{lem:discreterepletebar} and Lemma \ref{lem:loghhfree}, there is an equivalence \[{\rm logHH}(F_{(R_0, P_0) / (R_0, P_0)}(\{x\}, \emptyset)) \simeq R_0 \otimes_{{\mathbb Z}} {\rm logHH}({\mathbb Z}[x], \langle x \rangle) \cong R_0[x] \otimes_{{\mathbb Z}} {\mathbb Z}[B \langle x \rangle^\gp].\]  The homotopy groups of ${\mathbb Z}[B \langle x \rangle^\gp]$ are the ${\rm Tor}$-groups ${\rm Tor}^{{\mathbb Z}[x, x^{-1}]}_*({\mathbb Z}, {\mathbb Z})$. 
Indeed, observe that $B\langle x \rangle^\gp$ is the suspension of $\langle x \rangle^\gp$ in the category of animated abelian groups. Hence, since ${\mathbb Z}[-]$ commutes with homotopy pushouts, the animated commutative ring ${\mathbb Z}[B \langle x \rangle^\gp]$ is equivalent to the derived tensor product ${\mathbb Z} \otimes_{{\mathbb Z}[\langle x \rangle^\gp]}^{\mathbb L} {\mathbb Z}$.
Using the free resolution \[0 \to {\mathbb Z}[x, x^{-1}] \xrightarrow{\cdot (x - 1)} {\mathbb Z}[x, x^{-1}] \xrightarrow{} {\mathbb Z} \to 0,\] we find the ${\rm Tor}$-groups to be 
copies of $\bZ$ concentrated in degrees 0 and 1
 
so that the graded ring
$\pi_*{\rm logHH}(F_{(R_0, P_0)}(\{x\}, \emptyset) / (R_0, P_0))$ is concentrated as $R_0[x]$ in degrees $0$ and $1$. By Proposition \ref{prop:logkahler}, there is a preferred isomorphism 
\[\Phi_1 \colon \Omega^1_{F_{(R_0, P_0)}(\langle x \rangle, \emptyset) / (R_0, P_0)} \xrightarrow{\cong} \pi_1{\rm logHH}(F_{(R_0, P_0)}(\langle x \rangle, \emptyset) / (R_0, P_0)).\] Since $\pi_*\logHH(F_{(R_0, P_0)}(\langle x \rangle, \emptyset) / (R_0, P_0))$ is concentrated in degrees $0$ and $1$, this suffices to conclude that the map is an isomorphism.
\end{proof}

By commuting colimits, Lemma \ref{lem:loghhfree} implies the log ${\rm HKR}$-theorem for any free pre-log ring 
$F_{(R, P)}(X,\emptyset)$. Combining this with the ordinary ${\rm HKR}$-theorem, we obtain:

\begin{corollary}\label{cor:loghkrfree} Let $F_{(R_0, P_0)}(X, Y)$ be a free $(R_0, P_0)$-algebra. Then the map \eqref{antisymm} is an isomorphism of strictly commutative graded rings 
\[\Phi_* \colon \Omega^*_{F_{(R_0, P_0)}(X, Y) / (R_0, P_0)} 
\xrightarrow{\cong} \pi_*{\rm logHH}(F_{(R_0, P_0)}(X, Y) / (R_0, P_0)).\]
\end{corollary}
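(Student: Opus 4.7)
The plan is to reduce Corollary \ref{cor:loghkrfree} to the single-generator cases already treated, namely Lemma \ref{lem:loghkrfree} for the log variables $X$ and the classical HKR theorem for the polynomial variables $Y$.

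First I would establish the tensor decomposition of derived pre-log rings
\[
F_{(R_0,P_0)}(X,Y) \simeq (R_0,P_0) \otimes_{(\Z,\{1\})} \Bigl(\bigotimes_{x \in X} (\Z[x],\langle x\rangle)\Bigr) \otimes_{(\Z,\{1\})} (\Z\langle Y\rangle,\{1\}),
\]
where the inner tensor product is iterated over $(\Z,\{1\})$. Iterating the base-change formula of Proposition \ref{prop:basechangeHH} (which generalizes Lemma \ref{lem:loghhfree} to several tensor factors), together with the fact that log Hochschild homology of a pre-log ring with trivial log structure reduces to ordinary Hochschild homology, yields a Künneth-type decomposition
\[
\logHH(F_{(R_0,P_0)}(X,Y)/(R_0,P_0)) \simeq R_0 \otimes_\Z \Bigl(\bigotimes_{x \in X} \logHH((\Z[x],\langle x\rangle)/(\Z,\{1\}))\Bigr) \otimes_\Z \HH(\Z\langle Y\rangle/\Z).
\]

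Next I would invoke Lemma \ref{lem:loghkrfree} to identify each single-variable factor $\pi_* \logHH((\Z[x],\langle x\rangle)/(\Z,\{1\}))$ with the graded ring $\Z[x] \oplus \Z[x]\cdot\dlog(x)$, concentrated in degrees $0$ and $1$, and the classical HKR theorem to identify $\pi_*\HH(\Z[y]/\Z)$ with $\Z[y]\oplus \Z[y]\cdot dy$. Since each of these graded homotopy rings is degreewise $\Z$-free, the Künneth/Tor spectral sequence for the derived tensor product above collapses at $E^2$, producing an isomorphism of strictly graded-commutative rings
\[
\pi_*\logHH(F_{(R_0,P_0)}(X,Y)/(R_0,P_0)) \cong R_0\langle X \sqcup Y\rangle \otimes_\Z \Lambda^{*}_{\Z}\!\Bigl(\bigoplus_{x\in X} \Z\,\dlog(x) \oplus \bigoplus_{y \in Y} \Z\, dy\Bigr).
\]
The right-hand side agrees with $\Omega^{*}_{F_{(R_0,P_0)}(X,Y)/(R_0,P_0)}$ by the explicit formula \eqref{logkahlerfree}.

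Finally, since $\Phi_*$ is by definition the unique homomorphism of strictly graded-commutative rings extending $\Phi_1$, and since $\Phi_1$ sends the generators $dy$ and $\dlog(x)$ precisely to the classes spanning the degree-one part above (via Proposition \ref{prop:logkahler}, whose single-variable input is Corollary \ref{cor:loghhkahlerfree}), the identification constructed above must coincide with $\Phi_*$. Hence $\Phi_*$ is an isomorphism. The main obstacle in executing this plan is ensuring that the Künneth spectral sequence collapses \emph{as rings} and that the multiplicative structure on the abutment matches the exterior-algebra multiplication on $\Omega^*$; both follow from the explicit $\Z$-freeness and low-degree concentration of the single-variable computations provided by Lemma \ref{lem:loghkrfree} and the classical HKR theorem.
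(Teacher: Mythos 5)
Your proposal is correct and follows essentially the same route as the paper: the paper deduces the corollary from Lemma \ref{lem:loghhfree} (the tensor decomposition into $R_0 \otimes_{\mathbb Z} \HH(\mathbb{Z}\langle Y\rangle) \otimes_{\mathbb Z} \logHH((\mathbb{Z}\langle X\rangle, \langle X\rangle))$), commuting colimits to reduce to the single-generator case of Lemma \ref{lem:loghkrfree}, and the classical HKR theorem for the $Y$-variables. Your additional remarks on the collapse of the K\"unneth spectral sequence via degreewise $\mathbb{Z}$-freeness and the multiplicative compatibility with $\Phi_*$ just make explicit what the paper leaves implicit.
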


We can now deduce the log Andr\'e--Quillen spectral sequence:

\begin{proof}[Proof of Theorem \ref{thm:logaqss}] On free pre-log algebras, it is clear by Corollary \ref{cor:loghkrfree} that the Whitehead filtration of ${\rm logHH}((A, M) / (R, P))$ gives the result. Since both the log cotangent complex and log Hochschild homology are in general left Kan extended from free pre-log algebras, this also gives the filtration in general. Hence we obtain a conditionally convergent spectral sequence \[E^2_{p, q} := \pi_p(\Bigwedge\nolimits_A^q {\mathbb L}_{(A, M)/(R, P)}) \implies \pi_{p + q}\logHH((A, M) / (R, P))\]  concentrated in the first quadrant. Therefore it is strongly convergent, see e.g.\ \cite[Theorem 7.1]{Boa99} and the subsequent remark.  \end{proof}

\begin{remark}
From a more classical perspective, 
\Cref{cor:loghkrfree} allows us to use
 the results of Quillen \cite{Qui70} to obtain the log HKR-theorem. One key point is that Corollary \ref{cor:loghkrfree} implies the simplicial augmentation ideal of $$(A \otimes_R A)_{(M \oplus_P M)}^\rep \to A$$ is levelwise \emph{quasi-regular} in the sense of \cite[Definition 6.10]{Qui70}. The definition of $\logHH$ as derived self-intersections makes \cite[Theorem 6.3]{Qui70} applicable and the resulting spectral sequence converges. We also observe that the spectral sequence can be obtained from the totalization of a double complex, see e.g., \cite[\S 8]{Qui70}.
\end{remark}

\subsection{Base change for log Hochschild homology} 
The following is a generalization of Weibel--Geller's \cite{WG91} \'etale base change formula for Hochschild homology:

\begin{theorem}\label{prop:loghhbasechange} Let $(R, P)$ be an animated pre-log ring.
A map $(A, M) \to (B, N)$ of pre-log $(R, P)$-algebras is derived formally log \'etale
if and only if the canonical map \[B \otimes_A \logHH((A, M) / (R, P)) \xrightarrow{} \logHH((B, N) / (R, P))\] is an equivalence. 
\end{theorem}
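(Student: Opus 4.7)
The plan is to leverage the logarithmic HKR-filtration (Theorem~\ref{thm:logaqss}), the transitivity sequence \eqref{eq:transitivity} for the log cotangent complex, and the base change formula (Proposition~\ref{prop:basechangeHH}) to reduce both implications to the vanishing of $\bL_{(B,N)/(A,M)}$.

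For the forward direction, suppose $(A,M)\to (B,N)$ is derived formally log \'etale, so that $\bL_{(B,N)/(A,M)}\simeq 0$. The transitivity cofiber sequence \eqref{eq:transitivity} yields an equivalence $B\otimes_A \bL_{(A,M)/(R,P)}\xrightarrow{\simeq} \bL_{(B,N)/(R,P)}$. Since derived exterior powers commute with base change and preserve equivalences, the induced maps on associated graded pieces of the log HKR-filtrations,
\[
B\otimes_A \bigwedge\nolimits_A^q \bL_{(A,M)/(R,P)}[q]\longrightarrow \bigwedge\nolimits_B^q \bL_{(B,N)/(R,P)}[q],
\]
are equivalences for every $q\geq 0$. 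The natural base change map of log Hochschild homologies is filtered, so a standard induction along $F^0/F^{n+1}$ combined with descending separatedness propagates these equivalences on graded pieces to an equivalence of the totals.

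Conversely, assume the base change map is an equivalence. Writing $H_A:=\logHH((A,M)/(R,P))$ and $H_B:=\logHH((B,N)/(R,P))$, Proposition~\ref{prop:basechangeHH} combined with the hypothesis yields a chain of equivalences
\[
\logHH((B,N)/(A,M))\simeq A\otimes_{H_A} H_B \simeq A\otimes_{H_A}(B\otimes_A H_A)\simeq B,
\]
compatible with the augmentations to $B$. Setting $\bL := \bL_{(B,N)/(A,M)}$, we apply Theorem~\ref{thm:logaqss} to $\logHH((B,N)/(A,M))$: the augmentation $F^0\to F^0/F^1\simeq B$ becomes the identity on $B$, forcing $F^1\simeq 0$, and the cofiber sequence $F^2\to F^1\to F^1/F^2\simeq \bL[1]$ then yields $F^2\simeq \bL$. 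To conclude $\bL\simeq 0$, we induct on connectivity. If $\bL$ is $n$-connective, then $\bigwedge^q\bL[q]$ is at least $q(n+1)$-connective, hence $(n+2)$-connective for $q\geq 2$. The long exact sequences arising from $F^{q+1}\to F^q\to F^q/F^{q+1}$ give isomorphisms $\pi_n F^{q+1}\xrightarrow{\cong}\pi_n F^q$ for $q\geq 2$, so the tower $\{\pi_n F^q\}_{q\geq 2}$ is constant with value $\pi_n\bL$ and satisfies the Mittag-Leffler condition. The Milnor exact sequence combined with the separateness $\lim_q F^q\simeq 0$ then forces $\pi_n\bL=0$, completing the induction and yielding $\bL\simeq 0$.

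The main technical obstacle is the connectivity induction in the backward direction, which relies on the quantitative estimate that derived exterior powers of an $n$-connective module are $qn$-connective. This ensures the graded pieces of the HKR-filtration become increasingly connected as $q$ grows, allowing us to convert the separateness of the filtration into the vanishing of each individual $\pi_n\bL$.
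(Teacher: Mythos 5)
Your proof is correct, but the two directions split differently against the paper's argument. The direction ``derived formally log \'etale $\Rightarrow$ base change equivalence'' is essentially the paper's: the paper also deduces $B \otimes_A \bL_{(A,M)/(R,P)} \simeq \bL_{(B,N)/(R,P)}$ from transitivity and then ``compares the Quillen spectral sequences,'' which is exactly your filtered induction; your version only needs the (true, but worth stating) observation that $B \otimes_A \Fil^q$ remains $q$-connective, so the tensored filtration is still complete. The converse direction is where you genuinely diverge. The paper applies the derived indecomposables functor $Q_B$ to the base change equivalence and invokes Lemma \ref{lem:loghhindeccotcx}, i.e.\ $Q_A(\logHH((A,M)/(R,P))) \simeq \bL_{(A,M)/(R,P)}[1]$, which in one step converts the hypothesis into the equivalence $B \otimes_A \bL_{(A,M)/(R,P)}[1] \to \bL_{(B,N)/(R,P)}[1]$ and hence the vanishing of the cofiber $\bL_{(B,N)/(A,M)}$. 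You instead combine Proposition \ref{prop:basechangeHH} with the hypothesis to get $\logHH((B,N)/(A,M)) \simeq B$ compatibly with the augmentation, and then extract $\bL_{(B,N)/(A,M)} \simeq 0$ from the filtration of Theorem \ref{thm:logaqss} by a connectivity bootstrap ($\bigwedge^q$ of an $n$-connective module is $qn$-connective, so the tower $\pi_n \Fil^q$ stabilizes at $\pi_n \bL_{(B,N)/(A,M)}$ and separatedness kills it). Both are valid: the paper's route is shorter and isolates the content in the indecomposables computation (which rests on Proposition \ref{prop:logcotangentreplete}), while yours avoids that machinery entirely at the cost of the inductive connectivity estimate, and has the minor extra obligation of checking that your chain of equivalences really is compatible with the augmentation to $B$ (needed to conclude $\Fil^1 \simeq 0$), which follows from the naturality of Proposition \ref{prop:basechangeHH} but deserves a sentence.
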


\begin{remark}
If $(R_0, P_0) \to (A_0, M_0)$ is log \'etale and integral, 
then  $\mathbb{L}_{(A_0,M_0) / (R_0,P_0)}$ is contractible by Proposition \ref{prop:cotangent_vs_omega}(2). 
On the other hand, 
\Cref{exm:logetale} shows the base change formula does not hold for a general log \'etale map of discrete commutative rings.
\end{remark}

The next result makes use of the indecomposables functor discussed in Section \ref{subsec:derivedindec}. 

\begin{lemma}
\label{lem:loghhindeccotcx} 
Let $(R, P) \to (A, M)$ be a map of animated pre-log rings. 
There is a natural equivalence of $A$-modules 
\[Q_A(\logHH((A, M) / (R, P))) \simeq {\mathbb L}_{(A, M) / (R, P)}[1].\]
\end{lemma}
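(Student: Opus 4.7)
\medskip

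The plan is to identify both sides using the description of $\logHH$ as derived self-intersection of the replete diagonal (Definition \ref{def:loghh}) and the description of $\mathbb{L}_{(A,M)/(R,P)}$ as its conormal (Proposition \ref{prop:logcotangentreplete}), and then to invoke the fact that indecomposables convert suspensions in augmented $A$-algebras into shifts of $A$-modules.

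First, set $C := (A \otimes_R A) \otimes_{\mathbb{Z}[M \oplus_P M]} \mathbb{Z}[(M \oplus_P M)^{\rep}]$. The replete diagonal endows $C$ with the structure of an augmented $A$-algebra $A \to C \to A$; the section is obtained from the inclusion of $A$ into one of the factors of $A \otimes_R A$ (together with the canonical map ${\mathbb Z}[M] \to {\mathbb Z}[(M\oplus_P M)^{\rep}]$), and the augmentation is the replete diagonal itself. By construction and Corollary \ref{cor:sectionstrict}, the underlying augmented $A$-algebra is discrete-free from the log structure, so Proposition \ref{prop:logcotangentreplete} gives the identification
\[
Q_A(C) \simeq \mathbb{L}_{(A,M)/(R,P)}.
\]

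Second, by Definition \ref{def:loghh}, we have $\logHH((A,M)/(R,P)) = A \otimes_C A$, the pushout of $A \leftarrow C \to A$ in the $\infty$-category $\mathrm{CAlg}_{A//A}$ of augmented $A$-algebras. Since $A$ is the zero object of this pointed $\infty$-category, this pushout is precisely the suspension of $C$:
\[
\Sigma_{\mathrm{CAlg}_{A//A}}(C) \simeq A \otimes_C A = \logHH((A,M)/(R,P)).
\]

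Third, the indecomposables functor $Q_A \colon \mathrm{CAlg}_{A//A} \to \mathrm{Mod}_A$ is left adjoint to the trivial square-zero extension $M \mapsto A \oplus M$ (cf.\ Section \ref{subsec:derivedindec}). Hence it preserves all colimits; in particular, being a functor between pointed $\infty$-categories, it commutes with suspension in the sense that $Q_A(\Sigma C) \simeq \Sigma Q_A(C) \simeq Q_A(C)[1]$. Combining these observations yields
\[
Q_A(\logHH((A,M)/(R,P))) \simeq Q_A(\Sigma C) \simeq Q_A(C)[1] \simeq \mathbb{L}_{(A,M)/(R,P)}[1],
\]
and this equivalence is natural by construction. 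The main conceptual point (and the only subtle step) is the suspension identification $A \otimes_C A \simeq \Sigma C$ in $\mathrm{CAlg}_{A//A}$; everything else is a formal consequence of the adjunction defining $Q_A$ and Proposition \ref{prop:logcotangentreplete}. This is exactly the log analogue of the classical identification $Q_A(\HH(A/R)) \simeq \mathbb{L}_{A/R}[1]$ applied to the replete diagonal in place of the ordinary diagonal.
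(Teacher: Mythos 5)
Your argument is correct and is essentially the paper's own proof: identify $\logHH$ as the suspension of the augmented $A$-algebra $C=(A\otimes_R A)\otimes_{\mathbb{Z}[M\oplus_P M]}\mathbb{Z}[(M\oplus_P M)^{\rep}]$ in $\mathrm{CAlg}_{A//A}$, use that $Q_A$ is a left adjoint to commute it past the suspension, and then apply Proposition \ref{prop:logcotangentreplete} to identify $Q_A(C)$ with $\mathbb{L}_{(A,M)/(R,P)}$. The paper's proof is a terser version of exactly these three steps.
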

\begin{proof} Recall $\logHH((A, M) / (R, P))$ is the suspension of the augmented simplicial commutative $A$-algebra $(A \otimes_R A) \otimes_{{\mathbb Z}[M \oplus_P M]} {\mathbb Z}[(M \oplus_P M)^\rep]$. Since $Q_A$ is left Quillen, we obtain \[Q_A(\logHH((A, M) / (R, P))) \simeq Q_A((A \otimes_R A) \otimes_{{\mathbb Z}[M \oplus_P M]} {\mathbb Z}[(M \oplus_P M)^\rep])[1].\] The result now follows from Proposition \ref{prop:logcotangentreplete}.
\end{proof}

\begin{proof}[Proof of Theorem \ref{prop:loghhbasechange}] 
If the base change map is an equivalence, 
then the induced map of indecomposables is also an equivalence. 
By \Cref{lem:loghhindeccotcx}, 
this holds if and only if there is an equivalence 
\begin{equation}\label{shiftedcotangent}B \otimes_A {\mathbb L}_{(A, M) / (R, P)}[1] \to {\mathbb L}_{(B, N) / (R, P)}[1],\end{equation} implying that the cofiber ${\mathbb L}_{(B, N) / (A, M)}[1]$ is contractible. 
Conversely, if ${\mathbb L}_{(B, N) / (A, M)}$ is contractible, the map \eqref{shiftedcotangent} is an equivalence, and so we obtain the base change formula for log Hochschild homology by comparing the Quillen spectral sequences converging to $B \otimes_R {\rm logHH}((A, M) / (R, P))$ and ${\rm logHH}((B, N) / (R, P))$. 
\end{proof}

\begin{corollary}\label{cor:unitmapeq} Let $(R, P) \to (A, M)$ be a derived formally log \'etale map of derived pre-log rings. Then the unit map $A \to \logHH((A, M) / (R, P))$ is an equivalence. 
\end{corollary}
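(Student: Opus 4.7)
The plan is to derive the corollary as a direct consequence of the base change formula in \Cref{prop:loghhbasechange}. I would view the given map $(R, P) \to (A, M)$ as a map of derived pre-log $(R, P)$-algebras, regarding the source $(R, P)$ as an $(R, P)$-algebra via its identity. Since this map is derived formally log \'etale by hypothesis, \Cref{prop:loghhbasechange} furnishes a canonical equivalence
\[
A \otimes_R \logHH((R, P)/(R, P)) \xrightarrow{\simeq} \logHH((A, M)/(R, P)).
\]

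Next I would identify the source with $A$. Unwinding \Cref{def:loghh}, the log Hochschild homology $\logHH((R, P)/(R, P))$ is the derived self-intersection of the replete diagonal of the identity on $(R, P)$: the codiagonal $P \oplus_P P \to P$ is an equivalence of derived monoids, so its repletion is again equivalent to $P$; similarly $R \otimes_R R \simeq R$. Hence the whole expression collapses to $R \otimes_R R \simeq R$, and the left-hand side of the displayed equivalence simplifies to $A \otimes_R R \simeq A$.

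The only remaining verification is that the resulting composite $A \simeq A \otimes_R R \xrightarrow{\simeq} \logHH((A, M)/(R, P))$ agrees with the structural unit map. This is a formal naturality check: the equivalence of \Cref{prop:loghhbasechange} is, by construction, obtained by applying $A \otimes_R (-)$ to the map out of $\logHH((R, P)/(R, P))$ induced by $(R, P) \to (A, M)$, so once the latter is identified with $R$ the unit map is recovered. I expect no substantive obstacle beyond this bookkeeping step; alternatively, one can bypass the check altogether by running the log Andr\'e--Quillen spectral sequence of \Cref{thm:logaqss} directly, which collapses because $\Bigwedge^q_A \bL_{(A,M)/(R,P)} \simeq 0$ for $q \geq 1$ by hypothesis and $q = 0$ contributes precisely $\pi_\ast(A)$.
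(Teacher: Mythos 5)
Your proposal is correct and matches the paper's argument: the paper likewise deduces the corollary by feeding the identity $(R,P)\to(R,P)$ into the base change equivalence of Theorem \ref{prop:loghhbasechange} (together with Proposition \ref{prop:basechangeHH} to handle the identification $\logHH((R,P)/(R,P))\simeq R$, which you instead verify directly from Definition \ref{def:loghh}). Your alternative via the collapse of the spectral sequence of Theorem \ref{thm:logaqss} is also valid, but the main route is essentially the paper's own.
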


\begin{proof} This follows from Proposition \ref{prop:basechangeHH} and Theorem \ref{prop:loghhbasechange}.
\end{proof}

\begin{proposition} Let $R \to A$ be a tamely ramified finite extension of discrete valuation rings with perfect residue fields, and let $(f, f^\flat) \colon (R, R - \{0\}) \to (A, A - \{0\})$ be the induced map of log rings.  
Then there is a canonical equivalence \[
A \otimes_R \logHH(R, R -\{0\}) \to \logHH(A, A- \{0\}).
\]
\end{proposition}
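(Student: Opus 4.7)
The plan is to reduce the statement, via Theorem \ref{prop:loghhbasechange}, to showing that $(R, R - \{0\}) \to (A, A - \{0\})$ is derived formally log \'etale, i.e., that $\mathbb{L}_{(A, A - \{0\})/(R, R - \{0\})} \simeq 0$. I would then establish this vanishing by combining the tameness hypothesis with strict \'etale descent for the log cotangent complex.

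The main obstacle is that, upon fixing uniformizers $\pi \in R$ and $\varpi \in A$, one has $\pi = u_0 \varpi^e$ for some $u_0 \in A^\times$ and $e$ the ramification index, so the expected chart map $\mathbb{N} \xrightarrow{\cdot e} \mathbb{N}$ does not lift to a strict pre-log morphism over $A$. Tameness is precisely the hypothesis that cures this: since $e$ is invertible in $A$, the extension $B := A[v]/(v^e - u_0)$ is strict finite \'etale over $A$, and after passing to a local factor, $B$ is a DVR with uniformizer $\varpi' := v \varpi$ satisfying $(\varpi')^e = \pi$. By the strict \'etale base change formula \eqref{eq:strictet_bc} together with faithful flatness of $A \to B$, the desired vanishing reduces to showing $\mathbb{L}_{(B, B - \{0\})/(R, R - \{0\})} \simeq 0$.

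With the unit absorbed, the natural charts $(R, \mathbb{N})$ via $1 \mapsto \pi$ and $(B, \mathbb{N})$ via $1 \mapsto \varpi'$ fit into a strictly commutative diagram of fine discrete pre-log rings with chart map $\mathbb{N} \xrightarrow{\cdot e} \mathbb{N}$, and logifying recovers the canonical log structures on both $R$ and $B$. By Lemma \ref{lem:logcotangentloginv} it therefore suffices to show $\mathbb{L}_{(B, \mathbb{N})/(R, \mathbb{N})} \simeq 0$. Since $\mathbb{N}$ is valuative, the chart map is integral by \cite[Proposition I.4.6.3(5)]{Ogu18}, so Proposition \ref{prop:cotangent_vs_omega}(2) reduces the task to classical log \'etaleness via \cite[Theorem 3.5]{Kat89}. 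The cokernel $\mathbb{Z}/e$ of $\mathbb{N}^\gp \xrightarrow{e} \mathbb{N}^\gp$ has order invertible in $B$ by tameness, and the base-change ring $R' := R[y]/(y^e - \pi)$ is a DVR (by Eisenstein) with uniformizer $y$, so the map $R' \to B$ sending $y \mapsto \varpi'$ preserves uniformizers; its residue extension $R/\pi \to B/\varpi' B \cong (A/\varpi)[v]/(v^e - \overline{u_0})$ is classically \'etale because $(A/\varpi)/(R/\pi)$ is separable (perfect residue fields) and $v^e - \overline{u_0}$ is separable over $A/\varpi$ (with $e$ invertible). Hence $R' \to B$ is \'etale, concluding the proof plan.
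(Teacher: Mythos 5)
Your proposal is correct and follows essentially the same route as the paper: reduce via Theorem \ref{prop:loghhbasechange} to showing the map is derived (formally) log \'etale, and obtain that from integrality (valuativeness of the chart/monoid) together with Proposition \ref{prop:cotangent_vs_omega}(2). The only difference is that where the paper simply cites \cite[Proposition IV.3.1.19]{Ogu18} for log \'etaleness of $(R,R-\{0\})\to(A,A-\{0\})$, you verify it by hand on charts --- including the genuinely necessary strict \'etale extension $B=A[v]/(v^e-u_0)$ to absorb the unit before the chart map $\mathbb{N}\xrightarrow{\cdot e}\mathbb{N}$ exists --- which is a correct, more self-contained unpacking of the same step.
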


\begin{proof}
Owing to \cite[Proposition IV.3.1.19]{Ogu18}, $(f,f^\flat)$ is log \'etale.
Furthermore, 
$(f,f^\flat)$ is integral since $R - \{0\}$ is a valuative monoid \cite[Proposition I.4.6.3(5)]{Ogu18}.
Proposition \ref{prop:cotangent_vs_omega}(2) shows that $(f,f^\flat)$ is derived log \'etale, 
and we conclude by Theorem \ref{prop:loghhbasechange}. 
\end{proof}

\section{A recollection of the dividing Nisnevich topology}
\label{section:dNis}
We want to understand the global properties of logarithmic Hochschild homology and the 
Andr\'e-Quillen spectral sequence. 
To that end, 
we consider the dividing Nisnevich topology on log schemes introduced in \cite{logDM}. 
For the reader's convenience we recall some of its basic properties.
From now on, all schemes and rings are underived.

\subsection{Dividing Nisnevich cd-structure}
See \Cref{subsec:notation} for our notation on schemes and log schemes.
If $S\in \lSch$, 
let $\lSm/S$ be the category of log smooth morphisms $X\to S$ in $\lSch$.
Likewise, if $S\in \Sch$, 
we let $\SmlSm/S$ be the category of log smooth morphisms $X\to S$ in $\lSch$ such that $\ul{X}\to S$ is smooth 
($\ul{X}$ is the underlying scheme of $X$).

\begin{df}
The \emph{Zariski topology on $\lSch$} (resp.\ \emph{strict \'etale topology on $\lSch$}) is the Grothendieck topology generated by strict morphisms $X\to S$ in $\lSch$ such that $\ul{X}\to \ul{S}$ is a Zariski cover (resp.\ a strict \'etale cover).
\end{df}

\begin{df}
A morphism $f\colon X\to S$ in $\lSch$ is called a \emph{dividing cover} if it is a surjective proper log \'etale monomorphism.
For details, we refer to \cite[\S A.11]{logDM}. 
\end{df}

We refer to \cite{Vcdtop} for Voevodsky's notion of cd-structures.
The following cd-structures are defined in \cite{logDM}.

\begin{df}
\label{dZarsheaf.1}
\begin{enumerate}
\item[(i)]
The \emph{strict Nisnevich cd-structure} on $\lSch$ is the collection of cartesian squares
\begin{equation}
\label{dZarsheaf.1.1}
Q := \begin{tikzcd}[row sep = small]
X'\ar[d,"f'"']\ar[r,"g'"]&X\ar[d,"f"]
\\
S'\ar[r,"g"]&S
\end{tikzcd}
\end{equation}
such that the underlying square $\ul{Q}$ of schemes is a Nisnevich distinguished square and $f$, $f'$, $g$, 
and $g'$ are strict maps.
\item[(ii)]
The \emph{dividing cd-structure} on $\lSch$ is the collection of cartesian squares of the form \eqref{dZarsheaf.1.1} such that $X'=S'=\emptyset$ and $f$ is a dividing cover.
\item[(iii)]
The \emph{dividing Nisnevich cd-structure} on $\lSch$ is the union of the strict Nisnevich and dividing cd-structures.
\end{enumerate}
 
The topology associated with the strict Nisnevich (resp.\ dividing, dividing Nisnevich) cd-structure is called the \emph{strict Nisnevich} (resp.\ \emph{dividing}, \emph{dividing Nisnevich}) topology.
\end{df}

\subsection{Comparison of sheaves}
Let us review the comparison lemma \cite[Th\'eor\`eme III.4.1]{SGA4} and its $\infty$-categorical generalization.
As usual, $\Delta$ denotes the simplex category.
We write $\infSpc$ for the $\infty$-category of spaces, 
and let $\infDAb$ denote the derived $\infty$-category of abelian groups $\mathrm{Ab}$.

\begin{definition}
\label{dZarsheaf.8}
Suppose $\cV$ is an $\infty$-category admitting colimits and $\cC$ is an $\infty$-category.
Let $\infPsh(\cC,\cV)$ be the $\infty$-category $\Fun(\cC^{op},\cV)$.

Suppose $\sX$ is a simplicial object in $\cC$.
For $\cF\in \infPsh(\cC,\cV)$, we define
\[
\lvert\cF(\sX)\rvert
:=
\colimit_{n\in \Delta}\cF(\sX_n).
\]
If $t$ is a topology on $\cC$, 
let $\infShv_t(\cC,\cV)$ be the $\infty$-category of hypercomplete sheaves, 
i.e., 
the full subcategory of $\infPsh(\cC,\cV)$ consisting of $\cF$ such that the naturally induced map
\[
\cF(X)
\to
\lvert\cF(\sX)\rvert
\]
is an equivalence for every $t$-hypercover $\sX\to X$.
\end{definition}

\begin{example}
When $\cC$ is a $1$-category,
the $\infty$-category $\infShv_t(\cC,\infSpc)$ is equivalent to the underlying $\infty$-category of 
Jardine's $t$-local projective model structure on $\sPsh(\cC)$ according to \cite[Proposition 6.5.2.14]{HTT}, 
and likewise for $\infShv_t(\cC,\infDAb)$ and chain complexes of presheaves of abelian groups 
$\mathrm{Ch}(\Psh(\cC,\bZ))$).
\end{example}

Suppose $\alpha \colon \cC\to \cD$ is a fully faithful functor of categories and $t$ is a topology on $\cD$.
Consider the topology on $\cC$ induced by $t$.
Owing to \cite[Proposition III.1.2]{SGA4} the functor
\[
\alpha^*\colon \Shv_t(\cD)\to \Shv_t(\cC)
\]
sending $\cF\in \Shv_t(\cD)$ to $(X\in \cC)\mapsto \cF(\alpha(X))$ admits a left adjoint
\[
\alpha_!^t \colon \Shv_t(\cC)\to \Shv_t(\cD).
\]
Similarly, 
if $\alpha$ is instead a fully faithful functor of $\infty$-categories and $t$ is a topology on $\cD$,
we write $\alpha_!^t$ for the left adjoints of the functors
\[
\alpha^*\colon \infShv_t(\cD,\infSpc)\to \infShv_t(\cC,\infSpc)
\text{ and }
\alpha^*\colon \infShv_t(\cD,\infDAb)\to \infShv_t(\cC,\infDAb).
\]

\begin{proposition}
\label{dZarsheaf.7}
Suppose every object of $\cD$ admits a $t$-cover in $\cC$.
Then there is an equivalence
\[
\alpha^*\colon \Shv_t(\cD)\xrightarrow{\simeq} \Shv_t(\cC)
\]
in the case that $\alpha$ is a functor of $1$-categories and there are equivalences
\begin{align*}
\alpha^*\colon& \infShv_t(\cD,\infSpc)\xrightarrow{\simeq} \infShv_t(\cC,\infSpc),
\\
\alpha^*\colon& \infShv_t(\cD,\infDAb)\xrightarrow{\simeq} \infShv_t(\cC,\infDAb).
\end{align*}
in the case that $\alpha$ is a functor of $\infty$-categories.
\end{proposition}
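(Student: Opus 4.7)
The plan is to treat all three equivalences by a single hypercover-descent argument. The first case, for ordinary sheaves of sets, is essentially the classical Comparison Lemma \cite[Th\'eor\`eme III.4.1]{SGA4}; the two $\infty$-categorical statements are parallel refinements in the respective presentable $\infty$-categories. Since the left adjoint $\alpha_!^t$ has already been introduced, it suffices in each case to verify that the unit and counit of the adjunction $\alpha_!^t \dashv \alpha^*$ are equivalences.

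The key geometric input I would establish first is the following: for every $Y\in \cD$, the hypothesis produces a $t$-cover $\{X_i\to Y\}$ with $X_i\in \cC$, and by iteratively applying the hypothesis to the terms and fiber products of the \v{C}ech nerve, one builds a $t$-hypercover $\sX_\bullet\to Y$ with $\sX_n$ in the essential image of $\cC$ for every $n\geq 0$. Given this, the counit is verified as follows: any hypercomplete sheaf $\cG$ on $\cD$ satisfies $\cG(Y)\simeq \lvert\cG(\sX_\bullet)\rvert$ by definition, and since the right-hand side depends only on $\alpha^*\cG$, it agrees with the value $(\alpha_!^t\alpha^*\cG)(Y)$ computed via the same hypercover colimit. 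For the unit, if $X\in \cC$ then the constant simplicial diagram at $\alpha(X)$ is a $t$-hypercover with all terms in the essential image of $\cC$, so full faithfulness of $\alpha$ yields $(\alpha_!^t\cF)(\alpha(X))\simeq \cF(X)$, whence the unit is an equivalence.

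The main obstacle will be producing the hypercover $\sX_\bullet$ in a way that assembles into a genuine simplicial object compatible with the face and degeneracy maps of $\Delta^{\mathrm{op}}$. In the 1-categorical setting this is handled by the standard refinement of \v{C}ech nerves that underlies the classical Comparison Lemma; in the $\infty$-categorical setting it is most conveniently carried out via split hypercovers in the sense of Dugger--Hollander--Isaksen, for which the simplicial structure is forced by the face data of a sequence of covers. Once this step is in place, the three statements follow uniformly: the case of $\Shv_t$ recovers the classical result, while the cases of $\infShv_t(-,\infSpc)$ and $\infShv_t(-,\infDAb)$ follow by interpreting the same colimit computation in the respective presentable $\infty$-categories.
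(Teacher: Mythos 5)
Your route is genuinely different from the paper's. The paper handles the set-valued case by quoting the implication (i)$\Rightarrow$(ii) of the classical comparison lemma from SGA4, and then deduces the two $\infty$-categorical statements by transporting the resulting equivalence of $1$-topoi through the Joyal $t$-local model structures on simplicial sheaves and the Quillen equivalences with the local model structures on simplicial presheaves (citing Ayoub for the $\infDAb$ case). You instead attempt a direct verification that the unit and counit of $\alpha_!^t\dashv\alpha^*$ are equivalences by means of hypercovers with terms in $\cC$. That strategy can be made to work, but as written it has a genuine gap.

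The gap is in the unit step. You assert that, because the constant simplicial object at $\alpha(X)$ is a $t$-hypercover with terms in the image of $\cC$, full faithfulness of $\alpha$ gives $(\alpha_!^t\cF)(\alpha(X))\simeq\cF(X)$. Descent along a constant hypercover is a tautology and gives no information, and full faithfulness only computes the left Kan extension \emph{before} sheafification: $\alpha_!^t$ is left Kan extension along $\alpha$ followed by $t$-(hyper)sheafification on $\cD$, and the sheafification can a priori change the value at $\alpha(X)$. What is actually needed is that $\alpha^*$ carries the local equivalence $\mathrm{Lan}_\alpha\cF\to\alpha_!^t\cF$ on $\cD$ to a local equivalence on $\cC$; since source and target restrict to $t$-sheaves on $\cC$, this forces the unit to be an equivalence. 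Proving that $\alpha^*$ reflects and preserves local equivalences is exactly where the hypothesis that every object of $\cD$ is covered by objects of $\cC$ enters, i.e., the technical content you defer. Relatedly, your counit computation silently presupposes the unit: to identify $(\alpha_!^t\alpha^*\cG)(Y)$ with the descent limit over $\sX_\bullet$ you need both that $\alpha_!^t\alpha^*\cG$ satisfies descent along $\sX_\bullet\to Y$ (fine, it is a hypercomplete sheaf) and that its restriction to $\cC$ is $\alpha^*\cG$ — which is the unit. So the two verifications must be reordered, the unit needs the argument sketched above rather than the one you give, and the construction of a hypercover of $Y$ with all terms in $\cC$ (which you correctly flag as the main obstacle) must actually be carried out, since the counit depends on it.
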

\begin{proof}
The first $\alpha^*$ is an equivalence by the implication (i)$\Rightarrow$(ii) in \cite[Th\'eor\`eme III.4.1]{SGA4}.
The second one is an equivalence as a special case of \cite[Proposition 6.22]{PY16}.
The proof of \emph{loc.\ cit.} can be adapted to show that the third one is an equivalence too.
\end{proof}

\begin{df}
For a fan $\Sigma$, let $\bT_\Sigma$ be the fs log scheme $Z$ whose underlying scheme is the toric variety associated with $\Sigma$ and whose log structure is the compactifying log structure associated with the torus embedding.

For an fs monoid $P$ such that $P^\gp$ is torsion-free, 
let $\Spec{P}$ denote the associated fan whose only maximal cone is the dual monoid of $P$.

Let $\lFan$ be the full subcategory of $\lSch$ consisting of disjoint unions $\amalg_{i\in I}X_i$ such that each $X_i$ admits a strict morphism $X\to \bT_{\Sigma_i}$ with a fan $\Sigma_i$.
Let $\eta\colon \lFan\to \lSch$ be the inclusion functor.
\end{df}

\begin{lemma}
\label{dZarsheaf.3}
If $S\in \lSch$, 
there is a naturally induced equivalence
\[
\eta^*
\colon
\Shv_{sNis}(\lSch/S) \xrightarrow{\simeq} \Shv_{sNis}(\lFan/S),
\]
Similar equivalences hold for $\infShv(-,\infSpc)$ and $\infShv(-,\infDAb)$.
\end{lemma}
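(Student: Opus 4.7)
The plan is to apply \Cref{dZarsheaf.7} directly to the fully faithful inclusion $\eta \colon \lFan/S \to \lSch/S$, with $t$ taken to be the strict Nisnevich topology on $\lSch/S$ and the induced topology on $\lFan/S$. Full faithfulness is immediate from the definition of $\lFan$ as a full subcategory of $\lSch$, so the only nontrivial input is verifying the covering hypothesis: every object of $\lSch/S$ admits a strict Nisnevich cover by objects of $\lFan/S$.

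For the covering hypothesis, I would invoke the standard fact that every fs log scheme admits, locally in the strict Zariski (a fortiori strict Nisnevich) topology, a chart by an fs monoid. Concretely, for $X \to S$ in $\lSch/S$ and any point $x \in X$, one may, after replacing $X$ by a strict Zariski neighborhood $U \hookrightarrow X$ of $x$, produce a strict morphism $U \to \bT_{\Spec{P}}$ for some fs monoid $P$ with torsion-free $P^{\gp}$ (this uses \cite[Theorem II.2.3.2]{Ogu18} or the analogous chart existence results; the torsion-freeness can be arranged by further localization, since fs monoids have finitely generated groupifications). By definition of $\lFan$, each such $U$ lies in $\lFan$, and the composite $U \to X \to S$ then gives an object of $\lFan/S$ covering $X$ in the strict Nisnevich topology. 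Taking the family of all such $U$ as $x$ ranges over $X$ yields the required cover.

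It remains to confirm that the topology on $\lFan/S$ induced from the strict Nisnevich topology on $\lSch/S$ agrees with the strict Nisnevich topology on $\lFan/S$. This is clear because strict Nisnevich covers are preserved and detected by pullback along strict morphisms, and every covering family in $\lFan/S$ in the induced sense can be refined by strict Nisnevich covers all of whose members lie in $\lFan$ (applying the chart argument above to each component of a given cover in $\lSch$).

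With these two points in hand, \Cref{dZarsheaf.7} immediately yields the three claimed equivalences $\eta^* \colon \Shv_{sNis}(\lSch/S) \xrightarrow{\simeq} \Shv_{sNis}(\lFan/S)$ and the analogous $\infty$-categorical versions for $\infShv(-,\infSpc)$ and $\infShv(-,\infDAb)$. The main (and essentially only) obstacle is the appeal to chart existence with torsion-free groupification; everything else is a formal unwinding of \Cref{dZarsheaf.7}.
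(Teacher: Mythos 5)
Your proposal is correct and follows essentially the same route as the paper: the paper's proof likewise reduces to the covering hypothesis of \Cref{dZarsheaf.7}, verified via local chart existence (the paper cites \cite[Proposition II.2.3.7]{Ogu18}, which produces sharp fs charts and hence torsion-free groupifications automatically). Your additional remarks on arranging torsion-freeness and on the induced topology agreeing with the strict Nisnevich topology on $\lFan/S$ are correct elaborations of points the paper leaves implicit.
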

\begin{proof}
Every object of $\lSch/S$ admits a strict Nisnevich cover in $\lFan/S$ according to \cite[Proposition II.2.3.7]{Ogu18}. 
Proposition \ref{dZarsheaf.7} finishes the proof.
\end{proof}

\subsection{Dividing Nisnevich descent property}

We refer to \cite[Definitions 3.3.22, 3.4.2]{logDM} and \cite{Vcdtop} for the definitions of quasi-bounded, squareable, bounded, complete, and regular cd-structures.

\begin{proposition}
\label{dZarsheaf.10}
Suppose $P$ is a quasi-bounded, regular, and squareable cd-structure on a category $\cC$ with an initial object.
Let $t_P$ be the topology on $\cC$ associated with $P$.
Then $\cF\in \infPsh(\cC,\infSpc)$ satisfies $t_P$-descent if and only if $\cF(Q)$ is cocartesian for every $P$-distinguished square $Q$.
\end{proposition}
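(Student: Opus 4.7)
The plan is to derive this criterion as the $\infty$-categorical counterpart of Voevodsky's descent theorem for cd-structures \cite{Vcdtop}. I would transport the statement across the equivalence between $\infShv_{t_P}(\cC,\infSpc)$ and the underlying $\infty$-category of Jardine's $t_P$-local projective model structure on $\sPsh(\cC)$, and then apply the classical result to simplicial presheaves.

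For the forward direction, suppose $\cF$ satisfies $t_P$-descent. Given a $P$-distinguished square $Q$ with lower-right vertex $S$, the pair $\{X\to S,\; S'\to S\}$ forms a $t_P$-cover of $S$ by construction. The squareability hypothesis guarantees that the iterated fiber products needed to form the \v{C}ech nerve of this cover can all be identified with the vertices of $Q$ (or their self-products controlled by $Q$). The descent equivalence $\cF(S)\xrightarrow{\simeq}|\cF(\check{C})|$ therefore simplifies to the assertion that $\cF(Q)$ is a limit diagram in $\infSpc$, which is exactly the Mayer--Vietoris (co)cartesianness condition in the statement.

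For the reverse direction, I would invoke Voevodsky's theorem in the form adapted to regular, squareable, quasi-bounded cd-structures (see \cite{Vcdtop}, together with the refinements developed in \cite[\S 3]{logDM}): a simplicial presheaf is $t_P$-local if and only if it sends every $P$-distinguished square to a homotopy cartesian square and sends the initial object of $\cC$ to the terminal space. The first condition is the hypothesis; the second follows because the empty sieve is a $t_P$-cover of the initial object, using regularity plus the existence of an initial object in $\cC$. The inductive core of the argument uses regularity to present an arbitrary $t_P$-hypercover as built from $P$-distinguished squares, together with quasi-boundedness to ensure that the associated transfinite induction on Voevodsky's density structure terminates. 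The $\infty$-categorical upgrade is then automatic from the Jardine equivalence.

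The main obstacle is carrying through the transfinite induction in the reverse direction with the quasi-bounded (rather than strictly bounded) hypothesis. One must check that the density filtration still stabilizes and that at each successor stage one genuinely gains descent for the next class of covers by reducing to a single distinguished square. A secondary subtlety is that hypercompleteness has been built into the definition of $\infShv_{t_P}$; this is compatible with Jardine's model structure, but one should verify that the Mayer--Vietoris condition on $\cF(Q)$ suffices to imply descent along \emph{hypercovers} and not only along \v{C}ech covers, which is ultimately where quasi-boundedness plays its role.
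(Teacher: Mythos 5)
Your proposal follows essentially the same route as the paper: its proof is precisely the citation of Voevodsky's \cite[Proposition 3.8(2),(3)]{Vcdtop} for bounded, complete, regular cd-structures, together with the observation that the argument persists for quasi-bounded, squareable cd-structures once \cite[Lemma 3.5]{Vcdtop} is replaced by \cite[Lemma 3.4.10]{logDM}. Your extra commentary on the forward direction and on where quasi-boundedness enters the transfinite induction is consistent with that reduction and does not change the approach.
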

\begin{proof}
This is shown in \cite[Proposition 3.8(2), (3)]{Vcdtop} if $P$ is a bounded, complete, and regular cd-structure.
The same proof works in over setting by replacing \cite[Lemma 3.5]{Vcdtop} with \cite[Lemma 3.4.10]{logDM}.
\end{proof}

\begin{lemma}
\label{dZarsheaf.9}
Suppose $\cV$ is an $\infty$-category admitting colimits and $\cF\in \infPsh(\lSch/S,\cV)$, where $S\in \lSch$.
Then $\cF$ satisfies dividing descent if and only if $\cF(X)\to \cF(Y)$ is an equivalence for every dividing cover $Y\to X$ in $\lSch/S$.
The same result holds for $\lFan/S$.
\end{lemma}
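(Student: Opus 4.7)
The plan is to apply Proposition \ref{dZarsheaf.10} to the dividing cd-structure on $\lSch/S$. By \cite{logDM}, this cd-structure is quasi-bounded, regular, and squareable, and $\lSch/S$ admits an initial object, namely the empty log scheme. Proposition \ref{dZarsheaf.10} is stated for $\cV = \infSpc$, but the same proof works for arbitrary cocomplete $\cV$; alternatively, one can reduce to the space-valued case by postcomposing $\cF$ with the corepresentable functors $\Map(v,-)$ for $v\in \cV$, which jointly detect both equivalences and cartesian squares.

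Next, I would unpack the conclusion of the proposition for the dividing cd-structure. By Definition \ref{dZarsheaf.1}(ii), every dividing distinguished square is of the form
\[
Q = \begin{tikzcd}[row sep = small]
\emptyset \ar[d] \ar[r] & Y \ar[d,"f"] \\
\emptyset \ar[r] & X
\end{tikzcd}
\]
with $f\colon Y\to X$ a dividing cover. Applying $\cF$ produces a square in $\cV$ whose lower horizontal arrow is the identity on $\cF(\emptyset)$, so it is a pullback square if and only if $\cF(f)\colon \cF(X)\to \cF(Y)$ is an equivalence. Combined with Proposition \ref{dZarsheaf.10}, this yields the claim for $\lSch/S$.

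For the variant with $\lFan/S$, the same argument applies verbatim. The empty log scheme belongs to $\lFan$ (as the empty disjoint union), and the dividing cd-structure restricts to a cd-structure on $\lFan/S$ with the same formal properties, since by \cite[\S A.11]{logDM} dividing covers of objects of $\lFan$ arise from subdivisions of the associated fans and hence remain in $\lFan$.

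The main obstacle I expect is ensuring that Proposition \ref{dZarsheaf.10} applies in the generality of an arbitrary $\cV$ admitting colimits; this should be largely formal, following either by testing the sheaf condition pointwise against the co-Yoneda embedding of $\cV$ or by adapting Voevodsky's proof in \cite{Vcdtop} to the enriched setting. A secondary technical point is the verification that the dividing cd-structure restricts to a cd-structure on $\lFan/S$ with the required properties, but this is immediate from the toric-geometric description of dividing covers.
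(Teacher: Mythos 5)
Your route through the cd-structure machinery (Proposition \ref{dZarsheaf.10}) is genuinely different from the paper's argument, and it has a gap exactly at the point you flag as the ``main obstacle.'' Proposition \ref{dZarsheaf.10} is stated only for $\cV=\infSpc$, whereas the whole content of Lemma \ref{dZarsheaf.9} is that the characterization holds for an \emph{arbitrary} $\infty$-category $\cV$ admitting colimits (it is later applied to $\infDAb$, and the descent condition of Definition \ref{dZarsheaf.8} is formulated via the colimit $\lvert\cF(\sX)\rvert=\colim_{n\in\Delta}\cF(\sX_n)$). Neither of your two workarounds closes this gap: asserting that ``the same proof works'' is not justified, since Voevodsky's argument in \cite{Vcdtop} is specific to simplicial (pre)sheaves and the local model structure; and the reduction via the corepresentable functors $\Map(v,-)$ fails because these functors do not commute with the colimit $\lvert\cF(\sX)\rvert$ appearing in the descent condition (they preserve limits, not colimits), so ``$\Map(v,\cF(-))$ satisfies descent for all $v$'' is not equivalent to ``$\cF$ satisfies descent'' in the sense of Definition \ref{dZarsheaf.8}. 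Your unpacking of the dividing distinguished squares (empty upper-left and lower-left corners, so the square condition degenerates to $\cF(X)\to\cF(Y)$ being an equivalence) is correct, as is the observation that the argument restricts to $\lFan/S$; the problem is purely the generality in $\cV$.

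The paper avoids all of this with a direct two-step argument that works uniformly in any cocomplete $\cV$: since a dividing cover $f\colon Y\to X$ is by definition a monomorphism, its \v{C}ech nerve is the constant simplicial object at $Y$, so dividing descent forces $\cF(X)\to\lvert\cF(Y)\rvert\simeq\cF(Y)$ to be an equivalence; conversely, every level $\sX_i\to X$ of a dividing hypercover is itself a dividing cover, so the cosimplicial diagram $\cF(\sX_\bullet)$ is levelwise equivalent to the constant diagram at $\cF(X)$, whence $\cF(X)\to\lvert\cF(\sX)\rvert$ is an equivalence. If you want to salvage your approach, you would need to either reprove Proposition \ref{dZarsheaf.10} for general $\cV$ or restrict the lemma to $\cV=\infSpc,\infDAb$; the direct argument is both shorter and strictly more general.
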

\begin{proof}
We focus on $\lSch/S$ since the proofs are similar.
Suppose $\cF$ satisfies dividing descent.
For a dividing cover $f\colon Y\to X$ in $\lSch/S$, the \v{C}ech nerve associated with $f$ is isomorphic to the constant simplicial scheme $Y$.
The above shows that $\cF(X)\to \cF(Y)$ is an equivalence.
Conversely, suppose $\cF(X)\to \cF(Y)$ is an equivalence for every dividing cover $Y\to X$ in $\lSch/S$.
If $\sX\to X$ is a dividing hypercover, then $\sX_i\to X$ is a dividing cover for every integer $i\geq 0$.
Hence $\cF(X)\to \lvert\cF(\sX)\rvert$ is an equivalence, i.e., $\cF$ satisfies dividing descent.
\end{proof}

\begin{proposition}
\label{dZarsheaf.4}
Suppose $\cF\in \infPsh(\lSch/S,\cV)$, where $S\in \lSch$, and $\cV=\infSpc$ or $\infDAb$.
Then $\cF$ satisfies dividing Nisnevich descent if and only if $\cF$ satisfies both strict Nisnevich descent and dividing descent.
\end{proposition}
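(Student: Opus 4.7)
The ``only if'' direction is formal: by Definition \ref{dZarsheaf.1}(iii), every strict Nisnevich distinguished square and every dividing distinguished square is a dividing Nisnevich distinguished square, and consequently every strict Nisnevich or dividing hypercover is a dividing Nisnevich hypercover. Hence dividing Nisnevich descent specializes to descent for each of the two constituent topologies.

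For the ``if'' direction, the strategy is to replace the hypercover condition by a local check on distinguished squares via Proposition \ref{dZarsheaf.10}. To apply that proposition, one needs the dividing Nisnevich cd-structure to be quasi-bounded, regular, and squareable; these facts, as well as the corresponding facts for the strict Nisnevich and dividing cd-structures separately, are established in \cite[\S 3.3, \S 3.4]{logDM}. Granting this, Proposition \ref{dZarsheaf.10} reduces the question to showing that $\cF$ sends every dividing Nisnevich distinguished square to a cocartesian square in $\cV$. But every such square is by construction either a strict Nisnevich distinguished square or a dividing distinguished square (in the latter case of the degenerate form $X'=S'=\emptyset$ with $f$ a dividing cover; equivalently, one may invoke \Cref{dZarsheaf.9} for dividing descent). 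The strict Nisnevich descent hypothesis handles the first case, and the dividing descent hypothesis, via another application of Proposition \ref{dZarsheaf.10} in the reverse direction, handles the second.

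The one point that deserves attention is that Proposition \ref{dZarsheaf.10} is stated for $\cV = \infSpc$, whereas the statement to be proved also covers $\cV = \infDAb$. The proof of Voevodsky's criterion in \cite{Vcdtop} (and its $\infty$-categorical version used for Proposition \ref{dZarsheaf.10}) depends only on the formation of pushouts and initial objects and thus transfers verbatim to any cocomplete $\infty$-category; in particular to $\infDAb$. Alternatively, one can reduce to the $\infSpc$-case Postnikov-sectionwise: a complex $\cF \in \infPsh(\lSch/S, \infDAb)$ satisfies $t$-descent if and only if each shifted truncation $\tau_{\leq n}\cF[-k]$, viewed via the Dold--Kan equivalence as a simplicial presheaf, does. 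The main (very mild) obstacle is therefore purely one of bookkeeping: recording that the union of two cd-structures inherits the regularity/squareability hypotheses required by Proposition \ref{dZarsheaf.10}, after which the result is immediate.
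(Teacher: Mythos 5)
Your proposal is correct and follows essentially the same route as the paper: both reduce $t$-descent to the cocartesian condition on $t$-distinguished squares via Proposition \ref{dZarsheaf.10} (the paper cites \cite[Propositions 3.3.30, 3.4.11]{logDM} for the required hypotheses on the cd-structures and for the $\infDAb$ case) and then conclude from the fact that the dividing Nisnevich cd-structure is by definition the union of the strict Nisnevich and dividing cd-structures. The only difference is bookkeeping: the paper disposes of $\cV=\infDAb$ by citing the $\infDAb$-analogue of the square criterion, where you instead sketch a transfer or Postnikov-truncation argument.
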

\begin{proof}
Due to Proposition \ref{dZarsheaf.10} and \cite[Proposition 3.3.30]{logDM}, $\cF$ satisfies $t$-descent if and only if $\cF(Q)$ is cocartesian for every $t$-distinguished square $Q$ for $t=sNis,div,dNis$ when $\cV=\infSpc$.
The same holds by \cite[Proposition 3.4.11]{logDM} when $\cV=\infDAb$.
This implies the claim since the union of the strict Nisnevich cd-structure and the dividing cd-structure is the dividing Nisnevich cd-structure.
\end{proof}

\subsection{Dividing invariance of Hodge cohomology}

Owing to \cite[(9.1.1)]{logDM}, 
for all $X\in \Sch$, quasi-coherent sheaf $\cF$ on $X$, and integer $i$, 
we have isomorphisms
\begin{equation}
\label{cot.0.1}
H_{Zar}^i(X,\cF)
\cong
H_{sNis}^i(X,\cF)
\cong
H_{s\et}^i(X,\cF).
\end{equation}
If $X\in \lSch$, 
$t=Zar,sNis,s\et$, 
we have a canonical isomorphism
\[
H_t^i(\ul{X},\cF)
\cong
H_t^i(X,\cF).
\]
In fact, 
cohomology groups of quasi-coherent modules are invariant under dividing covers.

\begin{theorem}
\label{cot.5}
Let $f\colon Y\to X$ be a dividing cover in $\lSch$.
Then the functor
\[
\ul{f}^*\colon \Deri^b(\Coh(\ul{X}))\to \Deri^b(\Coh(\ul{Y}))
\]
of the bounded derived categories of coherent sheaves naturally induced by the underlying morphism $\ul{f}\colon \ul{Y}\to \ul{X}$ is fully faithful.
In particular, the naturally induced homomorphism
\begin{equation}
\label{cot.4.2}
\mathbf{H}_{Zar}^i(\ul{X},\cF)
\to
\mathbf{H}_{Zar}^i(\ul{Y},f^*\cF)
\end{equation}
is an isomorphism for all $\cF\in \Deri^b(\Coh(\ul{X}))$ and integers $i$.
\end{theorem}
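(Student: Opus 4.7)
The plan is to reduce the theorem to the single claim that
\[
\cO_{\ul{X}} \xrightarrow{\simeq} R\ul{f}_* \cO_{\ul{Y}}
\]
is an equivalence in $\Deri^b(\Coh(\ul{X}))$. Granting this, the projection formula for the proper morphism $\ul{f}$ produces a natural equivalence $R\ul{f}_*\, \ul{f}^*\cF \simeq \cF \otimes^{\mathbb{L}}_{\cO_{\ul{X}}} R\ul{f}_* \cO_{\ul{Y}} \simeq \cF$ for every $\cF \in \Deri^b(\Coh(\ul{X}))$. The adjunction $(\ul{f}^*, R\ul{f}_*)$ then yields full faithfulness of $\ul{f}^*$: for $\cF, \cG \in \Deri^b(\Coh(\ul{X}))$ one has $R\Hom_{\cO_{\ul{Y}}}(\ul{f}^*\cF, \ul{f}^*\cG) \simeq R\Hom_{\cO_{\ul{X}}}(\cF, R\ul{f}_* \ul{f}^*\cG) \simeq R\Hom_{\cO_{\ul{X}}}(\cF, \cG)$. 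Applying $R\Gamma(\ul{X}, -)$ to the equivalence $\cF \simeq R\ul{f}_* \ul{f}^*\cF$ and using $R\Gamma(\ul{X}, R\ul{f}_*(-)) \simeq R\Gamma(\ul{Y}, -)$ then gives the hypercohomology isomorphism \eqref{cot.4.2}.

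\textbf{Reduction to toric models.} For the key claim, I would argue strict Zariski locally on $X$. After shrinking $X$, I may assume $X$ admits a fs chart $X \to \bT_\Sigma$ for a fan $\Sigma$. By the structure theory of dividing covers and their dominance by log blow-ups (\cite[Theorems II.1.8.1, III.2.6.7]{Ogu18}, \cite[\S A.11]{logDM}), and invoking a two-out-of-three argument in $\Deri^b(\Coh(\ul{X}))$ to permit composition of $f$ with further log blow-ups, I may assume $Y = X \times_{\bT_\Sigma} \bT_{\Sigma'}$ for a subdivision $\Sigma' \to \Sigma$ of fans. For the associated toric morphism $g \colon \bT_{\Sigma'} \to \bT_\Sigma$, the classical Kempf--Knudsen--Mumford--Saint-Donat vanishing theorem gives $Rg_* \cO_{\bT_{\Sigma'}} \simeq \cO_{\bT_\Sigma}$, reflecting the fact that toric varieties have rational singularities.

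\textbf{Transfer and main obstacle.} The remaining, and most delicate, task is to transfer this toric vanishing along the strict base change to conclude $R\ul{f}_* \cO_{\ul{Y}} \simeq \cO_{\ul{X}}$. This is the main obstacle, because the chart morphism $\ul{X} \to \bT_\Sigma$ need not be flat, so standard flat base change is unavailable. I would handle this in one of two ways. First, one can pass to completions of local rings at closed points of $\ul{X}$: by Cohen's structure theorem these become flat (indeed, formally smooth) over the corresponding local rings of $\bT_\Sigma$, so flat base change applies there, and the statement descends to $\ul{X}$ via faithfully flat descent along the completion. Alternatively, one can pull back the canonical affine cover of $\bT_{\Sigma'}$ indexed by the maximal cones of $\Sigma'$ to an affine cover of $\ul{Y}$, and compute $R\ul{f}_* \cO_{\ul{Y}}$ directly via \v{C}ech cohomology, matching it combinatorially with $\cO_{\ul{X}}$ using the cone data of $\Sigma' \to \Sigma$. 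Either route reduces the problem to an explicit acyclicity check on the toric combinatorics, completing the proof.
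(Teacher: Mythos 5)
Your overall architecture---reduce to showing $\cO_{\ul{X}} \xrightarrow{\simeq} R\ul{f}_*\cO_{\ul{Y}}$, then conclude by the projection formula and adjunction---and your reduction to pulled-back toric subdivisions both match the paper. The gap is in the transfer step, which you correctly flag as the main obstacle but do not resolve. Your first route is false: completing the local rings of $\ul{X}$ does not make the chart flat. For the standard log point $\Spec{k} \to \bA^1_k$ hitting the origin, the completed local ring is $k$, a proper quotient of $k[[t]]$ and hence not flat over it; Cohen's structure theorem gives no formal smoothness here, and a non-flat morphism stays non-flat after completion. Your second route (\v{C}ech cohomology of the pulled-back affine cover) is where the difficulty lives rather than a resolution of it: the \v{C}ech terms are the underived tensor products $A \otimes_{\bZ[P]} \bZ[Q_\sigma]$, which carry no $M_\Sigma$-grading and admit no clean combinatorial description precisely because $\bZ[P] \to \bZ[Q_\sigma]$ is not flat; ``matching with the cone data'' is not an explicit acyclicity check but essentially the whole content of the theorem.

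The paper's solution is a graph trick that your proposal is missing. One replaces the non-flat chart by the strict closed immersion $i\colon X \hookrightarrow \ul{X}\times \bA_P$ (the graph of the chart), so that $f$ becomes the base change of the proper morphism $g = \id_{\ul{X}} \times (\bT_\Sigma \to \bA_P)$ along $\ul{i}$. The acyclicity $\cO \simeq R\ul{g}_*\ul{g}^*\cO$ is then proved on the \emph{absolute} product $\Spec{A}\times \bT_\Sigma$, where $\Gamma(\Spec{A}\times\bA_Q,\cO)=A[Q]$ splits as a direct sum of copies of $A$ indexed by lattice points, and a Danilov-type formula $H_{Zar}^i(\Spec{A}\times\bT_\Sigma,\cO)\cong \bigoplus_{m\in M_\Sigma} H_{Z_m}^i(\lvert\Sigma\rvert,A)$ (Propositions \ref{cot.6}--\ref{cot.8}) shows invariance under subdivision since $\lvert\Sigma\rvert=\lvert\Sigma'\rvert$. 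One then descends to $X$ using conservativity of $\ul{i}_*$, proper base change $\ul{g}^*\ul{i}_*\simeq \ul{i}'_*\ul{f}^*$, and the projection formula for $\ul{g}$. Without some such device your argument does not close; the correct intuition about non-flatness is there, but neither of your two proposed fixes works as stated.
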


We prove Theorem \ref{cot.5} at the end of this section.
It improves on \cite[Proposition 9.2.4]{logDM}, 
where we assumed that $X$ and $Y$ are log smooth over a field.

\begin{corollary}
\label{cot.13}
Suppose $S\in \lSch$.
Then
\[
R\Gamma_{sNis}(-,\Omega_{-/S}^d)\in \infShv_{sNis}(\lSch/S,\infDAb)
\]
satisfies dividing Nisnevich descent.
\end{corollary}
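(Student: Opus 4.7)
The plan is to invoke \Cref{dZarsheaf.4}, which splits dividing Nisnevich descent into two separate conditions: strict Nisnevich descent and dividing descent for the presheaf $\cF := R\Gamma_{sNis}(-,\Omega^d_{-/S})$ on $\lSch/S$.

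Strict Nisnevich descent should be essentially automatic. For a strict Nisnevich distinguished square $Q$, all four morphisms are strict, so formation of $\Omega^d_{-/S}$ commutes with pullback along the maps in $Q$. Consequently, evaluating $\cF$ on $Q$ reduces to Nisnevich hypercohomology on the underlying Nisnevich distinguished square $\ul{Q}$ with coefficients in a single coherent sheaf, which descends by the equalities \eqref{cot.0.1} together with classical descent of quasi-coherent cohomology.

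For dividing descent, I would invoke \Cref{dZarsheaf.9} to reduce to showing that, for every dividing cover $f\colon Y\to X$ in $\lSch/S$, the functorial map
\[
R\Gamma_{sNis}(\ul{X},\Omega^d_{X/S})
\longrightarrow
R\Gamma_{sNis}(\ul{Y},\Omega^d_{Y/S})
\]
is an equivalence. Since a dividing cover is, by definition, log \'etale, the transitivity sequence for log differentials combined with the vanishing $\Omega^1_{Y/X}=0$ yields a canonical isomorphism $\ul{f}^*\Omega^1_{X/S}\xrightarrow{\cong}\Omega^1_{Y/S}$, and hence $\ul{f}^*\Omega^d_{X/S}\xrightarrow{\cong}\Omega^d_{Y/S}$ in $\Coh(\ul{Y})$ upon taking $d$-th exterior powers. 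Via \eqref{cot.0.1}, the map above is then identified with the pullback map $R\Gamma_{Zar}(\ul{X},\Omega^d_{X/S})\to R\Gamma_{Zar}(\ul{Y},\ul{f}^*\Omega^d_{X/S})$, which is an equivalence by \Cref{cot.5} applied to $\cF=\Omega^d_{X/S}\in \Deri^b(\Coh(\ul{X}))$.

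The genuinely nontrivial input is \Cref{cot.5} itself, whose proof we defer and take for granted here; everything else is formal assembly. The only secondary point deserving attention is the identification $\ul{f}^*\Omega^d_{X/S}\cong \Omega^d_{Y/S}$ for dividing $f$: this is a standard consequence of log \'etaleness and the compatibility of exterior powers with pullback, and the additional hypotheses on a dividing cover (surjective proper monomorphism) play no role at this stage.
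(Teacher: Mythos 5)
Your proposal is correct and follows essentially the same route as the paper: reduce to checking invariance under a single dividing cover (the paper does this via \eqref{cot.0.1} and the cd-structure results, implicitly \Cref{dZarsheaf.4} and \Cref{dZarsheaf.9}), identify $\ul{f}^*\Omega^d_{X/S}\cong\Omega^d_{Y/S}$ using log \'etaleness, and conclude with \Cref{cot.5}. One minor point: the right-exact transitivity sequence plus $\Omega^1_{Y/X}=0$ only gives surjectivity of $\ul{f}^*\Omega^1_{X/S}\to\Omega^1_{Y/S}$; the isomorphism requires the left-exactness valid for log smooth (hence log \'etale) morphisms, which is the content of \cite[Corollary IV.3.2.4(1)]{Ogu18} cited in the paper.
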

\begin{proof}
Owing to \eqref{cot.0.1}, it suffices to show that the naturally induced homomorphism
\[
f^*
\colon
H_{Zar}^i(X,\Omega_{X/S}^d)
\to
H_{Zar}^i(Y,\Omega_{Y/S}^d)
\]
is an isomorphism for all dividing covers $Y\to X$ in $\lSch/S$ and integers $d,i\geq 0$.

We may assume $d>0$ due to Theorem \ref{cot.5}.
Corollary IV.3.2.4(1) in \cite{Ogu18} shows there is an isomorphism $f^*\Omega_{X/S}^1 \cong \Omega_{Y/S}^1$.
Hence we have $f^*\Omega_{X/S}^d\cong \Omega_{Y/S}^d$.
To conclude, 
we apply Theorem \ref{cot.5} to see that the homomorphism
\[
\Hom_{ \Deri^b(\Coh(\ul{X}))}(\cO_X,\Omega_{X/S}^d[i])
\to
\Hom_{ \Deri^b(\Coh(\ul{Y}))}(\cO_Y,\Omega_{Y/S}^d[i])
\]
is an isomorphism for all integers $i$.
\end{proof}

A fan $\Sigma$ has a lattice $N_{\Sigma}$ and a dual lattice $M_{\Sigma}$.
We let $N_{\Sigma,\bR}$ be shorthand for the tensor product $N_{\Sigma}\otimes \bR$.
Let $\lvert \Sigma\rvert$ denote the support of $\Sigma$, which is a closed subset of $N_{\Sigma,\bR}$.

\begin{proposition}
\label{cot.6}
Let $A$ be a commutative ring.
For a fan $\Sigma$ we set $X:=\Spec{A}\times \bT_{\Sigma}$ and $Z_m:=\{x\in \Sigma:\langle m,x\rangle\geq 0\}$ for $m\in M_\Sigma$.
If $i\geq 0$, there is a canonical isomorphism
\[
H_{Zar}^i(X,\cO_X)
\cong
\bigoplus_{m\in M_{\Sigma}} H_{Z_m}^i(\lvert\Sigma\rvert,A).
\]
\end{proposition}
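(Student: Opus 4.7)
The plan is to adapt Demazure's classical computation of cohomology on toric varieties. First cover $X = \Spec{A} \times \bT_\Sigma$ by the torus-invariant affine opens $V_\sigma := \Spec{A[\sigma^\vee \cap M_\Sigma]}$ indexed by cones $\sigma \in \Sigma$; since intersections $V_{\sigma_0}\cap\cdots\cap V_{\sigma_k} = V_{\sigma_0 \cap \cdots \cap \sigma_k}$ remain affine, the associated Čech complex computes $H^\bullet_{Zar}(X, \cO_X)$. The torus action equips $\cO_X$ with an $M_\Sigma$-grading preserved by restriction maps, so the Čech complex splits as $\bigoplus_{m \in M_\Sigma} \check{C}^\bullet_m$, and since cohomology commutes with direct sums, it suffices to identify $H^i(\check{C}^\bullet_m) \cong H^i_{Z_m}(|\Sigma|, A)$ for each fixed $m$.

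Next I would unwind the weight-$m$ piece: the $(\sigma_0, \ldots, \sigma_k)$-component of $\check{C}^k_m$ is $A$ precisely when $m \in (\sigma_0 \cap \cdots \cap \sigma_k)^\vee$, equivalently $\sigma_0 \cap \cdots \cap \sigma_k \subseteq Z_m$, and vanishes otherwise. Thus $\check{C}^\bullet_m$ is purely combinatorial, depending only on the pair $(\Sigma, Z_m)$. On the topological side, the contractibility of $|\Sigma|$ (a cone with apex $0$) and the local-cohomology long exact sequence reduce $H^i_{Z_m}(|\Sigma|, A)$ to the reduced cohomology of $|\Sigma| \setminus Z_m$ up to a degree shift in positive degrees (with degree $0$ handled separately). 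The latter can in turn be computed via the nerve of the cover of $|\Sigma| \setminus Z_m$ by the relative interiors of those cones of $\Sigma$ not contained in $Z_m$, matching the support condition that already appears in $\check{C}^\bullet_m$.

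The main obstacle is the final combinatorial identification. Both complexes are indexed by the face poset of $\Sigma$ with the same support condition $\sigma \subseteq Z_m$, but spelling out the comparison requires fixing a PL/cellular model of $|\Sigma|$ compatible with the affine cover $\{V_\sigma\}$ of $X$ so that chains of cones on the algebro-geometric side correspond to simplices of the nerve on the topological side. Alternatively, one may view both complexes as derived sections on the poset of cones of $\Sigma$ of the constant presheaf $A$ subject to the support condition $\sigma \subseteq Z_m$, which renders the matching transparent. This is a classical piece of toric geometry following Demazure; the passage from a base field to a general base ring $A$ is harmless since only the flat structure sheaf of $\bT_\Sigma$ is involved.
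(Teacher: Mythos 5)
Your overall strategy is essentially the paper's: both are the Demazure--Danilov computation, passing to the \v{C}ech complex of the torus-invariant affine cover, splitting it by the $M_\Sigma$-grading, and matching the weight-$m$ piece with $H^\bullet_{Z_m}(\lvert\Sigma\rvert,A)$. The difference lies in how that last matching is carried out, and this is where your proposal has a genuine gap. You correctly identify the weight-$m$ \v{C}ech complex as having an $A$ in position $(\sigma_0,\dots,\sigma_k)$ exactly when $\sigma_0\cap\cdots\cap\sigma_k\subseteq Z_m$, but the identification of its cohomology with $H^i_{Z_m}(\lvert\Sigma\rvert,A)$ is deferred to ``the main obstacle,'' and the mechanism you propose for it fails as stated: the relative interiors of the cones of $\Sigma$ not contained in $Z_m$ are not open in $\lvert\Sigma\rvert$ (a cone of positive codimension has empty interior in $N_{\Sigma,\bR}$), they are pairwise disjoint, and they are not even contained in $\lvert\Sigma\rvert\setminus Z_m$; the nerve of such a family is a discrete set and computes nothing. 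The alternative phrasing via ``derived sections on the poset of cones'' is the right intuition but is not an argument. Since this identification is the entire content of the proposition, it cannot be left at that.

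The paper sidesteps the issue by indexing the topological side by the same data as the algebraic side from the outset. It first treats a single cone $\sigma$ with dual monoid $P$: there $X$ is affine, so the algebraic side is $A[P]$ concentrated in degree $0$, while $H^i_{Z_m}(\lvert\sigma\rvert,A)$ is computed from the local-cohomology long exact sequence using that $\lvert\sigma\rvert$ is convex and that $\lvert\sigma\rvert\setminus Z_m$ is convex (hence contractible) or empty according to whether $m\notin P$ or $m\in P$. For general $\Sigma$ it then compares the \v{C}ech complex of the open cover $\{U_i=\Spec{A}\times\bA_{P_i}\}$ by the maximal cones with the complex built from the closed cover $\{\lvert\sigma_i\rvert\}$ of $\lvert\Sigma\rvert$ whose terms are $\Gamma_{Z_m\cap\lvert\sigma_{i_1}\cap\cdots\cap\sigma_{i_r}\rvert}(\lvert\sigma_{i_1}\cap\cdots\cap\sigma_{i_r}\rvert,A)$; by the single-cone case each such term is $A$ or $0$ under exactly the same condition $m\in(\sigma_{i_1}\cap\cdots\cap\sigma_{i_r})^\vee$ as on the algebraic side, so the two complexes are isomorphic term by term. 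If you want to salvage your route, replace the relative interiors by the closed cover of $\lvert\Sigma\rvert\setminus Z_m$ by the convex sets $\{x\in\sigma_i:\langle m,x\rangle<0\}$ for maximal $\sigma_i$, whose multi-intersections are convex or empty with nonemptiness governed by $\sigma_{i_1}\cap\cdots\cap\sigma_{i_r}\not\subseteq Z_m$; combined with your long-exact-sequence reduction to $\widetilde H^{\,i-1}(\lvert\Sigma\rvert\setminus Z_m)$, this recovers the needed identification. Either way, that step must actually be proved or cited (Danilov's Theorem 7.2, checked to persist over a general base ring); your note that the base change from a field is ``harmless'' is correct only because all the complexes involved consist of free modules, which deserves saying.
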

\begin{proof}
If $A$ is a field, 
this is a special case of \cite[Theorem 7.2]{Dan78}.
We argue similarly for a general commutative ring $A$.
First, consider the case when $\Sigma$ consists of a single cone $\sigma$ with dual monoid $P$.
Then there is an isomorphism $X\simeq \Spec{A}\times \bA_P$.
Since $\ul{X}$ is affine, 
we have the vanishing $H_{Zar}^i(X,\cO_X)\cong 0$ for $i>0$ and also $H_{Zar}^0(X,\cO_X)\cong A[P]$.
On the other hand,
there is a long exact sequence
\[
\cdots
\to
H^{i-1}(\lvert\Sigma\rvert-Z_m,A)
\to
H^i_{Z_m}(\lvert\Sigma\rvert,A)
\to
H^i(\lvert\Sigma\rvert,A)
\to
H^i(\lvert\Sigma\rvert-Z_m,A)
\to
\cdots.
\]
Since $|\Sigma|$ is convex, it is contractible.
Furthermore,
$|\Sigma|-Z_m$ is contractible (resp.\ empty) if and only if $m\in P$ (resp.\ $m\notin P$).
When combined with the above, 
we deduce
\[
H_{Z_m}^i(\lvert\Sigma\rvert,A)
\cong
\left\{
\begin{array}{ll}
A & \text{if }i=0\text{ and }m\in P,
\\
0 & \text{otherwise}.
\end{array}
\right.
\]

For general $\Sigma$,
let $\sigma_1,\ldots,\sigma_n$ be the maximal cones of $\Sigma$,
and let $P_1,\ldots,P_n$ be their dual cones.
We set $U_i:=\Spec{A}\times \bA_{P_i}$ for simplicity of notation.
The family $\{U_1,\ldots,U_n\}$ is a Zariski covering of $X$,
and the family $\{\lvert\sigma_1\rvert,\ldots,\lvert\sigma_n\rvert\}$ is a closed covering of $\lvert\Sigma\rvert$.
For all integers $1\leq i_1<\ldots<i_r\leq n$,
the intersection of $\sigma_{i_1},\ldots,\sigma_{i_r}$ is a cone.
What we have established above for the case of cones produces an isomorphism of \v{C}ech complexes
\[
\begin{tikzcd}[column sep=tiny, row sep=small]
\cdots\ar[r]&
\bigoplus_{1\leq i,j\leq n}\Gamma(U_i\cap U_j,\cO_{U_i\cap U_j})\ar[r]\ar[d,"\cong"']&
\bigoplus_{1\leq i\leq n}\Gamma(U_i,\cO_{U_i})\ar[d,"\cong"]
\\
\cdots\ar[r]&
\bigoplus_{1\leq i,j\leq n}\bigoplus_{m\in M_\Sigma}\Gamma_{Z_m\cap \lvert\sigma_i\cap \sigma_j\rvert}(|\sigma_i\cap \sigma_j|,A)\ar[r]&
\bigoplus_{1\leq i\leq n}\bigoplus_{m\in M_\Sigma}\Gamma_{Z_m\cap \lvert\sigma_i\rvert}(\lvert\sigma_i\rvert,A).
\end{tikzcd}
\]
Taking the $i$-th cohomology groups of the horizontal rows finishes the proof.
\end{proof}

\begin{proposition}
\label{cot.7}
Suppose $S$ is a scheme and $\theta\colon \Sigma'\to \Sigma$ is a subdivision of fans.
Let $f\colon S\times \bT_{\Sigma'}\to S\times \bT_{\Sigma}$ be the naturally induced morphism.
Then the homomorphism
\[
f^*
\colon
H_{Zar}^i(S\times \bT_{\Sigma},\cO_{S\times \bT_{\Sigma}})
\to
H_{Zar}^i(S\times \bT_{\Sigma'},\cO_{S\times \bT_{\Sigma'}})
\]
is an isomorphism for all integers $i$.
\end{proposition}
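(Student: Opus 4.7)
The plan is to reduce to the case $S = \Spec{A}$ affine and then apply Proposition \ref{cot.6} to both sides, exploiting the fact that a subdivision preserves all the combinatorial data appearing in that formula.

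First, I would cover $S$ by affine opens $U_\alpha = \Spec{A_\alpha}$. Since $f$ is the base change of $\bT_{\Sigma'} \to \bT_\Sigma$ along $S \to \Spec{\bZ}$, pulling back along $f$ yields an affine open cover $\{U_\alpha \times \bT_{\Sigma'}\}$ of $S \times \bT_{\Sigma'}$, and the Čech-to-derived-functor spectral sequences for $\cO$-cohomology on both sides fit into a map of spectral sequences induced by $f^*$. It therefore suffices to prove the statement when $S = \Spec{A}$.

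Assuming $S$ affine, apply Proposition \ref{cot.6} to both $\Sigma$ and $\Sigma'$. Since $\theta$ is a \emph{subdivision} of fans, it preserves the lattice, so $N_{\Sigma'} = N_\Sigma$, hence $M_{\Sigma'} = M_\Sigma$, and preserves the support, so $\lvert\Sigma'\rvert = \lvert\Sigma\rvert$ as subsets of $N_{\Sigma,\bR}$. Consequently, for every $m \in M_\Sigma = M_{\Sigma'}$, the closed subset $Z_m = \{x \in \lvert\Sigma\rvert : \langle m, x\rangle \geq 0\}$ is literally the same on both sides, so Proposition \ref{cot.6} produces the \emph{same} abelian group
\[
\bigoplus_{m \in M_\Sigma} H_{Z_m}^i(\lvert\Sigma\rvert, A)
\]
for both $S \times \bT_\Sigma$ and $S \times \bT_{\Sigma'}$.

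It remains to identify $f^*$ with the identity under this common description. I would do this by naturality: the isomorphism of Proposition \ref{cot.6} is constructed from the Čech complex associated with the covering by maximal affine toric charts $\{\Spec{A} \times \bA_{P_i}\}_i$, together with the single-cone identification $\Gamma(\Spec{A} \times \bA_P, \cO) = \bigoplus_{m \in P} A \cdot \chi^m$. A subdivision $\theta$ refines this cover to the cover of $S \times \bT_{\Sigma'}$ by the corresponding affine toric charts, and the refinement map on Čech complexes is compatible with the $M_\Sigma$-graded decomposition; the single-cone case then reduces to the classical statement that for a subdivision $\Sigma' \to \sigma$ of a single cone, the map $A[P] = \Gamma(\Spec{A} \times \bA_P, \cO) \to \Gamma(\Spec{A} \times \bT_{\Sigma'}, \cO)$ is an equality (both sides are $\bigoplus_{m \in P} A$ by Proposition \ref{cot.6}) and higher cohomology vanishes.

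The main obstacle is this last naturality check. A clean alternative, which avoids tracking the identification of Proposition \ref{cot.6}, is to prove directly that $Rf_*\cO_{S \times \bT_{\Sigma'}} \simeq \cO_{S \times \bT_\Sigma}$: by base change along $S \to \Spec{\bZ}$ and working Zariski-locally on $\bT_\Sigma$, this reduces to the single-cone case $\Sigma' \to \sigma$ treated in the previous paragraph, whence the proposition follows from the Leray spectral sequence.
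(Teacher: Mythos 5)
Your proof is correct and follows essentially the same route as the paper, whose entire argument is: reduce to $S$ affine (the question being Zariski local on $S$) and invoke Proposition \ref{cot.7} minus one, i.e.\ Proposition \ref{cot.6}, together with $\lvert\Sigma\rvert=\lvert\Sigma'\rvert$. The extra work you do — checking via the \v{C}ech construction that $f^*$ is compatible with the identification of Proposition \ref{cot.6} — is exactly what the paper leaves implicit in the word ``canonical,'' and your closing alternative ($Rf_*\cO\simeq\cO$, reduced to the single-cone case) is essentially the paper's subsequent Proposition \ref{cot.8}.
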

\begin{proof}
The question is Zariski local on $S$ so we may assume it is affine.
Proposition \ref{cot.6} finishes the proof since $\lvert\Sigma\rvert=\lvert\Sigma'\rvert$
\end{proof}

\begin{proposition}
\label{cot.8}
There is a canonical equivalence
\[
\cO_{S\times \bT_{\Sigma}} \xrightarrow{ad} R_{Zar} f_*f^*\cO_{S\times \bT_{\Sigma}}.
\]
\end{proposition}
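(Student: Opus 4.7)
The plan is to reduce the statement to the cohomological invariance established in Proposition \ref{cot.7}. Being an equivalence in the derived category is a Zariski-local property on the target $X := S \times \bT_\Sigma$, so I would first assume $S = \Spec A$ is affine and $\Sigma$ consists of a single cone $\sigma$ together with its faces; in this case $X = \Spec A \times \bA_P$ with $P = \sigma^\vee$. It then suffices to show that the higher direct images $R^i f_* \cO_{X'}$ vanish for $i > 0$ and that $f_* \cO_{X'} \cong \cO_X$, where $X' := S \times \bT_{\Sigma'}$.

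For this I would exhibit a basis for the Zariski topology on $X$ consisting of opens of the form $U = V \times \bT_\tau$, where $V \subseteq S$ is an affine open and $\tau$ ranges over the faces of $\sigma$. Each such $U$ is itself affine. The preimage under $f$ has the form $f^{-1}U = V \times \bT_{\Sigma'|_\tau}$, where $\Sigma'|_\tau$ is the restriction of $\Sigma'$ to $\tau$, which is a subdivision of the fan $\tau$ by construction. Applying Proposition \ref{cot.7} to the subdivision $\Sigma'|_\tau \to \tau$ over the base $V$ yields an isomorphism
\[
H^i(U, \cO_U) \xrightarrow{\cong} H^i(f^{-1}U, \cO_{f^{-1}U})
\]
for all $i$. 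For $i > 0$ both sides vanish (the left side because $U$ is affine), which shows the sheafified presheaf $U \mapsto H^i(f^{-1}U, \cO_{f^{-1}U})$ underlying $R^i f_* \cO_{X'}$ is zero on a basis, hence zero. For $i = 0$ the isomorphism on global sections gives $\Gamma(U, f_* \cO_{X'}) \cong \Gamma(U, \cO_U)$, identifying $f_* \cO_{X'}$ with $\cO_X$.

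I do not expect a serious obstacle here: the essential computational content is already packaged inside Proposition \ref{cot.7}, and the remaining work is to confirm the geometric identification $f^{-1}(V \times \bT_\tau) = V \times \bT_{\Sigma'|_\tau}$, which follows from the functoriality of the toric variety construction with respect to subdivisions of fans, together with the compatibility of fan restriction with preimage along $\theta\colon \Sigma' \to \Sigma$. Once this compatibility is in place, assembling the local isomorphisms into the global equivalence $\cO_X \xrightarrow{\simeq} Rf_* \cO_{X'}$ is automatic.
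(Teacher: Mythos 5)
Your proposal is correct and follows essentially the same route as the paper: reduce to $S$ affine, cover $\bT_\Sigma$ by the affine opens $\bT_\sigma$, identify $R^if_*\cO$ over such an open with the cohomology of the induced subdivision $\Sigma'|_\sigma\to\sigma$, and invoke Proposition \ref{cot.7} (plus affineness of the base open) to conclude. The only cosmetic difference is that you work with the finer basis $V\times\bT_\tau$ over faces $\tau$, where the paper restricts to the opens $S\times\bT_\sigma$ directly.
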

\begin{proof}
We may assume $S$ is affine since the assertion is Zariski local.
Let $\sigma$ be a cone of $\Sigma$, and let $\Delta'$ be the subfan of $\Sigma'$ consisting of cones $\sigma'\in \Sigma'$ such that $\theta(\sigma')\subset \sigma$.
For all integers $i$, the restriction
\[
R_{Zar}^i f_*f^*\cO_{S\times \bT_{\Sigma}}|_{S\times \bT_{\sigma}}
\]
is the sheaf associated to the module $H_{Zar}^i(S\times \bT_{\Delta'},\cO_{S\times \bT_{\Delta'}})$.
Since the $\bT_\sigma$'s, for all cones $\sigma$ of $\Sigma$, form a Zariski cover of $\bT_{\Sigma}$, 
we only need to show there is a naturally induced isomorphism
\[
H_{Zar}^i(S\times \bT_{\sigma},\cO_{S\times \bT_{\sigma}})
\to
H_{Zar}^i(S\times \bT_{\Delta'},\cO_{S\times \bT_{\Delta'}}).
\]
This claim is a special case of Proposition \ref{cot.7}.
\end{proof}

\begin{proof}[Proof of Theorem \ref{cot.5}]
The question is Zariski local on $X$.
We may assume $X$ has an fs chart $P$ with $P$ sharp, 
see \cite[Proposition II.2.3.7]{Ogu18}.
By \cite[Proposition A.11.5]{logDM}, 
there is a toric subdivision $\Sigma\to \Spec{P}$ and a naturally induced isomorphism 
\[
Y\times_{\bA_P}\bT_{\Sigma}
\xrightarrow{\cong}
X\times_{\bA_P}\bT_{\Sigma}.
\]
Due to \cite[Proposition A.11.3]{logDM}, there is a finite Zariski cover $\{U_i\}_{i\in I}$ of $Y$ such that each $U_i\to X$ admits a chart $P\to Q_i$ for which $P^\gp\to Q_i^\gp$ is an isomorphism.
Then $\Delta_i:=\Sigma\times_{\Spec{P}}\Spec{Q_i}$ is a subdivision of $\Spec{Q_i}$.
We need to show the claim for the projections $U_i\times_{\bA_{Q_i}}\bT_{\Delta_i}\to U_i$ and $X\times_{\bA_P}\bT_\Sigma\to X$.
These two cases have only notational differences, 
so we focus on the second case.
In the said case,
there is a cartesian square of fs log schemes
\[
Q
:=
\begin{tikzcd}[row sep = small]
Y\ar[d,"f"']\ar[r,"i'"]&
\ul{X}\times \bT_\Sigma\ar[d,"g"]
\\
X\ar[r,"i"]&
\ul{X}\times \bA_P.
\end{tikzcd}
\]
Since $i$ is strict,
$Q$ is cartesian in the category of log schemes.
It follows that the underlying square of schemes $\ul{Q}$ is cartesian too due to \cite[Proposition III.2.1.2]{Ogu18}.

Let us omit $R_{Zar}$ in the following for simplicity of notation.
We want to show the unit $\id \xrightarrow{ad} \ul{f}_*\ul{f}^*$ is an equivalence.
Since $\ul{i}_*$ is conservative, 
it suffices to show that the natural transformation 
$\ul{i}_*\xrightarrow{ad}\ul{i}_*\ul{f}_*\ul{f}^*$ is an equivalence.
There is a commutative diagram
\[
\begin{tikzcd}[row sep = small]
\ul{i}_*\ar[d,"ad"']\ar[rr,"\id"]&
&
\ul{i}_*\ar[d,"ad"]
\\
\ul{g}_*\ul{g}^*\ul{i}_*\ar[r,"\simeq"]&
\ul{g}_*\ul{i}_*'\ul{f}^*\ar[r,"\simeq"]&
\ul{i}_*\ul{f}_*\ul{f}^*,
\end{tikzcd}
\]
where $\ul{g}_*\ul{g}^*\ul{i}_*\xrightarrow{\simeq} \ul{g}_*\ul{i}_*'\ul{f}^*$ 
by proper base change.
Hence it suffices to show the left-hand vertical map is an equivalence.
The projection formula yields an equivalence
\[
\ul{g}_*\ul{g}^* \cO_{\ul{X}\times \ul{\bA_P}} \otimes \cE
\xrightarrow{\simeq}
\ul{g}_*\ul{g}^*\cE
\]
for all $\cE\in \Deri^b(\Coh(\ul{X}\times \ul{\bA_P}))$.
This concludes the proof since the unit 
\[
\cO_{\ul{X}\times \ul{\bA_P}}
\xrightarrow{ad}
\ul{g}_*\ul{g}^*\cO_{\ul{X}\times \ul{\bA_P}}
\]
is an equivalence; see Proposition \ref{cot.8}.
\end{proof}

\section{Sheaves on arrow categories}
\label{section:soac}

\subsection{Topologies on arrow categories}
One aspect of the dividing Nisnevich topology is that it involves non-affine schemes.
Hence to ``sheafify'' $\logHH$ with respect to the dividing Nisnevich topology, 
we need to define $\logHH(X/S)$ for arbitrary morphism of fs log schemes $X\to S$.
For this purpose, 
we first view $\logHH$ as a presheaf on an arrow category.
In this setup, 
we can sheafify with respect to the dividing Nisnevich topology.

\begin{df}
\label{arrow.1}
The \emph{arrow $\infty$-category of an $\infty$-category $\cC$} is defined to be $\Fun(\Delta^1,\cC)$.
Informally speaking,
the objects are the morphisms $(X\to S)$ in $\cC$,
and the morphisms $(X'\to S')\to (X\to S)$ are the commutative diagrams
\[
\begin{tikzcd}[row sep = small]
X'\ar[d]\ar[r]&S'\ar[d]
\\
X\ar[r]&S.
\end{tikzcd}
\]
If fiber products are representable in $\cC$, so are in $\Fun(\Delta^1,\cC)$.
Indeed, if $(X'\to S')\to (X\to S)$ and $(X''\to S'')\to (X\to S)$ are two morphisms in $\Fun(\Delta^1,\cC)$, then the fiber product is $(X'\times_X X''\to S'\times_S S'')$.
\end{df}

\begin{df}
\label{arrow.2}
Suppose $\cC$ is an $\infty$-category with a topology $t$.
A family of morphisms $\{(X_i\to S_i)\to (X\to S)\}_{i\in I}$ is called a \emph{$t$-covering sieve} if both families
\[
\{X_i\to X\}_{i\in I}
\text{ and }
\{S_i\to S\}_{i\in I}
\]
are $t$-covering sieve.
One can readily check that the set of $t$-covering sieves on $\Fun(\Delta^1,\cC)$ is a topology too.
\end{df}

\subsection{\texorpdfstring{$\logHH$}{logHH} as a presheaf on an arrow \texorpdfstring{$\infty$-}{infinity }category}

\begin{df}
An \emph{fs semi-log ring} is a pre-log ring $(A,M)$ such that $\ol{M}:=M/M^*$ is fine and saturated and the naturally induced map
\[
M\to \Gamma(\Spec{A,M},\cM_{\Spec{A,M}})
\]
is an isomorphism, where $\Spec{A,M}$ is the spectrum of $(A,M)$ \cite[Definition III.1.2.3]{Ogu18}.
Let $\SemiLog$ denote the full category of pre-log rings consisting of fs semi-log rings $(A,M)$ such that $A$ is noetherian and has a finite Krull dimension.
\end{df}

\begin{rmk}
There are at least three possible variants of affine log schemes. 
First, 
one may define an affine log scheme to be a log scheme $X=(\ul{X}, \mathcal{M}_X)$ such that 
$\ul{X}=\Spec{R}$ is an affine scheme. 
Second, 
following \cite[1.1]{Bei13}, 
one can add the condition that the sheaf $\ol{\mathcal{M}}_X$ is generated by global sections, 
i.e., 
$\Gamma({X}, \mathcal{M}_X) \to \ol{\mathcal{M}}_{X,x}$ is surjective for every point $x$. 
Lastly, 
following \cite{Kos21}, 
one can ask that the identity map on $\Gamma(X, \mathcal{M}_X)$ is a chart for $\mathcal{M}_X$.
The third one is taken as our definition of an affine log scheme.
\end{rmk}

\begin{prop}
The functor
$
\mathrm{Spec}
\colon
\SemiLog^{op}
\to
\lSch
$
is fully faithful.
\end{prop}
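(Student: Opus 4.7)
\emph{Plan.} The approach is to realize $\mathrm{Spec}$ as (the contravariant side of) the left adjoint in a classical adjunction between pre-log rings and log schemes, and to observe that the definition of fs semi-log rings is precisely the condition that the unit of this adjunction be an isomorphism. A standard categorical fact then gives fully faithfulness on the subcategory $\SemiLog^{\op}$.

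\emph{Step 1: the adjunction.} The first step is to exhibit, for every pre-log ring $(A, M)$ and every $X \in \lSch$, a natural bijection
\[
\Hom_{\lSch}(X, \Spec{A, M}) \cong \Hom_{\PreLog}\bigl((A, M),\; (\Gamma(\ul{X}, \cO_{\ul{X}}),\, \Gamma(X, \cM_X))\bigr).
\]
A morphism on the left is, by definition, a morphism of underlying schemes $\ul{X} \to \Spec{A}$ together with a morphism of log structures. By affineness of $\Spec{A}$, the former corresponds to a ring map $A \to \Gamma(\ul{X}, \cO_{\ul{X}})$. Since $\cM_{\Spec{A, M}}$ is by construction the associated log structure of the constant pre-log structure on $\Spec{A}$ with value $M \to A$, the universal property of the associated log structure identifies the log-structure component with a compatible monoid map $M \to \Gamma(X, \cM_X)$. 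Assembling these two pieces yields precisely a morphism of pre-log rings, as on the right.

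\emph{Step 2: the unit and conclusion.} The unit of this adjunction at $(A, M)$ is the map
\[
(A, M) \longrightarrow \bigl(\Gamma(\Spec{A}, \cO_{\Spec{A}}),\; \Gamma(\Spec{A, M}, \cM_{\Spec{A, M}})\bigr),
\]
whose first component is the canonical isomorphism $A \cong \Gamma(\Spec{A}, \cO_{\Spec{A}})$ and whose second component is the natural map $M \to \Gamma(\Spec{A, M}, \cM_{\Spec{A, M}})$ appearing in the definition of fs semi-log rings. By hypothesis, this second component is an isomorphism precisely when $(A, M) \in \SemiLog$. Applying the standard fact that a left adjoint is fully faithful iff its unit is an isomorphism, restricted to the full subcategory $\SemiLog$, we conclude that $\mathrm{Spec}\colon \SemiLog^{\op} \to \lSch$ is fully faithful. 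The only subtle point is Step 1, namely the correct application of the universal property of the associated log structure; once that is pinned down, the remainder is purely formal, and the fs, noetherian, and finite Krull dimension hypotheses play no role in the argument beyond delimiting the full subcategory under consideration.
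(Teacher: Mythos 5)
Your proof is correct and follows essentially the same route as the paper's: both rest on the natural bijection $\Hom_{\lSch}(X,\Spec{A,M})\cong\Hom_{\PreLog}\bigl((A,M),(\Gamma(\ul{X},\cO_{\ul{X}}),\Gamma(X,\cM_X))\bigr)$ (Ogus, Proposition III.1.2.4, which the paper reaches via the chart presentation $S=\ul{S}\times_{\ul{\bA_P}}\bA_P$ and a fiber-product description of the two Hom-sets) combined with the defining condition $M\xrightarrow{\cong}\Gamma(\Spec{A,M},\cM_{\Spec{A,M}})$ of fs semi-log rings. Packaging the conclusion as the unit-isomorphism criterion for an adjoint, rather than matching the two Hom-sets explicitly, is only a cosmetic difference.
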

\begin{proof}
Let $(A,M)$ and $(R,P)$ be fs semi-log rings.
We have an isomorphism
\[
\Hom_{\SemiLog}((R,P),(A,M))
\cong
\Hom_{\CRing}(R,A)\times_{\Hom_{\CMon}(P,A)}\Hom_{\CMon}(P,M),
\]
where $\CMon$ (resp.\ $\CRing$) denotes the category of commutative monoids (resp.\ commutative rings).

We set $X:=\Spec{A,M}$ and $S:=\Spec{R,P}$ for simplicity of notation.
We have isomorphisms
\begin{align*}
\Hom_{\lSch}(X,S)
\cong &
\Hom_{\lSch}(X,\ul{S}\times_{\ul{\bA_P}}\bA_P)
\\
\cong &
\Hom_{\lSch}(X,\ul{S})\times_{\Hom_{\lSch}(X,\ul{\bA_P})}\Hom_{\lSch}(X,\ul{\bA_P})
\\
\cong &
\Hom_{\Sch}(\ul{X},\ul{S})\times_{\Hom_{\CMon}(P,\Gamma(X,\cO_X))}\Hom_{\CMon}(P,\Gamma(X,\cM_X)),
\end{align*}
where we use \cite[Proposition III.1.2.4]{Ogu18} for the last isomorphism.
Together with the isomorphisms $\Gamma(X,\cO_X)\cong A$ and $\Gamma(X,\cM_X)\cong M$, we finish the proof.
\end{proof}

\begin{df}
\label{arrow.3}
Let $\lAff$ be the full subcategory of $\lSch$ consisting of the spectra of fs semi-log rings.
An important fact about $\lAff$ that we will frequently use is that every $X\in \lSch$ admits a Zariski covering $\{U_i\to X\}_{i\in I}$ such that $U_i\in \lAff$ for all $i\in I$, see \cite[Lemmas 1.3.2, 1.3.3]{Tsu99}.

The assignment
\[
(\Spec{A,M}\to \Spec{R,P}) \mapsto \logHH((A,M) / (R, P))
\]
gives a presheaf $\logHH\in \infPsh(\Fun(\Delta^1,\lAff),\SCRing)$.
By composing with the canonical functor $\SCRing\to \infDAb$, we also view $\logHH$ as an object of $\infPsh(\Fun(\Delta^1,\lAff),\infDAb)$.
\end{df}

\begin{lemma}
\label{arrow.4}
Let $(R,P)\xrightarrow{f} (A,M)\xrightarrow{g} (B,N)$ be homomorphisms of fs semi-log rings.
If $f$ is integral log \'etale, then there is an equivalence
\[
\logHH((B,N) / (R, P))
\xrightarrow{\simeq}
\logHH((B,N) / (A, M)).
\]
\end{lemma}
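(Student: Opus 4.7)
The plan is to derive the claim directly from two results already established in the paper: the unit-map equivalence of \Cref{cor:unitmapeq} for derived formally log \'etale maps, and the base-change formula of \Cref{prop:basechangeHH} relating $\logHH((B,N)/(R,P))$ and $\logHH((B,N)/(A,M))$.

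First, I would promote the classical log \'etaleness hypothesis on $f$ to its derived analogue. Since $f\colon(R,P)\to(A,M)$ is integral log \'etale between fs semi-log rings (in particular between fine pre-log rings), \Cref{prop:cotangent_vs_omega}(2) applies to give that $f$ is derived log \'etale. In particular, Gabber's log cotangent complex $\bL_{(A,M)/(R,P)}$ is contractible, so $f$ is derived formally log \'etale in the sense of \Cref{def:logetale}. This is the one step where the integrality assumption is essential: without it the derived log cotangent complex may fail to vanish even when the map is classically log \'etale, cf.\ \Cref{exm:logetale}.

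Second, \Cref{cor:unitmapeq} then yields that the canonical unit map $A\to \logHH((A,M)/(R,P))$ is an equivalence of derived commutative rings.

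Third, \Cref{prop:basechangeHH} supplies a natural equivalence
\[
\logHH((B,N)/(A,M))
\simeq
A\otimes_{\logHH((A,M)/(R,P))}\logHH((B,N)/(R,P)).
\]
Substituting the equivalence from the previous step collapses the right-hand side to $A\otimes_A \logHH((B,N)/(R,P))\simeq\logHH((B,N)/(R,P))$, which provides the asserted equivalence. The only residual verification is that this chain of identifications agrees, up to equivalence, with the canonical map induced by the factorization $(R,P)\to(A,M)\to(B,N)$; this is a naturality check built into \Cref{prop:basechangeHH}. The main obstacle has already been absorbed into \Cref{prop:cotangent_vs_omega}(2), where the integrality hypothesis allowed one to use flat base change for the classical cotangent complex (via \Cref{cor:discretecofiber}) to access the derived formal log \'etaleness of $f$.
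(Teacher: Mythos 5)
Your proposal is correct and follows essentially the same route as the paper: both rest on \Cref{prop:basechangeHH} together with the unit-map equivalence of \Cref{cor:unitmapeq}, the latter being applicable because integrality upgrades $f$ to a derived (formally) log \'etale map via \Cref{prop:cotangent_vs_omega}(2). The only cosmetic difference is that you apply the base-change formula directly to the composite $(R,P)\to(A,M)\to(B,N)$, whereas the paper routes through the absolute base-change equivalences; you also make the appeal to \Cref{prop:cotangent_vs_omega}(2) explicit, which the paper leaves implicit.
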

\begin{proof}
According to Proposition \ref{prop:basechangeHH}, we have equivalences
\[
\logHH((B,N) / (A, M))
\simeq
A\otimes_{\logHH(A,M)}\logHH(B,N),\]
\[\logHH((B,N) / (R, P))
\simeq
R\otimes_{\logHH(R,P)}\logHH(B,N).
\]
Together with Corollary \ref{cor:unitmapeq}, we finish the proof.
\end{proof}

\begin{lemma}
\label{arrow.7}
Let $(R,P)\xrightarrow{f} (A,M)\xrightarrow{g} (B,N)$ be homomorphisms of fs semi-log rings.
If $g$ is integral log \'etale, then there is an equivalence
\[
B\otimes_A \logHH((A,M) / (R, P))
\xrightarrow{\simeq}
\logHH((B,N) / (R, P)).
\]
\end{lemma}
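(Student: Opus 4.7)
The plan is to apply Theorem \ref{prop:loghhbasechange}: that theorem asserts that the base change map
\[
B \otimes_A \logHH((A,M)/(R,P)) \to \logHH((B,N)/(R,P))
\]
is an equivalence if and only if $g$ is derived formally log \'etale, i.e., $\mathbb{L}_{(B,N)/(A,M)}\simeq 0$. Thus the task reduces to verifying this vanishing for $g$.

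Next, I would invoke Proposition \ref{prop:cotangent_vs_omega}(2), which gives the vanishing of the relative log cotangent complex for integral log \'etale maps between discrete \emph{fine} pre-log rings. The subtlety in our setting is that we work with fs semi-log rings, so $M$ and $N$ need not themselves be finitely generated (only the sharp monoids $\ol{M}$ and $\ol{N}$ are fs). To bridge this, I would pass to the fine case strict \'etale locally: using \cite[Proposition II.2.3.7]{Ogu18}, strict \'etale locally on $\Spec{A,M}$ there exist fine charts $P_0 \to M$ and compatible $Q_0 \to N$, with $P_0 \to Q_0$ integral and log \'etale in the classical sense. By \Cref{lem:logcotangentloginv}, the log cotangent complex is invariant under logification, so it is unchanged when passing between the original semi-log presentation and the fine chart. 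Combined with the strict \'etale base change property \eqref{eq:strictet_bc}, the vanishing of $\mathbb{L}_{(B,N)/(A,M)}$ can be checked after strict \'etale localization on $(A,M)$, where Proposition \ref{prop:cotangent_vs_omega}(2) applies directly.

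Having verified that $g$ is derived formally log \'etale, Theorem \ref{prop:loghhbasechange} immediately yields the desired equivalence, completing the proof. The main technical point is the reduction from fs semi-log rings to fine pre-log rings, but this is routine given the local existence of fine charts and the sheaf-like behavior of $\mathbb{L}$ under strict \'etale covers.
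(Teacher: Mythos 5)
Your proposal is correct and follows essentially the same route as the paper: the paper's proof is simply ``This follows from Theorem \ref{prop:loghhbasechange},'' implicitly using Proposition \ref{prop:cotangent_vs_omega}(2) to see that an integral log \'etale map is derived formally log \'etale. Your additional care in reducing from fs semi-log rings to fine charts via logification invariance and strict \'etale localization is a reasonable elaboration of a step the paper leaves implicit (and handles globally when extending the implications \eqref{eq:implications1}--\eqref{eq:implications2} to fs log schemes), but it is not a different argument.
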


\begin{proof} This follows from Theorem \ref{prop:loghhbasechange}.
\end{proof}

A morphism $f\colon X\to S$ is called an \emph{integral log \'etale cover} if $f$ is integral, log \'etale, 
and universally surjective in the category $\lSch$.
This defines an \emph{integral log \'etale topology}.
Every integral log \'etale morphism is Kummer by \cite[Corollary I.4.3.10, Proposition IV.3.4.1]{Ogu18} 
(see also \cite[p.\ 344]{Ogu18}).
Hence the integral log \'etale topology is coarser than the Kummer log \'etale topology.

For a morphism of affine fs log schemes $X:=\Spec{A,M}\to S:=\Spec{R,P}$, we set
\[
\logHH(X/S)
:=
\logHH((A,M) / (R, P)).
\]

\begin{proposition}
\label{arrow.5}
The presheaf $\logHH\in \infPsh(\Fun(\Delta^1,\lAff),\infDAb)$ satisfies integral log \'etale descent. 
\end{proposition}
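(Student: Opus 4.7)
The plan is to reduce the descent statement to two inputs already established, namely Lemmas~\ref{arrow.4} and~\ref{arrow.7}, and finally to fpqc hyperdescent for modules. By Definition~\ref{arrow.2}, an integral log \'etale hypercover of $(X\to S)$ in $\Ar_{\lAff}$ consists of compatible integral log \'etale hypercovers $\sS_\bullet\to S$ and $\sX_\bullet\to X$ together with arrows $\sX_n\to \sS_n$, and one must check that
\[
\logHH(X/S)\longrightarrow \lvert\logHH(\sX/\sS)\rvert
\]
is an equivalence in $\infDAb$.

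First I would eliminate the base hypercover: integral log \'etaleness is stable under composition, so each $\sS_n\to S$ is integral log \'etale, and Lemma~\ref{arrow.4} supplies canonical equivalences $\logHH(\sX_n/\sS_n)\simeq \logHH(\sX_n/S)$. The problem thus reduces to descent for the presheaf $\logHH(-/S)$ along $\sX_\bullet\to X$. Applying Lemma~\ref{arrow.7} to each integral log \'etale map $\sX_n\to X$ identifies $\logHH(\sX_n/S)$ with the tensor product $A_n\otimes_A \logHH(X/S)$, where $A=\Gamma(X,\cO_X)$ and $A_n=\Gamma(\sX_n,\cO_{\sX_n})$. What remains is the purely module-theoretic assertion
\[
\logHH(X/S)\xrightarrow{\;\simeq\;}\lvert A_\bullet\otimes_A \logHH(X/S)\rvert.
\]

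For this I would invoke fpqc hyperdescent for $A$-modules: every integral log \'etale morphism of fs log schemes has flat underlying morphism, since on an integral chart $P\to M$ the ring map $\bZ[P]\to \bZ[M]$ is flat by \cite[Remark I.4.6.6]{Ogu18} and log \'etaleness supplies an \'etale morphism after this flat base change. Together with universal surjectivity, this exhibits $\sX_\bullet\to X$ as an fpqc hypercover of underlying affine schemes, so fpqc hyperdescent in $\infDAb$ delivers the required equivalence applied to $\logHH(X/S)$.

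The hard part will be justifying that the iterated fs fiber products $\sX_n$ really compute the derived base changes $A_n\otimes_A \logHH(X/S)$ as claimed: Lemma~\ref{arrow.7} is formulated for a single base change, and its repeated use along the hypercover relies on the fact that, under the integrality hypothesis, fs fiber products of integral log \'etale morphisms agree with the pullback of their underlying schemes. This compatibility is precisely the geometric content that lets integral log \'etale covers play the role of faithfully flat covers in the reduction above.
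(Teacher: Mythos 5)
Your proposal is correct and follows essentially the same route as the paper's proof: reduce to a fixed base $S$ via Lemma \ref{arrow.4}, identify the terms of the cosimplicial object via Lemma \ref{arrow.7}, and conclude by faithfully flat descent using that integral log \'etale morphisms have flat underlying morphisms (the paper cites \cite[Theorem IV.4.3.5]{Ogu18} for this). The compatibility of fs fiber products with underlying scheme-theoretic fiber products along integral morphisms, which you rightly flag, is indeed the implicit geometric input that makes the repeated application of Lemma \ref{arrow.7} along the hypercover legitimate.
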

\begin{proof}
Suppose $(X'\to S')\to (X\to S)$ is an integral log \'etale cover in $\Fun(\Delta^1,\lAff)$.
By Lemma \ref{arrow.4}, we have an equivalence
\begin{equation}
\label{arrow.5.1}
\logHH(X'/S')
\simeq
\logHH(X'/S).
\end{equation}

If $(\mathscr{X}\to \mathscr{S})\to (X\to S)$ is an integral log \'etale hypercover in $\Fun(\Delta^1,\lAff)$, 
we need to show that the canonical map
\[
\logHH(X/S)
\to
\lim_{i\in \Delta} \logHH(\mathscr{X}_i/ {\mathscr{S}_i})
\]
is an equivalence.
Thanks to \eqref{arrow.5.1}, 
we are reduced to considering the canonical map
\[
\logHH(X/S)
\to
\lim_{i\in \Delta} \logHH(\mathscr{X}_i/S).
\]

Suppose $S$ (resp.\ $X$, $\sX_i$) is given by $\Spec{A,M}$ (resp.\ $\Spec{B,N}$, $\Spec{B_i,N_i}$).
The underlying morphism of schemes of an integral log \'etale morphism is flat by \cite[Theorem IV.4.3.5]{Ogu18}.
Hence,
by faithfully flat descent, 
we obtain the exact sequence of $A$-modules
\[
0\to B \to B_0 \to B_1\to \cdots.
\]
To conclude we apply $(-) \otimes_A \logHH(X/S)$ and appeal to Lemma \ref{arrow.7}.
\end{proof}

\begin{remark}
An anonymous referee suggested  to consider a topology on the $\infty$-category ${\rm Ani(PreLog)}^{\rm op}$ such that $\logHH$ is a hypersheaf with this topology.
This can be achieved as follows.

Consider the topology on ${\rm Ani(PreLog)}^{\rm op}$ generated by the family of derived formally log \'etale maps $\{f_i\colon (A,M)\to (B_i,N_i)\}_{i\in I}$ with finite $I$ such that $A\to \prod_{i\in I} B_i$ is faithfully flat in the sense of \cite[Definition B.6.1.1]{SAG}.
The faithfully flat descent theorem holds for simplicial commutative rings, see \cite[Corollary D.6.3.4]{SAG}.
Hence using Proposition \ref{prop:loghhbasechange},
we can argue as in the proof of Proposition \ref{arrow.5} to show that the presheaf $\logHH\in \infPsh(\Fun(\Delta^1, {\rm Ani(PreLog)}^{\rm op}),\infDAb)$ is a hypersheaf for the induced topology on $\Fun(\Delta^1,{\rm Ani(PreLog)}^{\rm op})$. Note that the condition that the underlying map of rings $A\to \prod_{i\in I} B_i$ is faithfully flat is fairly strong: for example, log blow-ups are formally log \'etale but clearly the underlying map of (classical) schemes is not faithfully flat.

This is the class of morphism we would be interested in considering for the dividing topology. It is unfortunately unclear how to generalize the dividing Nisnevich topology to derived setting of ${\rm Ani(PreLog)}^{\rm op}$.
The main technical advantage of the dividing topology is that we can invert log blow-ups by imposing dividing descent since a log blow-up is a monomorphism in the category of \emph{saturated} log schemes.
In the category of log schemes,
a nontrivial log blow-up is no longer a monomorphism.
Hence working within the category of saturated log schemes is essential to take  advantage of the dividing topology.
However, it is not clear what is the useful definition of a \emph{saturated animated} monoid. 

This is the reason why we have to stick to discrete fs log schemes when we need to use the dividing Nisnevich topology later.
\end{remark}

\begin{proposition}
\label{arrow.6}
There is an equivalence
\[
\Shv_{sNis}(\Fun(\Delta^1,\lAff))
\xrightarrow{\simeq}
\Shv_{sNis}(\Fun(\Delta^1,\lSch)).
\]
Similar equivalences hold for $\infShv(-,\infSpc)$ and $\infShv(-,\infDAb)$.
\end{proposition}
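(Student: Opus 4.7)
The plan is to deduce this from the general comparison result \Cref{dZarsheaf.7}, applied to the fully faithful inclusion functor $\alpha\colon \Ar_{\lAff}\to \Ar_{\lSch}$ together with the strict Nisnevich topology on $\Ar_{\lSch}$ (see \Cref{arrow.2}). Full faithfulness of $\alpha$ is immediate from full faithfulness of $\lAff \hookrightarrow \lSch$. It is also straightforward that the strict Nisnevich topology on $\Ar_{\lAff}$ coincides with the one induced by restriction from $\Ar_{\lSch}$, since the cd-structure in \Cref{dZarsheaf.1} preserves affineness whenever the base is affine. With these formalities in place, the only real content is to verify the covering hypothesis of \Cref{dZarsheaf.7}: every object of $\Ar_{\lSch}$ admits a strict Nisnevich cover by objects of $\Ar_{\lAff}$.

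To establish this, let $(f\colon X\to S)$ be an object of $\Ar_{\lSch}$. Using the fact recorded in \Cref{arrow.3} that every fs log scheme admits a Zariski covering by objects of $\lAff$, I first choose a Zariski cover $\{S_j\to S\}_{j\in J}$ with each $S_j\in \lAff$. For each $j\in J$, the fiber product $X\times_S S_j$ lives in $\lSch$, and we may further choose a Zariski cover $\{X_{ij}\to X\times_S S_j\}_{i\in I_j}$ with each $X_{ij}\in \lAff$. For every pair $(i,j)$ with $j\in J$ and $i\in I_j$, the composite $X_{ij}\to X\times_S S_j\to S_j$ yields a morphism in $\Ar_{\lAff}$, and one obtains a map $(X_{ij}\to S_j)\to (X\to S)$ in $\Ar_{\lSch}$.

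I claim the family $\{(X_{ij}\to S_j)\to (X\to S)\}_{j\in J,\,i\in I_j}$ is a strict Nisnevich cover in the sense of \Cref{arrow.2}. Indeed, the induced family $\{S_j\to S\}$ (with multiplicities indexed by $\coprod_j I_j$) is the original Zariski cover of $S$, hence a strict Nisnevich cover; while the induced family $\{X_{ij}\to X\}$ refines the Zariski cover $\{X\times_S S_j\to X\}_{j\in J}$ by Zariski covers on each piece, so it is a Zariski, and thus a strict Nisnevich, cover of $X$. This verifies the hypothesis of \Cref{dZarsheaf.7}, which then yields the desired equivalences of $1$-categorical sheaves, of $\infty$-sheaves of spaces, and of $\infty$-sheaves of chain complexes of abelian groups.

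The argument is essentially formal once the covering lemma is in hand; no step involves a serious obstacle. The mild subtlety worth flagging is that covers in $\Ar$-categories involve \emph{simultaneous} covers on source and target, so one must order the two choices of Zariski affine covers correctly—first on $S$, then on the base-changed $X\times_S S_j$—to obtain compatible maps of arrows; this is the only point where a naive product of covers of $X$ and $S$ would fail to assemble into a cover in $\Ar_{\lSch}$.
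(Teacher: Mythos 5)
Your proposal is correct and follows essentially the same route as the paper: choose an affine Zariski (hence strict Nisnevich) cover of $S$, then of the base change $X\times_S S'$, observe this gives a strict Nisnevich cover of $(X\to S)$ by objects of $\Ar_{\lAff}$, and conclude via Proposition \ref{dZarsheaf.7}. The only difference is that you spell out the indexing and the verification of Definition \ref{arrow.2} more explicitly than the paper does.
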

\begin{proof}
Suppose $(X\to S)$ is an object of $\Fun(\Delta^1,\lSch)$.
Choose a strict Nisnevich cover $S'\to S$ such that $S'\in \lAff$, and choose a strict Nisnevich cover $X'\to X\times_S S'$ such that $X'\in \lAff$.
Then the morphism $(X'\to S')\to (X\to S)$ is a strict Nisnevich cover.
Hence every object of $\Fun(\Delta^1,\lSch)$ has a strict Nisnevich cover in $\Fun(\Delta^1,\lSch)$.
Proposition \ref{dZarsheaf.7} finishes the proof.
\end{proof}

\begin{definition}
For $\cV=\infSpc$ or $\infDAb$, 
the inclusion functor $\infShv_t(\cC,\cV)\to \infPsh(\cC,\cV)$ admits a left adjoint
\[
L_t
\colon 
\infPsh(\cC,\cV) \to \infShv_t(\cC,\cV).
\]
This is called the \emph{$t$-localization functor}.
We say that $\cF\in \infPsh(\cC,\cV)$ satisfies \emph{$t$-descent} if $\cF$ is in the essential image of the inclusion functor $\infShv_t(\cC,\cV)\to \infPsh(\cC,\cV)$.
\end{definition}

\begin{definition}
\label{arrow.8}
As a consequence of Propositions \ref{arrow.5} and \ref{arrow.6}, we obtain a sheaf  
\[
\logHH\in \infShv_{sNis}(\Fun(\Delta^1,\lSch),\infDAb).
\]
 This allows us to define $\logHH(X/S):=\logHH(X\to S)\in \infDAb$ for every morphism $X\to S$ in $\lSch$. We also set
\[
\logHH_{dNis}:=L_{dNis}\logHH\in \infShv_{dNis}(\Fun(\Delta^1,\lSch),\infDAb)
\]
and 
\[
\logHH_{dNis}(X/S):=\logHH_{dNis}(X\to S)\in \infDAb.
\]
\end{definition}

\subsection{$\Omega^d$ and $\bL$ as presheaves on an arrow \texorpdfstring{$\infty$-}{infinity }category}

\begin{df}
\label{arrow.9}
For $i\geq 1$, 
we let $\Omega^i\in \infPsh(\Fun(\Delta^1,\lSch),\infDAb)$ be the presheaf given by 
\[
\Omega^i(X\to S):= \Omega_{X/S}^i(X)
\]
concentrated in degree $0$. 
By convention, 
$\cO =  \Omega^0\in \infPsh(\Fun(\Delta^1,\lSch),\infDAb)$ is the presheaf given by $\cO(X\to S):=\Gamma(\ul{X}, \cO_{\ul{X}})$.
\end{df}

\begin{proposition}
\label{arrow.10}
For $(X\to S)\in \Fun(\Delta^1,\lSch)$ and all $d\geq 0$, 
there are equivalences
\[
L_{sNis}\Omega^d(X\to S)
\simeq
R\Gamma_{sNis}(X,\Omega_{X/S}^d)
\text{ and }
L_{dNis}\Omega^d(X\to S)
\simeq
R\Gamma_{dNis}(X,\Omega_{X/S}^d).
\]
\end{proposition}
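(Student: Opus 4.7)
The plan is to introduce the auxiliary presheaf
\[
G(X \to S) := R\Gamma_{sNis}(X, \Omega_{X/S}^d) \in \infPsh(\Ar_{\lSch}, \infDAb),
\]
and to prove (a) $G$ satisfies both strict Nisnevich and dividing Nisnevich descent, and (b) the canonical comparison $\Omega^d \to G$ (embedding global sections into derived global sections) is a local equivalence for both topologies. Combined, these give $L_{sNis}\Omega^d \simeq G \simeq L_{dNis}\Omega^d$. The second equivalence in the statement is then obtained by identifying $G(X \to S)$ with $R\Gamma_{dNis}(X, \Omega_{X/S}^d)$, which holds because $R\Gamma_{sNis}(-, \Omega_{-/S}^d)$ is already a dividing Nisnevich sheaf by Corollary \ref{cot.13}.

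The key organizational observation for descent is that every morphism $(X' \to S') \to (X \to S)$ in $\Ar_{\lSch}$ factors canonically as
\[
(X' \to S') \to (X' \to S) \to (X \to S),
\]
where the first arrow has identity on sources (a \emph{change-of-base}) and the second has identity on bases (a \emph{change-of-source}). Applied term-by-term to the {\v C}ech nerve of a cover in $\Ar_{\lSch}$, this reduces the sheaf condition for $G$ to two separate subcases, which are then handled independently.

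For change-of-source covers $(X' \to S) \to (X \to S)$ with $\{X' \to X\}$ a strict (resp.\ dividing) Nisnevich cover in $\lSch/S$, the descent assertion becomes descent of $R\Gamma_{sNis}(-, \Omega_{-/S}^d)$ on $\lSch/S$: in the strict case this is classical Nisnevich descent for the quasi-coherent sheaf $\Omega_{X/S}^d$ on $\ul X$, and in the dividing case it is exactly Corollary \ref{cot.13}. For change-of-base covers $(X \to S') \to (X \to S)$ induced by a strict (resp.\ dividing) cover $S' \to S$, the transitivity cofiber sequence \eqref{eq:transitivity} applied to $(S, \cdot) \to (S', \cdot) \to (X, \cdot)$ together with the contractibility of $\mathbb{L}_{(S', \cdot)/(S, \cdot)}$ for strict {\'e}tale maps (Corollary \ref{Cor:strict_cot}) and vanishing of $\Omega_{S'/S}^1$ for log {\'e}tale maps (applying to dividing covers) yields
\[
\Omega_{X/S}^1 \xrightarrow{\simeq} \Omega_{X/S'}^1
\]
and consequently $\Omega_{X/S}^d \xrightarrow{\simeq} \Omega_{X/S'}^d$; thus $G(X \to S) \xrightarrow{\simeq} G(X \to S')$ and the descent condition is trivially satisfied on the constant {\v C}ech nerve.

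For the local equivalence $\Omega^d \to G$, it suffices to evaluate on $(X \to S) \in \Ar_{\lAff}$ where $\ul X$ is affine: higher quasi-coherent cohomology vanishes, so $G(X \to S) \simeq \Gamma(\ul X, \Omega_{X/S}^d) = \Omega^d(X \to S)$. Every $(X \to S) \in \Ar_{\lSch}$ admits a strict Nisnevich cover by such affine objects (Proposition \ref{arrow.6}), so $\Omega^d \to G$ is a strict Nisnevich local equivalence, hence also a dividing Nisnevich local equivalence. The main technical obstacle is the change-of-base step: one needs the vanishing of $\Omega_{S'/S}^1$ for dividing covers, which relies on dividing covers being log {\'e}tale together with the classical fact (Kato) that $\Omega^1$ vanishes for log {\'e}tale morphisms of discrete log schemes; some care is also needed to verify that this non-derived vanishing suffices for an equivalence in $\infDAb$, which it does because $\Omega^d_{X/S}$ is simply a quasi-coherent sheaf placed in degree zero.
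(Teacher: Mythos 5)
Your proposal is correct and is essentially the paper's argument in different packaging: the key inputs are identical, namely that $\Omega_{S'/S}^1=0$ for the base component of a strict Nisnevich (resp.\ dividing Nisnevich) cover, so that $\Omega_{X'/S}^d\cong\Omega_{X'/S'}^d$ reduces descent on $\Ar_{\lSch}$ to descent of $R\Gamma_{sNis}(-,\Omega_{-/S}^d)$ on $\lSch/S$, with Corollary \ref{cot.13} handling the dividing case. The paper phrases this as a direct manipulation of the colimit-over-hypercovers formula for $L_{sNis}$ rather than via your auxiliary sheaf $G$ and the local-equivalence-on-affines step, but the mathematical content is the same.
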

\begin{proof}
We first consider the strict Nisnevich case.
Suppose $(X'\to S')\to (X\to S)$ is a strict Nisnevich cover in $\Fun(\Delta^1,\lSch)$.
Since $\Omega_{S'/S}^1=0$, we have an isomorphism $\Omega_{X'/S}^1\xrightarrow{\simeq} \Omega_{X'/S'}^1$, 
and hence 
\begin{equation}
\label{arrow.10.1}
\Omega_{X'/S}^d
\xrightarrow{\simeq}
\Omega_{X'/S'}^d
\end{equation}
for every integer $d\geq 1$ (it holds trivially true for $d=0$).

Let $\Hyp_{(X\to S)}$ (resp.\ $\Hyp_X$) be the set of strict Nisnevich hypercovers of $(X\to S)$ in $\Fun(\Delta^1,\lSch)$ 
(resp.\ $X$ in $\lSch$).
By appealing to \eqref{arrow.10.1} and \cite[Theorem 8.6]{DHI04} we have
\begin{align*}
L_{sNis}\Omega^d(X\to S)
&\simeq
\colimit_{(\mathscr{X}\to \mathscr{S})\in \mathrm{Hyp}_{(X\to S)}} \lim_{i\in \Delta}\Omega^d(\mathscr{X}_i\to \mathscr{S}_i)
\\
&\simeq
\colimit_{\mathscr{X}\in \mathrm{Hyp}_X} \lim_{i\in \Delta}\Omega^d(\mathscr{X}_i\to S)
\simeq
R\Gamma_{sNis}(X,\Omega_{X/S}^d).
\end{align*}
The claim for the dividing Nisnevich case can be proven similarly.
\end{proof}

For a morphism $f\colon X\to S$ of fs log schemes, 
let $\bL_{X/S}$ denote Gabber's cotangent complex for log schemes defined in \cite[\S 8.29]{Ols05}.
This complex lives in the $\infty$-category of chain complexes of quasi-coherent $\cO_X$-modules.
If $f$ is a morphism of fs log schemes given by $\Spec{g,g^\flat}$ for some map $(g,g^\flat)$ of discrete log rings, 
then this is equivalent to the log cotangent complex in Remark \ref{rmk:qcohcotangent}.

\begin{df}
\label{arrow.11}
Let $\bL\in \infPsh(\Fun(\Delta^1,\lSch),\infDAb)$ be the presheaf given by $(X\to S)\mapsto R\Gamma_{sNis}(X,\bL_{X/S})$.
We set $\bL^{dNis}:=L_{dNis}\bL$, for the $dNis$-localization 
\[
L_{dNis}\colon \Psh(\Fun(\Delta^1,\lSch),\infDAb)\to \Shv_{dNis}(\Fun(\Delta^1,\lSch),\infDAb).
\]
We can similarly define $\Bigwedge^d \bL$ and $\Bigwedge^d \bL^{dNis}$ for all integer $d$.
\end{df}

\begin{proposition}
\label{arrow.12}
The presheaf $\bL\in \infPsh(\Fun(\Delta^1,\lSch),\infDAb)$ satisfies integral log \'etale descent.
\end{proposition}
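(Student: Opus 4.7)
The plan is to imitate the proof of Proposition \ref{arrow.5}, replacing the base change properties of $\logHH$ with the analogous properties of Gabber's log cotangent complex.

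First, I would establish the following identification: for any morphism $(X' \to S') \to (X \to S)$ in $\Ar_{\lSch}$ such that both $S' \to S$ and $X' \to X$ are integral log \'etale, there is a canonical equivalence $\bL_{X'/S'} \simeq f^* \bL_{X/S}$ of complexes of quasi-coherent sheaves on $X'$, where $f \colon X' \to X$ is the underlying map. Since Gabber's log cotangent complex is local, this reduces to the affine setting. By Proposition \ref{prop:cotangent_vs_omega}(2), integral log \'etale maps of discrete fine pre-log rings are derived log \'etale, so both $\bL_{X'/X}$ and $\bL_{S'/S}$ are contractible. The transitivity sequence \eqref{eq:transitivity} applied first to $S \to S' \to X'$ yields $\bL_{X'/S} \simeq \bL_{X'/S'}$, and applied to $S \to X \to X'$ together with the derived base change property \cite[Proposition 4.12(ii)]{SSV16} yields $f^* \bL_{X/S} \simeq \bL_{X'/S}$.

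Next, I would invoke faithfully flat descent for complexes of quasi-coherent sheaves. The underlying morphism of schemes of an integral log \'etale cover is faithfully flat by \cite[Theorem IV.4.3.5]{Ogu18} (for flatness) combined with universal surjectivity, hence an fpqc cover. For any integral log \'etale hypercover $(\sX \to \sS) \to (X \to S)$ in $\Ar_{\lSch}$, the underlying hypercover $\sX \to X$ is therefore fpqc, and fpqc descent for quasi-coherent complexes gives an equivalence
\[
R\Gamma_{sNis}(X, \bL_{X/S})
\xrightarrow{\simeq}
\lim_{i \in \Delta} R\Gamma_{sNis}(\sX_i, \pi_i^* \bL_{X/S}),
\]
where $\pi_i \colon \sX_i \to X$ denotes the structure map.

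Combining these two inputs yields
\[
\bL(X \to S)
\simeq
\lim_{i \in \Delta} R\Gamma_{sNis}(\sX_i, \pi_i^* \bL_{X/S})
\simeq
\lim_{i \in \Delta} R\Gamma_{sNis}(\sX_i, \bL_{\sX_i / \sS_i})
=
\lim_{i \in \Delta} \bL(\sX_i \to \sS_i),
\]
which is the desired descent statement. The main technical obstacle will be carrying out Step 1 carefully for arbitrary (non-affine) log schemes: I will need to ensure that the transitivity and base change equivalences, available on affine charts via Section \ref{sec:differentials_and_cotangent}, assemble into a global equivalence of complexes of quasi-coherent sheaves on $X'$. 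A secondary point is that the hypothesis of Proposition \ref{prop:cotangent_vs_omega}(2) is stated for fine pre-log rings, so derived log \'etaleness of $X' \to X$ must be verified by passing to suitable charts, where one may reduce to the affine fine case.
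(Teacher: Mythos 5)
Your proposal is correct and follows essentially the same route as the paper: the paper's proof also uses the vanishing of the log cotangent complex for integral log \'etale covers together with the transitivity triangle to replace $\bL_{\sX_i/\sS_i}$ by $\bL_{\sX_i/S}\simeq \pi_i^*\bL_{X/S}$, and then concludes by the same flatness-plus-descent argument (the paper compresses this last step into ``argue as in Proposition \ref{arrow.10}'', with the faithfully flat input appearing explicitly in the proof of Proposition \ref{arrow.5}). You have merely spelled out the steps the paper leaves implicit.
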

\begin{proof}
Suppose $X'\xrightarrow{f} S'\xrightarrow{g} S$ are morphisms in $\lSch$.
If $g$ is an integral log \'etale cover, then $\bL_{S'/S}\simeq 0$ by 
Proposition \ref{prop:cotangent_vs_omega}(2). 
The transitivity triangle
\[
f^*\bL_{S'/S}\to \bL_{X'/S}\to \bL_{X'/S'}\to f^*\bL_{S'/S}[1]
\]
implies  $\bL_{X'/S}\xrightarrow{\simeq} \bL_{X'/S'}$.
Argue as in the proof of Proposition \ref{arrow.10} to finish the proof.
\end{proof}

\begin{proposition}
\label{dZarsheaf.2}
For every integer $d\geq 0$, $L_{sNis}\Omega^d\in \infShv_{sNis}(\Fun(\Delta^1,\lSch),\infDAb)$ satisfies dividing Nisnevich descent.
\end{proposition}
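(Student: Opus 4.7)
The plan is to reduce to the base-fixed case of Corollary \ref{cot.13} and a base-change statement using Theorem \ref{cot.5}. Since $L_{sNis}\Omega^d$ satisfies strict Nisnevich descent by construction, the arrow-category analog of \Cref{dZarsheaf.4} will reduce the claim to verifying dividing descent. By the arrow-category analog of \Cref{dZarsheaf.9}, this amounts to proving that for every dividing cover $(Y \to T) \to (X \to S)$ in $\Ar_{\lSch}$---meaning a square in which both $Y \to X$ and $T \to S$ are dividing covers---the induced map
\[
R\Gamma_{sNis}(X, \Omega_{X/S}^d) \to R\Gamma_{sNis}(Y, \Omega_{Y/T}^d)
\]
is an equivalence, where I use \Cref{arrow.10} to compute the values of $L_{sNis}\Omega^d$.

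The key move is to factor the dividing cover as
\[
(Y \to T) \to (X \times_S T \to T) \to (X \to S).
\]
For the right-hand factor, the projection $g\colon X \times_S T \to X$ is itself a dividing cover, since properness, surjectivity, log {\'e}taleness, and the monomorphism property are all stable under base change. The base change formula for log differentials gives an identification $g^*\Omega_{X/S}^d \cong \Omega_{X \times_S T/T}^d$, and Theorem \ref{cot.5} then yields
\[
R\Gamma_{Zar}(\ul{X}, \Omega_{X/S}^d) \xrightarrow{\simeq} R\Gamma_{Zar}(\underline{X \times_S T}, g^*\Omega_{X/S}^d) \simeq R\Gamma_{Zar}(\underline{X \times_S T}, \Omega_{X \times_S T/T}^d);
\]
passing between Zariski and strict Nisnevich cohomology via \eqref{cot.0.1} handles this factor. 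For the left-hand factor the base is fixed at $T$, so the morphism lies in $\lSch/T$; one checks that $Y \to X \times_S T$ is a dividing cover in the usual sense (the constituent properties are cancellative along the dividing cover $X \times_S T \to X$), whereupon \Cref{cot.13} applied to $\lSch/T$ directly delivers the required equivalence.

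The main obstacle is the transfer of the cd-structural machinery of \Cref{dZarsheaf.4} and the descent reformulation of \Cref{dZarsheaf.9} from $\lSch$ to the arrow category $\Ar_{\lSch}$. This should be formal: the strict Nisnevich, dividing, and dividing Nisnevich cd-structures on $\Ar_{\lSch}$ are built componentwise from the corresponding cd-structures on $\lSch$, and the properties needed (quasi-boundedness, regularity, and squareability) are inherited from the componentwise data. Once this is in place, the rest of the proof is the geometric input collected above, namely base change for $\Omega^d$ together with the fully faithful pullback statement of Theorem \ref{cot.5}.
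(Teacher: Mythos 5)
Your geometric inputs (Theorem \ref{cot.5} and Corollary \ref{cot.13}) are the same ones the paper relies on, but your reduction to them has two genuine gaps, and it is also more roundabout than necessary. First, the transfer of the cd-structure machinery of Propositions \ref{dZarsheaf.4} and \ref{dZarsheaf.9} to $\Ar_{\lSch}$ is not as formal as you claim. The dividing Nisnevich topology on $\Ar_{\lSch}$ is defined in Definition \ref{arrow.2} by a pretopology whose covers are families $\{(X_i\to S_i)\to (X\to S)\}_{i\in I}$ that are componentwise covers \emph{with a common index set}; this allows ``mixed'' covers (e.g.\ a strict Nisnevich family on the source paired with a repeated dividing cover on the base), so the topology is not visibly associated to a componentwise cd-structure, and the quasi-boundedness/regularity/squareability hypotheses of Proposition \ref{dZarsheaf.10} would have to be re-verified for whatever cd-structure you propose. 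The paper avoids this entirely by testing the sheaf condition against an arbitrary dividing Nisnevich hypercover $(\sX\to\sS)\to(X\to S)$ directly. Second, in your left-hand factor you assert that $Y\to X\times_S T$ is a dividing cover because ``the constituent properties are cancellative.'' Properness, log \'etaleness and the monomorphism property do cancel along the separated map $X\times_S T\to X$, but \emph{surjectivity does not}: the underlying scheme map of a dividing cover is typically far from a monomorphism (think of a log blow-up), so surjectivity of $Y\to X$ and of $X\times_S T\to X$ does not formally force surjectivity of $Y\to X\times_S T$. This step needs either a separate argument or a citation to the relevant cancellation lemma for dividing covers in \cite[\S A.11]{logDM}.

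Both gaps disappear if you use the paper's one observation: every structure map $\sS_j\to S$ of a dividing Nisnevich hypercover (in your setting, the map $T\to S$) is log \'etale, so $\Omega^1_{\sS_j/S}=0$ and hence $\Omega^d_{\sX_j/\sS_j}\cong\Omega^d_{\sX_j/S}$. This replaces the varying base by the fixed base $S$ in one stroke, for the entire hypercover at once, and reduces the statement to the assertion that $R\Gamma_{sNis}(-,\Omega^d_{-/S})\in\infShv_{sNis}(\lSch/S,\infDAb)$ satisfies dividing Nisnevich descent, which is exactly Corollary \ref{cot.13}. No factorization through $X\times_S T$, no base-change identification via Theorem \ref{cot.5}, and no cd-structure on the arrow category is needed. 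I recommend you rewrite the argument along these lines; your current draft is repairable but only after supplying the two missing verifications above.
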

\begin{warn}
The localization map $\bL\to \bL^{dNis}$ is not an equivalence since there is a log \'etale morphism $f\colon X\to S$ 
in $\lSch$ such that $\bL_{X/S}$ is not contractible, see Example \ref{exm:logetale}.
\end{warn}

\begin{proof}[Proof of Proposition \ref{dZarsheaf.2}]
Let $(\mathscr{X}\to \mathscr{S})\to (X\to S)$ is a dividing Nisnevich hypercover in $\Fun(\Delta^1,\lSch)$.
We need to show that the naturally induced map
\[
L_{sNis}\Omega^d(X\to S)
\to
\colimit_{(\sX\to \sS)\in \Hyp_{(X\to S)}^{dNis} }\lim_{i\in \Delta} L_{sNis}\Omega^d(\sX_i\to \sS_i)
\]
is an equivalence, where $\Hyp_{(X\to S)}^{dNis}$ is the set of dividing Nisnevich hypercovers of $(X\to S)$ in $\Fun(\Delta^1,\lSch)$.
There is an isomorphism $\Omega_{\mathscr{X}_j/\mathscr{S}_j}^1\cong \Omega_{\mathscr{X}_j/S}^1$ for every integer $j\geq 0$ 
since $\mathscr{S}_j\to S$ is log \'etale.
Hence it suffices to show that the naturally induced homomorphism
\begin{equation}
\label{dZarsheaf.2.1}
H_{sNis}^i(X,\Omega_{X/S}^d)
\to
\mathbf{H}_{sNis}^i(\mathscr{X},\Omega_{\mathscr{X}/S}^d)
\end{equation}
is an isomorphism for every integer $i\geq 0$.
It remains to check that 
$$
R\Gamma_{sNis}(-,\Omega_{-/S}^d)\in \infShv_{sNis}(\lSch/S,\infDAb)
$$
satisfies dividing Nisnevich descent;
this is done in Corollary \ref{cot.13}.
\end{proof}

\begin{df}
Let $\lSmAr$ be the full subcategory of $\Fun(\Delta^1,\lSch)$ spanned by the log smooth morphisms.
\end{df}

\begin{lemma}
\label{dZarsheaf.11}
The inclusion functor
\[
\lSmAr\to \Fun(\Delta^1,\lSch)
\]
is a cocontinuous morphism of sites for the dividing Nisnevich topologies.
\end{lemma}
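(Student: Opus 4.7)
The plan is to verify cocontinuity directly from the definition: given a log smooth $(X\to S)\in \lSmAr$ and a dividing Nisnevich covering family $\{(X_j'\to S_j')\to (X\to S)\}_{j\in J}$ of $(X\to S)$ in $\Ar_{\lSch}$, we will show that each object $(X_j'\to S_j')$ already belongs to $\lSmAr$, so the identity maps witness a refinement.

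By the definition of the topology on $\Ar_{\lSch}$ (Definition \ref{arrow.2}), both $\{X_j'\to X\}$ and $\{S_j'\to S\}$ are dividing Nisnevich covers in $\lSch$. Since the dividing Nisnevich topology is generated by strict Nisnevich covers (whose morphisms are strict \'etale, hence log \'etale) and by dividing covers (which are log \'etale by definition), each individual morphism $X_j'\to X$ and $S_j'\to S$ is log \'etale. Consequently the composition $X_j'\to X\to S$ is log smooth. To promote this to log smoothness of the relative map $X_j'\to S_j'$, we invoke the transitivity triangle \eqref{eq:transitivity} for Gabber's log cotangent complex,
\[
(X_j'\to S_j')^*\bL_{S_j'/S}\to \bL_{X_j'/S}\to \bL_{X_j'/S_j'},
\]
together with the vanishing of $\bL_{S_j'/S}$. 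For strict Nisnevich components of $S_j'\to S$, this vanishing is Corollary \ref{Cor:strict_cot}; for dividing components one reduces chart-locally to an integral log \'etale situation (a toric subdivision together with an appropriate strict \'etale refinement) to which Proposition \ref{prop:cotangent_vs_omega}(2) applies. The resulting equivalence $\bL_{X_j'/S_j'}\simeq \bL_{X_j'/S}$ is a finitely generated projective $\cO_{X_j'}$-module by log smoothness of $X_j'\to S$, and local finite presentation of $X_j'\to S_j'$ follows from that of $X_j'\to S$ and $S_j'\to S$. Hence $X_j'\to S_j'$ is log smooth.

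The main obstacle is the dividing component of the cover $S_j'\to S$: as \Cref{exm:logetale} illustrates, general log \'etale morphisms can have non-vanishing Gabber cotangent complex, so the transitivity argument cannot be applied globally. The resolution is to work strict-\'etale-locally on $S$, where the dividing cover is realized by a toric subdivision which, after passing to suitable local charts, becomes integral; the integral case is then covered by Proposition \ref{prop:cotangent_vs_omega}(2). Once log smoothness of $X_j'\to S_j'$ is established, the given family is itself a covering of $(X\to S)$ in $\lSmAr$, and cocontinuity of the inclusion $\lSmAr\to \Ar_{\lSch}$ follows immediately.
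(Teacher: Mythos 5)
Your overall strategy---showing that every member $(X_j'\to S_j')$ of a dividing Nisnevich covering of a log smooth $(X\to S)$ is again log smooth, so that the covering itself restricts to $\lSmAr$---is the right one and matches the paper. But the execution via Gabber's cotangent complex has a genuine gap, in two places. First, the claimed vanishing of $\bL_{S_j'/S}$ for the dividing part of the cover does not follow from a reduction to the integral case: the chart morphisms of a toric subdivision are typically \emph{not} integral. For the log blow-up of $\bA_{\bN^2}$ at the origin, the chart is $\bZ[x,y]\to\bZ[x,y/x]$ with monoid map $\bN^2\to\langle e_1,e_2-e_1\rangle$, and $\bZ[x,y]\to\bZ[x,y/x]$ is not flat (the fibre over the origin is one-dimensional), so no Zariski or strict \'etale refinement makes this integral and Proposition \ref{prop:cotangent_vs_omega}(2) does not apply. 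Second, and more fatally, the step ``$\bL_{X_j'/S_j'}\simeq\bL_{X_j'/S}$ is a finitely generated projective module by log smoothness of $X_j'\to S$'' asserts exactly the implication (log smooth $\Rightarrow$ derived log smooth) that the paper is at pains to refute without integrality hypotheses (see \eqref{eq:implications1}, Proposition \ref{prop:cotangent_vs_omega} and Example \ref{exm:logetale}). Since $\lSmAr$ consists of \emph{all} log smooth morphisms, your method, if it succeeded, would prove that every log smooth morphism appearing in such a cover is derived log smooth; taking the identity cover of a log smooth but non--derived-log-smooth morphism shows this is false. So the cotangent-complex criterion is structurally mismatched with what the lemma asks for.

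The paper's proof avoids all of this: dividing Nisnevich covers are log \'etale, so $Y'\to Y\to X$ is log smooth, and the classical cancellation property for log smooth and log \'etale morphisms (via the infinitesimal lifting criterion, \cite[Remark III.3.1.2]{Ogu18}) applied to $Y'\to X'\to X$ gives that $Y'\to X'$ is log smooth. This argument is purely underived and insensitive to integrality. If you want to salvage your write-up, replace the entire cotangent-complex discussion with this lifting-criterion argument.
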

\begin{proof}
Suppose $(Y'\to X')\to (Y\to X)$ is a dividing Nisnevich cover in $\Fun(\Delta^1,\lSch)$.
If $Y\to X$ is log smooth, then $Y'\to X'$ is log smooth too by \cite[Remark III.3.1.2]{Ogu18}.
\end{proof}

\begin{df}
Let $f\colon X\to S$ be a morphism of fs log schemes.
We say that $f$ is \emph{derived log smooth} (resp.\ \emph{derived log \'etale}) if $\ul{f}$ is locally of finite presentation 
and $\bL_{X/S}$ is a finitely generated locally free $\cO_X$-module (resp.\ $\bL_{X/S}\simeq 0$).
\end{df}

Observe that the conditions of being derived log smooth and derived log \'etale are Zariski local on $X$ and $S$.

Let $(g,g^\flat)\colon (R,P)\to (A,M)$ be a map of log rings.
If $(g,g^\flat)$ is derived log smooth, then it is log smooth, so $\bL_{(A,M)/(R,P)}$ is a finitely generated projective $A$-module by Proposition \ref{prop:cotangent_vs_omega2} and \cite[Proposition IV.3.2.1]{Ogu18}.
Thus $(g,g^\flat)$ is derived log smooth if and only if $\Spec{g,g^\flat}$ is derived log smooth.
We have a similar claim for the derived log \'etale case.
Hence the implications \eqref{eq:implications1} and \eqref{eq:implications2} are still valid for morphisms of fs log schemes.

As in Proposition \ref{prop:composition}, one can show that the classes of derived log smooth and derived log \'etale morphisms of fs log schemes are closed under compositions and pullbacks.

\begin{proposition}
\label{dZarsheaf.12}
Suppose $f\colon X\to S$ is a log smooth morphism in $\lSch$.
Then there is a canonical equivalence
\begin{equation}
\label{dZarsheaf.12.1}
\bL^{dNis}(X\to S)
\simeq
R\Gamma_{dNis}(X,\Omega_{X/S}^1).
\end{equation}
In particular, 
if $f$ is log \'etale, then $\bL^{dNis}(X\to S)\simeq 0$.
\end{proposition}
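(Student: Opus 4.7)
The plan is to construct a natural comparison map from $\bL^{dNis}(X\to S)$ to $R\Gamma_{dNis}(X,\Omega^1_{X/S})$ and prove it is an equivalence when $f$ is log smooth. Since Gabber's cotangent complex $\bL_{X/S}$ is connective with $H_0(\bL_{X/S})\cong \Omega^1_{X/S}$, the truncation $\bL_{X/S}\to \Omega^1_{X/S}[0]$ induces a natural transformation $\bL\to \Omega^1$ of presheaves on $\Ar_{\lSch}$. Applying $L_{dNis}$ and combining with Proposition \ref{arrow.10} yields the desired map
\[
\bL^{dNis}(X\to S) \to L_{dNis}\Omega^1(X\to S) \simeq R\Gamma_{dNis}(X,\Omega^1_{X/S}).
\]

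To prove this is an equivalence for log smooth $f$, I would reduce to the integral log smooth case via a dividing cover. The geometric input is: \emph{given a log smooth morphism $f\colon X\to S$, there exists a dividing cover $\pi\colon X'\to X$ such that $f\circ\pi\colon X'\to S$ is integral log smooth}. Strict Nisnevich locally on $X$, Kato's chart theorem \cite[Theorem IV.3.3.1]{Ogu18} presents $f$ by a chart $P\to Q$ of fine monoids; a suitable toric subdivision of $\Spec{Q}$ (viewed as a fan over $\Spec{P}$) into cones giving integral maps produces a log blow-up $X'\to X$, a dividing cover, with $X'\to S$ integral log smooth. This is the main obstacle of the proof: although each ingredient (charts, toric subdivisions, log blow-ups) is classical, combining them into the required dividing cover needs careful bookkeeping, drawing on the treatment of log blow-ups in \cite{Ogu18} and \cite{logDM}.

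Granted this geometric step, Proposition \ref{prop:cotangent_vs_omega}(1) together with Proposition \ref{prop:cotangent_vs_omega2} gives $\bL_{X'/S}\simeq \Omega^1_{X'/S}[0]$, and Proposition \ref{arrow.10} produces presheaf-level equivalences
\[
\bL(X'\to S) = R\Gamma_{sNis}(X',\bL_{X'/S})\simeq R\Gamma_{sNis}(X',\Omega^1_{X'/S})\simeq L_{sNis}\Omega^1(X'\to S).
\]
Thus the map $\bL\to L_{sNis}\Omega^1$ is a presheaf-level equivalence at $(X'\to S)$, and hence the sheafified map $\bL^{dNis}\to L_{dNis}\Omega^1$ is an equivalence at $(X'\to S)$ as well.

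Since $(X'\to S)\to (X\to S)$ is a dividing cover in $\Ar_{\lSch}$ (trivial cover on $S$, dividing cover on $X$) and is a monomorphism, the associated \v{C}ech nerve is constant; both $\bL^{dNis}$ and $L_{dNis}\Omega^1$ satisfy dividing descent by Proposition \ref{dZarsheaf.4} together with the arrow-category analogue of Lemma \ref{dZarsheaf.9}. Therefore
\[
\bL^{dNis}(X\to S)\simeq \bL^{dNis}(X'\to S)\simeq L_{dNis}\Omega^1(X'\to S)\simeq L_{dNis}\Omega^1(X\to S),
\]
which is the desired equivalence \eqref{dZarsheaf.12.1}. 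The ``in particular'' statement is immediate: if $f$ is log \'etale then $\Omega^1_{X/S}=0$, whence the right-hand side vanishes.
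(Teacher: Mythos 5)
Your comparison map (truncation $\bL_{X/S}\to H_0(\bL_{X/S})\cong\Omega^1_{X/S}$, then $L_{dNis}$ plus Proposition \ref{arrow.10}) is the right one, and the ``in particular'' clause is fine. But the key geometric step is stated on the wrong side of the arrow, and as stated it is false. You claim that a log smooth $f\colon X\to S$ admits a dividing cover $\pi\colon X'\to X$ of the \emph{source} with $f\circ\pi$ integral log smooth. Take the map of Example \ref{exm:logetale}: $\bA_{\bN^2}\to\bA_P$ with $P=\langle(2,0),(1,1),(0,2)\rangle$, which is log \'etale, hence log smooth. An integral log smooth (indeed log \'etale) morphism has flat underlying scheme morphism by \cite[Theorem IV.4.3.5]{Ogu18}; but for any toric modification $X'\to\bA_{\bN^2}$ the composite $\ul{X'}\to\Spec{k[P]}$ has finite generic fibre while the fibre over the vertex either has length $3$ against generic degree $2$ (trivial subdivision) or is positive-dimensional (nontrivial subdivision), so it is never flat. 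No dividing cover of the source can make this map integral. What is true, and what the paper uses, is Kato's theorem [Katointegral, Theorem 1.1]: there is a dividing Nisnevich cover of the \emph{base} $S'\to S$ such that $X\times_S S'\to S'$ is integral log smooth; the resulting $(X\times_S S'\to S')\to(X\to S)$ is then the cover in $\Ar_{\lSch}$ one descends along.

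There is a second gap in the last step: from ``$\bL\to L_{sNis}\Omega^1$ is an equivalence at the single object $(X'\to S)$'' you conclude that the $dNis$-sheafified map is an equivalence at that object. Sheafification is not computed objectwise, so this inference is invalid: $\bL^{dNis}(X'\to S)$ depends on the values of $\bL$ on all dividing Nisnevich (hyper)covers of $(X'\to S)$, not just on $\bL(X'\to S)$. The correct mechanism, used in the paper, is the comparison lemma (Proposition \ref{dZarsheaf.7}, via Lemma \ref{dZarsheaf.11}): one shows the map of presheaves is an objectwise equivalence on the full subcategory $\dlSmAr$ of derived log smooth arrows, observes that every object of $\lSmAr$ admits a $dNis$-cover in $\dlSmAr$ (by the base-flattening above), and deduces that the induced map of $dNis$-sheaves on $\lSmAr$ is an equivalence. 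With these two repairs your argument becomes essentially the paper's proof, the remaining local identification $\bL_{X/S}\simeq\Omega^1_{X/S}$ for derived log smooth $f$ being Propositions \ref{prop:cotangent_vs_omega} and \ref{prop:cotangent_vs_omega2} as you say.
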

\begin{proof}
Owing to Proposition \ref{arrow.10} and Lemma \ref{dZarsheaf.11}, we need to show that the canonical map
\[
\bL^{dNis}
\to
L^{dNis} \Omega^1
\]
is an equivalence in $\infShv_{dNis}(\lSmAr,\infDAb)$.
Let $\dlSmAr$ be the full subcategory of $\lSmAr$ consisting of derived log smooth morphisms.
For every $(X\to S)\in \lSmAr$, 
we can choose a dividing Nisnevich cover $S'\to S$ such that the projection $X\times_S S'\to S'$ 
is integral log smooth by
\cite[Theorem 1.1]{Katointegral}.
Together with Proposition \ref{dZarsheaf.7}, we have an equivalence
\begin{equation}
\label{dZarsheaf.12.2}
\infShv_{dNis}(\lSmAr,\infDAb)
\simeq
\infShv_{dNis}(\dlSmAr,\infDAb).
\end{equation}
Hence we may assume $f$ is derived log smooth.
Now it suffices to show there is a canonical equivalence
\[
\bL_{X/S}\xrightarrow{\simeq} \Omega_{X/S}^1
\]
This question is Zariski local on $X$ and $S$, 
so we further reduce to the case when $S,X\in \lAff$.
In this case, 
Proposition \ref{prop:cotangent_vs_omega}(1) gives \eqref{dZarsheaf.12.1}.
Suppose $f$ is log \'etale.
For every log \'etale morphism $Y\to S$, we have $\Omega_{Y/S}^1=0$.
Hence the right-hand side of \eqref{dZarsheaf.12.1} vanishes.
\end{proof}

\begin{proposition}
\label{dZarsheaf.13}
Suppose $f\colon X\to S$ is a log smooth morphism in $\lSch$.
Then there is a canonical equivalence
\[
\bL^{dNis}(X\to S)
\simeq
R\Gamma_{sNis}(X,\Omega_{X/S}^1).
\]
\end{proposition}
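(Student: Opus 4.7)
The proposition is essentially a consequence of two previously established results, so the plan is short. By Proposition \ref{dZarsheaf.12}, since $f$ is log smooth, we already have a canonical equivalence
\[
\bL^{dNis}(X\to S) \simeq R\Gamma_{dNis}(X,\Omega_{X/S}^1).
\]
Thus it suffices to produce a canonical equivalence
\[
R\Gamma_{dNis}(X,\Omega_{X/S}^1) \simeq R\Gamma_{sNis}(X,\Omega_{X/S}^1).
\]

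The plan is to deduce this from Corollary \ref{cot.13}, which asserts that the presheaf $R\Gamma_{sNis}(-,\Omega_{-/S}^1)$ on $\lSch/S$ already satisfies dividing Nisnevich descent. In other words, the strict Nisnevich-sheafified functor is automatically a dividing Nisnevich sheaf. Since the dividing Nisnevich topology is finer than the strict Nisnevich one, the localization functor $L_{dNis}$ factors as $L_{dNis}\circ L_{sNis}$, and on any presheaf that already satisfies dividing Nisnevich descent the map $L_{sNis}\cF \to L_{dNis}\cF$ is an equivalence. Applying this to $\cF = \Omega^1$ (viewed as a presheaf on the appropriate arrow category and restricted to $\lSch/S$ via the morphism $X\to S$) yields the desired equivalence.

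Concretely, I would combine these observations via a short chain of equivalences: Proposition \ref{arrow.10} identifies $L_{dNis}\Omega^1(X\to S)$ with $R\Gamma_{dNis}(X,\Omega_{X/S}^1)$ and $L_{sNis}\Omega^1(X\to S)$ with $R\Gamma_{sNis}(X,\Omega_{X/S}^1)$; Proposition \ref{dZarsheaf.12} then passes to $\bL^{dNis}$; and Corollary \ref{cot.13} collapses the two sheafifications. No further calculation should be required, and no real obstacle is present since the technical content (invariance of coherent cohomology under dividing covers) has been absorbed into Corollary \ref{cot.13} via Theorem \ref{cot.5}. The only small care needed is in matching the variance of the presheaves on the arrow category with their evaluation on $\lSch/S$, which is handled by the proof of Proposition \ref{arrow.10}.
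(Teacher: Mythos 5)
Your proposal is correct and follows essentially the same route as the paper: the paper's proof likewise combines Proposition \ref{arrow.10} and Proposition \ref{dZarsheaf.12} and then reduces to the fact that $R\Gamma_{sNis}(-,\Omega_{-/S}^1)$ satisfies dividing Nisnevich descent, which is Corollary \ref{cot.13}. No gaps.
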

\begin{proof}
Due to Propositions \ref{arrow.10} and \ref{dZarsheaf.12}, 
it only remains to check that $R\Gamma_{sNis}(-,\Omega_{X/S}^1)$ satisfies dividing Nisnevich descent.
This is done in Corollary \ref{cot.13}.
\end{proof}

\begin{example}
\label{arrow.13}
Let $f\colon X'\to Y'$ be the log \'etale map in Example \ref{exm:logetale} with non-vanishing log cotangent complex $\bL_{X'/Y'}$.
Let $P$ (resp.\ $P_1$, $P_2$) be the submonoid of $\Z^2$ generated by $(2,0)$, $(1,1)$, and $(0,2)$ 
(resp.\ $(2,0)$ and $(-1,1)$, $(0,2)$ and $(1,-1)$).
We set $P_{12}:=P_1+P_2$, $Y_1':=Y'\times_{\bA_P}\bA_{P_1}$, $Y_2':=Y'\times_{\bA_P}\bA_{P_2}$, and $Y_{12'}:=Y'\times_{\bA_P}\bA_{P_{12}}$.
We similarly define $X_1'$, $X_2'$, and $X_{12'}$.
Then the glueing of $Y_1'$ and $Y_2'$ along $Y_{12'}$ is a dividing cover of $Y'$.
Furthermore, the projections $X_1'\to Y_1'$, $X_2'\to Y_2'$, and $X_{12'}\to Y_{12'}$ are integral.
Hence
\[
\bL(X_1'\to Y_1'),\bL(X_2'\to Y_2'),\bL(X_{12}'\to Y_{12}')\simeq 0
\]
by Proposition \ref{prop:cotangent_vs_omega}(2).
This means that the induced sequence
\[
\bL(X'\to Y')\to \bL(X_1'\to Y_1')\oplus \bL(X_2'\to Y_2')\to \bL(X_{12}'\to Y_{12}')
\]
is not a cofiber sequence, so $\bL$ does not satisfy dividing Nisnevich descent.

Theorem \ref{prop:loghhbasechange} shows that the induced sequence
\[
\logHH(X'/Y') \to \logHH(X_1' / Y_1')\oplus \logHH(X_2' / Y_2') \to \logHH(X_{12}' / Y_{12}')
\]
is not a cofiber sequence, so $\logHH$ does not satisfy dividing Nisnevich descent either.
\end{example}

\begin{remark}[Comparison to Olsson's cotangent complex]

As explained in Section \ref{sec:differentials_and_cotangent}, 
we use Gabber's log cotangent complex.
It is a more or less direct extension to the context of pre-log (animated) rings of the classical construction due to 
Illusie and Quillen. 
There is another approach to the deformation theory of log schemes:
In \cite[Definition 3.2]{Ols05}, 
Olsson defines, 
for every morphism $f\colon X\to S$ of fine log schemes, 
a projective system $\mathbb{L}^O_{X/S} = (\bL^{\geq -n}_{X/S})_n$ 
of essentially constant ind-objects in $\mathcal{D}^b(\mathrm{Mod}_{\mathcal{O}_X})$. 
As such, 
a direct comparison with $\mathbb{L}_{X/S}$ considered in this paper (and in \cite[\S 8]{Ols05}) is not immediate. 
We will make a condition comparison, in the following sense. 

Assume the existence of an object $\bL^O\in \infPsh(\Fun(\Delta^1,\lSch),\infDAb)$ with the following two formal properties:
\begin{enumerate}
\item[(i)] The projective system
\[
(\cdots \to \tau_{\geq -n-1} \bL^O(X\to S) \to \tau_{\geq -n} \bL^O(X\to S) \to \cdots \to \tau_{\geq 0} \bL^O(X\to S))
\]
is canonically equivalent to $\bL_{X/S}^O$.
\item[(ii)] There exists a map $\bL\to \bL^O$ such that for all integers $n\geq 0$, the canonical map
\[
\tau_{\geq -n}\bL(X\to S)\to \tau_{\geq -n}\bL^O(X\to S)
\]
is naturally induced by the map constructed in \cite[\S 8.31]{Ols05}.
\end{enumerate}
In other words, $\mathbb{L}^O(-)$ is a presheaf on the arrow $\infty$-category $\Fun(\Delta^1,\lSch)$ whose Postnikov tower is 
precisely the projective system considered by Olsson. 

We claim that under the above assumptions, 
the map $\mathbb{L}\to \mathbb{L}^O$ becomes an equivalence after dividing Nisnevich sheafification. 
Indeed,  
let $\iAr$ be the full subcategory of $\Fun(\Delta^1,\lSch)$ consisting of the integral morphisms in $\lSch$.
If $(X\to S)$ is an object of $\Fun(\Delta^1,\lSch)$, 
then there exists a dividing cover $(X'\to S')$ of $(X\to S)$ with $(X'\to S')\in \iAr$ by
\cite[Theorem 1.1]{Katointegral}.
Proposition \ref{dZarsheaf.7} gives an equivalence
\[
\Shv_{dNis}(\iAr,\infDAb)
\xrightarrow{\simeq}
\Shv_{dNis}(\Fun(\Delta^1,\lSch),\infDAb).
\]
To conclude, 
we observe that the restriction of $\bL\to \bL^O$ to $\iAr$ is an equivalence owing to \cite[Corollary 8.34]{Ols05}.
\end{remark}

\section{Representability of \texorpdfstring{$\logHH$}{logHH} in logarithmic motivic homotopy theory}
\label{sec:motivic_repr}
Let us briefly recall the construction of the category of (effective) log motives over an fs noetherian log scheme of finite Krull dimension $S\in \lSch$. Let $\Lambda$ be a ring of coefficients and denote by $\mathcal{D}(\Lambda)$ the $\infty$-category of complexes of $\Lambda$-modules. We write 
$\boxx$ for the fs log scheme $(\mathbb{P}^1, \infty)$.
\begin{df}(1) A complex $C\in \infShv_{dNis}(\lSm/S, \mathcal{D}(\Lambda))$ is called $\boxx$-local if for all $X\in \lSm/S$, the map $R\Gamma_{dNis}( X, C) \to R\Gamma_{dNis}( X \times \boxx, C)$ is an equivalence in $\mathcal{D}(\Lambda).$

(2) $\logDAeff(S, \Lambda)$ is the full $\infty$-subcategory of $\infShv_{dNis}(\lSm/S, \mathcal{D}(\Lambda))$ consisting of $\boxx$-local objects. It is equivalent to the underlying $\infty$-category of the model category of chain complexes of presheaves of $\Lambda$-modules on $\lSm/S$ with the $(\boxx, dNis)$-local model structure. 
\end{df}
The interested reader can verify that this definition agrees with the one proposed in \cite{logSH} or \cite[Definition 2.9]{BMpurity}. For $\Lambda=\mathbb{Z}$, we simply write $\logDAeff(S)$. It is a monoidal, stable, presentable $\infty$-category. For $X\in \lSm/S$, we denote by $M_S(X)$ or simply by $M(X)$ the image in $\logDAeff(S)$ of the representable presheaf $\mathbb{Z}(X)$ via the localization functor.

If we denote by $\unit = M(S)$ the $\otimes$-unit of $\logDAeff(S)$, we set 
\[\unit(1) = \cofib(M(S)\xrightarrow{i_0} M(\mathbb{P}_S^1))[-2],\] where $i_0\colon S\to \mathbb{P}^1_S$ is the $0$-section, and for $n\geq 1$ we write $\unit(n)$ for the $n$-fold tensor product of $\unit(1)$ with itself. As for usual motives, this is called the $n$-th Tate twist.  
If $Y\to X$ is a morphism in $\lSm/S$, we set
\[
M(X/Y)
:=
\cofib(M(Y)\to M(X)).
\]
In \cite{logSH}, $\logDA(S)$ is defined to be the $\bP^1$-stabilization
\[
\logDA(S)
:=
\mathrm{Stab}_{\bP^1}(\logDAeff(S))
\]
in the sense of \cite[Theorem 2.14]{zbMATH06374152}.

For $X\in \lSm/S$, we keep writing $M(X)$ for the object of $\logDA(S)$ representable by $X$.

\subsection{$\boxx$-invariance of $\Omega^d$ and $\bL$}
This subsection aims to show that hypercohomology with coefficients in 
$\Omega^d$ and 
$\bL$ can be computed in terms of mapping spaces in $\logDAeff$. 
See also \cite[\S 9]{logDM} for related results.

\begin{proposition}
\label{rep.1}
Suppose $f\colon X\to S$ is a log smooth morphism in $\lSch$.
There is a canonical quasi-isomorphism
\begin{equation}
\label{rep.1.2}
\Omega_{X/S}^d
\simeq
R_{Zar}p_* \Omega_{X\times \boxx / S}^d
\end{equation}
for every integer $d\geq 0$, where $p\colon \ul{X}\times \bP^1\to \ul{X}$ denotes the projection.
\end{proposition}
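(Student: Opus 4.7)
The strategy is to reduce the computation to a Künneth-type formula using the transitivity triangle for the composition $X\times \boxx \to X \xrightarrow{f} S$, whose underlying map of schemes is $p$. The canonical map in \eqref{rep.1.2} is the adjoint of the pullback morphism $p^{*}\Omega_{X/S}^d\to \Omega_{X\times \boxx/S}^d$ induced by functoriality of log Kähler differentials.

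\medskip

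Since $\boxx\to \Spec{\bZ}$ is log smooth, the projection $X\times \boxx \to X$ is log smooth as well. Combined with the log smoothness of $f$, the transitivity of log Kähler differentials yields a short exact sequence of locally free $\cO_{\ul{X}\times \bP^1}$-modules
\[
0\to p^{*}\Omega_{X/S}^1 \to \Omega_{X\times \boxx/S}^1 \to \Omega_{X\times \boxx/X}^1 \to 0,
\]
and base change identifies $\Omega_{X\times \boxx/X}^1 \cong \pi^{*}\Omega_{\boxx/\bZ}^1$, where $\pi\colon \ul{X}\times \bP^1 \to \bP^1$ is the second projection. Since $\Omega_{\boxx/\bZ}^1$ has rank one, the induced Koszul filtration on $\Omega_{X\times \boxx/S}^d = \Bigwedge^d\Omega_{X\times \boxx/S}^1$ has only two non-vanishing graded pieces, yielding a short exact sequence
\[
0 \to p^{*}\Omega_{X/S}^d \to \Omega_{X\times \boxx/S}^d \to p^{*}\Omega_{X/S}^{d-1}\otimes \pi^{*}\Omega_{\boxx/\bZ}^1 \to 0.
\]

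\medskip

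Applying $Rp_{*}$ and using the projection formula together with flat base change along the proper flat morphism $\bP^1 \to \Spec{\bZ}$, the outer two terms simplify. The first becomes $\Omega_{X/S}^d \otimes_{\cO_{\ul X}} Rp_{*}\cO_{\ul{X}\times \bP^1}\simeq \Omega_{X/S}^d$, using $R\Gamma(\bP^1,\cO_{\bP^1})\simeq \bZ$. The third becomes $\Omega_{X/S}^{d-1}\otimes_{\cO_{\ul X}} Rp_{*}\pi^{*}\Omega_{\boxx/\bZ}^1$. The key calculation is the identification $\Omega_{\boxx/\bZ}^1 \cong \cO_{\bP^1}(-1)$, which follows from the residue exact sequence
\[
0\to \Omega_{\bP^1/\bZ}^1 \to \Omega_{\boxx/\bZ}^1 \to (i_{\infty})_{*}\cO_\infty \to 0
\]
and a degree count. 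Since $R\Gamma(\bP^1,\cO_{\bP^1}(-1))\simeq 0$, flat base change yields $Rp_{*}\pi^{*}\Omega_{\boxx/\bZ}^1 \simeq 0$, so that the inclusion $p^{*}\Omega_{X/S}^d \hookrightarrow \Omega_{X\times \boxx/S}^d$ induces the desired quasi-isomorphism.

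\medskip

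The main technical point is establishing the short exact sequence of log Kähler differentials under the stated log smoothness hypotheses; for this one invokes the standard transitivity result in log geometry (cf.\ \cite[Proposition IV.2.3.2]{Ogu18}). The local freeness of the terms is a consequence of log smoothness and is essential for obtaining the Koszul filtration with only two non-vanishing graded pieces upon passage to exterior powers. Once the short exact sequence is in hand, the rest is a routine application of the projection formula together with standard cohomology computations on $\bP^1$.
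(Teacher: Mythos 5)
Your proof is correct and follows essentially the same route as the paper: both arguments split off the relative log differentials of $\boxx$ via the product decomposition of $\Omega^1_{X\times\boxx/S}$ (the paper uses the direct-sum decomposition of \cite[Proposition IV.1.2.15]{Ogu18} where you use the locally split transitivity sequence) and then reduce to the vanishing of $R\Gamma(\bP^1,\Omega^1_{\boxx/\bZ})$, which the paper verifies by an explicit \v{C}ech computation and you verify by identifying $\Omega^1_{\boxx/\bZ}\cong\cO_{\bP^1}(-1)$. The only point to tighten is the base-change step $Rp_*\pi^*\Omega^1_{\boxx/\bZ}\simeq 0$: since $\ul{X}\to\Spec{\bZ}$ need not be flat, invoke instead that the two-term \v{C}ech complex computing $R\Gamma(\bP^1_{\bZ},\cO(-1))$ is an acyclic bounded complex of flat modules (hence universally acyclic), rather than ``flat base change.''
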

\begin{proof}
By \cite[Proposition IV.1.2.15]{Ogu18}, there is a canonical isomorphism
\[
\Omega_{X\times \boxx / S}^1
\cong
p^*\Omega_{X / S}^1 \oplus \Omega_{X\times \boxx / X}^1.
\]
By taking exterior powers we obtain an isomorphism
\[
\Omega_{X\times \boxx / S}^d
\cong
\bigoplus_{j=0}^d p^*\Omega_{X / S}^j \otimes \Omega_{X\times \boxx / X}^{d-j}.
\]
Since $\Omega_{X / S}^j$ is locally free by \cite[Proposition IV.3.2.1]{Ogu18}, 
the projection formula gives a quasi-isomorphism
\[
R_{Zar}p_*\Omega_{X\times \boxx / S}^d
\simeq
\bigoplus_{j=0}^d \Omega_{X / S}^j \otimes R_{Zar}p_*\Omega_{X\times \boxx / X}^{d-j}.
\]

It remains to check that the naturally induced morphism
\begin{equation}
\label{rep.1.1}
\Omega_{X / X}^d
\to
R_{Zar}p_* \Omega_{X\times \boxx / X}^d
\end{equation}
is a quasi-isomorphism.
By \cite[Proposition IV.1.2.15]{Ogu18}, 
there is a canonical isomorphism
\[
\Omega_{X\times \boxx / X}^1
\cong
\Omega_{\ul{X}\times \boxx / \ul{X}}^1.
\]
Hence we reduce to the case when $X=\ul{X}$.
We may also assume that $X$ is the spectrum of a local ring $A$.
Elementary computations show that there are isomorphisms
\begin{gather*}
\Omega_{X\times \boxx / X}^1(X\times \bA^1)
\cong
A[t]dt,\quad
\Omega_{X\times \boxx / X}^1(X\times (\bP^1-0))
\cong
A[1/t]dt/t,
\\
\Omega_{X\times \boxx / X}^1(X\times \mathbb{G}_m)
\cong
A[t,1/t]dt.
\end{gather*}
By computing the \v{C}ech cohomology of the Zariski cover $\{X\times \bA^1,X\times (\bP^1-0)\to X\times \bP^1\}$, 
we see that \eqref{rep.1.1} is a quasi-isomorphism for $d=1$.
Moreover, $\Omega_{X\times \boxx / X}^d=0$ if $d>1$, so \eqref{rep.1.1} is a quasi-isomorphism for $d>1$.
If $d=0$, then \eqref{rep.1.1} becomes $\cO_X\simeq R_{Zar}p_*\cO_{X\times \bP^1}$.
\end{proof}

\begin{proposition}
\label{rep.2}
Suppose $f\colon X\to S$ is a log smooth morphism in $\lSch$ and $d,i\geq 0$.
There is a canonical isomorphism
\begin{equation}
\label{rep.2.1}
H_{dNis}^i(X,\Omega_{X/S}^d)
\cong
H_{dNis}^i(X\times \boxx,\Omega_{X\times \boxx / S}^d).
\end{equation}
\end{proposition}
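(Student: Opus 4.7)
The plan is to combine Proposition \ref{rep.1}, which provides the crucial pushforward identification on the Zariski level, with the comparison results of Section \ref{section:dNis} that allow us to replace dividing Nisnevich cohomology with Zariski cohomology when the coefficients are quasi-coherent.

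First I would reduce both sides to Zariski cohomology. Since $\Omega_{X/S}^d$ and $\Omega_{X\times\boxx/S}^d$ are quasi-coherent sheaves on the underlying schemes of $X$ and $X\times\boxx$ respectively, Corollary \ref{cot.13} combined with \eqref{cot.0.1} yields canonical isomorphisms
\[
H_{dNis}^i(X,\Omega_{X/S}^d)\cong H_{Zar}^i(\ul{X},\Omega_{X/S}^d)
\quad\text{and}\quad
H_{dNis}^i(X\times\boxx,\Omega_{X\times\boxx/S}^d)\cong H_{Zar}^i(\ul{X}\times\bP^1,\Omega_{X\times\boxx/S}^d).
\]
Thus it suffices to establish the analogous isomorphism in Zariski cohomology.

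Next, I would invoke Proposition \ref{rep.1}, which provides the quasi-isomorphism $\Omega_{X/S}^d\simeq R_{Zar}p_*\Omega_{X\times\boxx/S}^d$ for the projection $p\colon \ul{X}\times\bP^1\to \ul{X}$. Taking Zariski hypercohomology and applying the Leray spectral sequence (or equivalently the identity $R\Gamma_{Zar}(\ul{X},Rp_*\cF)\simeq R\Gamma_{Zar}(\ul{X}\times\bP^1,\cF)$) gives
\[
H_{Zar}^i(\ul{X},\Omega_{X/S}^d)\cong H_{Zar}^i(\ul{X},R_{Zar}p_*\Omega_{X\times\boxx/S}^d)\cong H_{Zar}^i(\ul{X}\times\bP^1,\Omega_{X\times\boxx/S}^d).
\]
Chaining together these three canonical isomorphisms yields the desired isomorphism \eqref{rep.2.1}.

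The only nontrivial ingredient is really Proposition \ref{rep.1}, which has already been proved; the rest is a formal bookkeeping argument that translates between topologies and uses the standard pushforward formula. Consequently there is no serious obstacle in this step beyond verifying that the identifications are indeed canonical and functorial in $X$ and $S$.
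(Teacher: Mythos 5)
Your proposal is correct and follows essentially the same route as the paper, whose proof is precisely to combine \eqref{cot.0.1}, Corollary \ref{cot.13}, and Proposition \ref{rep.1}. Your sketch simply spells out the bookkeeping (dividing Nisnevich $\to$ strict Nisnevich $\to$ Zariski, then the pushforward identification) that the paper leaves implicit.
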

\begin{proof}
Combine \eqref{cot.0.1}, Corollary \ref{cot.13}, and Proposition \ref{rep.1}.
\end{proof}

\begin{theorem}
\label{rep.3}
Suppose $f\colon X\to S$ is a log smooth morphism in $\lSch$.
Then
\[
L_{dNis}\Omega^d((-)\to S), \quad \bL^{dNis}((-)\to S)\in \infShv_{dNis}(\lSm/X,\infDAb)
\]
are in $\logDAeff(X)$.
\end{theorem}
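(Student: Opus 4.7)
The goal is to show that both presheaves on $\lSm/X$ are $\boxx$-local dividing Nisnevich sheaves. Since they are defined as dividing Nisnevich sheaves on the arrow category $\Ar_{\lSch}$ (Propositions \ref{arrow.10}, \ref{dZarsheaf.12}), restricting along the functor $\lSm/X\to \Ar_{\lSch}$ sending $(Y\to X)$ to the composition $(Y\to X\xrightarrow{f}S)$ yields a dividing Nisnevich sheaf on $\lSm/X$: any dNis cover $\{Y_i\to Y\}$ in $\lSm/X$ gives a dNis cover $\{(Y_i\to S)\to (Y\to S)\}$ in $\Ar_{\lSch}$ in the sense of \Cref{arrow.2}, because the target component is the identity on $S$. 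So the sheaf property on $\lSm/X$ is automatic.

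The content of the theorem is therefore $\boxx$-invariance. First I would handle the case of $L_{dNis}\Omega^d((-)\to S)$. For $Y\in \lSm/X$, the composite $Y\to S$ is log smooth (compositions of log smooth morphisms are log smooth), and so is $Y\times\boxx\to S$. By \Cref{arrow.10},
\[
L_{dNis}\Omega^d(Y\to S)\simeq R\Gamma_{dNis}(Y,\Omega^d_{Y/S}),\qquad L_{dNis}\Omega^d(Y\times\boxx\to S)\simeq R\Gamma_{dNis}(Y\times\boxx,\Omega^d_{Y\times\boxx/S}).
\]
The map between these induced by the projection $Y\times\boxx\to Y$ is an equivalence by \Cref{rep.2}. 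This gives $\boxx$-invariance.

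For $\bL^{dNis}((-)\to S)$, the same reduction applies: $Y\to S$ and $Y\times\boxx\to S$ are both log smooth, so \Cref{dZarsheaf.13} identifies
\[
\bL^{dNis}(Y\to S)\simeq R\Gamma_{sNis}(Y,\Omega^1_{Y/S}),\qquad \bL^{dNis}(Y\times\boxx\to S)\simeq R\Gamma_{sNis}(Y\times\boxx,\Omega^1_{Y\times\boxx/S}).
\]
By \Cref{cot.13} (which shows strict Nisnevich cohomology of the log differentials satisfies dividing Nisnevich descent), these agree with their dividing Nisnevich counterparts, and \Cref{rep.2} with $d=1$ again provides the equivalence.

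\textbf{Main obstacle.} The nontrivial input here is \Cref{rep.2}, which ultimately rests on the splitting $\Omega^1_{X\times\boxx/S}\cong p^*\Omega^1_{X/S}\oplus \Omega^1_{X\times\boxx/X}$, the projection formula (using local freeness of $\Omega^j_{X/S}$ from \cite[Proposition IV.3.2.1]{Ogu18}), and the explicit \v{C}ech calculation of the cohomology of $\Omega^d_{X\times\boxx/X}$. All the heavy lifting has been done before the statement; what remains for the theorem itself is assembling these inputs and checking that the restriction from $\Ar_{\lSch}$ to $\lSm/X$ preserves the sheaf property. The subtlety worth noting is that for $\bL^{dNis}$ one cannot directly invoke $\boxx$-invariance of $\bL$ itself, since $\bL$ fails dividing Nisnevich descent (\Cref{arrow.13}); one must first pass to the dNis localization, which by \Cref{dZarsheaf.13} identifies with the differentials and thus inherits $\boxx$-invariance from the $\Omega^1$ case.
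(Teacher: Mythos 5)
Your proposal is correct and follows essentially the same route as the paper, which simply combines Propositions \ref{arrow.10}, \ref{dZarsheaf.12}, and \ref{rep.2}: the sheaf property is automatic from the construction, and the content is the $\boxx$-invariance supplied by \Cref{rep.2} after identifying the sheaves with $R\Gamma_{dNis}(-,\Omega^d_{-/S})$. Your only deviation is routing the $\bL^{dNis}$ case through \Cref{dZarsheaf.13} and \Cref{cot.13} instead of citing \Cref{dZarsheaf.12} directly, which is an equivalent (if slightly longer) path.
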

\begin{proof}
Combine Propositions \ref{arrow.10}, \ref{dZarsheaf.12}, and \ref{rep.2}.
\end{proof}

\subsection{$\boxx$-invariance of \texorpdfstring{$\logHH$}{logHH}}
We are now ready to show $\boxx$-invariance and consequently representability of log Hochschild homology in the setting of log motives.

From the convergence of the log Andr\'e-Quillen spectral sequence in Theorem \ref{thm:logaqss}, 
we obtain an infinite sequence 
\begin{equation}
\label{dZarsheaf.14.1}
\cdots
\to
\Fil^{-1} \logHH
\to
\Fil^0 \logHH := \logHH
\end{equation}
in $\infShv_{sNis}(\Fun(\Delta^1,\lAff),\infDAb)$ such that
\begin{equation}
\label{dZarsheaf.14.4}
\lim_{q} \Fil^d \logHH \simeq 0
\end{equation}
and
\begin{equation}
\label{dZarsheaf.14.5}
\fiber(\Fil^{d-1}\logHH\to \Fil^d\logHH)
\simeq
\Bigwedge^{-d} \bL[d]
\end{equation}
for all integers $d$.
We may view \eqref{dZarsheaf.14.1} as an infinite sequence in $\infShv_{sNis}(\Fun(\Delta^1,\lSch),\infDAb)$ due to Proposition \ref{arrow.6}.

\begin{proposition}
\label{dZarsheaf.14}
Suppose $f\colon X\to S$ is a derived log smooth morphism in $\lSch$.
There is a naturally induced equivalence
\[
\logHH(X / S)
\xrightarrow{\simeq}
\logHH(X\times \boxx / S).
\]
\end{proposition}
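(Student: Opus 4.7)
The plan is to reduce $\boxx$-invariance to that of each graded piece of the logarithmic Andr\'e--Quillen filtration \eqref{dZarsheaf.14.1}, then conclude by convergence. I would first observe that $\boxx=(\bP^1,\infty)\to\Spec{\bZ}$ is integral log smooth, since the only nontrivial stalk map of sharp monoids is $0\to\bN$, which is integral. By closure of derived log smooth morphisms under pullbacks and compositions, the projection $X\times\boxx\to S$ is therefore itself derived log smooth, and Proposition \ref{prop:cotangent_vs_omega2} applies to both $X\to S$ and $X\times\boxx\to S$.

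Next, for every derived log smooth $Y\to S$ in $\lSch$, I would establish the identification
\[
\Bigwedge\nolimits^d\bL(Y/S) \simeq R\Gamma_{sNis}(Y,\Omega_{Y/S}^d).
\]
On affine pieces, Proposition \ref{prop:cotangent_vs_omega2} identifies $\bL_{Y/S}$ with the locally free $\cO_Y$-module $\Omega_{Y/S}^1$ concentrated in degree zero, so its derived exterior powers coincide with the classical ones $\Omega_{Y/S}^d$. Globalizing via strict Nisnevich descent of the sheaf $\Bigwedge^d\bL$, in analogy with the proof of Proposition \ref{dZarsheaf.12}, then yields the identification. Combining with the quasi-coherent comparison $R\Gamma_{sNis}\simeq R\Gamma_{Zar}$ from \eqref{cot.0.1} and with Proposition \ref{rep.1}, the natural map
\[
\Bigwedge\nolimits^d\bL(X/S) \xrightarrow{\simeq} \Bigwedge\nolimits^d\bL(X\times\boxx/S)
\]
is an equivalence for every $d\ge 0$.

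Finally, setting $D_d:=\fiber\bigl(\Fil^d\logHH(X/S)\to \Fil^d\logHH(X\times\boxx/S)\bigr)$, the fiber sequences of \eqref{dZarsheaf.14.5} evaluated at $(X\to S)$ and $(X\times\boxx\to S)$ assemble into fiber sequences
\[
\fiber\bigl(\Bigwedge\nolimits^{-d}\bL[d](X/S)\to \Bigwedge\nolimits^{-d}\bL[d](X\times\boxx/S)\bigr)\to D_{d-1}\to D_d.
\]
The preceding step forces the leftmost term to vanish, so $D_{d-1}\xrightarrow{\simeq}D_d$ for every $d\le 0$. Since fibers commute with limits, $\lim_q D_q\simeq\fiber(0\to 0)\simeq 0$ by \eqref{dZarsheaf.14.4}; but the tower $D_\bullet$ is essentially constant, so $D_0\simeq\lim_q D_q\simeq 0$, giving the asserted equivalence $\logHH(X/S)\xrightarrow{\simeq}\logHH(X\times\boxx/S)$.

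The main obstacle is the non-affine identification $\Bigwedge^d\bL(Y/S)\simeq R\Gamma_{sNis}(Y,\Omega_{Y/S}^d)$ for derived log smooth $Y\to S$: the filtration is constructed on the affine presheaf via the Andr\'e--Quillen spectral sequence, and promoting the identification through strict Nisnevich sheafification requires checking that the derived exterior powers of the log cotangent complex still behave sufficiently well on this larger site. Once that is in place, the remainder is a clean convergence argument parallel to standard HKR-filtration arguments in the non-logarithmic setting.
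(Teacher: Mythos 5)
Your proposal is correct and follows essentially the same route as the paper: restrict the Andr\'e--Quillen filtration \eqref{dZarsheaf.14.1} to derived log smooth morphisms, identify the graded pieces with $R\Gamma_{sNis}(-,\Omega^{d}_{-/S})$ via Proposition \ref{prop:cotangent_vs_omega2} and Proposition \ref{arrow.10}, deduce their $\boxx$-invariance from Proposition \ref{rep.1}, and conclude by completeness \eqref{dZarsheaf.14.4} of the filtration. Your write-up merely makes explicit two points the paper leaves implicit (that $X\times\boxx\to S$ is again derived log smooth, and the tower-of-fibers convergence argument).
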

\begin{proof}
By restriction, we regard \eqref{dZarsheaf.14.1} as an infinite sequence in $\infShv_{sNis}(\dlSmAr,\infDAb)$.
As observed in the proof of Corollary \ref{cor:loghkr},
we have an equivalence
\begin{equation}
\label{dZarsheaf.14.3}
\fiber(\Fil^{d-1}\logHH\to \Fil^{d}\logHH)\simeq L_{sNis}\Omega^{-d}[d]
\end{equation}
for all integer $d$.
Our goal is to show $\logHH(X\to S)\simeq \logHH(X\times \boxx\to S)$.
By \eqref{dZarsheaf.14.4} and \eqref{dZarsheaf.14.3} we are done if 
$L_{sNis}\Omega^d(X\to S)\simeq L_{sNis}\Omega^d(X\times \boxx\to S)$.
Propositions \ref{arrow.10} and \ref{rep.1} finish the proof.
\end{proof}

\begin{prop}
\label{dZarsheaf.17}
Suppose $f\colon X\to S$ is a derived log smooth morphism in $\lSch$ (for example, $f$ is log smooth and integral).
Then there is a canonical equivalence
\[
\logHH(X/S)
\simeq
\logHH_{dNis}(X/S).
\]
\end{prop}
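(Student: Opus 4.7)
The plan is to show that when the log Andr\'e--Quillen filtration \eqref{dZarsheaf.14.1} on $\logHH$ is restricted to the full subcategory $\dlSmAr \subset \Ar_{\lSch}$ of derived log smooth morphisms, each layer is already a dividing Nisnevich sheaf. This will force $\logHH$ itself to be $dNis$-local on $\dlSmAr$, so that the unit map $\logHH \to L_{dNis}\logHH = \logHH_{dNis}$ is an equivalence at any derived log smooth $(X\to S)$.

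First, following the opening step of the proof of Proposition \ref{dZarsheaf.14}, I would restrict \eqref{dZarsheaf.14.1} to $\dlSmAr$. The identifications used there (Corollary \ref{cor:loghkr} together with Propositions \ref{prop:cotangent_vs_omega}(1), \ref{dZarsheaf.12}, and \ref{arrow.10}) furnish equivalences
\[
\fiber(\Fil^{d-1}\logHH \to \Fil^d\logHH) \simeq L_{sNis}\Omega^{-d}[d]
\]
in $\infShv_{sNis}(\dlSmAr,\infDAb)$ for every integer $d$. By Proposition \ref{dZarsheaf.2}, each presheaf $L_{sNis}\Omega^{-d}$ already satisfies dividing Nisnevich descent, so every graded piece of the filtration is $dNis$-local on $\dlSmAr$.

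Next, set $Q_n := \logHH / \Fil^{-n}\logHH$ for $n \geq 0$. Each $Q_n$ is obtained from $Q_0 \simeq 0$ by finitely many cofiber extensions whose successive layers are among the graded pieces above. Since the full subcategory of $dNis$-local objects inside the stable $\infty$-category $\infShv_{sNis}(\dlSmAr,\infDAb)$ is closed under cofibers and limits (being reflective with an exact localization), each $Q_n$ is $dNis$-local. The convergence statement \eqref{dZarsheaf.14.4}, which is preserved by restriction to $\dlSmAr$, gives $\logHH \simeq \lim_n Q_n$, so $\logHH$ is itself $dNis$-local on $\dlSmAr$. Evaluating at $(X\to S)\in \dlSmAr$ then yields the desired equivalence $\logHH(X/S) \simeq \logHH_{dNis}(X/S)$.

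The main subtlety is the passage from descent on graded pieces to descent on the totalization: this relies both on the limit-vanishing \eqref{dZarsheaf.14.4}, which controls the infinite tail of the filtration, and on the exactness of $dNis$-sheafification between stable $\infty$-categories, which ensures that $dNis$-locality is preserved by cofibers and limits. Beyond this point the argument is essentially formal and parallels the proof of Proposition \ref{dZarsheaf.14}, with $\boxx$-invariance of the graded pieces replaced by their dividing Nisnevich descent.
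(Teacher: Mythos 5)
Your proof is correct and follows essentially the same route as the paper: restrict the Andr\'e--Quillen filtration to $\dlSmAr$, observe that the graded pieces $L_{sNis}\Omega^{-d}[d]$ satisfy dividing Nisnevich descent, and conclude from completeness of the filtration \eqref{dZarsheaf.14.4} together with closure of $dNis$-local objects under limits (and, in the stable setting, cofibers). The paper's argument is terser but uses exactly the same ingredients, including the reduction to $\dlSmAr$ via \eqref{dZarsheaf.12.2}, which your appeal to the unit map $\logHH \to L_{dNis}\logHH$ implicitly relies on as well.
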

\begin{proof}
Owing to \eqref{dZarsheaf.12.2}, it suffices to show
\begin{equation}
\label{dZarsheaf.17.1}
\logHH\in \infShv_{dNis}(\dlSmAr,\infDAb).
\end{equation}
We have the infinite sequences \eqref{dZarsheaf.14.1} in $\infShv_{sNis}(\dlSmAr,\infDAb)$.
As observed in the proof of Proposition \ref{dZarsheaf.13}, 
the graded pieces \eqref{dZarsheaf.14.3} are in $\infShv_{dNis}(\dlSmAr,\infDAb)$.
The functor
\[
\infShv_{sNis}(\dlSmAr,\infDAb)
\to
\infShv_{dNis}(\dlSmAr,\infDAb)
\]
is a right adjoint and commutes with limits.
By combining with \eqref{dZarsheaf.14.4}, we deduce \eqref{dZarsheaf.17.1}.
\end{proof}

\begin{proposition}
\label{dZarsheaf.15}
Suppose $f\colon X\to S$ is a log smooth morphism in $\lSch$.
There is a naturally induced equivalence
\[
\logHH_{dNis}^S(X)
\xrightarrow{\simeq}
\logHH_{dNis}^S(X\times \boxx).
\]
\end{proposition}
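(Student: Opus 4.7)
The plan is to reduce Proposition \ref{dZarsheaf.15} to the derived log smooth case, which is already handled by the combination of Propositions \ref{dZarsheaf.14} and \ref{dZarsheaf.17}. First, I would reinterpret the statement as asserting that a certain natural transformation of dNis sheaves on $\lSmAr$ is an equivalence. Specifically, let $P \in \infPsh(\lSmAr, \infDAb)$ be defined by $P(X\to S) := \logHH_{dNis}(X\times \boxx / S)$; the projection $X\times \boxx \to X$, viewed as a morphism in $\Ar_{\lSch}$ over $S$, yields a natural map $\logHH_{dNis}|_{\lSmAr} \to P$, and the claim is that this map is an equivalence.

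Next, I would verify that $P$ is a dNis sheaf on $\lSmAr$. This reduces to the observation that the assignment $(X\to S) \mapsto (X\times \boxx \to S)$ is compatible with base change and sends dNis distinguished squares in $\Ar_{\lSch}$ to dNis distinguished squares, because product with $\boxx$ commutes with strict Nisnevich squares and pulls back along dividing covers. The dNis sheaf property of $\logHH_{dNis}$ thus transports to $P$.

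Now I would invoke the equivalence \eqref{dZarsheaf.12.2}, according to which a morphism in $\infShv_{dNis}(\lSmAr, \infDAb)$ is an equivalence precisely when its restriction to $\dlSmAr$ is. Hence it suffices to consider $(X\to S) \in \dlSmAr$. For such an $f$, the projection $X\times \boxx \to X$ is the base change of $\boxx \to \pt$, which is integral log smooth and thus derived log smooth; closure of the derived log smooth class under composition and pullback (cf.\ Proposition \ref{prop:composition}) guarantees that $X\times \boxx \to S$ is again derived log smooth. Applying Proposition \ref{dZarsheaf.17} on both sides and Proposition \ref{dZarsheaf.14} between them, I obtain the chain of equivalences
\[
\logHH_{dNis}(X/S) \simeq \logHH(X/S) \simeq \logHH(X \times \boxx / S) \simeq \logHH_{dNis}(X \times \boxx / S),
\]
which matches the map induced by projection.

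The primary bookkeeping hurdle is checking that $P$ genuinely descends to a dNis sheaf on $\lSmAr$ and that the map $\logHH_{dNis}|_{\lSmAr} \to P$ induced by projection is identified with the composite equivalence obtained in the derived log smooth case. The essential $\boxx$-invariance, however, is supplied entirely by the logarithmic HKR filtration of Theorem \ref{thm:logaqss} together with the Zariski-level computation of Proposition \ref{rep.1}; no further input is required beyond assembling these pieces through the sheaf-theoretic reduction.
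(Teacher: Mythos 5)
Your proposal is correct and follows essentially the same route as the paper: the paper's proof also reduces to the integral (hence derived) log smooth case by dividing Nisnevich locality via Kato's theorem --- which is exactly what the equivalence \eqref{dZarsheaf.12.2} you invoke encodes --- and then concludes by combining Propositions \ref{dZarsheaf.14} and \ref{dZarsheaf.17}. Your extra bookkeeping (checking that $(X\to S)\mapsto \logHH_{dNis}(X\times\boxx/S)$ is a dNis sheaf and that $X\times\boxx\to S$ remains derived log smooth) just makes explicit what the paper leaves implicit.
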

\begin{proof}
The question is dividing Nisnevich local on both $X$ and $S$.
Hence we may assume $f$ is integral log smooth by 
\cite[Theorem 1.1]{Katointegral}.
To conclude we combine Propositions \ref{dZarsheaf.14} and \ref{dZarsheaf.17}.
\end{proof}

This immediately implies the following fundamental result.

\begin{theorem}
\label{dZarsheaf.16}
For $S\in \lSch$ we have 
\[
\logHH_{dNis}(- / S)\in \logDAeff(S).
\]
\end{theorem}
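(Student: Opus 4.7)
The plan is essentially to unwind the definition of $\logDAeff(S)$ and observe that all the hard work has already been done in the preceding propositions. By Definition \ref{arrow.8}, $\logHH_{dNis}$ is a dividing Nisnevich sheaf on $\Ar_{\lSch}$, so restricting along the functor $\lSm/S \to \Ar_{\lSch}$ sending $X$ to $(X\to S)$ yields a sheaf $\logHH_{dNis}(-/S) \in \infShv_{dNis}(\lSm/S,\infDAb)$. To verify membership in $\logDAeff(S)$, I need only check $\boxx$-invariance, i.e.\ that for every $X\in \lSm/S$ the projection $X\times \boxx\to X$ induces an equivalence
\[
R\Gamma_{dNis}(X,\logHH_{dNis}(-/S))
\xrightarrow{\simeq}
R\Gamma_{dNis}(X\times \boxx,\logHH_{dNis}(-/S)).
\]

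Since $\logHH_{dNis}(-/S)$ is already a $dNis$-sheaf on $\lSm/S$, this hypercohomology computation reduces to evaluating the sheaf at $X$ and $X\times \boxx$. So the required equivalence is precisely the statement
\[
\logHH_{dNis}(X/S)
\xrightarrow{\simeq}
\logHH_{dNis}(X\times \boxx /S),
\]
which is exactly Proposition \ref{dZarsheaf.15} applied to the log smooth morphism $X\to S$.

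The only step that requires care is checking that the values of the restricted presheaf on $\lSm/S$ genuinely coincide (up to canonical equivalence) with the sheafification constructed on $\Ar_{\lSch}$; but this follows from Lemma \ref{dZarsheaf.11}, which asserts that $\lSmAr\to \Ar_{\lSch}$ is a cocontinuous morphism of sites for the dividing Nisnevich topology, together with Proposition \ref{dZarsheaf.7}. The main conceptual obstacle — namely establishing $\boxx$-invariance of $\logHH_{dNis}$ on log smooth objects — has already been overcome via the $dNis$-local version of the log Andr\'e--Quillen filtration \eqref{dZarsheaf.14.1}, whose graded pieces $L_{dNis}\Omega^{-d}[d]$ are themselves $\boxx$-invariant by Proposition \ref{rep.2}, and which converges by \eqref{dZarsheaf.14.4}. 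No further argument is needed.
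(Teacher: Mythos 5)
Your proposal is correct and follows the same route as the paper: the theorem is stated there as an immediate consequence of Proposition \ref{dZarsheaf.15}, which is exactly the $\boxx$-invariance statement you reduce to after unwinding the definition of $\logDAeff(S)$ as the $\boxx$-local objects of $\infShv_{dNis}(\lSm/S,\infDAb)$. Your additional care about the restriction from $\Ar_{\lSch}$ to $\lSm/S$ and the convergence of the filtration is consistent with (and merely makes explicit) what the paper leaves implicit.
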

Suppose $S$ has a valuative log structure.
Then every morphism $f\colon X\to S$ of fs log schemes is integral by \cite[Proposition I.4.6.3(5)]{Ogu18}.
Together with Propositions \ref{prop:cotangent_vs_omega}(1) and \ref{dZarsheaf.17}, we deduce
\begin{equation}
\logHH(- / S)\in \logDAeff(S).
\end{equation}

\subsection{Orientation of log Hochschild homology}
In \cite{logSH} we develop a notion of oriented cohomology theories for fs log schemes.
The main purpose of this subsection is to show that log Hochschild homology is oriented.
This observation is a crucial ingredient used for establishing the generalized residue sequence in Theorem \ref{Gysin.2}.

\begin{proposition}
\label{ori.2}
Suppose $S\in \lSch$ and $X,Y\in \lSch/S$.
There exists a canonical equivalence
\[
\logHH(X\times_S Y / S)
\simeq
\logHH(X / S)\otimes_{R\Gamma(S,\cO_{\underline{S}})} \logHH(Y / S).
\]
\end{proposition}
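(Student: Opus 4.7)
The plan is to reduce to the affine case, prove a three-corner Künneth decomposition of Rognes' pushout presentation of $\logHH$, and then glue back via strict Nisnevich descent. To set up the reduction, pick strict Nisnevich covers of $X$, $Y$, and $S$ by objects of $\lAff$ (which exist by \cite[Lemmas 1.3.2, 1.3.3]{Tsu99}), combine them into a simultaneous \v{C}ech hypercover in $\Ar_\lSch$, and use the strict Nisnevich descent of $\logHH$ recorded in \Cref{arrow.5} together with \Cref{arrow.8} to reduce the identity to the affine case. On the base, $R\Gamma(S,\cO_{\underline{S}})$ becomes the underlying ring $R$.

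Assume therefore $S=\Spec{R,P}$, $X=\Spec{A,M}$, $Y=\Spec{B,N}$, so $X\times_S Y=\Spec{C,K}$ with $C:=A\otimes_R B$ and $K:=M\oplus_P N$. By \Cref{prop:rognescomparison2}, $\logHH((A,M)/(R,P))$ is presented as the derived pushout in commutative $R$-algebras of
\[
\HH(A/R)\;\longleftarrow\;\Z[S^1\oplus_P M]\;\longrightarrow\;\Z[(S^1\oplus_P M)^{\rep}],
\]
and likewise for $(B,N)$ and $(C,K)$. I would establish three Künneth-type identifications at the corners. The first is the classical $\HH(C/R)\simeq\HH(A/R)\otimes_R \HH(B/R)$, which follows from swapping pushouts. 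The second is
\[
S^1\oplus_P K\;\simeq\;(S^1\oplus_P M)\oplus_P(S^1\oplus_P N),
\]
obtained by commuting the two pushouts (over $P$ and over the diagonals) that compute $S^1\oplus_P(-)$ applied to $M\oplus_P N$. The third,
\[
(S^1\oplus_P K)^{\rep}\;\simeq\;(S^1\oplus_P M)^{\rep}\oplus_P(S^1\oplus_P N)^{\rep},
\]
follows by applying \Cref{prop:repletesplit} to the section-equipped augmentation $S^1\oplus_P K\to K$ and using that $(M\oplus_P N)^{\gp}/P^{\gp}\simeq (M^{\gp}/P^{\gp})\oplus(N^{\gp}/P^{\gp})$ as derived abelian groups, since cofibers of grouplike derived monoids commute with pushouts.

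Applying $\Z[-]$ to the above monoid identifications turns each $\oplus_P$ into an $\otimes_{\Z[P]}$; absorbing the $\Z[P]$-structures through $\Z[P]\to R$ yields tensor products over $R$ at all three corners. The pushout-versus-tensor compatibility $(X_1\otimes_{X_0}X_2)\otimes_R(Y_1\otimes_{Y_0}Y_2)\simeq(X_1\otimes_R Y_1)\otimes_{X_0\otimes_R Y_0}(X_2\otimes_R Y_2)$ in derived commutative $R$-algebras then factors the pushout computing $\logHH((C,K)/(R,P))$ as the tensor product over $R$ of the pushouts computing $\logHH((A,M)/(R,P))$ and $\logHH((B,N)/(R,P))$, yielding the desired equivalence. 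The principal obstacle is the repletion splitting at the third corner: repletion is not colimit-preserving in general, and the argument hinges on the splitting provided by either factor inclusion together with the compatibility of group completion with pushouts of derived monoids; a secondary bookkeeping subtlety is the careful propagation of the $\Z[P]$-algebra structures on monoid rings through the relevant derived tensor products.
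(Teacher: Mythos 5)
Your proposal is correct and follows essentially the same route as the paper: reduce to the affine case by strict Nisnevich locality, invoke the pushout presentation of \Cref{prop:rognescomparison2} together with the K\"unneth formula for ordinary $\HH$, and establish the splitting at the replete corner (the paper does this by applying \Cref{lem:discreterepletebar} twice, which is the same splitting you extract from \Cref{prop:repletesplit} plus additivity of $(-)^{\gp}/P^{\gp}$ under pushouts over $P$).
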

\begin{proof}
The question is strict Nisnevich local on $S$, $X$, and $Y$.
Hence we reduce to the case when $S$, $X$, and $Y$ are affine log schemes. Let $S = {\rm Spec}(R, P)$, $X = {\rm Spec}(A, M)$ and $Y = {\rm Spec}(B, N)$. By \Cref{prop:rognescomparison2} and the corresponding fact for ordinary Hochschild homology (see for example \cite[\S 6]{uniqBGT}), it suffices to prove that \[{\mathbb Z}[B^\rep_P(M \oplus_P N)] \simeq {\mathbb Z}[B^\rep_P(M)] \otimes_{{\mathbb Z}[P]} {\mathbb Z}[B^\rep_P(N)].\] This is readily seen using Lemma \ref{lem:discreterepletebar} twice.
\end{proof}

\begin{proposition}
\label{ori.1}
Suppose $X\to S$ is a morphism in $\lSch$.
If $\Bigwedge^d \bL_{X / S}\simeq 0$ for $d\gg 0$,
there exists a
convergent spectral sequence
\[
E_2^{pq}
:=
\mathbb{H}_{Zar}^p(X,\Bigwedge^{-q}\bL_{X / S})
\Rightarrow
\pi_{-p-q}\logHH(X / S).
\]
\end{proposition}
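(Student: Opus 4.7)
The plan is to derive the spectral sequence from the logarithmic Andr\'e--Quillen filtration of Theorem \ref{thm:logaqss}, promoted to the arrow-category setup in Section \ref{section:soac}. Specifically, the filtration \eqref{dZarsheaf.14.1}
\[
\cdots \to \Fil^{-1}\logHH \to \Fil^0\logHH = \logHH
\]
in $\infShv_{sNis}(\Ar_{\lSch},\infDAb)$ has layers $\fiber(\Fil^{d-1}\logHH\to \Fil^d\logHH) \simeq \Bigwedge^{-d}\bL[d]$ and satisfies $\lim_d \Fil^d\logHH \simeq 0$. Evaluating at $(X\to S)$ and using Definition \ref{arrow.11} of the presheaves $\bL$ and $\Bigwedge^{-d}\bL$, this gives a filtration of $\logHH(X/S)$ in $\infDAb$ whose $d$-th layer is $R\Gamma_{sNis}(X,\Bigwedge^{-d}\bL_{X/S})[d]$.

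The hypothesis $\Bigwedge^n\bL_{X/S}\simeq 0$ for $n\gg 0$ forces these layers to vanish for $d$ sufficiently negative, so the transition maps $\Fil^{d-1}\logHH(X/S)\to \Fil^d\logHH(X/S)$ become equivalences for such $d$. Combined with completeness, this implies $\Fil^d\logHH(X/S)\simeq 0$ for $d\ll 0$; the filtration is therefore bounded below, and the associated spectral sequence converges strongly. To identify the $E_2$-page, I use
\[
\pi_n\bigl(R\Gamma_{sNis}(X,\Bigwedge^{-d}\bL_{X/S})[d]\bigr) \cong \mathbf{H}^{d-n}_{sNis}(X,\Bigwedge^{-d}\bL_{X/S}) \cong H^{d-n}_{Zar}(X,\Bigwedge^{-d}\bL_{X/S}),
\]
where the last isomorphism follows from \eqref{cot.0.1}, since $\Bigwedge^{-d}\bL_{X/S}$ is a complex of quasi-coherent $\cO_X$-modules. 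The reindexing $p := d - n$ and $q := -d$ then yields $E_2^{p,q} = H^p_{Zar}(X,\Bigwedge^{-q}\bL_{X/S})$ abutting to $\pi_n\logHH(X/S) = \pi_{-p-q}\logHH(X/S)$, and the inequalities $p \geq 0$ and $-q \geq 0$ place the spectral sequence in the second quadrant.

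The principal concern is convergence, which depends on the finiteness assumption on the wedge powers interacting cleanly with the completeness of the global filtration as above. The other ingredients---the existence of the filtration and the identification of the layers with the Zariski hypercohomology of the wedge powers of $\bL_{X/S}$---are immediate consequences of earlier results (Theorem \ref{thm:logaqss}, \eqref{dZarsheaf.14.4}--\eqref{dZarsheaf.14.5}, Definition \ref{arrow.11}, and \eqref{cot.0.1}).
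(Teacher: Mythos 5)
Your proposal is correct and follows essentially the same route as the paper: both use the Andr\'e--Quillen filtration \eqref{dZarsheaf.14.1} with graded pieces $\Bigwedge^{-d}\bL[d]$, observe that the vanishing hypothesis together with completeness forces $\Fil^d\logHH(X/S)\simeq 0$ for $d\ll 0$, and then extract and reindex the spectral sequence of the resulting bounded-below complete filtration (the paper invokes \cite[Proposition 1.2.2.14]{HA} at this step and works with Zariski sheaves on $X$ rather than global sections, but these are only cosmetic differences).
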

\begin{proof}
By restriction,
we regard $\logHH(X / S)$ as an object of $\infShv_{Zar}(X_{Zar},\infDAb)$.
From \eqref{dZarsheaf.14.1},
we obtain an infinite sequence 
\begin{equation}
\cdots
\to
\Fil^{-1} \logHH(X / S)
\to
\Fil^0 \logHH(X / S) := \logHH(X / S)
\end{equation}
in $\infShv_{Zar}(X_{Zar},\infDAb)$ such that
\begin{equation}
\lim_{d} \Fil^d \logHH(X / S) \simeq 0
\end{equation}
and
\begin{equation}
\fiber(\Fil^{d-1}\logHH(X / S)\to \Fil^d\logHH(X / S))
\simeq
\Bigwedge^{-d}\bL_{X/S}[d]
\end{equation}
for all integers $d$.
The vanishing condition on $\Bigwedge^d \bL_{X/S}$ implies that $\Fil^d \logHH(X / S) \simeq 0$ for $d\ll 0$.
Using \cite[Proposition 1.2.2.14]{HA} we obtain a convergent spectral sequence
\[
E'^1_{pq}
:=
H_{Zar}^{-2p-q}(X,\Bigwedge^{-p}\bL_{X/S})
\Rightarrow
\pi_{p+q}\logHH(X / S).
\]
To obtain the desired spectral sequence, we set $E_r^{pq}:=E'^{r-1}_{q,-p-2q}$ for all $r\geq 2$.
\end{proof}

\begin{proposition}
\label{ori.3}
Suppose $S$ is an affine log scheme $\Spec{A,M}$.
Then $\logHH(\bP_S^n / S)$ is concentrated in degree $0$, and there is a canonical isomorphism of $A$-modules
\[
\pi_0(\logHH(\bP_S^n / S))
\cong
A^{\oplus n+1}.
\]
\end{proposition}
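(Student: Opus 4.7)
The plan is to exploit that the projection $\mathbb{P}^n_S \to S$ is strict smooth (and integral since its log structure is pulled back from $S$, in particular $\mathbb{P}^n_S \to S$ is integral log smooth), hence derived log smooth. By \Cref{Cor:strict_cot} the log cotangent complex reduces to the classical one:
\[
\bL_{\mathbb{P}^n_S/S} \simeq \Omega^1_{\mathbb{P}^n_A/A}.
\]
This is a locally free $\cO_{\mathbb{P}^n_A}$-module of rank $n$, so $\Bigwedge^d \bL_{\mathbb{P}^n_S/S} = 0$ for $d>n$ and the hypothesis of \Cref{ori.1} is satisfied.

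Next I would apply \Cref{ori.1} to obtain a convergent second quadrant spectral sequence
\[
E_2^{pq} = H_{Zar}^p(\mathbb{P}^n_S, \Bigwedge^{-q}\bL_{\mathbb{P}^n_S/S}) \Rightarrow \pi_{-p-q}\logHH(\mathbb{P}^n_S/S).
\]
Since the morphism is strict, $\Bigwedge^{-q}\bL_{\mathbb{P}^n_S/S} \cong \Omega^{-q}_{\mathbb{P}^n_A/A}$ and the underlying topos agrees with the Zariski topos of $\mathbb{P}^n_A$. The classical Hodge cohomology computation for projective space over an arbitrary base ring gives
\[
E_2^{pq} \cong H^p(\mathbb{P}^n_A, \Omega^{-q}_{\mathbb{P}^n_A/A}) \cong \begin{cases} A & \text{if } 0 \le p = -q \le n, \\ 0 & \text{otherwise.} \end{cases}
\]

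The only nonzero entries of the $E_2$-page therefore lie on the anti-diagonal $p+q=0$. For any $r \ge 2$ the differential $d_r\colon E_r^{p,-p} \to E_r^{p+r,-p-r+1}$ lands off this anti-diagonal (since $(p+r)+(-p-r+1) = 1 \neq 0$), so all differentials vanish and the spectral sequence degenerates at $E_2$. No extension problem can occur, as every nonzero piece contributes to the single degree $\pi_0$. Summing along $p + q = 0$ then yields
\[
\pi_0 \logHH(\mathbb{P}^n_S/S) \cong \bigoplus_{p=0}^n A \cong A^{\oplus n+1}
\]
and $\pi_i \logHH(\mathbb{P}^n_S/S) = 0$ for $i \neq 0$.

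The main technical point to check carefully is the identification of the log cotangent complex with the classical one and the fact that the construction of $\logHH(\mathbb{P}^n_S/S)$ from the arrow category presheaf in \Cref{arrow.8} does compute the appropriate object whose associated Postnikov filtration is the one appearing in \Cref{ori.1} (i.e.\ the strict Nisnevich sheafification of the log HKR filtration). Since the map is derived log smooth, \Cref{dZarsheaf.17} ensures that the dividing Nisnevich and strict Nisnevich sheafifications agree with the original $\logHH$, so no ambiguity remains; the remaining ingredient is the standard Hodge cohomology computation for $\mathbb{P}^n_A$, which is the only non-formal input.
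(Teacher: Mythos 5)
Your proposal is correct and follows essentially the same route as the paper: the paper's proof is exactly the spectral sequence of Proposition \ref{ori.1} combined with the standard Hodge cohomology computation $H^p(\bP^n_A,\Omega^q_{\bP^n_A/A})\cong A$ for $0\le p=q\le n$ and $0$ otherwise, and your additional checks (strictness gives $\bL_{\bP^n_S/S}\simeq\Omega^1_{\bP^n_A/A}$, the wedge powers vanish above degree $n$ so \Cref{ori.1} applies, and all differentials vanish for degree reasons) are correct elaborations of what the paper leaves implicit. One tiny point: the resulting finite filtration on $\pi_0$ splits not merely because everything sits in a single degree, but because each graded piece $A$ is projective over $A$, so the successive extensions are trivial.
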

\begin{proof}
We have the direct computation
\[
H_{Zar}^p(\bP_S^n,\Omega_{\bP_S^n}^{q})
\cong
\left\{
\begin{array}{ll}
A & \text{if }0\leq p=q\leq n,
\\
0 & \text{otherwise.}
\end{array}
\right.
\]
The convergent spectral sequence
\[
E_2^{pq}:=H_{Zar}^p(\bP_S^n,\Omega_{\bP_S^n}^{-q})
\Rightarrow
\pi_{-p-q}\logHH(\bP_S^n / S)
\]
obtained from Proposition \ref{ori.1} finishes the proof.
\end{proof}

For topological Hochschild homology of schemes, 
the following result is a special case of the projective bundle formula \cite[Theorem 1.5]{BM12}.

\begin{proposition}
\label{ori.4}
Suppose $S\in \lSch$ and $X\in \lSch/S$.
There is a canonical equivalence in $\infDAb$
\[
\logHH(X\times \bP^n / S)
\simeq
\logHH(X / S)^{\oplus n+1}.
\]
\end{proposition}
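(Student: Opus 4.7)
The strategy is to combine the K\"unneth-type formula of \Cref{ori.2} with the explicit computation in \Cref{ori.3}, after reducing to the affine case on the base.

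First, I would observe that both sides of the proposed equivalence are sheaves in the strict Nisnevich topology on $S$ (the left-hand side by construction of $\logHH$ as a presheaf on $\Ar_{\lAff}$ extended via descent as in \Cref{arrow.8}, the right-hand side as a finite direct sum of such). Hence I may assume $S = \Spec{A, M}$ is affine. In this case, $\bP^n_S$ is simply the base change of $\bP^n_{\Spec{\bZ}}$ (with trivial log structure on $\bP^n$), so that $X \times \bP^n \simeq X \times_S \bP^n_S$ in $\lSch/S$.

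Next, I would invoke \Cref{ori.2} with $Y = \bP^n_S$ to obtain a canonical equivalence
\[
\logHH(X \times_S \bP^n_S / S) \;\simeq\; \logHH(X / S) \otimes_{A} \logHH(\bP^n_S / S),
\]
where I have used that $R\Gamma(S, \cO_{\underline{S}}) \simeq A$ since $S$ is affine. By \Cref{ori.3}, $\logHH(\bP^n_S / S)$ is concentrated in degree $0$ and is canonically isomorphic to $A^{\oplus n+1}$ as an $A$-module; in particular it is a flat (even free) $A$-module, so the derived tensor product on the right coincides with the ordinary one, and we obtain
\[
\logHH(X / S) \otimes_{A} A^{\oplus n+1} \;\simeq\; \logHH(X / S)^{\oplus n+1}
\]
in $\infDAb$, as desired.

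The only subtlety is to make sure the equivalence is natural enough to globalize from the affine case back to a general $S \in \lSch$; this follows because both \Cref{ori.2} and \Cref{ori.3} produce \emph{canonical} equivalences/isomorphisms, so the resulting equivalence glues along a strict Nisnevich (or Zariski) cover of $S$ by affine log schemes. I expect no serious obstacles here: once the base change isomorphism and the projective bundle computation over an affine base are in hand, assembling them is purely formal.
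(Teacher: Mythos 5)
Your proposal is correct and follows essentially the same route as the paper: reduce to the case of an affine base via strict Nisnevich descent on the arrow category, apply the K\"unneth formula of \Cref{ori.2} with $Y=\bP^n_S$, and conclude with the computation of $\logHH(\bP^n_S/S)\cong A^{\oplus n+1}$ from \Cref{ori.3}. The paper packages the gluing step slightly more formally (as a natural equivalence of sheaves on the full subcategory of $\Ar_{\lSch}$ with affine targets), but the content is the same as your final paragraph.
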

\begin{proof}
Let $\cC$ be the full subcategory of $\Fun(\Delta^1,\lSch)$ consisting of morphisms whose targets are in $\lAff$.
As in Proposition \ref{arrow.6}, there is an equivalence
\[
\Shv_{sNis}(\cC,\infDAb)
\simeq
\Shv_{sNis}(\Fun(\Delta^1,\lSch),\infDAb).
\]
Hence it suffices to give a natural equivalence
\[
((X\to S)\in \cC \mapsto \logHH(X\times \bP^n / S))
\xrightarrow{\simeq}
((X\to S)\in \cC \mapsto \logHH(X / S)^{\oplus n+1}).
\]
Due to Proposition \ref{ori.2}, we reduce to constructing a natural equivalence
\[
(S\in \lAff \mapsto \logHH(S\times \bP^n / S))
\xrightarrow{\simeq}
(S\in \lAff \mapsto \logHH(S / S)^{\oplus n+1}).
\]
This follows from Proposition \ref{ori.3}.
\end{proof}

Hence for $S\in \Sch$, we obtain an equivalence in $\logDA(S)$
\begin{equation}
\label{Ori.4.1}
\logHH(- / S)
\simeq
\Omega_{\bP^1}\logHH(- / S),
\end{equation}
where $\Omega_{\bP^1}:=\ul{\Hom}(M(\bP^1/\pt),-)$, and $\ul{\Hom}$ denotes the internal Hom functor.

\begin{definition}
The logarithmic Hochschild homology of $S\in \Sch$ is given by 
\[
\blogHH(- / S)
:=
(\logHH(- / S),\logHH(- / S),\ldots)\in \logDA(S),
\]
where the bonding maps are given by \eqref{Ori.4.1}.

For every map $(R,P)\to (A,P)$ of log rings, $\logHH((A,M) / (R, P))$ is an animated commutative ring.
Hence $\logHH$ is an $\bE_\infty$-ring in $\Psh(\Fun(\Delta^1,\lAff),\infDAb)$. The localization functor is symmetric monoidal, so $\logHH$ is an $\bE_\infty$-ring in
\[
\Shv_{sNis}(\Fun(\Delta^1,\lAff),\infDAb)
\simeq
\Shv_{sNis}(\Fun(\Delta^1,\lSch),\infDAb).
\]
The restriction and localization functors
\[
\Shv_{sNis}(\Fun(\Delta^1,\lSch),\infDAb)
\to
\Shv_{sNis}(\lSm/S,\infDAb)
\to
\logDAeff(S)
\]
are symmetric monoidal too, so $\logHH$ is an $\bE_\infty$-ring in $\logDAeff(S)$.

By 
\cite[Construction 6.2.3]{logSH},
one can show that $\blogHH(- / S)$ is a homotopy commutative monoid in $\logDA(S)$, 
i.e., there are multiplication and unit maps  
\[
\blogHH(- / S)\wedge \blogHH(- / S)
\to
\blogHH(- / S)
\text{ and }
\unit \to \blogHH(- / S)
\]
satisfying the associative, commutative, and unital axioms in the homotopy category of $\logDA(S)$. 
For all $X\in \lSm/S$, 
there is a canonical equivalence
\begin{equation}
\label{ori.6.1}
\map_{\logDA(S)}(M(X),\blogHH(- / S))
\simeq
\logHH(X / S), 
\end{equation}
where $\map_{\logDA(S)}$ denotes the mapping spectrum in $\logDA(S)$, 
and we regard the right hand side as a spectrum using the Dold-Kan correspondence in \cite[\S 1.2.4]{HA}.
Moreover, 
for all $X\in \lSm/S$, 
there are canonical equivalences
\begin{equation}
\label{ori.6.2}
\begin{split}
\blogHH(- / S)
= 
(\logHH(- / S),\logHH(- / S),\ldots)
\simeq &
(\Omega_{\bP^1}\logHH(- / S),\Omega_{\bP^1}\logHH(- / S),\ldots)
\\
\simeq &
\blogHH(- / S)(-1)[-2].
\end{split}
\end{equation}
\end{definition}

We note that \eqref{ori.6.2} is analogous to the Bott periodicity witnessed in the algebraic and complex $K$-theory spectra 
\cite{zbMATH01194164}.

The next notion is a direct adaptation of
\cite[Definition 7.1.1]{logSH}
to $\logDA(S)$.

\begin{definition}
\label{ori.6}
Suppose $S\in \Sch$.
A homotopy commutative monoid $\bE$ in $\logDA(S)$ is called \emph{oriented} if there is a class 
\[
c_\infty \in \Hom_{\logDA(S)}(M(\bP_S^\infty/\pt),\bE(1)[2])
\]
whose restriction to $M(\bP_S^1/\pt)$ is given by
\[
M(\bP_S^1/\pt)\otimes \unit 
\colon
M(\bP_S^1/\pt)
\to
M(\bP_S^1/\pt)\otimes \bE
\simeq
\bE(1)[2],
\]
where $\unit$ denotes the unit of $\bE$, and $M(\bP_S^\infty/\pt):=\colimit_{n\to \infty}M(\bP_S^n/\pt)$.
\end{definition}

\begin{theorem}
\label{ori.5}
The homotopy commutative monoid $\blogHH(- / S)\in \logDA(S)$ is oriented for all $S\in \Sch$.
\end{theorem}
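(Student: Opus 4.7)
The plan is to build the orientation class from the Hodge decomposition of log Hochschild homology on projective spaces, by producing compatible first Chern classes of the tautological bundles and passing to the limit.

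Combining \eqref{ori.6.1} with \eqref{ori.6.2}, for every $X\in\lSm/S$ there is a canonical isomorphism
\[
\Hom_{\logDA(S)}(M(X), \blogHH(-/S)(1)[2]) \cong \pi_0\logHH(X/S).
\]
Applied to the cofiber sequence $M(S)\to M(\bP_S^n)\to M(\bP_S^n/\pt)$, the splitting induced by any rational section $S\to\bP_S^n$ gives
\[
\Hom_{\logDA(S)}(M(\bP_S^n/\pt), \blogHH(1)[2]) \cong \ker\bigl(\pi_0\logHH(\bP_S^n/S)\to \pi_0\logHH(S/S)\bigr).
\]
The degeneration of the Hodge-type spectral sequence of Proposition \ref{ori.1} (as in the proof of Proposition \ref{ori.3}) identifies the kernel with $\bigoplus_{p=1}^n H^p_{Zar}(\bP_S^n, \Omega_{\bP_S^n/S}^p)$.

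Next, I would define $c_n$ to be the element corresponding to the hyperplane class $h_n\in H^1_{Zar}(\bP_S^n, \Omega_{\bP_S^n/S}^1)$, i.e., the first Chern class of $\cO(1)$ obtained by applying $\dlog$ to a trivializing cocycle for $\cO(1)$ on the standard affine cover. The closed immersion $\bP_S^{n-1}\hookrightarrow \bP_S^n$ induces isomorphisms on $H^p(-,\Omega^p)$ for $p\le n-1$ and kills the top piece, so these classes are compatible under restriction and the transition maps in the inverse system $\{\Hom_{\logDA(S)}(M(\bP_S^n/\pt), \blogHH(1)[2])\}_n$ are surjective. Since $M(\bP_S^\infty/\pt)=\colim_n M(\bP_S^n/\pt)$ by definition, a Milnor short exact sequence gives
\[
\Hom_{\logDA(S)}(M(\bP_S^\infty/\pt), \blogHH(1)[2]) \cong \lim_n \Hom_{\logDA(S)}(M(\bP_S^n/\pt), \blogHH(1)[2])
\]
(the $\lim^1$-term vanishes by Mittag--Leffler), so the tower $(c_n)_n$ produces the desired class $c_\infty$.

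It remains to check that $c_1$ is the canonical map of Definition \ref{ori.6}. Under the tautological equivalence $M(\bP_S^1/\pt)\simeq \unit(1)[2]$, the canonical composite $M(\bP_S^1/\pt)\otimes\unit\to M(\bP_S^1/\pt)\otimes\blogHH\simeq \blogHH(1)[2]$ is $u\otimes\id_{\unit(1)[2]}$ for the unit $u\colon\unit\to\blogHH$, and therefore corresponds to $1\in\pi_0\logHH(S/S)$ via $\Hom_{\logDA(S)}(\unit, \blogHH)\cong\pi_0\logHH(S/S)$. The main subtlety is to verify that under the periodicity equivalence \eqref{ori.6.2}, which arises from the projective bundle formula \eqref{Ori.4.1}, this unit class is matched with the hyperplane class $h_1$ in the Hodge decomposition of $\pi_0\logHH(\bP_S^1/S)$. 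This follows because both the equivalence \eqref{ori.6.2} and the identification of the $H^1(\bP_S^1,\Omega^1)$-summand of $\pi_0\logHH(\bP_S^1/S)$ are different manifestations of the same projective bundle decomposition (Proposition \ref{ori.4}). This compatibility is the principal obstacle; once established, the proof is complete.
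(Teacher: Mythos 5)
Your proposal is correct and follows essentially the same route as the paper: the paper computes $\Hom_{\logDA(S)}(M(\bP_S^\infty/\pt),\blogHH(-/S)(1)[2])\cong\lim_n\pi_0\logHH(\bP_S^n/S)\cong\prod_{n=1}^\infty\Gamma(S,\cO_S)$ via the projective bundle formula (Proposition \ref{ori.4}) and takes the class $(1,0,\ldots)$, which is exactly your tower of hyperplane classes read through the Hodge decomposition. Your write-up is more explicit about the $\lim^1$-vanishing and about the normalization check on $\bP^1_S$, both of which the paper leaves implicit.
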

\begin{proof}
By Proposition \ref{ori.4}, there are isomorphisms
\[
\Hom_{\logDA(S)}(M(\bP_S^\infty/\pt),\blogHH(- / S)(1)[2])
\cong
\lim_{n\to \infty} {\pi_0}\logHH(\bP_S^n / S)
\cong
\prod_{n=1}^\infty \Gamma(S,\cO_S).
\]
The element $(1,0,\ldots)$ of the above infinite product gives an orientation class.
\end{proof}

\section{Generalized residue  sequences for \texorpdfstring{$\logHH$}{logHH}}\label{sec:gysinseq}
Let $S=\Spec{A}$ be an affine scheme. In Section \ref{sec:motivic_repr} we have seen how 
\[\logHH(- / S)\in \mathrm{Alg}_{\mathbb{E}_\infty}(\Shv_{dNis}(\mathrm{SmAff}_S, \cD(\mathrm{Ab})))\] gives rise to a homotopy commutative monoid object $\blogHH(- / S)$ in $\logDA(S)$. 
Combined with the formal properties of the motivic category $\logDA(S)$ we will deduce sequences relating logarithmic and ordinary
Hochschild homology. The fundamental ingredient is the Gysin, or residue, sequence in $\logDA(S)$. 
Let us recall the statement from \cite{logDM}.

\begin{definition}[{\cite[Definition 7.4.3]{logDM}}]
Suppose $S\in \Sch$, $X\in \lSm/S$, and $\cE\to X$ is a vector bundle (i.e., $\xi\colon\underline{\cE}\to \underline{X}$ is a vector bundle on the underlying scheme $\ul{X}$, and the log structure on $\cE$  is the pullback of the log structure on $X$ via $\xi$).
The \emph{Thom motive of $\cE$ over $X$} is defined to be
\[
MTh(\cE):=M(\cE/(\Bl_Z\cE,E)) \in \logDA(S),
\]
where $Z$ is the zero section of $\cE$, and $E$ is the exceptional divisor. 
\end{definition}

\begin{df}
\label{Gysin.5}
Suppose $S\in \Sch$, $X\in \Sm/S$, $Z_1+\cdots+Z_r$ is a strict normal crossing divisor on $X$, and $Z$ is a smooth closed subscheme of $X$ that is strict normal crossing with $Z_1+\cdots+Z_r$ in the sense of \cite[Definition 7.2.1]{logDM}.
Let $p\colon N_Z X\to Z$ be the projection, where $N_ZX$ denotes the normal bundle of $Z$ in $X$.
We set
\[
N_Z(X,Z_1+\cdots+Z_r)
:=
N_Z X\times_Z(Z,Z\cap Z_1+\cdots+Z\cap Z_r).
\]
Observe that $N_Z(X,Z_1+\cdots+Z_r)$ is a vector bundle over $(Z,Z\cap Z_1+\cdots + Z\cap Z_r)$.
\end{df}

\begin{theorem}
\label{Gysin.6}
There exists a canonical cofiber sequence
\[
M(\Bl_Z X,\widetilde{Z}_1+\cdots+\widetilde{Z}_r+E)
\xrightarrow{M(u)}
M(X,Z_1+\cdots+Z_r)
\to 
MTh(N_Z (X,Z_1+\cdots+Z_r))
\]
in $\logDA(S)$, where $\widetilde{Z}_i$ is the strict transform of $Z_i$ in $\Bl_Z X$ for $i=1,\ldots,r$, $E$ is the exceptional divisor on $\Bl_Z X$, and $u\colon (\Bl_Z X,\widetilde{Z}_1+\cdots+\widetilde{Z}_r+E)\to (X,Z_1+\cdots+Z_r)$ is the obvious morphism.
\end{theorem}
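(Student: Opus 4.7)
The plan is to establish this Gysin-type cofiber sequence via deformation to the normal bundle, reducing the verification to the case where $X$ equals the total space of the normal bundle $N := N_Z(X, Z_1+\cdots+Z_r)$, in which case the statement becomes a tautology from the definition of the Thom motive recalled at the start of the section. This is a log-motivic adaptation of the classical purity argument and closely parallels the Gysin sequence machinery developed in \cite[\S 7]{logDM}.

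Concretely, I would form the deformation space $D_Z(X)$ fibered over $\boxx$, equipped with log structure coming from the pullbacks of the $Z_i$'s, and having generic fiber $X$ and special fiber $N$ (with its vector-bundle log structure compatible with the divisors $Z_i \cap Z$). The closed subscheme $Z \times \boxx$ sits inside $D_Z(X)$, and blowing it up while recording its strict transforms and the new exceptional divisor produces a diagram over $\boxx$ that interpolates between the blow-up of $Z$ in $X$ (at the generic fiber) and the blow-up of the zero section in $N$ (at the special fiber).

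The key technical inputs, all established in \cite{logDM}, are: $\boxx$-invariance for motives in $\logDA(S)$, the admissible blow-up formula for log smooth morphisms, and compatibility of the strict transform operation with the fiber restrictions. Using these, the two fiber inclusions $\{0\}, \{1\} \to \boxx$ induce equivalences on the relevant motives, yielding a zig-zag of equivalences between the two candidate cofiber sequences (for $X$ and for $N$). The main obstacle, and the place where the strict normal crossing assumption of Definition \ref{Gysin.5} is essential, is the log-geometric bookkeeping for the pulled-back divisors and their strict transforms under the deformation: one must ensure that the divisors $Z_i \times \boxx \subset D_Z(X)$ remain strict normal crossing with $Z \times \boxx$ after blowing up, and that this property restricts correctly to both fibers.

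In the reduced case $X = N$ with $Z$ identified as the zero section, the strict transform $\widetilde{Z}_i$ in $\Bl_Z N$ is the pullback of $Z_i \cap Z$ along the natural projection $\Bl_Z N \to Z$, the exceptional divisor coincides with $\bP(N) \to Z$, and the cofiber sequence becomes
\[
M(\Bl_Z N, \widetilde{Z}_1+\cdots+\widetilde{Z}_r+E) \xrightarrow{} M(N, Z_1+\cdots+Z_r) \xrightarrow{} M(N/(\Bl_Z N, E)),
\]
whose right-hand term is precisely $MTh(N_Z(X, Z_1+\cdots+Z_r))$ by the definition of the Thom motive, concluding the proof.
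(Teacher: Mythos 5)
The paper does not actually prove Theorem \ref{Gysin.6}: its ``proof'' is the single line ``We refer to \cite[Theorem 7.5.4]{logDM}.'' So there is no internal argument to compare yours against; what you have written is a sketch of the argument that lives in the cited reference. At the level of strategy your outline is the right one --- deformation to the normal bundle via $\Bl_{Z\times\{0\}}(X\times\boxx)$, reduction to the case $X=N_Z(X,Z_1+\cdots+Z_r)$, and then unwinding the definition of $MTh$ --- and the reduced case is indeed essentially the definition of the Thom motive once one identifies the strict transforms $\widetilde{Z}_i$ in $\Bl_Z N$ with the pullbacks of $Z\cap Z_i$.

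The genuine gap is the sentence ``the two fiber inclusions $\{0\},\{1\}\to\boxx$ induce equivalences on the relevant motives, yielding a zig-zag of equivalences between the two candidate cofiber sequences.'' This is precisely the homotopy purity theorem, and it is where all of the difficulty is concentrated: $\boxx$-invariance of $\logDA(S)$ does \emph{not} by itself imply that the inclusion of a fiber of the deformation space over $\boxx$ is a motivic equivalence on the relative motives $M(X,\,\cdot\,)/M(\Bl_Z X,\,\cdot\,)$. In \cite{logDM} this is established by an induction involving the blow-up formula along smooth centers, the projective bundle formula, and a Zariski-local reduction to the case where $Z$ is cut out by part of a coordinate system compatible with $Z_1+\cdots+Z_r$; you invoke ``the admissible blow-up formula'' and ``compatibility of strict transforms with fiber restrictions'' by name but give no argument for why they yield the two equivalences. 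A second, smaller issue is the normal crossing bookkeeping you flag yourself: one must verify that $Z\times\boxx$ (more precisely, the strict transform of $Z\times\bA^1$ in the deformation space) remains strict normal crossing with the strict transforms of the $Z_i\times\boxx$ and with the new exceptional divisor, so that all the objects in your interpolating diagram actually lie in $\lSm/S$; this is needed before any motive can be formed, and is not automatic. As written, then, your proposal is a correct plan but not a proof; to make it one you would either have to reprove log homotopy purity or, as the authors do, cite it.
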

\begin{proof}
We refer to \cite[Theorem 7.5.4]{logDM}.
\end{proof}

\begin{theorem}
\label{Gysin.4}
Suppose $S\in \Sch$ and $\bE$ is an oriented homotopy commutative monoid in $\logDA(S)$.
For $X\in \lSm/S$, a rank $d$ vector bundle $\cE\to X$, and integers $p$ and $q$, there is a canonical equivalence
\begin{equation}
\label{Gysin.4.1}
\map_{\logDA(S)}(M(X),\bE(q)[p])
\simeq
\map_{\logDA(S)}(MTh(\cE),\bE(q+d)[p+2d]).
\end{equation}
\end{theorem}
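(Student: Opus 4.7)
The plan is to prove the classical Thom isomorphism for oriented theories, now in the logarithmic motivic category $\logDA(S)$. The strategy is to use the orientation class $c_\infty$ to construct, for each rank $d$ vector bundle $\cE \to X$, a Thom class $t_\cE \in \Hom_{\logDA(S)}(MTh(\cE), \unit(d)[2d])$, and to show that cup product with $t_\cE$ via the monoidal structure on $\bE$ realizes the equivalence \eqref{Gysin.4.1}. Functoriality of this construction in $\cE$ and $X$ provides the map; the content lies in showing it is an equivalence.

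I would first handle the trivial bundle case. For $\cE = \mathbb{A}^1_X \to X$, the zero section $X$ is already a Cartier divisor, so the blow-up is trivial and the exceptional divisor $E$ coincides with $X$; thus $(\Bl_X \mathbb{A}^1_X, E) \simeq X \times (\mathbb{A}^1, 0)$. Combining the defining cofiber sequence of $MTh(\mathcal{O}_X)$ with the definition $\unit(1) = \cofib(M(S) \to M(\mathbb{P}^1_S))[-2]$ and $\boxx$-invariance, I expect to extract a canonical equivalence $MTh(\mathcal{O}_X) \simeq M(X) \otimes \unit(1)[2]$. Iterating using the multiplicativity of Thom motives under direct sum yields $MTh(\mathcal{O}_X^{\oplus d}) \simeq M(X)(d)[2d]$, which renders \eqref{Gysin.4.1} tautological for trivial bundles.

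For a general $\cE$, I would invoke a splitting principle inside $\logDA(S)$. The orientation $c_\infty$ produces first Chern classes of line bundles, and from these, following the formal pattern familiar in classical motivic homotopy theory and adapted using the projective bundle computation in Proposition \ref{ori.4} (together with the analogous projectivization computation for a general $\bE$ fed by $c_\infty$), one establishes the projective bundle formula $\bE^{*,*}(\mathbb{P}(\cF)) \simeq \bigoplus_{i=0}^{n-1} \bE^{*-2i,*-i}(Y) \cdot c_1(\mathcal{O}(1))^i$ for a rank $n$ vector bundle $\cF$ on $Y \in \lSm/S$. Passing to a flag bundle $p \colon F \to X$, the pullback $p^* \cE$ becomes filtered by line bundles, the Thom class is then the product of their first Chern classes, and the injectivity of $p^*$ on $\bE$-cohomology (which follows from the projective bundle formula) lets one descend the Thom class and the desired equivalence back to $X$.

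The main obstacle I anticipate is establishing the projective bundle formula in $\logDA(S)$ from the orientation axiom alone, since this is the engine that both defines Chern classes and powers the splitting principle. Related to this, one must verify the functoriality and compatibility of the Thom motive $MTh(\cE)$---defined via an admissible blow-up carrying a nontrivial log structure along the exceptional divisor---with pullback along flag bundles and with projectivization constructions. Once these log-geometric compatibilities are checked, the proof proceeds along the standard pattern of the classical motivic Thom isomorphism.
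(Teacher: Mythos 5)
Your plan is the standard oriented Thom-isomorphism argument (Thom classes from $c_\infty$, projective bundle formula, splitting principle via flag bundles), which is exactly the route the paper takes: its proof of Theorem \ref{Gysin.4} simply cites that the proof of \cite[Proposition 6.3.9]{logSH} adapts to $\logDA(S)$ for oriented $\bE$, adding only the remark that over a perfect field with resolution of singularities and transfers one even has $MTh(\cE)\simeq M(X)(d)[2d]$ with no orientation needed, by \cite[Theorem 8.3.7]{logDM}. So the approaches agree; the obstacles you flag (the projective bundle formula for a general oriented $\bE$ and the compatibility of the blow-up-defined Thom motives with pullback and direct sums) are precisely what those cited results supply.
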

\begin{proof}
When $S$ is the spectrum of a perfect field $k$ with resolution of singularities,
and $\logDA(S)$ is replaced by the version with transfers $\mathrm{log}\mathcal{DM}(k)$,
this is a consequence of \cite[Theorem 8.3.7]{logDM}.
Indeed,
there is an isomorphism $MTh(\cE)\simeq M(X)(d)[2d]$,
and an orientation on $\bE$ is not needed.
The equivalence \eqref{Gysin.4.1} holds in greater generality:
our proof of
\cite[Proposition 7.3.9]{logSH}
adapts to $\logDA(S)$ under the assumption that $\bE$ is oriented. 
\end{proof}

\begin{theorem}[Generalized residue sequence]
\label{Gysin.2}
With the notations of Definition \ref{Gysin.5} and Theorem \ref{Gysin.6}, there exists a canonical cofiber sequence
\begin{align*}
\logHH((Z,Z\cap Z_1+\cdots+Z\cap Z_r) / S)
\to &
\logHH((X,Z_1+\cdots+Z_r) / S)
\\
\xrightarrow{u^*} &
\logHH((\Bl_Z X,\widetilde{Z}_1+\cdots+\widetilde{Z}_r+E) / S).
\end{align*}
The above cofiber sequence is called the \emph{Gysin (or residue) sequence for $\logHH$.}

As a particular case, we obtain a canonical cofiber sequence
\begin{equation}\label{particularcase}
\HH(Z / S)
\to
\HH(X / S)
\to
\logHH((\Bl_Z X,E) / S).
\end{equation}
\end{theorem}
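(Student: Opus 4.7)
The plan is to apply $\map_{\logDA(S)}(-,\blogHH(-/S))$ to the motivic Gysin cofiber sequence of Theorem \ref{Gysin.6} and identify each resulting term via the orientation of $\blogHH$ together with the Bott-type periodicity \eqref{ori.6.2}. Concretely, start from the cofiber sequence
\[
M(\Bl_Z X,\widetilde{Z}_1+\cdots+\widetilde{Z}_r+E)
\xrightarrow{M(u)}
M(X,Z_1+\cdots+Z_r)
\to
MTh(N_Z(X,Z_1+\cdots+Z_r))
\]
in $\logDA(S)$. Applying the exact functor $\map_{\logDA(S)}(-,\blogHH(-/S))$ yields a fiber sequence in spectra; since $\logDA(S)$ is stable this is equivalent to a cofiber sequence after rotation.

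The two outer terms are identified directly through \eqref{ori.6.1}: the mapping spectrum out of $M(X,Z_1+\cdots+Z_r)$ computes $\logHH((X,Z_1+\cdots+Z_r)/S)$, and similarly for $(\Bl_Z X,\widetilde{Z}_1+\cdots+\widetilde{Z}_r+E)$. For the third term I invoke Theorem \ref{ori.5}, which asserts that $\blogHH(-/S)$ is oriented, so that Theorem \ref{Gysin.4} applies to the rank $d=\mathrm{codim}_X Z$ vector bundle $N_Z(X,Z_1+\cdots+Z_r)\to (Z,Z\cap Z_1+\cdots+Z\cap Z_r)$. This gives
\[
\map_{\logDA(S)}\!\bigl(MTh(N_Z(X,Z_1+\cdots+Z_r)),\blogHH(-/S)\bigr)
\simeq
\map_{\logDA(S)}\!\bigl(M(Z_{\log}),\blogHH(-/S)(d)[2d]\bigr),
\]
where $Z_{\log}:=(Z,Z\cap Z_1+\cdots+Z\cap Z_r)$. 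Using the Bott periodicity equivalence $\blogHH(-/S)(d)[2d]\simeq \blogHH(-/S)$ from \eqref{ori.6.2} and \eqref{ori.6.1} again, this mapping spectrum is $\logHH(Z_{\log}/S)$. Assembling these identifications produces the asserted cofiber sequence
\[
\logHH(Z_{\log}/S)\to \logHH((X,Z_1+\cdots+Z_r)/S)\xrightarrow{u^*} \logHH((\Bl_Z X,\widetilde{Z}_1+\cdots+\widetilde{Z}_r+E)/S).
\]

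The particular case \eqref{particularcase} then follows by specializing to $r=0$: with the trivial log structure the representability statement of Theorem \ref{dZarsheaf.16} together with \eqref{twofirsthh} identifies $\logHH((X,\emptyset)/S)$ with $\HH(X/S)$ and $\logHH((Z,\emptyset)/S)$ with $\HH(Z/S)$.

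The most delicate step is the correct bookkeeping at the Thom term: one must verify that the Thom isomorphism from Theorem \ref{Gysin.4} together with the shift-and-twist cancellation \eqref{ori.6.2} produces an equivalence \emph{natural} in $(Z_{\log},N)$, so that the boundary map coming from the Gysin triangle really lands in the representing object for $\logHH(Z_{\log}/S)$ and not merely in something abstractly equivalent to it. Once naturality is ensured, the cofiber sequence is automatic from stability of $\logDA(S)$; this is where the hypothesis that $\blogHH(-/S)$ be oriented (and not only that its underlying object is $\boxx$-invariant) is used in an essential way.
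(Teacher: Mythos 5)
Your proposal is correct and follows exactly the paper's own (one-line) proof: apply the Thom isomorphism of Theorem \ref{Gysin.4} to the oriented theory $\blogHH(-/S)$ from Theorem \ref{ori.5}, map the Gysin triangle of Theorem \ref{Gysin.6} into it, and cancel the twist and shift via \eqref{ori.6.2} and \eqref{ori.6.1}. The only cosmetic point is that in the special case \eqref{particularcase} the identification $\logHH((X,\emptyset)/S)\simeq\HH(X/S)$ for trivial log structure is immediate from Definition \ref{def:loghh} (the repletion is trivial) rather than a consequence of \eqref{twofirsthh}.
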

\begin{proof}
Apply Theorem \ref{Gysin.4} to the oriented homotopy commutative monoid $\blogHH(-/S)$ and use \eqref{ori.6.1}, 
\eqref{ori.6.2}, and Theorem \ref{Gysin.6}.
\end{proof}
The interested reader can note that \eqref{particularcase} is similar to the generalized Gysin sequence available 
for the cohomology of reciprocity sheaves in \cite[Theorem 7.16]{BRS}.

\subsection{Comparison of cofiber sequences} Let $A$ be a discrete commutative ring and let $a \in A$ be a non-zero divisor. By \cite[Theorem 5.5, Example 5.7]{RSS15}, there is a cofiber sequence \[\THH(A) \to \THH(A, \langle a \rangle) \to \THH(A/(a))[1],\] where $\langle a \rangle \to (A, \cdot)$ is the pre-log structure generated by $a$. Here $\THH(A, \langle a \rangle)$ denotes log topological Hochschild homology in the sense of \cite{RSS15}. By cobase change along the augmentation $\THH({\mathbb Z}) \to {\mathbb Z}$, 
\Cref{prop:rognescomparison2} and \cite[Lemma 5.4]{Lun21} yield the cofiber sequence \begin{equation}\label{rsscofiberseq}{\rm HH}(A) \to {\rm logHH}(A, \langle a \rangle) \to {\rm HH}(A/(a))[1]. \end{equation}

We point out that the cofiber sequence \eqref{rsscofiberseq} is a special case of Theorem \ref{Gysin.2}. Indeed, set $S = {\rm Spec}({\mathbb Z})$ (with trivial log structure), $X = {\rm Spec}({\mathbb Z}[x])$ and $Z = {\rm Spec}({\mathbb Z})$ in \eqref{particularcase}, so that the resulting cofiber sequence reads \[{\rm HH}({\mathbb Z}[x]) \to {\rm logHH}({\mathbb Z}[x], \langle x \rangle \times {\rm GL}_1({\mathbb Z}[x])) \to {\rm HH}({\mathbb Z})[1].\] Now we cobase-change along ${\rm HH}({\mathbb Z}[x]) \to {\rm HH}(A)$ and apply logification invariance of ${\rm logHH}$ to obtain a cofiber sequence of the form \eqref{rsscofiberseq}. Logification invariance of ${\rm logHH}$ can be seen using cobase change along $\THH({\mathbb Z}) \to {\mathbb Z}$ and a similar equivalence in log $\THH$, see \cite[Theorem 4.24]{RSS15}, or by using our Quillen spectral sequence and the analogous statement for the 
log cotangent complex in \cite[Theorem 8.16]{Ols05}.

When $a\in A$ is a non-zero divisor, the subscheme $Z=\Spec{A/(a)}$ is an effective Cartier divisor in $X$, and clearly $\Bl_ZX = X$, i.e., the blow-up is irrelevant. It is then clear by construction that the sequences \eqref{particularcase} and \eqref{rsscofiberseq} agree. 

\begin{remark}\label{rem:regularsequence}
Theorem \ref{Gysin.2} extends the results in \cite{RSS15} in two directions. 
First, 
it works for log schemes (and not just for log rings), 
and the log structure is allowed to be non-trivial in all the terms of the sequence. 
Second, 
it allows for a higher-dimensional generalization of the residue sequence. 
Allowing for non-affine log schemes is essential to that end.

To explain it better, let us consider the following situation. Let $R$ be a noetherian commutative ring of finite Krull dimension, and let $A$ be a smooth $R$-algebra. Let  $f_1, \ldots, f_r$ be a regular sequence in $A$, such that the quotient $A/(f_1, \ldots, f_r)$ is  a smooth complete intersection $R$-algebra. Then, since the normal bundle of $Z:=\Spec{A/(f_1, \ldots, f_r)}$ in $X:=\Spec{A}$ is trivial, 
\eqref{particularcase} reads as
\[ \HH(A / R) \to \logHH((\Bl_{(f_1, \ldots, f_r)}\Spec{A}, E) / R) \to \HH((A/(f_1, \ldots, f_r)) / R)[1], \]
where $E\cong \mathbb{P}(N_Z X) \cong \mathbb{P}_Z^{r-1}$. 
For $r=1$, 
using the HKR Theorem, 
we obtain the classical residue sequence (see e.g., \cite[2.3.a]{EV})
\[0\to \Omega^n_{A/R} \to \Omega^n_{A/R}(\log f) \xrightarrow{Res} \Omega^{n-1}_{(A/f) / R} \to 0.\]
Note that 
the map $\pi_n\HH((A/f_1) / R) \to \pi_n\HH(A / R)$ is trivial for all $n\geq 0$ in this case since 
the differentials inject into the log differentials.
\end{remark}

We end with a discussion about residue sequences in log $\THH$, 
and how this could potentially relate to the results of this paper:

\begin{remark}\label{rem:bp} Quillen's localization theorem \cite[\S 5]{Quillen} produces a cofiber sequence 
\[K(\bZ_p) \to K(\bQ_p) \to K(\bF_p)[1]\] in algebraic $K$-theory, while Blumberg--Mandell 
\cite[Localization Theorem]{BM08} establish a cofiber sequence \[K({\rm ku}_p) \to K({\rm KU}_p) \to K(\bZ_p)[1].\] 
Here ${\rm ku}_p$ and ${\rm KU}_p$ are the connective and periodic complex ($p$-complete) $K$-theory spectra. 
These examples answer the following question in the affirmative for $n = 0, 1$: Is there a cofiber sequence \begin{equation}\label{ktheorybpcofiber}K(BP \langle n \rangle) \to K(BP \langle n \rangle[v_n^{-1}]) \to K(BP \langle n - 1 \rangle)[1] \end{equation} for the truncated Brown--Peterson spectrum $BP \langle n \rangle$? For $n\geq 2$, 
such cofiber sequences were proven to \emph{not} exist in \cite{ABG18}.

The search for cofiber sequences in log $\THH$ is often motivated by the problem of finding close approximations 
of cofiber sequences in algebraic $K$-theory that do not exist in ordinary Hochschild homology: 
Such a perspective is appealing for the examples of \cite{RSS15}.

We sketch a possible construction of
a cofiber sequence \begin{equation}\label{logthhbpncofiber}{\rm THH}(BP \langle n \rangle) \to {\rm logTHH}(BP \langle n \rangle, \langle v_n \rangle_*) \to {\rm THH}(BP \langle n - 1 \rangle)[1].\end{equation} 
This is a cofiber sequence in log ${\rm THH}$ involving $BP \langle n \rangle$ for which there is no corresponding $K$-theory sequence \eqref{ktheorybpcofiber} when $n\geq 2$. The truncated Brown--Peterson spectra $BP \langle n \rangle$ do in general not admit the structure of ${\mathbb E}_{\infty}$-rings \cite[Theorem 1.1.2]{Law18}, and hence they do not fall under the framework of logarithmic ${\rm THH}$ as developed by Rognes--Sagave--Schlichtkrull \cite{RSS15}, \cite{RSS18}. We shall combine groundbreaking work of Hahn--Wilson \cite{HW21} with a construction of Sagave--Schlichtkrull \cite{SS19} to give an \emph{ad hoc} definition of ${\rm logTHH}(BP \langle n \rangle, \langle v_n \rangle_*)$ participating in cofiber sequences \eqref{logthhbpncofiber}.

We start with the cofiber 
sequence \[{\rm THH}({\rm MUP}_{\ge 0}) \to {\rm logTHH}({\rm MUP}_{\ge 0}, V^{\cal U}_{\ge 0}) \to {\rm THH}({\rm MU})[1]\] of Sagave--Schlichtkrull \cite[Example 8.6]{SS19}. Here ${\rm MUP}_{\ge 0} = \oplus_{i \ge 0} {\rm MU}[2i]$ is the positively weighted part of the periodic complex cobordism spectrum ${\rm MUP}$, equipped with a log structure $V^{\cal U}_{\ge 0}$ localizing to ${\rm MUP}$. Subobjects of ${\rm MUP}_{\ge 0}$ 
whose weights are multiples of $p^n - 1$ are denoted by ${\rm MU}[y]$ in Hahn--Wilson \cite[\S 2.1]{HW21}. We think of ${\rm MU}[y]$ as a polynomial ring over ${\rm MU}$, and Sagave--Schlichtkrull's argument still applies to produce a cofiber sequence \begin{equation}\label{muycofiber}{\rm THH}({\rm MU}[y]) \to {\rm logTHH}({\rm MU}[y], \langle y \rangle_*) \to {\rm THH}({\rm MU})[1].\end{equation} As part of their construction of $\bE_3$-${\rm MU}$-algebra structures on $BP\langle n \rangle$, Hahn--Wilson \cite{HW21} construct $\bE_3$-${\rm MU}[y]$-algebra structures on $BP \langle n \rangle$, where $y$ acts by $v_n$. Applying a cobase change of \eqref{muycofiber} along the resulting map ${\rm THH}({\rm MU}[y]) \to {\rm THH}(BP\langle n \rangle)$ 
gives \eqref{logthhbpncofiber}, where ${\rm logTHH}(BP \langle n \rangle, \langle v_n \rangle_*)$ is defined as the
cobase change of ${\rm logTHH}({\rm MU}[y], \langle y \rangle_*)$ along ${\rm THH}({\rm MU}[y]) \to {\rm THH}(BP\langle n \rangle)$. A version of this sequence will appear in the forthcoming work of Ausoni--Bayındır--Moulinos.
This further accentuates the difference between the behavior of cofiber sequences in log ${\rm THH}$ and localization sequences in algebraic $K$-theory. 

One may attempt to define log structures on $BP\langle n \rangle$ generated by all the $v_i$'s in the hope of obtaining 
a sequence in ${\rm logTHH}$ analogous to the $K$-theory sequence of Barwick--Lawson \cite[Corollary 5.2]{BL14}. 
By \cite[Remark 2.1.2]{HW21}, the approach sketched here does not apply to produce such cofiber sequences in ${\rm logTHH}$. However, Theorem \ref{Gysin.2} suggests one way to approach this problem: Replace $BP \langle n \rangle$ with an appropriate blow-up in the context of spectral algebraic geometry and prove that it participates in 
a ${\rm logTHH}$ residue sequence. In upcoming work, we hope to address this question, as well as how the resulting sequence relates to variants of algebraic $K$-theory.
\end{remark}

\noindent\textbf{Acknowledgements.} The authors wish to thank the anonymous referee for a detailed and informative report, that significantly helped to streamline the exposition. 

The authors also thank Bj\o rn Dundas, Elden Elmanto, Lars Hesselholt, 
Teruhisa Koshikawa, Mauro Porta, John Rognes, and Steffen Sagave for conversations and comments on the 
subject of this paper.

\bibliographystyle{siam}
\bibliography{logHKRPostRevision}

\end{document}